\DeclareMathOperator{\gr}{gr}
\DeclareMathOperator{\Hom}{Hom}
\DeclareMathOperator{\End}{End}
\DeclareMathOperator{\Tor}{Tor}
\DeclareMathOperator{\Der}{Der}
\DeclareMathOperator{\Sp}{Sp}
\DeclareMathOperator{\Spf}{Spf}
\DeclareMathOperator{\Sym}{Sym}
\DeclareMathOperator{\aug}{aug}
\DeclareMathOperator{\rig}{rig}
\DeclareMathOperator{\an}{an}
\DeclareMathOperator{\GL}{GL}
\DeclareMathOperator{\Coh}{Coh}
\DeclareMathOperator{\Loc}{Loc}
\DeclareMathOperator{\coker}{coker}
\DeclareMathOperator{\im}{Im}
\DeclareMathOperator{\id}{id}
\DeclareMathOperator{\op}{op}
\begin{document}
\newtheorem{MainThm}{Theorem}
\renewcommand{\theMainThm}{\Alph{MainThm}}
\theoremstyle{definition}
\newtheorem*{examps}{Examples}
\newtheorem*{defn}{Definition}
\theoremstyle{plain}
\newtheorem*{lem}{Lemma}
\newtheorem*{prop}{Proposition}
\newtheorem*{rmk}{Remark}
\newtheorem*{rmks}{Remarks}
\newtheorem*{thm}{Theorem}
\newtheorem*{example}{Example}
\newtheorem*{examples}{Examples}
\newtheorem*{cor}{Corollary}
\newtheorem*{conj}{Conjecture}
\newtheorem*{hyp}{Hypothesis}
\newtheorem*{thrm}{Theorem}
\newtheorem*{quest}{Question}
\theoremstyle{remark}

\newcommand{\Zp}{{\mathbb{Z}_p}}
\newcommand{\Qp}{{\mathbb{Q}_p}}
\newcommand{\Fp}{{\mathbb{F}_p}}
\newcommand{\A}{\mathcal{A}}
\newcommand{\B}{\mathcal{B}}
\newcommand{\C}{\mathcal{C}}
\newcommand{\D}{\mathcal{D}}
\newcommand{\E}{\mathcal{E}}
\newcommand{\F}{\mathcal{F}}
\newcommand{\G}{\mathcal{G}}
\newcommand{\I}{\mathcal{I}}
\newcommand{\J}{\mathcal{J}}
\renewcommand{\L}{\mathcal{L}}
\newcommand{\sL}{\mathscr{L}}
\newcommand{\M}{\mathcal{M}}
\newcommand{\sM}{\mathscr{M}}
\newcommand{\N}{\mathcal{N}}
\newcommand{\sN}{\mathscr{N}}
\renewcommand{\O}{\mathcal{O}}
\newcommand{\cP}{\mathcal{P}}
\newcommand{\Q}{\mathcal{Q}}
\newcommand{\R}{\mathcal{R}}
\newcommand{\cS}{\mathcal{S}}
\newcommand{\T}{\mathcal{T}}
\newcommand{\U}{\mathcal{U}}
\newcommand{\sU}{\mathscr{U}}
\newcommand{\sV}{\mathscr{V}}
\newcommand{\V}{\mathcal{V}}
\newcommand{\W}{\mathcal{W}}
\newcommand{\X}{\mathcal{X}}
\newcommand{\Y}{\mathcal{Y}}
\newcommand{\Z}{\mathcal{Z}}
\newcommand{\PFS}{\mathbf{PreFS}}
\newcommand{\h}[1]{\widehat{#1}}
\newcommand{\hA}{\h{A}}
\newcommand{\hK}[1]{\h{#1_K}}
\newcommand{\hsULK}{\hK{\sU(\L)}}
\newcommand{\hsUFK}{\hK{\sU(\F)}}
\newcommand{\hsULnK}{\hK{\sU(\pi^n\L)}}
\newcommand{\hn}[1]{\h{#1_n}}
\newcommand{\hnK}[1]{\h{#1_{n,K}}}
\newcommand{\w}[1]{\wideparen{#1}}
\newcommand{\wK}[1]{\wideparen{#1_K}}
\newcommand{\invlim}{\varprojlim}
\newcommand{\dirlim}{\varinjlim}
\newcommand{\fr}[1]{\mathfrak{{#1}}}
\newcommand{\LRU}[1]{\sU(\mathscr{#1})}
\newcommand{\et}{\acute et}

\newcommand{\ts}[1]{\texorpdfstring{$#1$}{}}
\newcommand{\st}{\mid}
\newcommand{\be}{\begin{enumerate}[{(}a{)}]}
\newcommand{\ee}{\end{enumerate}}
\newcommand{\qmb}[1]{\quad\mbox{#1}\quad}
\let\le=\leqslant  \let\leq=\leqslant
\let\ge=\geqslant  \let\geq=\geqslant

\title{$\w\D$-modules on rigid analytic spaces I}
\author{Konstantin Ardakov}
\address{Mathematical Institute\\University of Oxford\\Oxford OX2 6GG}
\author{Simon Wadsley}
\address{Homerton College, Cambridge, CB2 8PQ}
\begin{abstract}
We introduce a sheaf of infinite order differential operators $\w\D$ on smooth rigid analytic spaces that is a rigid analytic quantisation of the cotangent bundle. We show that the sections of this sheaf over sufficiently small affinoid varieties are Fr\'echet-Stein algebras, and use this to define co-admissible sheaves of $\w\D$-modules. We prove analogues of Cartan's Theorems A and B for co-admissible $\w\D$-modules.
\end{abstract}
\maketitle
\setcounter{tocdepth}{1}  
\tableofcontents
\section{Introduction}
\subsection{Background and motivation} The theory of $\D$-modules goes back over forty years to the work of Sato and Kashiwara for $\D$-modules on manifolds \cite{KashiwaraTh} and to the work of Bernstein for $\D$-modules on algebraic varieties \cite{Bernstein}. Originally introduced as a framework for the algebraic study of partial differential equations there have been also been fundamental applications in the studies of harmonic analysis, algebraic geometry, Lie groups and representation theory. In this paper we attempt to initiate a new theory of $\D$-modules for rigid analytic spaces in the sense of Tate \cite{Tate71}.

In their seminal paper \cite{BB}, Beilinson and Bernstein explained how to study representations of a complex semi-simple Lie algebra $\mathfrak{g}$ via  twisted $\D$-modules on the flag variety $\B$ of the corresponding algebraic group. In particular they established an equivalence between the category of finitely generated modules over the enveloping algebra $U(\mathfrak{g})$ with a fixed regular infinitesimal central character $\chi$ and the category of coherent modules for the sheaf of $\chi$-twisted differential operators on $\B$. 

Our primary motivation for this work is to establish a rigid analytic version of the Beilinson-Bernstein equivalence in order to understand the representation theory of the Arens--Michael envelope $\w{U(\mathfrak{g})}$ of the universal enveloping algebra of a semi-simple Lie algebra $\mathfrak{g}$ over a complete discretely valued field $K$ of mixed characteristic. The Arens--Michael envelope is the completion of $U(\fr{g})$ with respect to all submultiplicative seminorms on $U(\fr{g})$; when $\fr{g}$ is the Lie algebra of a $p$-adic Lie group, $\w{U(\fr{g})}$ occurs as the algebra of locally analytic $K$-valued distributions on this group supported at the identity, and is therefore of interest in the theory of locally analytic representations of $p$-adic groups, developed by Schneider and Teitelbaum \cite{ST4}. We delay the proof of our version of the Beilinson-Bernstein equivalence to a later paper, but see Theorem \ref{BB} below for a precise statement. Here we construct the sheaf $\w{\D}$ on a general smooth rigid analytic space over $K$, and establish some of its basic properties.

\subsection{Rigid analytic quantisation} 
In our earlier work \cite{AW13} we proved an analogous theorem for certain Banach completions of $\w{U(\mathfrak{g})}$ localising onto a smooth formal model  $\h{\B}$ of the flag variety. In this new programme we extend that work in two directions. In the base direction, by working on the rigid analytic flag variety $\B^{an}$ which has a finer topology than a fixed formal model $\h{\B}$, the localisation is more refined and the geometry is more flexible\footnote{In fact with a little extra effort our construction can be localised to the rigid \'etale site but we do not provide the details of that here.}. In the cotangent direction, we no longer fix a level $n$ as we did in \cite{AW13}, and instead work simultaneously with all $n$. This involves using Schneider and Teitelbaum's notions of Fr\'echet--Stein algebras and co-admissible modules introduced in \cite{ST}. 

The definition of a Fr\'echet--Stein algebra is modelled around key properties of Stein algebras; these latter arise as rings of functions on Stein spaces in (complex) analytic geometry. There is a well-behaved abelian category of co-admissible modules defined for each Fr\'echet--Stein algebra; in the case when the algebra in question is ring of global rigid analytic functions on a quasi-Stein rigid analytic space, this category is naturally equivalent to the category of coherent sheaves on this space. It is known \cite{Schmidt09} that $\w{U(\mathfrak{g})}$ is a Fr\'echet-Stein algebra. We view $\w{U(\fr{g})}$ as a quantisation of the algebra of rigid analytic functions on $\mathfrak{g}^\ast$ in much the same way that $U(\mathfrak{g})$ can be viewed as a quantisation of the algebra of polynomial functions on $\mathfrak{g}^\ast$. This is the starting point for our work: our Beilinson--Bernstein style equivalence should have the co-admissible modules for central reductions of $\w{U(\mathfrak{g})}$ on one side.

\subsection{Lie algebroids and completed enveloping algebras}
When working with smooth algebraic varieties in characteristic zero, one can view classical sheaves of differential operators as special cases of sheaves of enveloping algebras of Lie algebroids as in \cite{BB2}, for example. We adopt this more general framework here partly for convenience at certain points of our presentation and partly for the sake of flexibility in future work; in particular we will use it to define sheaves of twisted differential operators in \cite{DhatThree}. In section \ref{Sheaves} below for each Lie algebroid $\sL$ on a rigid analytic space $X$ we construct a sheaf $\w{\sU(\sL)}$ of completed universal enveloping algebras on $X$. When $X$ is smooth we then define $\w{\D}:= \w{\sU(\T)}$. These sheaves $\w{\sU(\sL)}$ may be viewed as quantisations of the total space of the vector bundle $\sL^\ast$. In particular, in this picture, $\w{\D}$ is a quantisation of $T^\ast X$. 

One difficulty with extending the classical work on $\D$-modules to the rigid analytic setting is that there is no known good notion of quasi-coherent sheaf for rigid analytic spaces (but see \cite{BBK} for some recent work in this direction). We resolve that problem here by avoiding it; that is by restricting ourselves to the study of `coherent' modules for our sheaves of rings. Because our sheaves of rings $\w{\sU(\sL)}$ are not themselves coherent the usual notion of coherent sheaves of modules is too strong. However, the sections of our structure sheaves $\w{\sU(\sL)}$ over sufficiently small affinoid subdomains turn out to be Fr\'echet--Stein, so Schneider and Teitelbaum's work shows us how to proceed: we replace the notion of `locally finitely generated' by `locally co-admissible'. Looked at through a particular optic these `locally co-admissible' sheaves do deserve to be seen as if they were coherent.  However, it seems to be necessary to fully develop a theory of micro-local sheaves in our context to make this interpretation precise. 

\subsection{Main results}\label{MainResults}

Our first main result is a non-commutative version of Tate's acyclicity Theorem \cite[Theorem 8.2]{Tate71}.
\begin{MainThm}\label{TateAcy} Suppose that $X$ is a smooth $K$-affinoid variety such that $\T(X)$ is a free $\O(X)$-module. Then \[ \w{\D}(Y):= \w{U(\T(Y))} \] defines a sheaf $\w\D$ of Fr\'echet-Stein algebras on affinoid subdomains of $X$ with vanishing higher \v{C}ech cohomology groups.
\end{MainThm}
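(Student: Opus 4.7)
The plan is to realise $\w\D$ as an inverse limit of sheaves of Noetherian Banach algebras built from successive rescalings of a single Lie lattice, and to reduce acyclicity at each finite level to Tate's classical theorem via the PBW filtration. First I would fix an affine formal model $\A\subset\O(X)$ and, using the hypothesis that $\T(X)$ is a free $\O(X)$-module together with the fact that $X$ is affinoid, choose a free $\A$-Lie lattice $\L\subset\T(X)$. For every integer $n\geq 0$ the enveloping algebra $\sU(\pi^n\L)$ carries its PBW filtration with associated graded $\Sym_\A(\pi^n\L)$; $\pi$-adic completion followed by inverting $\pi$ therefore yields a Banach $K$-algebra $\hsULnK$ that is Noetherian by the standard filtered-graded argument.

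Next I would extend the construction to affinoid subdomains. Given an affinoid subdomain $Y\hookrightarrow X$ one may choose a formal model $\B\supseteq\A$ of $\O(Y)$; setting $\L_Y := \B\otimes_\A\L$ and repeating the completion procedure produces a presheaf of Banach algebras $Y\mapsto \hK{\sU(\pi^n\L_Y)}$ on affinoid subdomains of $X$. One must verify that this is independent of the chosen formal model (a standard comparison between different models using rescaling) and that it recovers $\hK{\sU(\pi^n\L)}$ when $Y=X$.

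The central step is showing that each of these presheaves is a sheaf with vanishing higher \v{C}ech cohomology. Given a finite affinoid covering $\{Y_i\}$ of $Y$, I would filter the augmented \v{C}ech complex of $\hK{\sU(\pi^n\L_Y)}$ by PBW degree. Passing to associated graded identifies the resulting complex in each degree $d$ with the degree-$d$ piece of the augmented \v{C}ech complex of the symmetric algebra $\Sym_{\O(Y)}(\L_Y\otimes_\A K)$, which is the algebra of rigid analytic functions on the vector bundle $(\L_Y^\ast)^{\an}$ over $Y$ restricted to a polynomial subalgebra. Tate's classical acyclicity theorem, applied in each fixed PBW degree to the coherent $\O$-module $\Sym^d(\L^\ast)^{\an}$ on $Y$, then gives acyclicity of the graded complex, and a standard filtered-complex spectral sequence argument (using that the PBW filtration is exhaustive and separated after $\pi$-adic completion) lifts this to acyclicity of the original \v{C}ech complex.

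Finally I would assemble the Fr\'echet--Stein structure and pass to the limit. The natural maps $\hK{\sU(\pi^{n+1}\L_Y)}\to\hK{\sU(\pi^n\L_Y)}$ have dense image by construction, and two-sided flatness again follows from the PBW filtered-graded argument applied to $\Sym_{\O(Y)}(\L_Y\otimes K)$. Hence $\w{\sU(\L_Y)} = \invlim_n \hK{\sU(\pi^n\L_Y)}$ is a Fr\'echet--Stein algebra, and the sheaf property together with vanishing of higher \v{C}ech cohomology transfers from each finite level to the inverse limit because the relevant transition systems of \v{C}ech cochains are surjective (hence Mittag--Leffler), so $R^1\invlim$ vanishes. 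I expect the main obstacle to be the acyclicity argument at finite level, and more precisely ensuring that the associated graded of the completed \v{C}ech complex genuinely coincides with the classical \v{C}ech complex of an acyclic coherent module on $Y$; this requires careful control of how the Lie lattice $\L_Y$ and its $\pi$-adic completion interact with restriction to smaller affinoid subdomains, and is where the smoothness of $X$ and the freeness of $\T(X)$ over $\O(X)$ are both used crucially.
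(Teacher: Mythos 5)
Your overall architecture — presheaves of Noetherian Banach algebras at each level $n$, acyclicity at each finite level, then pass to the inverse limit to get the Fréchet--Stein structure — is the right one and agrees with the paper's. However, the central acyclicity argument has a genuine gap.

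You propose to filter the Čech complex of $\hK{\sU(\pi^n\L_Y)}$ by PBW degree and run a spectral sequence, asserting that "the PBW filtration is exhaustive and separated after $\pi$-adic completion.'' This is false. Already for $\A = \R$ and $\L = \R x$, the completion $\R\langle x\rangle$ contains $\sum_{k\geq 0}\pi^k x^k$, which lies in no finite PBW degree; after $\pi$-adic completion and inverting $\pi$, the PBW filtration fails to be exhaustive, so the filtered-complex spectral sequence does not converge and the associated graded cannot be identified with the Čech complex of $\Sym(\sL)$. If instead you work degree-by-degree \emph{before} completing, Tate's theorem does give exactness of each $C^\bullet_{\aug}(\U,\Sym^d(\L))\otimes_\R K$, but this only tells you that the cohomology of the integral complex $C^\bullet_{\aug}(\U,U(\L))$ is $\pi$-torsion — not that it is killed by a single power of $\pi$. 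Without a uniform torsion bound, the cohomology of the $\pi$-adic completion need not vanish (and the $\pi$-adic completion does not commute with the infinite PBW direct sum). This uniform bound is precisely the hard technical input: the paper obtains it via admissible formal blow-ups, Zariski coverings of the blown-up formal scheme, and the finiteness theorem for proper morphisms (EGA II, Corollary 3.4.4), which shows that the cohomology of the integral Čech complex of $\O$ is finitely generated over a Noetherian ring, hence has bounded torsion; one can then apply Lemma \ref{BddTorsionHat}. Your proposal has no mechanism producing this bound.

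Two smaller points. The subdomain construction $\L_Y := \B\otimes_\A\L$ only yields a Lie--Rinehart algebra when $\B$ is an $\L$-stable affine formal model (so that the action of $\L$ lifts to $\B$); this fails for general $Y$, and the paper handles it by restricting to the $\L$-admissible $G$-topology and by observing that every $Y$ becomes $\pi^n\L$-admissible for $n$ large (Lemma \ref{RescLie}), which is why $\w{\D}$ is an inverse limit over $n$ rather than a direct completion at level $0$. Finally, the transition maps $\hK{\sU(\pi^{n+1}\L)}(Y)\to\hK{\sU(\pi^n\L)}(Y)$ have dense image but are not surjective, so your Mittag--Leffler appeal needs to be phrased topologically (density suffices for Fréchet spaces, or one can invoke the vanishing of $\invlim^{(1)}$ for coadmissible modules from \cite{ST}).
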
 Here $\w{U(\T(Y))}$ can be concisely defined as the completion of the enveloping algebra $U(\T(Y))$ with respect to all submultiplicative seminorms that extend the supremum seminorm on $\O(Y)$; see Section \ref{DefnOfwUL} below for a more algebraic definition. Our next result involves an appropriate version of completed tensor product $\w\otimes$ developed in Section \ref{Coad}.

\begin{MainThm} \label{MainLoc} Suppose that $X$ is a smooth $K$-affinoid variety such that $\T(X)$ is a free $\O(X)$-module. Then \[ \Loc(M)(U):= \w{\D}(U)\underset{\w{\D}(X)}{\w\otimes}M \] defines a full \emph{exact} embedding $M \mapsto \Loc(M)$ of the category of co-admissible $\w{\D}(X)$-modules into the category of sheaves of $\w{\D}$-modules on affinoid subdomains of $X$, with vanishing higher \v{C}ech cohomology groups.
\end{MainThm}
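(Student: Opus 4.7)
The plan is to reduce to a Banach-level statement via the Fréchet--Stein structure of $\w\D(Y)$ established in Theorem \ref{TateAcy}, then return to the Fréchet--Stein limit using the formalism of co-admissible modules. Write $\w\D(Y) = \invlim_n \A_n(Y)$ with each $\A_n$ an appropriate Banach completion of the sheaf of enveloping algebras $\sU(\pi^n\L)$ for a suitable Lie lattice $\L \subseteq \T$. By definition, a co-admissible $\w\D(X)$-module is of the form $M = \invlim_n M_n$ where each $M_n$ is finitely generated over $\A_n(X)$ and $\A_n(X)\otimes_{\A_{n+1}(X)} M_{n+1} \cong M_n$ for all $n$.

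Step one is to prove the Banach-level localisation theorem: for each fixed $n$, the presheaf
\[ \Loc_n(M_n)(U) := \A_n(U)\otimes_{\A_n(X)} M_n \]
is a sheaf of $\A_n$-modules on affinoid subdomains of $X$ with vanishing higher \v{C}ech cohomology. Two ingredients are required. First, a Tate-style acyclicity for $\A_n$ itself, which should be extractable from the proof of Theorem \ref{TateAcy}, since that proof presumably proceeds one Fréchet--Stein level at a time. Second, flatness of $\A_n(U)$ over $\A_n(X)$ for every affinoid subdomain $U \hookrightarrow X$; this should follow from a standard filtered/graded argument, exploiting that $\A_n$ carries a positive filtration whose associated graded is a symmetric algebra over $\O$, reducing the flatness question to Tate's classical flatness of $\O(U)$ over $\O(X)$.

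Step two is to identify $\Loc(M)(U)$ with $\invlim_n \Loc_n(M_n)(U)$, essentially by unwinding the definition of $\w\otimes$ from Section \ref{Coad}. Step three is to pass to the inverse limit, which is the delicate point: a priori, inverse limits need not preserve sheaves or \v{C}ech acyclicity. The argument must show that for any finite affinoid cover $\mathfrak{U}$ of an affinoid subdomain, the Banach-level \v{C}ech complexes $C^\bullet(\mathfrak{U}, \Loc_n(M_n))$ assemble into a co-admissible system in $n$ --- in particular the transition maps should have dense image. Once this is in hand, exactness of $\invlim$ on co-admissible systems (the defining feature of Fréchet--Stein algebras, cf.~\cite{ST}) produces both the sheaf property and the vanishing of higher \v{C}ech cohomology in a single stroke.

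Exactness of $\Loc$ then combines Banach-level flatness with exactness of $\invlim$ on co-admissible systems; fullness follows from the identification $\Loc(M)(X) \cong M$ together with the fact that morphisms of co-admissible modules are determined levelwise. The main obstacle I anticipate is precisely the verification that the Banach-level \v{C}ech systems are co-admissible: this requires both flatness of $\A_n(U)$ over $\A_n(X)$ and the compatibility $\A_n(X) \otimes_{\A_{n+1}(X)} \Loc_{n+1}(M_{n+1})(U) \cong \Loc_n(M_n)(U)$, which in turn should reduce to the analogous compatibility of the system $\{M_n\}$ combined with a base-change identity relating the Banach sheaves $\A_n(U)$ at successive levels.
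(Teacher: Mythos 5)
Your high-level architecture --- reduce to the Banach levels, prove Tate acyclicity and flatness there, then pass to the inverse limit using the Fr\'echet--Stein machinery --- does match what the paper does. The passage to the limit via vanishing derived inverse limits on the \v{C}ech complexes is also essentially the paper's argument, which invokes \cite[Theorem B]{ST} and an induction on the augmented \v{C}ech complex viewed as a complex of towers.

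However, there is a genuine gap at the centre of your Step one. You assert flatness of $\A_n(U)$ over $\A_n(X)$ for \emph{every} affinoid subdomain $U$ of $X$, and claim it "should follow from a standard filtered/graded argument, reducing the flatness question to Tate's classical flatness of $\O(U)$ over $\O(X)$." The paper explicitly acknowledges in its introduction that the authors were unable to prove this: "We would have liked to prove that the restriction maps $\hK{\sU(\L)}(Y)\to \hK{\sU(\L)}(Z)$ are flat whenever $Z\subset Y$ are $\L$-admissible affinoid subdomains of $X$. Because we were unable to do this, we instead define a weaker $\L$-accessible $G$-topology\ldots". The obstruction is that after $\pi$-adic completion and inverting $\pi$, the PBW filtration does not interact cleanly with the Banach topology, so there is no obvious positive filtration with symmetric-algebra graded from which flatness can be read off. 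The paper gets around this by (i) introducing the restricted class of $\L$-accessible rational and affinoid subdomains (\S\ref{Laccess}--\S\ref{LaccessAff}), for which flatness \emph{is} proved, but only through a sequence of one-variable Tate extension arguments (\S\ref{OneTate}--\S\ref{WeiLau2}) culminating in a delicate torsion-freeness statement (Theorem \ref{WeiLau2}: $M\langle t\rangle$ is $(t-f)$-torsion-free for finitely generated $M$, which uses Noetherianity and the stability $f^i\N\subseteq\N$ of a generating lattice, not a graded-ring computation); (ii) proving (Proposition \ref{RescLie}) that any fixed affinoid subdomain or finite covering becomes $\pi^n\L$-accessible for all sufficiently large $n$; and (iii) rescaling the Lie lattice in the proof of Theorem \ref{Loc} so that the given cover is $\pi^n\L$-accessible for all $n$ before taking the inverse limit. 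That rescaling trick is what allows the Fr\'echet--Stein limit to see enough flatness even though the Banach level alone does not, and its absence is what makes your Step one unrealisable as stated.
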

We can extend $\w\D$ to a sheaf defined on general smooth rigid analytic varieties. Then we prove the following analogue of Kiehl's Theorem \cite{Kiehl} for coherent sheaves of $\O$-modules on rigid analytic spaces.

\begin{MainThm}  \label{MainKiehl} Suppose that $X$ is a smooth analytic variety over $K$. Let $\M$ be a sheaf of $\w{\D}$-modules on $X$. Then the following are equivalent.
\be
\item  There is an admissible affinoid covering $\{X_i\}_{i\in I}$ of $X$ such that $\T(X_i)$ is a free $\O(X_i)$-module, $\M(X_i)$ is a co-admissible $\w{\D}(X_i)$-module and the restriction of $\M$ to the affinoid subdomains of $X_i$ is isomorphic to $\Loc(\M(X_i))$ for each $i\in I$. 
\item  For every affinoid subdomain $U$ of $X$ such that $\T(U)$ is a free $\O(U)$-module, $\M(U)$ is a co-admissible $\w{\D}(U)$-module and $\M(V) \cong \w\D(V) \underset{\w\D(U)}{\w\otimes} \M(U)$ for every affinoid subdomain $V$ of $U$.
\ee
\end{MainThm}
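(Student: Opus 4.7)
The plan is: the direction (b) $\Rightarrow$ (a) is immediate, since smoothness of $X$ lets us choose any admissible affinoid covering on which $\T$ is free, and condition (b) then yields (a) directly. The substance is (a) $\Rightarrow$ (b). I would fix an affinoid subdomain $U$ of $X$ with $\T(U)$ free and refine $\{U \cap X_i\}_i$ to a finite admissible affinoid covering $\U = \{V_j\}_{j=1}^m$ of $U$ in which each $V_j$ is an affinoid subdomain of some $X_{i(j)}$. Then $\T(V_j)$ is free as the restriction of the free $\T(X_{i(j)})$, and by Theorem \ref{MainLoc} the section $\M(V_j) \cong \w\D(V_j)\w\otimes_{\w\D(X_{i(j)})}\M(X_{i(j)})$ is coadmissible over $\w\D(V_j)$; by associativity of $\w\otimes$, the restriction $\M|_{V_j}$ is identified with the localisation of $\M(V_j)$, and the same holds on the double intersections $V_j \cap V_k$.

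The main work is to produce a coadmissible $\w\D(U)$-module $N$ equipped with isomorphisms $\w\D(V_j)\w\otimes_{\w\D(U)} N \cong \M(V_j)$ compatible on intersections. Using the Fr\'echet--Stein presentation $\w\D(U) = \varprojlim_n A_n$ from Theorem \ref{TateAcy}, at each Banach level $n$ the coadmissible modules $\M(V_j)$ cut out finitely generated modules over the corresponding level-$n$ Banach completions of $\w\D(V_j)$. Combining the level-$n$ version of Tate acyclicity underlying Theorem \ref{TateAcy} with the classical Noetherian Kiehl-type gluing theorem, these level-$n$ pieces glue to a finitely generated $A_n$-module $M_n$; the system $(M_n)_{n\geq 0}$ is coadmissible, and $N := \varprojlim_n M_n$ is the required module.

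Finally, by construction $\w\D(V_j)\w\otimes_{\w\D(U)}N \cong \M(V_j)$ compatibly on overlaps, so the sheaves $\Loc(N)$ and $\M|_U$ agree on $\U$ and therefore coincide on affinoid subdomains of $U$. Taking sections over $U$ and invoking the \v{C}ech acyclicity of $\Loc(N)$ from Theorem \ref{MainLoc} yields $\M(U) \cong N$, so $\M(U)$ is coadmissible; the isomorphism $\M(V) \cong \w\D(V)\w\otimes_{\w\D(U)}\M(U)$ for every affinoid subdomain $V \subset U$ then follows from $\M|_U \cong \Loc(\M(U))$. The principal obstacle I anticipate is the level-$n$ gluing step: the Fr\'echet--Stein filtrations on the various $\w\D(V_j)$ are defined via local choices of integral lattices in $\T(V_j)$ that need not agree, and reconciling them likely requires first refining $\U$ to affinoids small enough that a single lattice on $U$ restricts to compatible lattices across the entire cover.
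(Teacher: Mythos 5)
Your high-level architecture matches the paper's: pass to a single smooth Lie lattice $\L$ on $U$ and use $\pi$-power rescaling (Lemma \ref{RescLie}, which is exactly the resolution you anticipate for the lattice-compatibility obstacle) so that the covering of $U$ is $\pi^n\L$-accessible for all $n$; glue at each Banach level $n$ to get a finitely generated $\hK{\sU(\pi^n\L)}(U)$-module $M_n$; check $(M_n)_n$ forms a coherent sheaf; and show $\Loc(\invlim M_n) \cong \M|_{U_w}$. That is the skeleton of the paper's Theorem \ref{Kiehl}.

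The genuine gap is the level-$n$ gluing step, which you attribute to ``the classical Noetherian Kiehl-type gluing theorem.'' No such classical theorem applies here: the rings $\hK{\sU(\pi^n\L)}(V_j)$ are non-commutative Banach algebras, and Kiehl's original result is a statement about coherent $\O$-modules. The paper has to build a non-commutative analogue essentially from scratch in Section \ref{CohMod}. Concretely, this requires: a non-commutative version of Cartan's matrix factorisation lemma (Lemma \ref{CartanLemma}), which lets one write matrices close to the identity over $\cS(X(f) \cap X(1/f))$ as a product $s_1(Q_1)^{-1}s_2(Q_2)^{-1}$; a surjectivity theorem for sections over $X(f)$, $X(1/f)$ (Theorem \ref{SpecialLaurent}), whose proof depends on the density of the image of $\cS(X) \to \cS(X(f))$ and careful manipulation of generating families; and then the inductive reduction to Laurent coverings (Theorem \ref{LaccKiehl}), for which the covering $\U$ is first refined to a Laurent covering using \cite[Lemmas 8.2.2/2--4]{BGR}. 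This is the bulk of the technical content of the proof and cannot be deferred to a citation.

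Two further points you pass over quickly. First, to see that $N = \invlim M_n$ is co-admissible you must check that $(M_n)$ is a coherent sheaf of $\cS_n(X)$-modules, i.e.\ $\cS_n(X) \otimes_{\cS_{n+1}(X)} M_{n+1} \cong M_n$; the paper verifies this by observing that both localise to the same sheaf over the covering. Second, ``the sheaves $\Loc(N)$ and $\M|_U$ agree on $\U$ and therefore coincide'' requires more than an isomorphism of sections over each $V_j$ --- one needs a compatible system of isomorphisms on overlaps, which the paper builds explicitly out of the natural maps $\theta_n : M_\infty \to \M_n(X)$ and their restrictions, tracking that $\lim(\theta_n|_Y)|_Z = \lim\theta_n|_Z$.
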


We call a sheaf of $\w{\D}$-modules that satisfies the equivalent conditions of Theorem \ref{MainKiehl} \emph{co-admissible}. Theorems \ref{MainLoc} and \ref{MainKiehl} immediately give the following

\begin{cor} Suppose $X$ is a smooth $K$-affinoid variety such that $\T(X)$ is a free $\O(X)$-module. Then $\Loc$ is an equivalence of abelian categories
\[
\left\{ 
				\begin{array}{c} 
					co\hspace{-0.1cm}-\hspace{-0.1cm}admissible \\ 
					\w\D(X)-\hspace{-0.1cm}modules
				\end{array}
\right\} \cong \left\{
				\begin{array}{c}
				 co\hspace{-0.1cm}-\hspace{-0.1cm}admissible \hspace{0.1cm} sheaves \hspace{0.1cm} of\\ 
				  \w{\D} \hspace{-0.1cm}-\hspace{-0.1cm}modules \hspace{0.1cm} on \hspace{0.1cm} X
				\end{array}
\right\}.
\]
\end{cor}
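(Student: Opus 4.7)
The plan is to derive the corollary directly from Theorems \ref{MainLoc} and \ref{MainKiehl}, checking essential surjectivity by hand. First I would observe that both sides are abelian: co-admissible $\w\D(X)$-modules form an abelian category by Schneider--Teitelbaum, and the category of co-admissible $\w\D$-sheaves in the sense of Theorem \ref{MainKiehl} is abelian because condition (b) there is preserved by kernels and cokernels (co-admissibility is preserved under passage to sub- and quotient modules, and applying $\w\D(V)\w\otimes_{\w\D(U)} -$ to a short exact sequence of co-admissible modules is exact by the base change properties of $\w\otimes$ developed in the paper). Theorem \ref{MainLoc} immediately gives that $\Loc$ is fully faithful and exact on the source side.

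The next step is to show that $\Loc$ lands inside the category of co-admissible $\w\D$-sheaves. Given a co-admissible $\w\D(X)$-module $M$, condition (a) of Theorem \ref{MainKiehl} is automatic using the trivial admissible covering $\{X\}$ of $X$, provided we know the restriction of $\Loc(M)$ to affinoid subdomains of $X$ is again of the form $\Loc$. By Theorem \ref{MainKiehl} the key point is then to verify the transitivity statement
\[
\w\D(V)\underset{\w\D(U)}{\w\otimes}\left(\w\D(U)\underset{\w\D(X)}{\w\otimes} M\right) \;\cong\; \w\D(V)\underset{\w\D(X)}{\w\otimes} M
\]
for every pair of affinoid subdomains $V\subseteq U\subseteq X$, together with the fact that $\w\D(U)\w\otimes_{\w\D(X)} M$ is co-admissible over $\w\D(U)$. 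The first is a case of the associativity of the completed tensor product for co-admissible modules over Fr\'echet--Stein algebras (developed in Section \ref{Coad}), while the second is the standard preservation of co-admissibility under $\w\otimes$. I expect the associativity of $\w\otimes$ to be the one genuinely technical point, since it requires a compatible choice of Fr\'echet--Stein presentations for $\w\D(V)$, $\w\D(U)$ and $\w\D(X)$ and a Mittag-Leffler argument on the levels; but this machinery is precisely what has been set up earlier in the paper.

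For essential surjectivity, let $\M$ be a co-admissible $\w\D$-sheaf on $X$. Setting $M:=\M(X)$, condition (b) of Theorem \ref{MainKiehl} applied to $U=X$ says that $M$ is a co-admissible $\w\D(X)$-module and that
\[
\M(V) \;\cong\; \w\D(V)\underset{\w\D(X)}{\w\otimes} M \;=\; \Loc(M)(V)
\]
for every affinoid subdomain $V\subseteq X$. These isomorphisms are natural in $V$, so they glue to an isomorphism of sheaves $\M\cong\Loc(M)$ on the site of affinoid subdomains of $X$. Combined with the fully faithful exact embedding from Theorem \ref{MainLoc}, this gives the desired equivalence of abelian categories, and it is clear by construction that a quasi-inverse is $\M\mapsto \M(X)$.
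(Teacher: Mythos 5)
Your argument is correct and essentially what the paper means by ``Theorems \ref{MainLoc} and \ref{MainKiehl} immediately give'' the Corollary: full faithfulness is Theorem \ref{MainLoc}, co-admissibility of $\Loc(M)$ follows from condition (a) of Theorem \ref{MainKiehl} with the trivial covering $\{X\}$, and essential surjectivity comes from condition (b) with $U = X$, the local isomorphisms gluing by naturality. The only quibble is that your transitivity detour is unnecessary for this step: with $X_i = X$, condition (a) reduces to the tautology $\Loc(M)|_{X_w} \cong \Loc\bigl(\Loc(M)(X)\bigr) = \Loc(M)$, and the associativity of $\w\otimes$ you flag as the technical point is already used inside the proofs of Theorems \ref{MainLoc} and \ref{MainKiehl} themselves (Corollary \ref{WotimesAssoc} in Section \ref{Coad}), so you may quote rather than re-verify it here.
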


In fact we prove each of these statements in greater generality with $\T$ replaced by any Lie algebroid on any reduced rigid analytic space over $K$, and for right modules as well as left modules. 

\subsection{Future and related work}\label{Future}
We plan to explain in the future how parts of the vast classical theory of $\D$-modules generalise to our setting with the results contained in this work being merely the leading edge of what is to come. In particular in \cite{DhatTwo} we will prove the following analogue of Kashiwara's equivalence.
\begin{MainThm}\label{Kashiwara} Let $Y$ be a smooth closed analytic subvariety of a smooth rigid analytic variety $X$. There is a natural equivalence of categories
\[
\left\{ 
				\begin{array}{c} 
					co\hspace{-0.1cm}-\hspace{-0.1cm}admissible\hspace{0.1cm} sheaves \hspace{0.1cm} of\\
					\hspace{0.1cm}\w\D-\hspace{-0.1cm}modules\hspace{0.1cm}on\hspace{0.1cm} Y
				\end{array}
\right\} \cong \left\{
				\begin{array}{c}
				 co\hspace{-0.1cm}-\hspace{-0.1cm}admissible \hspace{0.1cm} sheaves \hspace{0.1cm} of \\ 
				 \hspace{0.1cm} \w{\D} \hspace{-0.1cm}-\hspace{-0.1cm}modules\hspace{0.1cm} on \hspace{0.1cm} X \hspace{0.1cm} supported \hspace{0.1cm} on \hspace{0.1cm} Y
				\end{array}
\right\}.
\]
\end{MainThm}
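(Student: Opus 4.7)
The plan is to adapt the classical proof of Kashiwara's equivalence to the rigid analytic, co-admissible setting, using Theorems \ref{TateAcy}, \ref{MainLoc} and \ref{MainKiehl} to glue local equivalences. First I would reduce to the local situation: because $Y \hookrightarrow X$ is a smooth closed embedding of codimension $d$, $X$ admits an admissible affinoid cover by opens $U$ such that $\T(U)$ is free over $\O(U)$ and the ideal $I$ of $Y\cap U$ in $\O(U)$ is generated by a regular sequence $f_1,\dots,f_d$ whose differentials extend to an $\O(U)$-basis of $\Omega^1(U)$. We may then choose $\partial_1,\dots,\partial_d \in \T(U)$ with $\partial_i(f_j)=\delta_{ij}$, so that the ``tangential'' vector fields form a complementary sub-Lie-algebroid whose enveloping algebra surjects onto $\w{\D}(Y\cap U)$. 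By Theorem \ref{MainKiehl}, it then suffices to construct, naturally in $U$, an equivalence between co-admissible $\w{\D}(Y\cap U)$-modules and co-admissible $\w{\D}(U)$-modules whose associated sheaf is set-theoretically supported on $Y\cap U$.

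In this local picture I would define the pushforward $i_+$ by means of the completed analog of the classical transfer bimodule $\D_{Y\to X}$, setting $i_+ M := \w{\D}_{Y\to X}(U) \w\otimes_{\w\D(Y\cap U)} M$; concretely $i_+ M$ is presentable as a completed space of series $\sum_\alpha \partial^\alpha m_\alpha$ with $m_\alpha \in M$. In the opposite direction, for a co-admissible $\w\D(U)$-module $N$ with set-theoretic support in $Y\cap U$, put $i^! N := \{n\in N : f_1 n = \dots = f_d n = 0\}$, which inherits a $\w\D(Y\cap U)$-module structure from the tangential vector fields. The local equivalence then rests on the Kashiwara decomposition
\[ N \;=\; \bigoplus_{\alpha \in \mathbb{N}^d} \partial^\alpha \cdot (i^! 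N), \]
which in the classical case follows by diagonalising the commuting, locally finite operators $f_i\partial_i$ on the $I$-torsion module $N$.

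The main obstacle will be to verify this decomposition at the completed, co-admissible level and to control co-admissibility under both functors. Since $\w\D(U)$ contains infinite-order operators that might a priori mix $\partial^\alpha$-levels, my plan is to filter $N$ by $N_k := \{n \in N : I^{k+1} n = 0\}$, use the support hypothesis together with the Fr\'echet--Stein structure to establish $N = \bigcup_k N_k$, and then apply the commutation relation $[\partial_i^{k+1},f_i] = (k+1)\partial_i^k$ to split each $N_k$ inductively as $\bigoplus_{|\alpha|\le k}\partial^\alpha \cdot (i^! N)$. Co-admissibility of $i^! N$ follows from its description as the kernel of a continuous homomorphism between co-admissible modules, descended through the Banach completions in the Fr\'echet--Stein presentation of $\w\D(U)$; co-admissibility of $i_+ M$ follows by presenting it as a topologically free module over $M$ with basis indexed by the $\partial^\alpha$, together with a norm estimate matching the Fr\'echet--Stein structures on $\w\D(U)$ and $\w\D(Y\cap U)$. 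Finally, naturality of $i_+$ and $i^!$ under affinoid restriction, combined with Theorem \ref{MainKiehl}, glues the local equivalences into the desired global equivalence of categories.
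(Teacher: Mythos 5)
The paper does not contain a proof of Theorem D: it is explicitly stated in Section \ref{Future} that the result will be proved in the forthcoming companion paper \cite{DhatTwo}, so there is no argument in this paper against which to compare your proposal. Evaluated on its own terms, your plan correctly identifies the classical template (transfer bimodule $i_+$, kernel functor $i^!$, the decomposition of an $I$-torsion module via the locally finite operators $f_i\partial_i$, and gluing via Theorem \ref{MainKiehl}) and also correctly locates the main difficulty, but it leaves the hardest step essentially open rather than resolved, and in a way that hides a genuine tension.

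The tension is this. You define $i_+M$ as a \emph{completed} space of series $\sum_\alpha \partial^\alpha m_\alpha$, which is forced on you because $\w\D(U)\w\otimes_{\w\D(Y\cap U)} M$ is a co-admissible completion. But you then assert the Kashiwara decomposition $N = \bigoplus_{\alpha}\partial^\alpha\cdot(i^!N)$ as an \emph{algebraic} direct sum, justified by the filtration $N=\bigcup_k N_k$ where $N_k$ is the $I^{k+1}$-torsion. If both held, the unit $i_+(i^!N)\to N$ could not be an isomorphism, since one side is complete and the other is not, unless the index set is finite. In the classical algebraic or analytic setting this issue does not arise because the modules are coherent over a filtered ring with Noetherian graded, and the finiteness of generators forces the direct sum to stabilise locally. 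Here the structure sheaf $\w\D$ is not coherent and $N$ is only co-admissible, so you need either to prove that $N$ really is the completed direct sum of the $\partial^\alpha\cdot(i^!N)$ with respect to the Fr\'echet topology dictated by the presentation $N=\varprojlim N_n$ over the Banach completions $\hnK{\sU(\pi^n\L)}$, or to prove that the algebraic union $\bigcup_k N_k$ exhausts $N$, which amounts to a nontrivial Nullstellensatz-type statement at the Banach level that is not supplied by anything in the present paper. Relatedly, you use the phrase \emph{set-theoretic support}, but for a co-admissible sheaf of $\w\D$-modules the support condition needs to be formulated with some care (e.g.\ as a vanishing condition after restriction to affinoid subdomains missing $Y$, checked against the gluing criterion of Theorem \ref{MainKiehl}); it is not obvious, and you do not argue, that such a condition implies $I$-power-torsion elementwise. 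Until these two points --- the precise definition of support and the completed form of the decomposition, proved at the level of the Banach slices $\hnK{\sU(\pi^n\L)}\otimes_{\w\D(U)}N$ --- are filled in, the proposal is a reasonable roadmap but not a proof.
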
 

In future work \cite{DhatThree}, we will prove an analogue of Beilinson and Bernstein's localisation theorem of \cite{BB} for twisted $\w{\D}$-modules on $\B^{\an}$. For the sake of brevity, we will only state the version of this result for un-twisted $\w{\D}$-modules here.
\begin{MainThm}\label{BB} Let $\mathbf{G}$ be a connected split reductive group over $K$ with Lie algebra $\mathfrak{g}$, let $\B^{\an}$ be the rigid analytic flag variety and let $Z(\mathfrak{g})$ be the centre of $U(\fr{g})$. Then there is an equivalence of abelian categories 
\[
\left\{ 
				\begin{array}{c} 
					co\hspace{-0.1cm}-\hspace{-0.1cm}admissible \\ 
					\w{U(\fr{g})} \otimes_{Z(\fr{g})} K \hspace{-0.1cm}-\hspace{-0.1cm}modules
				\end{array}
\right\} \cong \left\{
				\begin{array}{c}
				 co\hspace{-0.1cm}-\hspace{-0.1cm}admissible \hspace{0.1cm} sheaves \hspace{0.1cm}of\\ 
				  \w\D\hspace{-0.1cm}-\hspace{-0.1cm}modules \hspace{0.1cm} on \hspace{0.1cm} \B^{\an} 
				\end{array}
\right\}.
\]
 \end{MainThm}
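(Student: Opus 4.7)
The plan is to mimic the classical Beilinson--Bernstein strategy \cite{BB} in the rigid analytic Fr\'echet--Stein setting, building on our earlier Banach-level work on a formal model in \cite{AW13}. The first step is to construct the key ring homomorphism: the $\mathbf{G}$-action on $\B^{\an}$ differentiates to a map of Lie algebroids $\fr{g}\otimes_K \O_{\B^{\an}}\to \T_{\B^{\an}}$, which extends by the universal property of $\sU(-)$ and by continuity to a continuous algebra homomorphism $\w{U(\fr{g})} \to \w\D(\B^{\an})$. The image of $Z(\fr{g})^+$ is zero because it acts trivially on functions, so this factors through
\[ \varphi:\ \w{U(\fr{g})}\otimes_{Z(\fr{g})} K \longrightarrow \w\D(\B^{\an}). \]
One then proves that $\varphi$ is an isomorphism by comparing Fr\'echet--Stein presentations level by level: both sides are inverse limits over $n$ of Banach algebras, and at each level the corresponding statement is essentially \cite[Main Theorem]{AW13}; compatibility of the transition maps lets one pass to the limit.

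Given $\varphi$, define the localisation functor $\Loc(M)(U) := \w\D(U)\underset{\w\D(\B^{\an})}{\w\otimes} M$ via the completed tensor product of Section~\ref{Coad}, and take $\Gamma := \Gamma(\B^{\an},-)$ in the opposite direction. Theorems~\ref{MainLoc} and~\ref{MainKiehl}, applied on a covering of $\B^{\an}$ by $\mathbf{G}(K)$-translates of the big Bruhat cell (each of which is $\T$-free), ensure that $\Loc(M)$ is a co-admissible sheaf of $\w\D$-modules and that $\Loc$ is exact and fully faithful, with unit isomorphism $M \to \Gamma\,\Loc(M)$. A formal adjunction argument then reduces the theorem to two properties of $\Gamma$: (a) $\Gamma$ is exact on co-admissible $\w\D$-modules (``$\w\D$-affinity'' of $\B^{\an}$); and (b) the counit $\Loc\,\Gamma\,\M\to \M$ is an isomorphism for every co-admissible sheaf $\M$.

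Property (a) would be established by combining Theorem~\ref{TateAcy}, the Bruhat-cell covering, and a Leray spectral sequence, with higher cohomology vanishing verified level by level from \cite{AW13} and propagated to the Fr\'echet--Stein limit using exactness of inverse limits along surjective systems of Banach spaces. The main obstacle, and the real content of \cite{DhatThree}, is property (b): one must show that no non-zero co-admissible $\w\D$-module on $\B^{\an}$ has vanishing global sections. Classically this uses regularity of the infinitesimal character together with translation functors and Kempf-style vanishing, all of which have to be developed in the co-admissible Fr\'echet--Stein framework. Controlling the interaction between the centre of $\w{U(\fr{g})}$, the level-$n$ Banach localisations of \cite{AW13}, and the passage to the limit, while preserving co-admissibility at each stage, is where the heaviest technical investment is anticipated.
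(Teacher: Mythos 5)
The paper does not prove Theorem~\ref{BB}: Section~\ref{Future} explicitly states that this result will be proved in the subsequent paper \cite{DhatThree}, and it is only \emph{stated} here (in the un-twisted case) to indicate where the programme is headed. So there is no proof in this paper for your proposal to be compared against. Your sketch therefore cannot be checked for fidelity to "the paper's proof"; what can be assessed is whether it is a plausible reconstruction of the announced strategy, and here you have the broad outline right — build $\varphi: \w{U(\fr g)}\otimes_{Z(\fr g)} K \to \w\D(\B^{\an})$, prove it is an isomorphism by comparing Fr\'echet--Stein presentations level-by-level against \cite{AW13}, set up $\Loc \dashv \Gamma$, and reduce to $\w\D$-affinity (exactness of $\Gamma$) plus a non-vanishing statement — and you correctly identify property~(b) as the genuinely new difficulty.

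Two specific points in the sketch warrant caution. First, the "$\mathbf G(K)$-translates of the big Bruhat cell" are rigid-analytifications of affine spaces, hence quasi-Stein rather than affinoid; Theorems~\ref{TateAcy}, \ref{MainLoc}, and \ref{MainKiehl} only apply over affinoid subdomains (or, via Theorem~\ref{CoadmSheaves}, over affinoid subdomains with free tangent sheaf). You would need to refine to an admissible affinoid covering subordinate to the big-cell covering on which $\T$ trivialises, and then run the Leray/\v{C}ech comparison of Proposition~\ref{TwoCors}; this is a fixable but non-trivial omission, especially since $\B^{\an}$ is proper and the vanishing of higher cohomology is exactly the serious input. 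Second, the claim that $\varphi$ is an isomorphism by passing \cite[Main Theorem]{AW13} to the inverse limit requires checking that the level-$n$ transition maps on the two sides match up under the comparison isomorphisms — \cite{AW13} works with a fixed formal model and a fixed level $n$, so the compatibility as $n$ varies (and as the formal model is blown up) is part of the content of \cite{DhatThree}, not a formality. Neither of these undermines the overall plan, but they are precisely the places where "the heaviest technical investment" you flag at the end actually sits, not just in property~(b).
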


We hope, perhaps even expect, that this work will have wider applications. Certainly it seems likely that the study of $p$-adic differential equations will be synergetic with our work. Also, much as the theory of algebraic $\D$-modules was influential for the field of non-commutative algebraic geometry, this work might point towards a non-commutative rigid analytic geometry (see also \cite{Soi1}).

It is appropriate to mention here the body of work by Berthelot and others begun in \cite{Berth} that considers sheaves of arithmetic differential operators on smooth formal schemes $\mathcal{X}$ over $W(k)$. There are points of connection between our work and Berthelot's but the differences are substantial.  We also note that Patel, Schmidt and Strauch have begun a programme \cite{PSS} of localising locally analytic representations of non-compact semi-simple $p$-adic Lie groups onto Bruhat-Tits buildings. Whilst their motivation is similar to ours there are again significant differences between our approach and theirs. 

\subsection{Intermediate constructions}

In order to construct our sheaves $\w{\sU(\sL)}$ we first define some intermediate objects that may well prove to be of interest in their own right. 

Let $\R$ denote the ring of integers of our ground field $K$, and fix a non-zero non-unit $\pi \in \R$. Let $X$ be a reduced \emph{affinoid} variety over $K$. Given an affine formal model $\A$ in $\O(X)$ and an $(\R,\A)$-Lie algebra $\L$ we define a $G$-topology $X_w(\L)$ on $X$ consisting of those affinoid subdomains $Y$ of $X$ such that $\O(Y)$ has an affine formal model $\B$ with the property that the unique extension of the natural action of $\L$ on $\O(X)$ to an action on $\O(Y)$ preserves $\B$. We call these affinoid subdomains \emph{$\L$-admissible}.

For example if $X=\Sp K\langle x\rangle$, $\A=\R\langle x\rangle$, and $\L=\A\partial_x$ then the closed disc $Y \subset X$ of radius $|p|^{1/p}$ centred at zero is $\L$-admissible because $\R\langle x,x^p/p\rangle$ is an $\L$-stable affine formal model in $\O(Y)$.  The smaller closed disc of radius $|p|$ is not $\L$-admissible, however it \emph{is} $p\L$-admissible.

A key result due to Rinehart \cite[Theorem 3.1]{Rinehart} that underlies much of our work can be viewed as a generalisation of the Poincar\'e-Birkhoff-Witt theorem to the setting of $(\R,\A)$-algebras. To apply this theorem directly to an enveloping algebra $U(\L)$, the $(\R,\A)$-algebra $\L$ is required to be \emph{smooth}. When this is the case, we construct a sheaf of Noetherian Banach algebras $\hK{\sU(\L)}$ on $X_w(\L)$. 

We would have liked to prove that the restriction maps $\hK{\sU(\L)}(Y)\to \hK{\sU(\L)}(Z)$ are flat whenever $Z \subset Y$ are $\L$-admissible affinoid subdomains of $X$. Because we were unable to do this, we instead define a weaker \emph{$\L$-accessible} $G$-topology $X_{ac}(\L)$ on $X$, and prove that if $Z\subset Y$ are $\L$-accessible then $\hK{\sU(\L)}(Z)$ is a flat $\hK{\sU(\L)}(Y)$-module on both sides. Since every affinoid subdomain of $X$ is $\pi^n\L$-accessible for sufficiently large $n$, this turns out to be sufficient for our purposes.

Now, the $X_w(\pi^n \L)$ form an increasing chain of $G$-topologies on $X$ and every affinoid subdomain $Y$ of $X$ lives is $X_w(\pi^n\L)$ for sufficiently large $n$, so the formula $\invlim \hK{\sU(\pi^n\L)}(Y)$ defines a presheaf of $K$-algebras $\w{\sU(K\otimes_\R \L)}$ on the affinoid subdomains of $X$. We show that this presheaf is actually a sheaf, and that its sections over affinoid subdomains $Y$ are Fr\'echet--Stein in the sense of \cite{ST} with respect to the family $(\hK{\sU(\pi^n\L)}(Y))_{n\gg 0}$. We then use a version of the Comparison Lemma to extend this construction to a sheaf on every reduced rigid analytic space $X$.

\subsection{Structure of the paper}

The main body of the paper begins in Section \ref{TateAcyc} where we define and study the $G$-topology $X_w(\L)$ associated to a $K$-affinoid variety $X$ with an affine formal model $\A$ and a smooth $(\R,\A)$-Lie algebra $\L$ as explained above. The main result of that section is that the presheaf $\hK{\sU(\L)}$ on $X_w(\L)$ defined therein is a sheaf with no higher cohomology. In Section \ref{Flatness} we prove that various continuous $K$-algebra homomorphisms that arise as restriction maps in the sheaves $\hsULK$ on $X_{ac}(\L)$ are flat. In Section \ref{CohMod} we prepare the way for Theorems \ref{MainLoc} and \ref{MainKiehl} by proving preliminary versions for the sheaves $\hsULK$ on $X_{ac}(\L)$. 

In Section \ref{FrechEnv} we begin our study of Fr\'echet--Stein algebras. In particular we give a functorial construction that associates a Fr\'echet--Stein algebra $\w{U(L)}$ to each coherent $(K,A)$-Lie algebra $L$ with $A$ affinoid. We do this via a more general construction that associates a Fr\'echet--Stein algebra to every deformable $\R$-algebra with commutative Noetherian associated graded ring. Then in Section \ref{Coad} we define a base change functor between categories of co-admissible modules over Fr\'echet--Stein algebras $U$ and $V$ that possess a suitable $U-V$-bimodule. 

In Sections \ref{CoadsUL} and \ref{Sheaves} we put all this together in order to prove Theorems \ref{TateAcy}--\ref{MainKiehl}.  More precisely, Theorems \ref{TateAcy} and \ref{MainLoc} are special cases of Theorems \ref{Envaff} and \ref{Loc}, whereas Theorem \ref{MainKiehl} and its Corollary are special cases of Theorem \ref{CoadmSheaves} and Theorem \ref{TwoCors}, respectively.

\subsection{Acknowledgements} The authors are very grateful to Ian Grojnowski for introducing them to rigid analytic geometry and to localisation methods in representation theory. They would also like to thank Ahmed Abbes, Oren Ben-Bassat, Joseph Bernstein, Kenny Brown, Simon Goodwin, Ian Grojnowski, Michael Harris, Florian Herzig, Christian Johannson,  Minhyong Kim, Kobi Kremnitzer, Shahn Majid, Vytas Pa\v{s}k\={u}nas, Tobias Schmidt, Peter Schneider, Wolfgang Soergel, Matthias Strauch, Catharina Stroppel, Go Yamashita and James Zhang for their interest in this work. The first author was supported by EPSRC grant EP/L005190/1.

\subsection{Conventions} Throughout the remainder of this paper $K$ will denote a complete discrete valuation field with valuation ring $\R$ and residue field $k$. We fix a non-zero non-unit element $\pi$ in $\R$. If $\M$ is an $\R$-module, then $\h{\M}$ denotes the $\pi$-adic completion of $\M$. The term "module" will mean \emph{left} module, unless explicitly stated otherwise.

\section{Generalities}
\subsection{Enveloping algebras of Lie--Rinehart algebras}
Let $R$ be a commutative base ring, and let $A$ be a commutative $R$-algebra. A \emph{Lie--Rinehart algebra}, or more precisely, an ($R,A$)-\emph{Lie algebra} is a pair $(L,\rho)$ where
\begin{itemize}	
\item $L$ is an $R$-Lie algebra and an $A$-module, and
\item $\rho \colon L \to \Der_R(A)$ is an $A$-linear Lie algebra homomorphism
\end{itemize}
called the \emph{anchor map}, such that $[x,ay]=a[x,y]+\rho(x)(a)y$ for all $x,y \in L$ and $a \in A$; see \cite{Rinehart}. We will frequently abuse notation and simply denote $(L,\rho)$ by $L$ whenever the anchor map $\rho$ is understood.

For every $(R,A)$-Lie algebra $L$ there is an associative $R$-algebra $U(L)$ called the \emph{enveloping algebra} of $L$, which comes equipped with canonical homomorphisms 
\[i_A \colon A \to U(L)\qmb{and}i_L : L \to U(L)\]
of $R$-algebras and $R$-Lie algebras respectively, satisfying
\[i_L(ax) = i_A(a)i_L(x) \qmb{and} [i_L(x), i_A(a)] = i_A(\rho(x)(a)) \qmb{for all} a \in A, x \in L.\]
The enveloping algebra $U(L)$ enjoys the following universal property: whenever $j_A \colon A \to S$ is an $R$-algebra homomorphism and $j_L : L \to S$ is an $R$-Lie algebra homomorphism such that
\[j_L(ax) = j_A(a)j_L(x) \qmb{and} [j_L(x), j_A(a)] = j_A(\rho(x)(a)) \qmb{for all} a \in A, x \in L,\]
there exists a unique $R$-algebra homomorphism $\varphi \colon U(L) \to S$ such that
\[ \varphi \circ i_A = j_A \qmb{and} \varphi \circ i_L = j_L.\]
It is easy to show \cite[\S 2]{Rinehart} that $i_A \colon A \to U(L)$ is always injective, and we will always identify $A$ with its image in $U(L)$ via $i_A$.

If $(L,\rho), (L',\rho')$ are two $(R,A)$-Lie algebras then a \emph{morphism of $(R,A)$-Lie algebras} is an $A$-linear map $f:L \to L'$ that is also a morphism of $R$-Lie algebras and satisfies $\rho'f=\rho$. 

A morphism of $(R,A)$-Lie algebras $f\colon L\to L'$ induces an $R$-algebra homomorphism $U(f)\colon U(L)\to U(L')$ via $U(f)(a)=a$ for $a\in A$ and $U(f)(i_{L}x)=i_{L'}(f(x))$ for $x\in L$. So in this way, $U$ defines a functor from $(R,A)$-Lie algebras to associative $R$-algebras.

\begin{defn} We say that an $(R,A)$-Lie algebra $L$ is \emph{coherent} if it is coherent as an $A$-module. We say that $L$ is \emph{smooth} if in addition it is projective as a $A$-module.
\end{defn}
\subsection{Base extensions of Lie--Rinehart algebras}\label{BaseExt} Let $A$ and $B$ be commutative $R$-algebras and let $\varphi \colon A \to B$ be an $R$-algebra homomorphism. If $L$ is an $(R,A)$-Lie algebra, the $B$-module $B \otimes_AL$ will not be an $(R,B)$-Lie algebra, in general.  However this is true in many interesting situations.

\begin{lem} Suppose that the anchor map $\rho \colon L \to \Der_R(A)$ lifts to an $A$-linear Lie algebra homomorphism $\sigma \colon L \to \Der_R(B)$ in the sense that
\[ \sigma(x) \circ \varphi = \varphi \circ \rho(x) \qmb{for all} x \in L.\]
Then $(B \otimes_AL, 1\otimes \sigma)$ with the natural $B$-linear structure is an $(R,B)$-Lie algebra in a unique way.
\end{lem}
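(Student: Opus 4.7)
The plan is to define the bracket on $B \otimes_A L$ by the formula
\[
[b \otimes x, b' \otimes y] := bb' \otimes [x,y] + b\,\sigma(x)(b') \otimes y - b'\,\sigma(y)(b) \otimes x
\]
and to show this is the unique $(R,B)$-Lie bracket with anchor $1 \otimes \sigma$ such that $L \to B \otimes_A L$, $x \mapsto 1 \otimes x$, is a morphism of $(R,A)$-Lie algebras. Uniqueness is immediate: once we demand $[1 \otimes x, 1 \otimes y] = 1 \otimes [x,y]$, the $(R,B)$-Lie axiom $[u, cv] = c[u,v] + (1\otimes\sigma)(u)(c)\cdot v$ applied in each slot, together with $B$-bilinearity, forces the displayed expression.

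For existence, the main obstacle is well-definedness over the tensor product $\otimes_A$: fixing $b' \otimes y$, one must check that the right-hand side above is $A$-balanced in $(b,x)$, i.e.\ that $[ab \otimes x, b'\otimes y] = [b \otimes ax, b' \otimes y]$ for all $a \in A$, and similarly in the second slot. Expanding both sides, three ingredients conspire to give equality: the $A$-linearity of $\sigma$ (which handles the $y$-term), the Leibniz-type identity $[ax,y] = a[x,y] - \rho(y)(a)\,x$ from the $(R,A)$-Lie axioms on $L$, and, crucially, the lifting hypothesis $\sigma(y)\circ\varphi = \varphi\circ\rho(y)$, which identifies the $\rho(y)(a)\,x$ term produced by the Leibniz rule in $L$ with the $\sigma(y)(\varphi(a))\otimes x$ term produced by the derivation property of $\sigma(y)$ on $B$. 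The analogous check in the second slot is identical in spirit.

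Skew-symmetry is visible by inspection of the formula together with skew-symmetry of $[\cdot,\cdot]$ in $L$. For the Jacobi identity I would expand $[b \otimes x, [b' \otimes y, b'' \otimes z]]$ directly and sum over cyclic permutations; the resulting terms collapse using the Jacobi identity in $L$, the fact that $\sigma \colon L \to \Der_R(B)$ is a Lie algebra homomorphism (so that the $\sigma(x)\sigma(y)b''$-type terms combine into $\sigma([x,y])(b'')$), and the derivation property of each $\sigma(x)$ acting on products of elements of $B$. This is the longest routine calculation but requires nothing beyond the stated hypotheses.

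Finally, the map $1 \otimes \sigma \colon B \otimes_A L \to \Der_R(B)$, $b \otimes x \mapsto b\,\sigma(x)$, is well-defined and $B$-linear precisely because $\sigma$ is $A$-linear. That it is a Lie algebra homomorphism is a brief computation using the defining formula for the bracket, the fact that $\sigma$ is itself a Lie homomorphism, and the derivation property. The $(R,B)$-compatibility axiom $[u, bv] = b[u,v] + (1\otimes\sigma)(u)(b)\,v$ for the new bracket and anchor is immediate from the defining formula when $u = c \otimes x$, $v = c' \otimes y$ and $b \in B$, which completes the verification.
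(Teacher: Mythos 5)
Your proof is correct and follows essentially the same route as the paper: both define the bracket by the same formula
\[
[b \otimes x, b' \otimes y] = bb' \otimes [x,y] + b\,\sigma(x)(b') \otimes y - b'\,\sigma(y)(b) \otimes x,
\]
declare it forced by the axioms, and verify well-definedness, skew-symmetry, the Leibniz compatibility, and the Jacobi identity. The paper compresses nearly all of these verifications into the phrase ``straightforward to verify'' (spelling out only the observation that $\sigma$ being a Lie homomorphism is necessary for Jacobi), whereas you explicitly isolate the point that matters most — that $A$-balancedness of the defining expression hinges on the lifting hypothesis $\sigma(y) \circ \varphi = \varphi \circ \rho(y)$ to reconcile the $\rho(y)(a)x$ term coming from the Leibniz rule in $L$ with the $\sigma(y)(\varphi(a))$ term coming from the derivation property in $B$ — which is a useful detail the paper leaves implicit.
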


\begin{proof} Write $x \cdot b := \sigma(x)(b)$ and $bx := b\otimes x$ for all $x \in L$ and $b \in B$. Following \cite[(3.5)]{Rinehart}, we define a bracket operation on $B\otimes_A L$ in the only possible way as follows:
\[ [ b  x , b'  x' ] := bb'  [x,x'] - b' (x'\cdot b)  x + b (x \cdot b')  x'\]
for all $b,b' \in B$ and $x,x' \in L$. It is straightforward to verify that this bracket is well-defined, skew-symmetric, and satisfies
\[ [b  x, c(b'  x')] = c[b  x, b'  x'] + (1 \otimes \sigma)(b  x)(c)b'x'\]
for all $b,b',c \in B$ and $x,x' \in L$. Note that if $x,y,z\in L$ and $b \in B$ then
\[ [[1  x, 1  y], b  z] + [[1  y, b  z], 1  x] + [[b  z, 1  x], 1  y] = ([x,y] \cdot b - x \cdot (y \cdot b) + y \cdot (x \cdot b))  z\]
so the condition that $\sigma : L \to \Der_k(B)$ is a Lie homomorphism is necessary for the Jacobi identity to hold. A longer, but still straightforward, computation shows that this condition is also sufficient. 
\end{proof}

\begin{cor} Suppose that $\psi\colon \Der_R(A)\to \Der_R(B)$ is an $A$-linear homomorphism of $R$-Lie algebras such that $\psi(u)\circ \varphi= \varphi\circ u$ for each $u\in \Der_R(A)$. There is a natural functor $B\otimes_A -$ from $(R,A)$-Lie algebras to $(R,B)$-Lie algebras sending $(L,\rho)$ to $(B\otimes_A L, 1\otimes \psi\rho)$.
\end{cor}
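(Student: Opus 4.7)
The plan is to apply the preceding Lemma to $\sigma := \psi \circ \rho$ for each $(R,A)$-Lie algebra $(L,\rho)$, and then to promote this pointwise construction to a functor by showing that $1\otimes f$ inherits the structure of a morphism of $(R,B)$-Lie algebras from any morphism $f$ of $(R,A)$-Lie algebras.

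First I would verify that the map $\sigma := \psi\circ\rho\colon L\to \Der_R(B)$ meets the hypotheses of the Lemma. Since $\rho$ is an $A$-linear Lie homomorphism and $\psi$ is an $A$-linear Lie homomorphism by assumption, the composite $\psi\circ\rho$ is an $A$-linear Lie homomorphism from $L$ to $\Der_R(B)$. The compatibility with $\varphi$ is immediate from the hypothesis on $\psi$:
\[ \sigma(x)\circ\varphi = \psi(\rho(x))\circ\varphi = \varphi\circ\rho(x) \qquad\text{for all } x\in L.\]
Thus the Lemma applies and endows $B\otimes_A L$ with a canonical $(R,B)$-Lie algebra structure with anchor $1\otimes \psi\rho$, using the natural $B$-module structure on $B\otimes_A L$.

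For functoriality, given a morphism $f\colon (L,\rho)\to (L',\rho')$ of $(R,A)$-Lie algebras, I would set $B\otimes_A f := 1\otimes f$ and check three conditions: $B$-linearity, compatibility of anchor maps, and preservation of the bracket. The first is obvious. The anchor compatibility $(1\otimes \psi\rho') \circ (1\otimes f) = 1\otimes \psi\rho$ reduces to the identity $\rho' f = \rho$ built into the definition of a morphism of $(R,A)$-Lie algebras. The bracket preservation is the only mildly computational step: unwinding the bracket formula from the proof of the Lemma on a generator $b\otimes x$, one finds that the terms $x\cdot b = \psi(\rho(x))(b)$ appearing on the $L$-side and $f(x)\cdot b = \psi(\rho'(f(x)))(b)$ appearing on the $L'$-side coincide, again thanks to $\rho' f = \rho$, while $f([x,x']) = [f(x),f(x')]$ is the Lie-homomorphism property of $f$. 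Identity and composition are preserved because $1\otimes\id = \id$ and $(1\otimes g)\circ(1\otimes f) = 1\otimes(gf)$.

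There is no serious obstacle here; the content of the corollary is essentially a bookkeeping exercise converting the Lemma (which requires, for each individual $L$, the choice of a lift $\sigma$) into a functor by using $\psi$ to produce a coherent choice of lift $\sigma = \psi\circ\rho$ that varies naturally in $L$. The only point that deserves mild care is checking that the bracket defined in the proof of the Lemma really is natural in $L$; this comes down to verifying that the action $x\cdot b$ used to define the bracket on $B\otimes_A L$ agrees with the one on $B\otimes_A L'$ along $f$, which is exactly what the anchor condition $\rho'f = \rho$ guarantees.
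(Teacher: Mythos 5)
Your proposal is correct and follows essentially the same approach as the paper: apply the preceding Lemma with the lift $\sigma = \psi\circ\rho$, then verify that $1\otimes f$ is $B$-linear, respects the anchor maps via $\rho'f=\rho$, and preserves the bracket by the same short computation the paper carries out. Your version is slightly more explicit in spelling out why $\psi\circ\rho$ meets the hypotheses of the Lemma, but this is a minor elaboration rather than a different route.
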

\begin{proof} Suppose that $(\rho,L)$ and $(\rho',L')$ are $(R,A)$-Lie algebras and $f\colon L\to L'$ is a morphism of $(R,A)$-Lie algebras. Give $(B\otimes_AL, 1\otimes \psi\rho)$ and $(B\otimes_A L', 1\otimes \psi\rho')$ the structures of $(R,B)$-Lie algebras guaranteed by the Lemma; we have to show that $1 \otimes f : B \otimes_A L \to B \otimes_A L'$ is then a morphism of $(R,B)$-Lie algebras. 

It is $B$-linear and satisfies $(1 \otimes \psi\rho') \circ (1 \otimes f) = 1 \otimes \psi \rho$ because $\rho' f = \rho$. Now if $b,c \in B$ and $x,y \in L$ then
\begin{eqnarray*} 
[(1 \otimes f)(bx), (1 \otimes f)(cy)] &=& bc[f(x),f(y)]- c(  y \cdot b)f(x)+ b(  x \cdot c) f(y) \\ 
& = & (1\otimes f)([bx,cy]). 
\end{eqnarray*} Thus $1 \otimes f$ is an $R$-Lie algebra homomorphism.
\end{proof}

\subsection{Rinehart's Theorem}\label{RineTh}
Let $\Sym(L)$ denote the symmetric algebra of the $A$-module $L$. Rinehart proved \cite[Theorem 3.1]{Rinehart} that there is always a surjection
\[ \Sym(L) \twoheadrightarrow \gr U(L)\]
which is even an isomorphism whenever $L$ is smooth. Therefore $U(L)$ is a (left and right) Noetherian ring whenever $A$ is Noetherian and $L$ is a finitely generated $A$-module; we will use this basic fact without further mention in what follows.

\begin{prop} Let $\varphi\colon A \to B$ be a homomorphism of commutative $R$-algebras and let $(L, \rho)$ be a smooth ($R,A$)-Lie algebra. Suppose that $\rho \colon L \to \Der_R(A)$ lifts to an $A$-linear Lie algebra homomorphism $\sigma \colon L \to \Der_R(B)$. Then there are natural isomorphisms
\[ B \otimes_A U(L) \to U(B \otimes_A L) \qmb{and} U(L) \otimes_AB \to U(B \otimes_A L)\]
of filtered left $B$-modules and filtered right $B$-modules, respectively.
\end{prop}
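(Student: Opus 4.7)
The plan is to construct the two maps via the universal property of $U(L)$, and then to show each is an isomorphism by passing to associated graded and invoking Rinehart's PBW theorem. I will focus on the left module case; the right module case is entirely symmetric.

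First, I would set $j_A := i_B \circ \varphi \colon A \to U(B \otimes_A L)$ and $j_L \colon L \to U(B \otimes_A L)$, $x \mapsto i_{B \otimes_A L}(1 \otimes x)$. The identity $j_L(ax) = j_A(a) j_L(x)$ is immediate from $B$-linearity, while
\[ [j_L(x), j_A(a)] = [1 \otimes x, \varphi(a)] = (1 \otimes \sigma)(1 \otimes x)(\varphi(a)) = \sigma(x)(\varphi(a)) = \varphi(\rho(x)(a)) = j_A(\rho(x)(a)) \]
uses the definition of the anchor on $B \otimes_A L$ together with the hypothesis $\sigma \circ \varphi = \varphi \circ \rho$. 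The universal property of $U(L)$ then yields a unique $R$-algebra homomorphism $U(L) \to U(B \otimes_A L)$, which is $A$-linear through $\varphi$; extending scalars on the left produces the left $B$-linear map
\[ \alpha \colon B \otimes_A U(L) \longrightarrow U(B \otimes_A L). \]
By construction $\alpha$ sends $B \otimes_A F_n U(L)$ into $F_n U(B \otimes_A L)$, so $\alpha$ is a filtered map once we equip the source with $F_n(B \otimes_A U(L)) := B \otimes_A F_n U(L)$.

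Since $L$ is smooth over $A$, Rinehart's theorem gives $\Sym_A L \xrightarrow{\sim} \gr U(L)$, so each graded piece $F_n U(L)/F_{n-1} U(L) \cong \Sym^n_A L$ is projective over $A$. By induction on $n$, each $F_n U(L)$ is itself projective (hence flat) over $A$, and therefore $B \otimes_A -$ preserves the filtration quotient exact sequences, yielding
\[ \gr(B \otimes_A U(L)) \cong B \otimes_A \gr U(L) \cong B \otimes_A \Sym_A L \cong \Sym_B(B \otimes_A L). \]
Because $L$ is projective over $A$, so is $B \otimes_A L$ over $B$; hence another application of Rinehart's theorem gives $\gr U(B \otimes_A L) \cong \Sym_B(B \otimes_A L)$. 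Under these identifications $\gr \alpha$ sends the degree one generators $1 \otimes x$ to $1 \otimes x$, so it is the identity on $\Sym_B(B \otimes_A L)$. Both filtrations are exhaustive with $F_{-1} = 0$, so $\alpha$ is itself an isomorphism of filtered left $B$-modules.

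The right module statement is obtained by the same argument with $B \otimes_A -$ replaced by $- \otimes_A B$; the only change is that the filtration on $U(L) \otimes_A B$ uses flatness of $F_n U(L)$ as a right $A$-module, which holds for the same reason. The main obstacle is precisely the flatness of the pieces $F_n U(L)$ over $A$ needed to identify $\gr(B \otimes_A U(L))$ with $B \otimes_A \gr U(L)$; this is exactly where smoothness of $L$ must be used, since without it one only has a surjection $\Sym_A L \twoheadrightarrow \gr U(L)$ and base change can fail to commute with the associated graded.
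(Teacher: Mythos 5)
Your proof is correct and follows essentially the same strategy as the paper: construct the map via the universal property, pass to associated graded, and invoke Rinehart's PBW theorem. You make explicit the flatness of each $F_nU(L)$ over $A$ (coming from smoothness via projectivity of $\Sym^n_A L$), which is the point needed to identify $\gr(B\otimes_A U(L))$ with $B\otimes_A\gr U(L)$; the paper leaves this step implicit in its appeal to Rinehart's theorem.
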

\begin{proof} The pair $(B \otimes_AL, 1 \otimes \sigma)$ is an ($R,B$)-Lie algebra by Lemma \ref{BaseExt}. The universal property of $U(L)$ induces a homomorphism of filtered $R$-algebras \[U(\varphi) \colon U(L) \to U(B \otimes_AL)\]
such that $U(\varphi)(i_L(x)) = i_{B \otimes_AL}(1 \otimes x)$ for all $x\in L$. Since $U(\varphi)$ is left $A$-linear, we obtain a filtered left $B$-linear homomorphism
\[ 1 \otimes U(\varphi) \colon B \otimes_A U(L) \longrightarrow U( B \otimes_A L).\]
By \cite[Theorem 3.1]{Rinehart}, its associated graded can be identified with the natural map
\[ B \otimes_A \Sym(L) \longrightarrow \Sym(B \otimes_A L)\]
which is an isomorphism by \cite[Chapter III, \S 6, Proposition 4.7]{BourbakiAlgI}. The isomorphism $U(\varphi) \otimes 1 \colon U(L) \otimes_AB \to U(B \otimes_AL)$ of right $B$-modules is established similarly.
\end{proof}

\subsection{Lifting derivations of affinoid algebras}\label{LiftEtale}
Recall, \cite[\S3.3]{Berkovich93}, that if $A\to B$ is a morphism of affinoid algebras then there is a finitely generated $B$-module 
\[\Omega_{B/A}\] 
such that for any Banach $B$-module $M$ there is a natural isomorphism \[ \Hom_B(\Omega_{B/A},M)\cong \Der^b_A(B,M)\] where $\Der^b_A(B,M)$ denotes the set of $A$-linear \emph{bounded} derivations from $B$ to $M$. Note that every $K$-linear derivation from $B$ to a \emph{finitely generated} $B$-module $M$ is automatically bounded; this follows from the proof of \cite[Theorem 3.6.1]{FvdPut}. So in particular, $\Der^b_K(B,B) = \Der_K(B)$.

\begin{lem} Let $\varphi\colon A \to B$ be an \'etale morphism of $K$-affinoid algebras. Then there is a unique $A$-linear map
\[\psi : \Der_K(A)\to \Der_K(B)\] such that $\psi(u)\circ \varphi=\varphi\circ u$ for each $u\in \Der_K(A)$. Moreover $\psi$ is a homomorphism of $K$-Lie algebras.
\end{lem}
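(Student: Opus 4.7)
The plan is to use the universal property of the finitely generated modules $\Omega_{B/K}$ and $\Omega_{B/A}$ of K\"ahler differentials recalled in the paragraph above, together with two standard consequences of $\varphi$ being \'etale in rigid analytic geometry: first, that $\Omega_{B/A}=0$; and second, that the natural map $B\otimes_A\Omega_{A/K}\to\Omega_{B/K}$ is an isomorphism.

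For uniqueness, suppose $\delta_1,\delta_2\in\Der_K(B)$ both satisfy $\delta_i\circ\varphi=\varphi\circ u$ for a fixed $u\in\Der_K(A)$. Their difference is a bounded $K$-derivation $B\to B$ vanishing on $\varphi(A)$, hence $A$-linear, and so corresponds to a $B$-linear map $\Omega_{B/A}\to B$ via the universal property; this map is zero. For existence, I would start from the $A$-linear map $d_u\colon\Omega_{A/K}\to A$ dual to $u$. Composing with $\varphi$ and extending $B$-linearly yields a $B$-linear map $B\otimes_A\Omega_{A/K}\to B$, which via the isomorphism $B\otimes_A\Omega_{A/K}\cong\Omega_{B/K}$ corresponds to a $B$-linear map $\Omega_{B/K}\to B$, and so to a bounded $K$-derivation $\psi(u)\in\Der_K(B)$; by construction $\psi(u)\circ\varphi=\varphi\circ u$. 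The $A$-linearity of $\psi$ then follows from uniqueness: for $a\in A$, the derivation $\varphi(a)\psi(u)$ satisfies $(\varphi(a)\psi(u))\circ\varphi=\varphi\circ(au)$, so it must equal $\psi(au)$; additivity in $u$ is similar.

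For the Lie bracket assertion, recall that the commutator of two $K$-derivations on a $K$-algebra is again a $K$-derivation. Given $u,v\in\Der_K(A)$, a short computation yields
\[ [\psi(u),\psi(v)](\varphi(a))=\psi(u)(\varphi(v(a)))-\psi(v)(\varphi(u(a)))=\varphi([u,v](a)) \]
for all $a\in A$, so $[\psi(u),\psi(v)]$ satisfies the intertwining relation characterising $\psi([u,v])$, and uniqueness forces $[\psi(u),\psi(v)]=\psi([u,v])$.

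The main obstacle lies in the existence step: one needs the isomorphism $B\otimes_A\Omega_{A/K}\cong\Omega_{B/K}$ for \'etale $\varphi$. This is a consequence of the first fundamental exact sequence for $K\to A\to B$ together with $\Omega_{B/A}=0$, provided one also knows injectivity of the first arrow in the \'etale case (standard for smooth morphisms, of which \'etale is the relative-dimension-zero case). An alternative that avoids this input is to build $\psi(u)$ directly through the formally \'etale lifting property applied to the square-zero extension $B[\epsilon]/(\epsilon^2)$, setting $\tilde\varphi(a):=\varphi(a)+\varphi(u(a))\epsilon$ and lifting along $\varphi$ to an algebra map $B\to B[\epsilon]/(\epsilon^2)$ whose $\epsilon$-component is the desired $\psi(u)$.
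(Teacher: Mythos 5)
Your proof is correct and takes essentially the same route as the paper: both proofs dualise the isomorphism $B\otimes_A\Omega_{A/K}\cong\Omega_{B/K}$ (the paper phrases this as showing that restriction $\beta:\Der_K(B)\to\Der_K^b(A,B)$ is an isomorphism and setting $\psi=\beta^{-1}\circ\alpha$) and then read off $A$-linearity and the Lie homomorphism property from the resulting uniqueness. The isomorphism you flag as the ``main obstacle'' is simply cited in the paper from Berkovich, Proposition 3.5.3(i), so neither the first fundamental exact sequence nor the formally \'etale lifting detour is needed.
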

\begin{proof} By \cite[Corollary 2.1.8/3 and Theorem 6.1.3/1]{BGR},  $\varphi : A \to B$ is bounded. Hence, composition with $\varphi$ induces $A$-linear maps 
\[\Der_K(A) \stackrel{\alpha}{\longrightarrow} \Der_K^b(A,B) \stackrel{\beta}{\longleftarrow} \Der_K(B).\] 
Since $A\to B$ is \'etale, \cite[Proposition 3.5.3(i)]{Berkovich93} guarantees that the natural morphism $B\otimes_A \Omega_{A/K}\to \Omega_{B/K}$ is an isomorphism. Taking $B$-linear duals shows that the restriction map 
\[ \beta : \Der_K(B) \to \Der_K^b(A,B)\]
is also an isomorphism and therefore every $K$-linear derivation of $B$ is determined by its restriction to $A$. We therefore obtain a unique $A$-linear map
\[ \psi := \beta^{-1} \circ \alpha : \Der_K(A) \to \Der_K(B)\]
such that $\psi(u) \circ \varphi = \varphi \circ u$ for all $u \in \Der_K(A)$. If $u,v \in \Der_K(A)$ then the $K$-linear derivations $\psi([u,v])$ and $[\psi(u), \psi(v)]$ of $B$ agree on the image of $A$ in $B$ and therefore are equal. Hence $\psi$ is a Lie homomorphism.
\end{proof}

Combining the Lemma with Corollary \ref{BaseExt} gives the following

\begin{cor} Let $A \to B$ be an \'etale morphism of $K$-affinoid algebras and let $L$ be a $(K, A)$-Lie algebra. Then there is a unique structure of a $(K,B)$-Lie algebra on $B \otimes_AL$ with its natural $B$-module structure such that the natural map $L \to B \otimes_A L$ is a $K$-Lie algebra homomorphism and the diagram \[ \xymatrix{ L \ar[r]^{\rho_L} \ar[d] & \Der_K(A) \ar[d]^\psi\\ B\otimes_A L \ar[r]_{\rho_{B\otimes_A L}} & \Der_K(B)} \]  commutes. Moreover this defines a canonical functor from $(K,A)$-Lie algebras to $(K,B)$-Lie algebras.
\end{cor}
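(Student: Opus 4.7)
The plan is to recognize that this corollary is essentially immediate from combining the two preceding results: the Lemma just proved (lifting derivations across étale morphisms) and Corollary \ref{BaseExt} (producing a functor of Lie--Rinehart algebras from a compatible lift of derivations). First I would invoke the Lemma applied to the étale morphism $\varphi \colon A \to B$ to produce the unique $A$-linear $K$-Lie algebra homomorphism $\psi \colon \Der_K(A) \to \Der_K(B)$ satisfying $\psi(u) \circ \varphi = \varphi \circ u$ for all $u \in \Der_K(A)$. This is precisely the hypothesis required by Corollary \ref{BaseExt} (taking $R = K$), which then delivers the natural functor $B \otimes_A -$ from $(K,A)$-Lie algebras to $(K,B)$-Lie algebras sending $(L, \rho)$ to $(B \otimes_A L, 1 \otimes \psi \rho)$.

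Next I would verify that the structure thus produced satisfies the claimed extra properties. That the natural map $L \to B \otimes_A L$ is a $K$-Lie algebra homomorphism follows by unwinding the bracket formula from the proof in Section \ref{BaseExt}: on pure tensors $1 \otimes x, 1 \otimes x'$ with $x, x' \in L$, the cross terms $(x' \cdot 1)x$ and $(x \cdot 1)x'$ vanish because derivations kill constants, leaving $[1 \otimes x, 1 \otimes x'] = 1 \otimes [x, x']$. That the anchor diagram commutes is built into the construction, since $\rho_{B \otimes_A L} = 1 \otimes \psi \rho_L$ agrees with $\psi \circ \rho_L$ on the image of $L$.

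For the uniqueness of the $(K,B)$-Lie algebra structure, I would argue as follows. The $B$-module structure is prescribed. The anchor map is forced to equal $\psi(\rho_L(x))$ on the image of $L$ by the diagram condition, and then is forced on all of $B \otimes_A L$ by $B$-linearity. The bracket on tensors $1 \otimes x, 1 \otimes x'$ coming from $L$ is forced to be $1 \otimes [x, x']$ by the Lie homomorphism hypothesis. The Leibniz rule $[\xi, b\eta] = b[\xi, \eta] + \rho(\xi)(b) \eta$ together with skew-symmetry, which are built into the definition of a $(K,B)$-Lie algebra, then propagate the bracket uniquely to all pairs of elements $b (1 \otimes x), b'(1 \otimes x')$.

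There is no real obstacle here: every ingredient is already in place in the preceding two subsections. The only bookkeeping step of any substance is the recognition that the hypothesis of Corollary \ref{BaseExt} is exactly the conclusion of the preceding Lemma, and that the uniqueness assertion follows mechanically from the $(K,B)$-Lie algebra axioms applied to generators.
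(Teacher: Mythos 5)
Your proof is correct and follows exactly the route the paper intends: the paper provides no separate proof for this Corollary beyond the remark that it follows by combining the preceding Lemma with Corollary 2.2 (BaseExt). You fill in the details correctly, and in particular you supply the uniqueness argument (anchor forced by the diagram plus $B$-linearity, bracket forced on $1\otimes L$ by the Lie-homomorphism condition, then propagated to arbitrary pairs $b(1\otimes x), b'(1\otimes y)$ via skew-symmetry and the Leibniz axiom), which the paper leaves implicit.
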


\subsection{Lemma}\label{BddTorsionHat} Let $C^\bullet$ be a complex of flat $\R$-modules with bounded torsion cohomology. Then $H^q(\widehat{C^\bullet}) \cong H^q(C^\bullet)$ for all $q$, and $\widehat{C^\bullet} \otimes_\R K$ is exact.
\begin{proof} Since $C^\bullet$ has no $\pi$-torsion by assumption, for each $n, m \geq 0$ we have a commutative diagram of complexes of $\R$-modules with exact rows:
\[\xymatrix{ 0 \ar[r] & C^\bullet \ar[r]^{\pi^{n+m}} \ar[d]^{\pi^m} & C^\bullet \ar[r] \ar@{=}[d] & C^\bullet / \pi^{n+m} C^\bullet \ar[r]\ar[d] & 0 \\
0 \ar[r] & C^\bullet \ar[r]_{\pi^n} & C^\bullet \ar[r] & C^\bullet / \pi^n C^\bullet \ar[r] & 0. }\]
Now fix $q$, choose $N \geq 0$ such that $H^q(C^\bullet)$ and $H^{q+1}(C^\bullet)$ are killed by $\pi^N$ and let $n,m \geq N$. Applying the long exact sequence of cohomology produces another commutative diagram with exact rows:
\[\xymatrix{ 0 \ar[r] & H^q(C^\bullet) \ar[r] \ar@{=}[d]& H^q(C^\bullet / \pi^{n+m} C^\bullet) \ar[r] \ar[d] & H^{q+1}(C^\bullet)\ar[r]\ar[d]^{\pi^m} & 0 \\
0 \ar[r] & H^q(C^\bullet) \ar[r]  & H^q(C^\bullet / \pi^n C^\bullet) \ar[r] & H^{q+1}(C^\bullet)\ar[r] & 0. }\]
Consider this diagram as a short exact sequence of towers of $\R$-modules. Since the vertical arrow on the right is zero for $m \geq N$ by assumption, and the vertical arrow on the left is an isomorphism, the long exact sequence associated to the inverse limit functor $\invlim$ shows that
\[ H^q(C^\bullet) \cong \invlim H^q(C^\bullet / \pi^n C^\bullet) \qmb{and} {\invlim}^1 H^q(C^\bullet / \pi^n C^\bullet) = 0 \qmb{for all} q.\]
Because the maps in the tower of complexes $(C^\bullet / \pi^n C^\bullet)_n$ are surjective, this tower satisfies the Mittag-Leffler condition. The cohomological variant of \cite[Theorem 3.5.8]{Wei1995} implies that
\[\invlim H^q(C^\bullet / \pi^n C^\bullet) \cong H^q(\widehat{C^\bullet}) \qmb{for all} q.\]
Therefore $\widehat{C^\bullet}$ has $\pi$-torsion cohomology.
\end{proof}

\subsection{Torsion in \ts{U(\L)}}\label{TorsUL}
Let $\A$ be a commutative Noetherian $\R$-algebra, and let $\L$ be a coherent $(\R,\A)$-Lie algebra. Let $\overline{\L}$ denote the image of $\L$ in $\L \otimes_\R K$; this is again an $(\R,\A)$-Lie algebra which is now flat as an $\R$-module. 

Let $\overline{U(\L)}$ denote the image of $U(\L)$ in $U(\L)\otimes_\R K$; note that unless $\L$ happens to be smooth, the $\pi$-torsion submodule of $U(\L)$ may well be non-zero. In any case, it is easy to see that there is a commutative diagram of $\R$-algebra homomorphisms with surjective arrows
\[\xymatrix{ U(\L) \ar@{>>}[r]\ar@{>>}[d] & U(\overline{\L}) \ar@{>>}[d]\\ \overline{U(\L)} \ar@{.>>}[r] &\overline{U(\overline{\L})}. }\]
Note that because $U(\L \otimes_\R K) \cong U(\L) \otimes_\R K$, the bottom arrow in this diagram is actually an isomorphism.
\begin{lem} The functor $X \mapsto \hK{X} = \h{X}\otimes_\R K$ transforms each arrow in the above diagram into an isomorphism.
\end{lem}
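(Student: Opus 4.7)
The approach I would take is to identify the kernel of each of the four surjections in the diagram and show in each case that $\hK$ vanishes on the kernel. First, the bottom arrow $\overline{U(\L)} \to \overline{U(\overline{\L})}$ is already an isomorphism \emph{before} completion: using $U(\L)\otimes_\R K \cong U(\L\otimes_\R K) \cong U(\overline{\L})\otimes_\R K$, both objects are identified with the image of $U(\L)$ (equivalently of $U(\overline{\L})$) inside this common $K$-algebra, and these images coincide because $U(\L)\twoheadrightarrow U(\overline{\L})$.

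For the top arrow $U(\L)\to U(\overline{\L})$, the kernel is the two-sided ideal $N$ of $U(\L)$ generated by $\L^{\mathrm{tor}}:=\ker(\L\to\overline{\L})$. Since $\L$ is coherent over the Noetherian ring $\A$, the submodule $\L^{\mathrm{tor}}$ is finitely generated and $\pi$-power torsion, hence $\pi^m\L^{\mathrm{tor}}=0$ for some $m\geq 0$; then $N$ is itself $\pi^m$-torsion because $\pi^m(axb)=a(\pi^m x)b=0$ for all $a,b\in U(\L)$ and $x\in\L^{\mathrm{tor}}$.

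For each surjection $X\twoheadrightarrow Y$ in the diagram with kernel $N$ equal either to this ideal (top arrow) or to the full $\pi$-torsion submodule $X^{\mathrm{tor}}$ (vertical arrows), I would verify the identity $N\cap \pi^n X=\pi^n N$: if $u=\pi^n v$ lies in $N$ and is killed by $\pi^k$, then $\pi^{n+k}v=0$, forcing $v$ into the relevant torsion submodule. The reduced sequence $0\to N/\pi^n N\to X/\pi^n X\to Y/\pi^n Y\to 0$ is then exact with surjective transition maps, so Mittag--Leffler together with the flatness of $K$ over $\R$ produces a short exact sequence
\[
0\to \hK{N}\to \hK{X}\to \hK{Y}\to 0.
\]
For the top arrow, $N$ is $\pi^m$-torsion so $\h{N}=N$ and $\hK{N}=0$, giving the desired isomorphism.

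The main obstacle is the vertical arrows, where $N$ is the full $\pi$-torsion submodule and is not obviously uniformly bounded. By the commutativity of the square, once the top and bottom arrows are known to be isomorphisms after $\hK{\cdot}$, it suffices to handle a single vertical arrow, say $U(\overline{\L})\to \overline{U(\overline{\L})}$, and the other one follows automatically. The key input is that $\overline{\L}$ is $\R$-torsion-free; combined with the Rinehart surjection $\Sym(\overline{\L})\twoheadrightarrow \gr U(\overline{\L})$ and the observation that $\gr U(\overline{\L})$ is a finitely generated $\A$-algebra (and hence a Noetherian ring with bounded $\pi$-torsion), a careful analysis of how torsion propagates between successive pieces of the standard filtration on $U(\overline{\L})$ should yield uniform bounded $\pi$-torsion for $U(\overline{\L})$ as a whole, whence $\hK$ vanishes on $U(\overline{\L})^{\mathrm{tor}}$ and the right vertical becomes an isomorphism after $\hK{\cdot}$, as required.
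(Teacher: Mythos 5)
There is a genuine gap. Your overall strategy (bottom arrow is an isomorphism before completion, then reduce one vertical to the other via commutativity) is sound, but the two arguments you use to close out the remaining arrows do not work as stated. First, for the top arrow your verification of $N \cap \pi^n X = \pi^n N$ only shows that $v$ (with $u=\pi^n v$) lies in the full $\pi$-torsion submodule $X^{\mathrm{tor}}$, not in $N=\langle \L^{\mathrm{tor}}\rangle$. These are genuinely different in general: the paper explicitly notes that $U(\L)$ can have $\pi$-torsion even when $\L$ is $\R$-torsion-free, so $N$ may be strictly contained in $U(\L)^{\mathrm{tor}}$. Second, and more seriously, your plan for the vertical arrow --- to transfer bounded $\pi$-torsion from $\gr U(\overline{\L})$ to $U(\overline{\L})$ by tracking torsion through the PBW filtration --- does not go through: bounded torsion of a graded ring does not on its own bound the torsion of the filtered ring, since the degree of a torsion element can be arbitrarily large, so iteratively pushing into lower filtration pieces gives no uniform bound.

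The observation you are missing, and which makes the lemma almost immediate, is that $U(\L)$ and $U(\overline{\L})$ are left and right \emph{Noetherian rings} (this was recorded in \S\ref{RineTh} of the paper as a consequence of Rinehart's theorem). Hence the kernel of $U(\L)\to \overline{U(\L)}$ (respectively of $U(\overline{\L})\to \overline{U(\overline{\L})}$), being a left ideal, is a finitely generated module over a Noetherian ring; since it dies after tensoring with $K$, it is killed by $\pi^n$ for some $n$. Exactness of $\pi$-adic completion on finitely generated modules over a Noetherian ring then gives a short exact sequence $0\to \h{T}\to \h{X}\to \h{Y}\to 0$, and $\h{T}\otimes_\R K=0$ because $\pi^n \h{T}=0$. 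Applying this to both vertical arrows and using that the bottom arrow is already an isomorphism completes the proof without any graded-ring analysis or Mittag--Leffler verification.
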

\begin{proof}  The kernel of $U(\L) \to \overline{U(\L)}$ is a finitely generated left ideal $T$ in $U(\L)$ since $U(\L)$ is Noetherian. Since $T\otimes_\R K = 0$ by construction, we see that $\pi^n \cdot T = 0$ for some $n\geq 0$. The sequence $0 \to \h{T} \to \h{U(\L)} \to \h{\overline{U(\L)}} \to 0$ is exact by \cite[\S 3.2.3(ii)]{Berth}, and $\pi^n \cdot \h{T} = 0$, so $\hK{U(\L)} \to \hK{\overline{U(\L)}}$ is an isomorphism.  This deals with the vertical arrows, and the result follows.
\end{proof}

We will also require the following elementary result concerning flat modules.
\subsection{Lemma}\label{FlatMethod} Let $S \to T$ be a ring homomorphism. Let $u \in T$ be a left regular element and suppose that
\be
\item $T$ is a flat right $S$-module,
\item $T \otimes_SM$ is $u$-torsion-free for all finitely generated left $S$-modules $M$. \ee
Then $W := T / u T$ is also a flat right $S$-module.
\begin{proof} Let $M$ be a finitely generated $S$-module and pick a projective resolution $P_\bullet \twoheadrightarrow M$ of $M$. Since $T_S$ is flat, $T \otimes_S P_\bullet \twoheadrightarrow T \otimes_S M$ is a projective resolution so
\[ \Tor_1^S(W,M) = H_1(W \otimes_S P_\bullet) = H_1(W \otimes_T (T \otimes_S P_\bullet)) \cong \Tor_1^T(W, T \otimes_S M).\]
The short exact sequence $0 \to T \stackrel{u\cdot}{\longrightarrow} T \to W \to 0$ induces the long exact sequence
\[0 = \Tor_1^T(T, T \otimes_SM) \to \Tor_1^T(W, T \otimes_S M) \to T \otimes_S M \stackrel{u \cdot}{\longrightarrow} T \otimes_S M,\]
so $\Tor_1^S(W,M) = \Tor_1^T(W, T \otimes_S M)$ vanishes by assumption (b). Hence $W$ is a flat right $S$-module by \cite[Proposition 3.2.4]{Wei1995}.
\end{proof}

\section{Tate's Acyclicity Theorem for \ts{\hK{\sU(\L)}}}\label{TateAcyc} 

\subsection{\ts{\L}-stable affine formal models}\label{AdmRalg}
Recall \cite[\S 1]{BL1} that an \emph{admissible $\R$-algebra} is a commutative $\R$-algebra which is topologically of finite type and flat over $\R$. 

If $A$ is a reduced $K$-affinoid algebra and $\A$ is an admissible $\R$-algebra then we say that $\A$ is an \emph{affine formal model in $A$} if $A \cong \A \otimes_\R K$. Note that any affine formal model in this sense is automatically reduced.

\begin{lem} Let $\A$ be an affine formal model in a reduced $K$-affinoid algebra $A$.
\be 
\item $\A$ is contained in $A^\circ$, and $A^\circ$ is a finitely generated $\A$-module.
\item $A^\circ$ is an affine formal model in $A$.
\item Every subring $\A'$ of $A$ such that $\A \subset \A' \subseteq A^\circ$ is also an affine formal model in $A$.
\ee
\end{lem}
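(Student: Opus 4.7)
The proof divides along the three parts, and the only non-formal input is the classical Grauert--Remmert theorem (see e.g.\ \cite[\S 6.4]{BGR}) that the ring of power-bounded elements in a reduced $K$-affinoid algebra is module-finite over any of its affine formal models.

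For (a), I would obtain the inclusion $\A \subseteq A^\circ$ by presenting $\A$ as a continuous quotient of some restricted power series ring $\R\langle x_1, \ldots, x_n\rangle$. After base change to $K$ this gives a continuous surjection of $K$-affinoid algebras $K\langle x_1, \ldots, x_n\rangle \twoheadrightarrow A$, and since continuous $K$-algebra homomorphisms between $K$-affinoid algebras carry power-bounded elements to power-bounded elements, while the unit ball of the Tate algebra is exactly $\R\langle x_1, \ldots, x_n\rangle$, the image of this unit ball, namely $\A$, lies in $A^\circ$. The second assertion of (a), that $A^\circ$ is a finite $\A$-module, is then exactly the content of the Grauert--Remmert theorem.

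For (b), the three defining properties of an affine formal model follow directly from (a): $A^\circ$ is topologically of finite type as a finite module over the topologically finitely generated $\R$-algebra $\A$; it is $\R$-flat as a torsion-free subring of the $K$-vector space $A$; and $A^\circ \otimes_\R K = A$ because $A^\circ \supseteq \A$ and $\A \otimes_\R K = A$. Part (c) proceeds along the same lines: since $\A$ is Noetherian (being topologically of finite type over the Noetherian ring $\R$), the intermediate subring $\A'$ is a finitely generated $\A$-submodule of the finite $\A$-module $A^\circ$; in particular it is topologically of finite type over $\R$, and $\pi$-adically complete as a finitely generated module over the complete Noetherian ring $\A$. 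Flatness and the identification $\A' \otimes_\R K = A$ follow exactly as for $A^\circ$.

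The main obstacle is entirely the invocation of the Grauert--Remmert finiteness theorem; once that is granted, each remaining assertion reduces to a routine verification of the defining properties of an affine formal model given at the start of \S\ref{AdmRalg}.
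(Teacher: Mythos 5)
Your proof is correct and takes essentially the same approach as the paper: the key input is the Grauert--Remmert finiteness theorem (cited in the paper as \cite[Theorem 3.5.6]{FvdPut}), and the rest is routine verification of the defining properties. The only cosmetic difference is that you derive (b) directly from (a), whereas the paper's proof of (b) re-runs the Tate-algebra surjection argument from scratch; the underlying mathematics is identical.
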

\begin{proof} (a) Because $\A$ is topologically finitely generated over $\R$, we can find a Tate algebra $T = K \langle x_1,\cdots, x_n \rangle$ and a surjective homomorphism $\sigma : T^\circ \twoheadrightarrow \A$. Then $\sigma_K : T \twoheadrightarrow A$. Since $K$ is discretely valued and $A$ is reduced, \cite[Theorem 3.5.6]{FvdPut} implies that $A^\circ$ contains $\sigma_K(T^\circ) = \A$, and is finitely generated over as a module over $\sigma_K(T^\circ) = \A$ .

(b) We can find some Tate algebra $T = K \langle x_1,\cdots, x_n \rangle$ and a surjective homomorphism $\varphi : T \twoheadrightarrow A$. Then $\varphi(T^\circ)$ is topologically finitely generated over $\R$, and $A^\circ$ is a finitely generated $\varphi(T^\circ)$-module by \cite[Theorem 3.5.6]{FvdPut}. So $A^\circ$ is also topologically finitely generated over $\R$, and it is flat over $\R$ being contained in the $K$-vector space $A$.

(c) Since $\A$ is Noetherian and $A^\circ$ is finitely generated as an $\A$-module by part (a), $\A'$ is also finitely generated as an $\A$-module. Therefore it is an admissible $\R$-algebra and $\A' \otimes_\R K = A$.
\end{proof}

Let $\sigma : A \to B$ be an \'etale morphism of reduced $K$-affinoid algebras and let $\A$ be an affine formal model in $A$. Let $\L$ be an $(\R,\A)$-Lie algebra. Note that because $\sigma : A \to B$ is \'etale, the action of $L := \L\otimes_\R K$ on $A$ lifts automatically to $B$ by Corollary \ref{LiftEtale}.

\begin{defn} Let $\B$ be an affine formal model in $B$. We say that $\B$ is \emph{$\L$-stable} if $\sigma(\A) \subset \B$ and the action of $\L$ on $\A$ lifts to $\B$. We say that $\sigma : A \to B$ is \emph{$\L$-admissible} if there exists at least one $\L$-stable affine formal model $\B$ in $B$.
\end{defn}

\begin{prop} Let $\sigma : A \to B$ be an $\L$-admissible morphism of reduced $K$-affinoid algebras, let $\A$ be an affine formal model in $A$ and let $\L$ be an $(\R,\A)$-Lie algebra.
\be
\item The set of $\L$-stable affine formal models in $B$ is closed under multiplication.
\item There exists a unique largest $\L$-stable affine formal model $B^\diamond$ in $B$.
\item $\sigma(A^\diamond) \subseteq B^\diamond$.
\ee
\end{prop}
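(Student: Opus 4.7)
The plan is to derive all three parts from the Leibniz rule together with part (c) of the preceding lemma, which guarantees that any subring of $B$ sandwiched between an affine formal model and $B^\circ$ is itself an affine formal model. The key observation, used repeatedly, is that if $\B_1$ and $\B_2$ are subrings of $B^\circ$ then so is the set $\B_1 \cdot \B_2$ of finite sums $\sum_i b_i b_i'$ with $b_i \in \B_1$ and $b_i' \in \B_2$.

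For part (a), given two $\L$-stable affine formal models $\B_1$ and $\B_2$, the product $\B_1 \B_2$ is a subring of $B$ containing $\B_1$ and contained in $B^\circ$ by part (a) of the preceding lemma; part (c) of the same lemma then promotes it to an affine formal model. The $\L$-stability is then immediate from the Leibniz rule applied to products $bb'$ with $b \in \B_1$ and $b' \in \B_2$, since both $(x \cdot b)b'$ and $b(x \cdot b')$ lie in $\B_1\B_2$.

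For part (b), let $\mathcal{F}$ denote the family of $\L$-stable affine formal models in $B$, which is non-empty by the $\L$-admissibility of $\sigma$ and upward-directed under inclusion by (a). Set $B^\diamond := \bigcup_{\B' \in \mathcal{F}} \B'$. Directedness forces $B^\diamond$ to be a subring of $B$, and it lies between any chosen $\B \in \mathcal{F}$ and $B^\circ$, so the preceding lemma again exhibits it as an affine formal model. It is $\L$-stable since that is a pointwise condition and each element of $B^\diamond$ already lives in some $\B' \in \mathcal{F}$; by construction $B^\diamond$ is the unique largest element of $\mathcal{F}$.

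For part (c), apply (b) to the identity map $A \to A$ to obtain $A^\diamond \supseteq \A$, and pick any $\B \in \mathcal{F}$. The same argument as in (a) shows that $\sigma(A^\diamond) \cdot \B$ is an affine formal model in $B$ containing $\B$. The compatibility of anchor maps recorded in the corollary of \S\ref{LiftEtale} gives $x \cdot \sigma(a) = \sigma(x \cdot a) \in \sigma(A^\diamond)$ for all $x \in \L$ and $a \in A^\diamond$, and combined with $\L$-stability of $\B$ the Leibniz rule yields $\L$-stability of $\sigma(A^\diamond) \cdot \B$. Hence $\sigma(A^\diamond) \subseteq \sigma(A^\diamond) \cdot \B \subseteq B^\diamond$. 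The only step requiring any care is the identification of the $\L$-action on $\sigma(A^\diamond)$ inherited from its embedding in $B$ with the $\sigma$-image of the original $\L$-action on $A^\diamond$, and this is precisely what that corollary provides.
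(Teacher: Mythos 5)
Your proof is correct and, for parts (a) and (c), follows the same route as the paper: the product of two $\L$-stable affine formal models is again one (sandwich lemma plus the fact that $\L$ acts by derivations), and $\sigma(A^\diamond)\cdot\B$ (the paper uses $\sigma(A^\diamond)\cdot B^\diamond$, which is interchangeable) is an $\L$-stable affine formal model, forcing $\sigma(A^\diamond)\subseteq B^\diamond$ by maximality.

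For part (b), your argument diverges slightly. The paper's one-line hint invokes the Noetherian property of an affine formal model, pointing towards a chain-termination argument: any strictly ascending chain in the family $\mathcal{F}$ must stop because $B^\circ$ is a finitely generated module over any fixed $\B\in\mathcal{F}$, and then directedness from (a) gives uniqueness. You instead form the union $\bigcup_{\B'\in\mathcal{F}}\B'$, observe directedness makes it a subring sandwiched between some $\B$ and $B^\circ$, and apply the sandwich lemma once. This is perfectly valid and a little cleaner to state, though it relies on the same finiteness ultimately: the sandwich lemma's part (c) already embeds the Noetherian argument, so the two proofs are using the same resource, just packaged differently. Either is fine.

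One small remark on (c): you correctly flag the potential issue of whether the action of $\L$ on $\sigma(A^\diamond)\subset B$ coincides with the $\sigma$-image of the action on $A^\diamond$, and correctly cite the uniqueness of the lift from Lemma/Corollary \ref{LiftEtale} to settle it. This is the right justification — good to make it explicit since the $\L$-admissibility hypothesis silently presupposes that $\sigma$ is étale.
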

\begin{proof} (a) Any two $\L$-stable affine formal models in $B$ are both contained in $B^\circ$ by part (a) of the Lemma because $B$ is reduced. So their product is again an affine formal model in $\B$ by part (c) of the Lemma. This product is also $\L$-stable because $\L$ acts on $B$ by $\R$-linear derivations. 

(b) This follows from part (a), part (a) of the Lemma and the fact that any affine formal model is a Noetherian ring.

(c) The $\R$-subalgebra $\sigma(A^\diamond) B^\diamond$ contains $B^\diamond$ and is $\L$-stable, therefore it is an $\L$-stable affine formal model by part (c) of the Lemma. Therefore it must be contained in $B^\diamond$ by the maximality of $B^\diamond$.
\end{proof}

\subsection{\ts{\L}-admissible subdomains and the functor \ts{\hsULK}}\label{SheafFQ} Let $X$ be a $K$-affinoid variety. Recall from \cite[\S 9.1.4]{BGR} the \emph{strong $G$-topology} $X_{\rig}$ consisting of the \emph{admissible open subsets} of $X$ and admissible coverings, and the \emph{weak $G$-topology} $X_w$ on $X$ consisting of the \emph{affinoid subdomains} of $X$ and \emph{finite} coverings by affinoid subdomains.

\begin{lem} Let $X$ be a $K$-affinoid variety and let $Q$ be a flat $\O(X)^\circ$-module. For every admissible open subset $Y$ of $X$, define
\[\F_Q(Y) := \widehat{ \O(Y)^\circ \otimes_{\O(X)^\circ} Q} \otimes_\R K.\]
Then $\F_Q$ is a sheaf on $X_{\rig}$.
\end{lem}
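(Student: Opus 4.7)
The plan is to verify the sheaf axiom by analysing the integral augmented \v{C}ech complex and invoking Lemma \ref{BddTorsionHat} to transfer exactness from the integral level to the completed-rational level.

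Standard sheaf-extension arguments (see \cite[\S 9.2]{BGR}) reduce the task to showing that for every finite covering $\{Y_i\}_{i=1}^n$ of an affinoid subdomain $Y$ of $X$ by affinoid subdomains, the augmented \v{C}ech complex of $\F_Q$ on $\{Y_i\}$ is exact. I would introduce the auxiliary presheaf $\G(Z):=\O(Z)^\circ \otimes_{\O(X)^\circ} Q$ and denote by $C^\bullet$ its augmented \v{C}ech complex on $\{Y_i\}$. Since $\pi$-adic completion and $\otimes_\R K$ both commute with finite direct sums, $\widehat{C^\bullet}\otimes_\R K$ is precisely the augmented \v{C}ech complex of $\F_Q$, so the strategy is to apply Lemma \ref{BddTorsionHat} to $C^\bullet$.

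For the flatness hypothesis of that lemma: each $\O(Z)^\circ$ is $\pi$-torsion-free and so flat over $\R$, and Lazard's theorem writes $Q$ as a filtered colimit of finitely generated free $\O(X)^\circ$-modules, making every term $\G(Z)$ a filtered colimit of copies of $\O(Z)^\circ$, hence flat over $\R$. For the bounded-torsion-cohomology hypothesis, let $C^\bullet_0$ be the analogous augmented \v{C}ech complex of $\O^\circ$ itself, so that $C^\bullet = C^\bullet_0 \otimes_{\O(X)^\circ} Q$. Flatness of $Q$ over $\O(X)^\circ$ gives $H^q(C^\bullet) \cong H^q(C^\bullet_0) \otimes_{\O(X)^\circ} Q$. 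On the other hand $C^\bullet_0 \otimes_\R K$ is the usual augmented \v{C}ech complex of $\O$ on $\{Y_i\}$, which is exact by Tate's acyclicity theorem \cite[Theorem 8.2]{Tate71}, so each $H^q(C^\bullet_0)$ is $\pi$-torsion. Assuming $X$ is reduced, $\O(X)^\circ$ is Noetherian and the terms of $C^\bullet_0$ are finitely generated $\O(X)^\circ$-modules (by the properties of affine formal models developed in \S \ref{AdmRalg}), so $H^q(C^\bullet_0)$ is a finitely generated $\pi$-torsion module and is therefore of bounded torsion; tensoring with $Q$ preserves this bound. Lemma \ref{BddTorsionHat} then yields the desired exactness of $\widehat{C^\bullet}\otimes_\R K$.

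The hard part will be the bounded-torsion step, which relies on Noetherianity of $\O(X)^\circ$ and finite generation of $\O(Y_{i_0}\cap \cdots \cap Y_{i_p})^\circ$ over $\O(X)^\circ$; both hold cleanly when $X$ is reduced. If one wishes to drop reducedness, the argument must be reorganised relative to a fixed Noetherian affine formal model $\A$ in $\O(X)$, replacing $\O(X)^\circ$ by $\A$ throughout the \v{C}ech calculations and using Lazard to write $Q$ as a filtered colimit of free $\A$-modules after restriction of scalars.
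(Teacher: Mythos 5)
Your strategy proves more than is needed and has a gap at exactly the point where the extra strength costs something. The paper's proof of this lemma is short: $\O^\circ$ is a sheaf on $X_{\rig}$ by \cite[Example 8.2.2(3)]{FvdPut}, and the functor $A \mapsto \widehat{A \otimes_{\O(X)^\circ} Q}\otimes_\R K$ is left exact because $Q$ is flat, so $\F_Q$ inherits the sheaf property directly from $\O^\circ$. What you set out to prove instead is the strictly stronger assertion that every finite affinoid covering is $\F_Q$-acyclic; that is the content of Theorem \ref{TateFQ}, not of the present lemma, and your argument collapses at precisely the step where the proof of Theorem \ref{TateFQ} requires real work.

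The gap is the claim that the terms $\O(Y_{i_0}\cap\cdots\cap Y_{i_p})^\circ$ of $C^\bullet_0$ are finitely generated $\O(X)^\circ$-modules, which you attribute to \S\ref{AdmRalg}. What that section proves is that, \emph{within a single} reduced affinoid algebra $A$, the ring $A^\circ$ is finitely generated over any affine formal model $\A\subseteq A^\circ$. It says nothing about the $\O(X)^\circ$-module structure of $\O(Y)^\circ$ for a proper affinoid subdomain $Y\subsetneq X$, and this is essentially never finitely generated: already $\R\langle x/\pi\rangle$ is not finitely generated over $\R\langle x\rangle$. Hence you cannot conclude that $H^q(C^\bullet_0)$ is a finitely generated $\O(X)^\circ$-module, and bounded torsion does not follow by the route you describe. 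In the paper's proof of Theorem \ref{TateFQ}, bounded torsion is obtained very differently: one reduces to a rational covering, realises it as the generic fibre of a Zariski covering of an admissible formal blowing-up $f\colon\mathcal{X}'\to\Spf(\O(X)^\circ)$, and uses properness of $f$ together with \cite[Corollary 3.4.4]{EGAII} to deduce that the \emph{cohomology} of $C^\bullet_0$ (not its terms) is finitely generated over $\O(X)^\circ$. You would need to reproduce that argument in full to make your approach work. Note also that the lemma makes no reducedness hypothesis, the paper's short proof needs none, and Theorem \ref{TateFQ} does assume $X$ reduced --- one more indication that you are proving a different statement.
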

\begin{proof} We know that $\O^\circ$ is a sheaf on $X_{\rig}$ by \cite[Example 8.2.2(3)]{FvdPut}. The functor $A \mapsto \widehat{A \otimes_{\O(X)^\circ} Q}\otimes_\R K$ is left exact because $Q$ is a flat $\O(X)^\circ$-module. Therefore $\F_Q$ is also a sheaf on $X_{\rig}$.
\end{proof}

\begin{defn} Let $X$ be a reduced $K$-affinoid variety, and let $\L$ be an $(\R,\A)$-Lie algebra for some affine formal model $\A$ in $\O(X)$. 
\be
\item We say that an affinoid subdomain $Y$ of $X$ is \emph{$\L$-admissible} if the pullback on functions $f^\sharp : \O(X) \to \O(Y)$ is $\L$-admissible. 
\item For any $\L$-admissible affinoid subdomain $Y$ of $X$, we define
\[\hsULK(Y) := \widehat{ U(\O(Y)^\diamond \otimes_{\A} \L) } \otimes_\R K.\]
\ee\end{defn}

Note that $\O(Y)^\diamond \otimes_{\A} \L$ is an $(\R, \O(Y)^\diamond)$-Lie algebra by Lemma \ref{BaseExt}, so $\hsULK(Y)$ is an associative $K$-Banach algebra. 

\subsection{Fibre products}\label{FibreProds}

\begin{lem} Let $B \leftarrow A \stackrel{\sigma}{\to} C \stackrel{\tau}{\to} D$ be a diagram of reduced $K$-affinoid algebras with \'etale morphisms, let $\A$ be an affine formal model in $A$ and let $\L$ be an $(\R,\A)$-Lie algebra. Let $\B$ and $\C$ be $\L$-stable affine formal models in $B$ and $C$, respectively.
\be
\item $\C \otimes_{\A} \L$ is a $(\R, \C)$-Lie algebra.
\item $\tau \sigma$ is $\L$-admissible if and only if $\tau$ is $\C \otimes_{\A} \L$-admissible.
\item Let $\D$ be the largest $\C \otimes_{\A} \L$-stable affine formal model in $D$. Then $\D = D^\diamond$.
\item The image of $\B \widehat{\otimes}_{\A} \C$ in $B \widehat{\otimes}_A C$ is an $\L$-stable affine formal model in $B \widehat{\otimes}_AC$.
\ee
\end{lem}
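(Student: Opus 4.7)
Parts (a), (b), (c) are essentially formal consequences of the base-change machinery in \S\ref{BaseExt} combined with the product-of-affine-formal-models construction in \S\ref{AdmRalg}. For (a), I would apply the Lemma in \S\ref{BaseExt} to the composite $\R$-algebra map $\A \hookrightarrow A \xrightarrow{\sigma} C$: because $\C$ is $\L$-stable, the anchor $\L \to \Der_\R(\A)$ lifts along this map to $\L \to \Der_\R(\C)$, and so $\C \otimes_\A \L$ inherits the canonical $(\R,\C)$-Lie algebra structure. For (b), the direction $(\Leftarrow)$ is immediate: any $\C \otimes_\A \L$-stable affine formal model $\D_1 \subseteq D$ contains $\tau(\C) \supseteq \tau\sigma(\A)$, and is $\L$-stable by restriction along $\L \hookrightarrow \C \otimes_\A \L$. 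For $(\Rightarrow)$, given any $\L$-stable affine formal model $\D_0 \subseteq D$ with $\tau\sigma(\A) \subseteq \D_0$, the subring $\tau(\C) \cdot \D_0 \subseteq D^\circ$ is an affine formal model by part (c) of the Lemma in \S\ref{AdmRalg}, is $\L$-stable because both factors are and $\L$ acts by derivations, and visibly contains $\tau(\C)$, so is $\C \otimes_\A \L$-stable --- this is the same construction used in part (a) of the Proposition in \S\ref{AdmRalg}. Part (c) then follows by the usual maximality argument: every $\C \otimes_\A \L$-stable formal model is $\L$-stable, so $\D \subseteq D^\diamond$; conversely $D^\diamond \cdot \tau(\C)$ is an $\L$-stable affine formal model containing $D^\diamond$, so by maximality it equals $D^\diamond$, forcing $\tau(\C) \subseteq D^\diamond$ and hence making $D^\diamond$ itself $\C \otimes_\A \L$-stable, so $D^\diamond \subseteq \D$.

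Part (d) is the step that really needs care. The plan is: (i) verify that $B \widehat\otimes_A C$ is a reduced $K$-affinoid algebra, which uses that the étale map $\sigma \colon A \to C$ base-changes to an étale map $B \to B \widehat\otimes_A C$ and that étale morphisms of affinoids preserve reducedness; (ii) extend the compatible $\L$-actions on $\B$ and $\C$ by the Leibniz rule and $\pi$-adic continuity to an action of $\L$ by continuous $\R$-linear derivations on $\B \widehat\otimes_\A \C$; (iii) let $\I$ be the image of $\B \widehat\otimes_\A \C$ in $B \widehat\otimes_A C$, check that $\I$ is topologically finitely generated over $\R$ (as a quotient of $\B \widehat\otimes_\A \C$), $\R$-flat (as a subring of a $K$-algebra), and satisfies $\I \otimes_\R K = B \widehat\otimes_A C$ because the latter is by construction the generic fibre of $\B \widehat\otimes_\A \C$. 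Then $\I$ is an affine formal model, and the $\L$-action from (ii) descends to $\I$, making it $\L$-stable.

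The hard part will be step (ii): one has to justify that the derivation action on $\B \otimes_\A \C$ is $\pi$-adically bounded, so it extends uniquely and continuously to the completion $\B \widehat\otimes_\A \C$, and then check that this extended action descends through the quotient to the torsion-free image $\I$. Once this is in hand, the identifications and admissibility claims in (iii) are routine, and parts (a)--(c) are purely formal manipulations using the Lemma in \S\ref{BaseExt} and the product-of-models construction already established in \S\ref{AdmRalg}.
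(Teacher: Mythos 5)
Parts (a)--(c) of your plan are essentially the same as the paper's proof and are correct. For (a) the paper simply invokes Lemma \ref{BaseExt}; for (b) the paper takes an $\L$-stable model $\D'$ and passes to $\tau(\C)\D'$ exactly as you do; for (c) your variant (arguing first that $\tau(\C)D^\diamond = D^\diamond$ by maximality of $D^\diamond$, hence that $\tau(\C) \subseteq D^\diamond$) is a harmless rearrangement of the paper's one-liner, which instead observes directly that $\tau(\C)D^\diamond$ is $\C\otimes_{\A}\L$-stable and then uses maximality of $\D$. Both are fine.

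Part (d) is where your plan diverges from the paper and is the part you correctly flag as delicate. The paper does not attempt to extend the $\L$-action to $\B\widehat\otimes_{\A}\C$ by a $\pi$-adic boundedness argument as in your step (ii). Instead it obtains the affine-formal-model statement essentially for free: the image $\overline{\B\widehat\otimes_{\A}\C}$ of $\B\widehat\otimes_{\A}\C$ in $B\widehat\otimes_A C$ is, by definition of the fibre product of admissible formal $\R$-schemes, the ring of functions of $\Spf(\B)\times_{\Spf(\A)}\Spf(\C)$, and then \cite[Corollary~4.6]{BL1} gives that this is an affine formal model in $B\widehat\otimes_A C$. Your step (iii) appeals to the identification $(\B\widehat\otimes_{\A}\C)\otimes_\R K = B\widehat\otimes_A C$ ``by construction,'' but this is precisely the content that needs a reference: it is not true at the level of definitions that the $\pi$-adic completion of $\B\otimes_{\A}\C$ has generic fibre equal to the complete tensor product of the $K$-Banach algebras $B$ and $C$; one needs a comparison result for formal vs.\ rigid fibre products, which is exactly what the paper's citation supplies. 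For the $\L$-stability the paper also avoids your boundedness calculation: it notes that the subalgebra of $\overline{\B\widehat\otimes_{\A}\C}$ generated by the images of $\B\widehat\otimes 1$ and $1\widehat\otimes\C$ is $\L$-stable and dense, and then uses the fact (already recalled in \S\ref{LiftEtale}) that every $K$-linear derivation of a $K$-affinoid algebra is automatically continuous, so $\L$-stability extends to the closure. This is slicker than verifying $\pi$-adic boundedness by hand; I would recommend adopting it rather than carrying out your step (ii) directly. In summary, your (d) is pointed in the right direction but has a real gap at the ``by construction'' step, and the continuity argument can be streamlined.
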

\begin{proof} (a) This follows from Lemma \ref{BaseExt}.

(b) Suppose $\tau \sigma$ is $\L$-admissible. Choose an $\L$-stable affine formal model $\D'$ in $D$ and let $\D'' = \tau(\C) \D'$. Then $\D''$ is again an $\L$-stable affine formal model in $D$ by Lemma \ref{AdmRalg}(c), and it is a $\C$-submodule of $D$ via $\tau$. Hence $\D''$ is $\C \otimes_{\A} \L$-stable and $D$ is $\C \otimes_{\A} \L$-admissible. The converse is clear.

(c) $\D$ is $\L$-stable so $\D \subseteq D^\diamond$. On the other hand $\tau(\C)D^\diamond$ is a $\C \otimes_{\A} \L$-stable affine formal model in $D$ so $D^\diamond \subseteq \tau(\C)D^\diamond \subseteq \D$.

(d) Let $\overline{\B \widehat{\otimes}_{\A} \C}$ denote the image of $\B \widehat{\otimes}_{\A} \C$ in $B \widehat{\otimes}_A C$. Then the fibre product $\Spf(\B) \times_{\Spf(\A)} \Spf(\C)$ \emph{in the category of admissible formal schemes} is $\Spf(\overline{\B \widehat{\otimes}_{\A} \C})$ by definition, see \cite[p. 298]{BL1}. Therefore $\overline{\B \widehat{\otimes}_{\A} \C}$ is an affine formal model in $B \widehat{\otimes}_A C$ by \cite[Corollary 4.6]{BL1}. It contains the $\L$-stable subalgebra generated by the images of $\B \widehat{\otimes} 1$ and $1 \widehat{\otimes} \C$ as a dense subspace, and therefore is $\L$-stable because every $K$-linear derivation of the $K$-affinoid algebra $B \widehat{\otimes}_A C$ is automatically continuous.
\end{proof}

We will denote the full subcategory of $X_w$ consisting of the $\L$-admissible affinoid subdomains by $X_w(\L)$. Part (d) of the Lemma implies the following
\begin{cor}
Let $X$ be a reduced $K$-affinoid variety, and let $\L$ be an $(\R,\A)$-Lie algebra for some affine formal model $\A$ in $\O(X)$. Then $X_w(\L)$ is stable under finite intersections.
\end{cor}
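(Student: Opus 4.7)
The plan is to reduce the statement directly to Lemma \ref{FibreProds}(d), using the standard identification of intersections of affinoid subdomains with fibre products in the category of affinoid varieties. The assertion is that if $Y$ and $Z$ are $\L$-admissible affinoid subdomains of $X$, then so is $Y \cap Z$; the main (and essentially only) point is to exhibit an $\L$-stable affine formal model in $\O(Y \cap Z)$ containing the image of $\A$.

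First I would recall that the intersection $Y \cap Z$ is again an affinoid subdomain of $X$, and that under the natural identification $Y \cap Z \cong Y \times_X Z$ in the category of $K$-affinoid varieties we have $\O(Y \cap Z) \cong \O(Y)\widehat{\otimes}_{\O(X)}\O(Z)$. The restriction maps $\O(X) \to \O(Y)$ and $\O(X) \to \O(Z)$ coming from affinoid subdomain inclusions are in particular \'etale morphisms of reduced $K$-affinoid algebras, so the hypotheses of Lemma \ref{FibreProds} are in force (taking $A = \O(X)$, $B = \O(Y)$, $C = \O(Z)$, and $D = \O(Y \cap Z)$ with $\tau$ the canonical map $C \to D$).

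Since $Y$ and $Z$ lie in $X_w(\L)$, we may choose $\L$-stable affine formal models $\B \subseteq \O(Y)$ and $\C \subseteq \O(Z)$; by the definition of $\L$-stability both contain the images of $\A$ and are preserved by the lifted action of $\L$. Applying Lemma \ref{FibreProds}(d) directly, the image of $\B \widehat{\otimes}_{\A} \C$ in $\O(Y) \widehat{\otimes}_{\O(X)} \O(Z) = \O(Y \cap Z)$ is an $\L$-stable affine formal model. Since this image clearly contains the image of $\A$ (it contains both images of $\B$ and $\C$, and $\A$ maps into each), the composite map $\O(X) \to \O(Y \cap Z)$ is $\L$-admissible in the sense of Subsection \ref{AdmRalg}, which is exactly the assertion that $Y \cap Z \in X_w(\L)$.

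There is no real obstacle here: the whole content has already been packaged into Lemma \ref{FibreProds}(d), and the proof is essentially a matter of unwinding definitions and identifying intersections with fibre products. The only subtle ingredient is the fact used in the Lemma that $\Spf(\B) \times_{\Spf(\A)} \Spf(\C)$ computed in admissible formal schemes really does produce an affine formal model in the rigid generic fibre, but this has already been established in Lemma \ref{FibreProds}.
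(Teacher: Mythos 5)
Your proposal is correct and matches the paper's intended argument: the paper states the Corollary as an immediate consequence of Lemma \ref{FibreProds}(d), and you have simply unwound that implication by identifying $\O(Y\cap Z)$ with $\O(Y)\widehat{\otimes}_{\O(X)}\O(Z)$ and noting that the image of $\B\widehat{\otimes}_\A\C$ provides the required $\L$-stable affine formal model. (A small cosmetic point: part (d) of the Lemma does not involve $D$ or $\tau$ at all, so your introduction of $D = \O(Y\cap Z)$ and $\tau: C\to D$ is superfluous.)
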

We define an \emph{$\L$-admissible covering} of an $\L$-admissible affinoid subdomain of $X$ to be a finite covering by objects in $X_w(\L)$. The Corollary now shows that $X_w(\L)$ is a $G$-topology on $X$ in the sense of \cite[Definition 9.1.1/1]{BGR}.

It follows from the universal property of the enveloping algebras of Lie--Rinehart algebras and Proposition \ref{AdmRalg}(c) that $\hsULK$ is a functor from $X_w(\L)$ to $K$-Banach algebras. We have thus defined a presheaf $\hsULK$ on $X_w(\L)$. It behaves well with respect to restriction, in the following sense.

\begin{prop}Let $X$ be a reduced $K$-affinoid variety, and let $\L$ be an $(\R,\A)$-Lie algebra for some affine formal model $\A$ in $\O(X)$. Let $Y$ be an $\L$-admissible affinoid subdomain of $X$, let $\B$ be an $\L$-stable affine formal model in $\O(Y)$ and let $\L' = \B \otimes_{\A} \L$. Then there is a natural isomorphism 
\[\hK{\sU(\L')}(Z) \stackrel{\cong}{\longrightarrow} \hsULK(Z)\]
for every $\L'$-admissible affinoid subdomain $Z$ of $Y$.
\end{prop}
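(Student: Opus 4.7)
The plan is to reduce the identification of the two Banach algebras to an identification of Lie--Rinehart algebras at the level of largest stable affine formal models, and then apply the functor $\h{U(-)}\otimes_\R K$.

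First I would justify that the statement makes sense. Since $Z$ is an $\L'$-admissible affinoid subdomain of $Y$, the composition $\O(X)\to \O(Y)\to \O(Z)$ can be analysed by Lemma \ref{FibreProds}(b) with $\C=\B$: the morphism $\O(Y)\to\O(Z)$ being $\L'$-admissible is equivalent to $\O(X)\to\O(Z)$ being $\L$-admissible. Hence $Z$ lies in $X_w(\L)$ and $\hsULK(Z)$ is defined. Now write $\D$ for the largest $\L$-stable affine formal model $\O(Z)^\diamond$ in $\O(Z)$, and write $\D'$ for the largest $\L'$-stable affine formal model in $\O(Z)$. The crucial step is Lemma \ref{FibreProds}(c), applied again with $\C=\B$ and $D=\O(Z)$, which gives $\D=\D'$.

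Next I would identify the two Lie--Rinehart algebras that appear in the definitions. Using the associativity of tensor product and the fact that the anchor map of $\L'=\B\otimes_\A\L$ is by construction the unique extension of that of $\L$ through $\A\to\B$, there is a canonical $\D$-linear map
\[
\D\otimes_\A \L \longrightarrow \D\otimes_\B \L'= \D\otimes_\B(\B\otimes_\A\L)
\]
which is an isomorphism of $\D$-modules with inverse induced by $(d,b,x)\mapsto (db,x)$. Both sides carry the structure of an $(\R,\D)$-Lie algebra provided by Lemma \ref{BaseExt}, and in both cases the anchor map is the unique $\D$-linear lift of $\rho_\L\colon\L\to\Der_\R(\A)$ to $\Der_\R(\D)$ (which exists by $\L$-stability of $\D$, equivalently $\L'$-stability). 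By uniqueness of these extensions and of the Lie bracket formula in Lemma \ref{BaseExt}, this $\D$-linear isomorphism is an isomorphism of $(\R,\D)$-Lie algebras.

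Finally, I would apply the functor $U$, which is a functor on $(\R,\D)$-Lie algebras, to obtain an isomorphism of $\R$-algebras $U(\D\otimes_\A\L)\cong U(\D\otimes_\B\L')$, then $\pi$-adically complete and tensor with $K$ to get the desired isomorphism $\hsULK(Z)\cong \hK{\sU(\L')}(Z)$. Naturality in $Z$ follows because all the identifications above are functorial in restrictions between $\L'$-admissible affinoid subdomains of $Y$. I do not expect a genuine obstacle here: the content is essentially bookkeeping, and the one point that requires care is verifying that the two constructions of the anchor map on the tensor product agree, which is immediate from the uniqueness clause in Corollary \ref{LiftEtale} applied to the \'etale morphism $\O(Y)\to\O(Z)$.
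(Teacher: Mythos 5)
Your proposal is correct and follows essentially the same approach as the paper: both proofs invoke parts (b) and (c) of Lemma \ref{FibreProds} to show that $Z$ is $\L$-admissible in $X$ and that the largest $\L'$-stable affine formal model in $\O(Z)$ coincides with $\O(Z)^\diamond$, and then identify $\O(Z)^\diamond\otimes_\B\L'$ with $\O(Z)^\diamond\otimes_\A\L$ as $(\R,\O(Z)^\diamond)$-Lie algebras before applying $\h{U(-)}\otimes_\R K$. The paper's proof is more compressed, simply asserting this last identification, while you spell out the verification that the two $(\R,\D)$-Lie algebra structures agree via the uniqueness clauses of Lemma \ref{BaseExt} and Corollary \ref{LiftEtale}; this is a reasonable amount of extra care but not a different route.
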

\begin{proof} Note that $\L'$ is an $(\R, \B)$-Lie algebra by part (a) of the Lemma, $Z$ is an $\L$-admissible affinoid subdomain of $X$ by part (b) of the Lemma, and the largest $\L'$-stable affine formal model $\C$ in $\O(Z)$ coincides with $\O(Z)^\diamond$ by part (c) of the Lemma. Hence
 $\C \otimes_{\B} \L' \cong \O(Z)^\diamond\otimes_{\A} \L$ and the result follows.
\end{proof}

\subsection{Comparing two affine formal models} \label{Compare}
Let $V$ and $W$ be two $K$-Banach spaces. We say that a $K$-linear map $f : V \to W$ is an \emph{isomorphism} if it is bounded, and has a bounded $K$-linear inverse.

\begin{prop}
Let $\sigma : A \to B$ be a morphism of reduced $K$-affinoid algebras, let $\A$ be an affine formal model in $A$ and let $\B \subset \B'$ be affine formal models in $B$ containing $\sigma(\A)$.
\be
\item Let $Q$ be a flat $\A$-module. Then there is an isomorphism of Banach $B$-modules
\[ \widehat{\B \otimes_{\A} Q} \otimes_\R K \stackrel{\cong}{\longrightarrow} \widehat{\B' \otimes_{\A} Q} \otimes_\R K.\]
\item Suppose that $\sigma$ is \'etale and that $A$ and $B$ are reduced. Let $\L$ be a smooth $(\R,\A)$-Lie algebra, and suppose that $\B$ and $\B'$ are $\L$-stable. Then there is an isomorphism of $K$-Banach algebras
\[ \widehat{ U( \B \otimes_{\A} \L) } \otimes_\R K  \stackrel{\cong}{\longrightarrow} \widehat{ U( \B'\otimes_{\A} \L) } \otimes_\R K.\]
\ee\end{prop}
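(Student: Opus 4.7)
The plan is to first prove (a) using an elementary cofinality argument relating $\B$ and $\B'$, and then deduce (b) by applying (a) to $Q = U(\L)$ after invoking Rinehart's Theorem.

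For part (a), the starting observation is that $\B$ and $\B'$ are both affine formal models in the same reduced $K$-affinoid algebra $B$. By Lemma~\ref{AdmRalg}(a) both are contained in $B^\circ$, and $B^\circ$ is a finitely generated $\B$-module, so $\B'/\B$ is finitely generated over the Noetherian ring $\B$. Since $\B' \otimes_\R K = B = \B \otimes_\R K$, this quotient is entirely $\pi$-power torsion, so there exists $N \geq 0$ with $\pi^N(\B'/\B) = 0$. Tensoring the short exact sequence $0 \to \B \to \B' \to \B'/\B \to 0$ of $\A$-modules with the flat $\A$-module $Q$ gives an injection $f : \B \otimes_\A Q \hookrightarrow \B' \otimes_\A Q$ whose cokernel is killed by $\pi^N$. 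Hence $\pi^N \cdot \id_{\B' \otimes_\A Q}$ factors through $f$, producing a map $g$ in the other direction with $fg = \pi^N \id$ and (using the injectivity of $f$) $gf = \pi^N \id$. Both maps are $\pi$-adically continuous, so the same identities hold after $\pi$-adic completion, and on inverting $\pi$ the scalar $\pi^N$ becomes a unit. Hence $\hat{f} \otimes_\R K$ is an isomorphism of Banach $B$-modules, with bounded two-sided inverse $\pi^{-N}(\hat{g} \otimes_\R K)$. This is the desired natural isomorphism.

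For part (b), first note that the smoothness of $\L$ combined with Rinehart's Theorem (Section~\ref{RineTh}) yields $\gr U(\L) \cong \Sym_\A(\L)$. Since $\L$ is projective over $\A$, each $\Sym^n_\A(\L)$ is a direct summand of the projective module $\L^{\otimes n}$, so $\gr U(\L)$ is projective, hence flat, over $\A$. An induction on the filtration degree using short exact sequences $0 \to U(\L)_{n-1} \to U(\L)_n \to \gr_n U(\L) \to 0$ then shows each $U(\L)_n$ is flat, and $U(\L) = \varinjlim U(\L)_n$ is flat as a filtered colimit of flat modules. Now applying Proposition~\ref{RineTh} to the inclusion $\A \hookrightarrow \B$ (respectively $\A \hookrightarrow \B'$), which extends to the anchor action via $\L$-stability, gives filtered $\B$-module (respectively $\B'$-module) isomorphisms $\B \otimes_\A U(\L) \cong U(\B \otimes_\A \L)$ and $\B' \otimes_\A U(\L) \cong U(\B' \otimes_\A \L)$.

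Applying part (a) with $Q = U(\L)$ then produces an isomorphism of Banach $B$-modules
\[\widehat{\B \otimes_\A U(\L)} \otimes_\R K \stackrel{\cong}{\longrightarrow} \widehat{\B' \otimes_\A U(\L)} \otimes_\R K,\]
which transports via the Rinehart isomorphisms to a bounded bijection $\widehat{U(\B \otimes_\A \L)} \otimes_\R K \to \widehat{U(\B' \otimes_\A \L)} \otimes_\R K$ with bounded inverse. This map is, by construction, the one induced by the morphism of $(\R,\B')$-Lie algebras $\B \otimes_\A \L \to \B' \otimes_\A \L$ through the functoriality of $U$; it is therefore an $\R$-algebra homomorphism, and hence a $K$-Banach algebra isomorphism. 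The main subtlety is the consistency check that the module isomorphism produced abstractly by part (a) really agrees with the map of $K$-Banach algebras coming from functoriality of $U$, but this is automatic because both arise from the same inclusion $\B \hookrightarrow \B'$.
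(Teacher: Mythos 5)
Your proof is correct and follows essentially the same route as the paper. For (a), the paper's argument is the same cofinality idea expressed a bit more compactly: $\pi^a\B' \subset \B$ for some $a$ gives embeddings $\B\otimes_\A Q \hookrightarrow \B'\otimes_\A Q \hookrightarrow \pi^{-a}\B\otimes_\A Q$, which after completing and inverting $\pi$ immediately give the isomorphism; your two-sided $\pi^N$-commensurability argument amounts to the same thing. For (b), the paper likewise proceeds by observing that $U(\L)$ is flat over $\A$ via Rinehart's theorem, transporting part (a) through the Rinehart isomorphism $U(\B\otimes_\A\L)\cong\B\otimes_\A U(\L)$, and noting the map is already a $K$-algebra homomorphism by functoriality of $U$. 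Your consistency check at the end is handled in the paper by starting from the functorial ring map and then showing it is an isomorphism, rather than constructing the module isomorphism first and identifying it afterwards, but the content is identical.

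One small nitpick: your justification that each $\Sym^n_\A(\L)$ is projective (``a direct summand of the projective module $\L^{\otimes n}$'') is slightly circular as written, since in mixed characteristic there is no symmetrizing idempotent and the surjection $\L^{\otimes n}\twoheadrightarrow \Sym^n_\A(\L)$ splits \emph{because} the target is projective. A noncircular argument is to embed $\L$ as a direct summand of a free module $\A^r$ and use $\Sym^n(\A^r) = \bigoplus_{a+b=n}\Sym^a(\L)\otimes\Sym^b(\L')$, or simply to observe that finitely generated projective modules over Noetherian rings are locally free and symmetric powers commute with localisation. The conclusion you want is true and the rest of the argument goes through unchanged.
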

\begin{proof} (a) $\B'$ is a finitely generated $\B$-module by Lemma \ref{AdmRalg}(a), so $\pi^a \B' \subset \B$ for some integer $a$. Since $Q$ is flat, we obtain $\B$-module embeddings
\[\B \otimes_{\A} Q \hookrightarrow \B' \otimes_{\A} Q \hookrightarrow \frac{1}{\pi^a}\B \otimes_{\A} Q\]
that induce the required isomorphism $\widehat{\B \otimes_A Q} \otimes_R K \stackrel{\cong}{\longrightarrow} \widehat{\B' \otimes_A Q} \otimes_\R K$ after completing and inverting $\pi$.

(b) Since $\B \subset \B'$ are $\L$-stable affine formal models, the natural inclusion $\B \otimes_{\A} \L \to \B' \otimes_{\A} \L$ is a homomorphism of $(\R,\B)$-Lie algebras, which induces an $\R$-algebra homomorphism $U(\B \otimes_{\A} \L) \to U(\B' \otimes_{\A} \L)$, and a $K$-algebra homomorphism 
\[\widehat{ U( \B \otimes_{\A} \L) } \otimes_\R K  \to \widehat{ U( \B'\otimes_{\A} \L) } \otimes_\R K\]
by functoriality. Now $U(\L)$ is a projective (hence flat) $\A$-module by \cite[Theorem 3.1]{Rinehart}, so in view of Proposition \ref{RineTh}, this homomorphism is an isomorphism of Banach $B$-modules by part (a). Hence it is also a $K$-Banach algebra isomorphism. 
\end{proof}

\begin{cor} Suppose that $\L$ is smooth.
\be \item $\hsULK$ is isomorphic to the restriction of $\F_{\O(X)^\circ  \otimes_{\A}U(\L)}$ to $X_w(\L)$. 
\item The presheaf of $K$-Banach algebras $\hsULK$ on $X_w(\L)$ is a sheaf. 
\ee
\end{cor}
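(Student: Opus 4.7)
The plan is to reduce (b) to (a), and to prove (a) by a short chain of three natural isomorphisms built from results already in place.

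For part (a), fix $Y \in X_w(\L)$ and write $Q := \O(X)^\circ \otimes_{\A} U(\L)$. Unpacking the definition of $\F_Q$ from Lemma \ref{SheafFQ} and using associativity of tensor products gives
\[\F_{Q}(Y) \;=\; \widehat{\O(Y)^\circ \otimes_{\O(X)^\circ} Q} \otimes_\R K \;\cong\; \widehat{\O(Y)^\circ \otimes_{\A} U(\L)} \otimes_\R K.\]
Since $\L$ is smooth, Rinehart's theorem (as used in Proposition \ref{RineTh}) shows that $U(\L)$ is a flat $\A$-module, so Proposition \ref{Compare}(a) applied to the inclusion $\O(Y)^\diamond \hookrightarrow \O(Y)^\circ$ of affine formal models in $\O(Y)$ with flat $\A$-module $U(\L)$ yields an isomorphism
\[\widehat{\O(Y)^\diamond \otimes_{\A} U(\L)} \otimes_\R K \;\cong\; \widehat{\O(Y)^\circ \otimes_{\A} U(\L)} \otimes_\R K.\]
Finally, since $Y$ lies in $X_w(\L)$ the formal model $\O(Y)^\diamond$ is $\L$-stable, so the anchor map $\L \to \Der_\R(\A)$ lifts to $\Der_\R(\O(Y)^\diamond)$; Proposition \ref{RineTh} then gives an isomorphism of filtered $\O(Y)^\diamond$-modules
\[\O(Y)^\diamond \otimes_{\A} U(\L) \;\stackrel{\cong}{\longrightarrow}\; U(\O(Y)^\diamond \otimes_{\A} \L),\]
and the right hand side, after $\pi$-adic completion and inversion of $\pi$, is by definition $\hsULK(Y)$. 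Chaining these three isomorphisms identifies $\F_Q(Y)$ with $\hsULK(Y)$.

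To upgrade this to an isomorphism of presheaves I would check naturality along any restriction $Z \subseteq Y$ in $X_w(\L)$: associativity and completed base change are tautologically natural; Proposition \ref{Compare}(a) is natural in the pair $(\B \subseteq \B')$, and it applies compatibly because the restriction maps send $\O(Y)^\diamond \to \O(Z)^\diamond$ and $\O(Y)^\circ \to \O(Z)^\circ$ (by Proposition \ref{AdmRalg}(c) and the maximality defining $\diamond$); the third isomorphism is natural in the Lie--Rinehart data via functoriality of $U$. For part (b), I note that $Q = \O(X)^\circ \otimes_{\A} U(\L)$ is flat over $\O(X)^\circ$ by base change from the $\A$-flatness of $U(\L)$, so Lemma \ref{SheafFQ} makes $\F_Q$ a sheaf on the finer $G$-topology $X_{\rig}$; its restriction to the coarser $G$-topology $X_w(\L)$ remains a sheaf, and by (a) that restriction is $\hsULK$.

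I do not anticipate any substantial obstacle here: the entire argument is a bookkeeping exercise combining Rinehart's theorem with the formal-model comparison already established. The most delicate point is the naturality verification at restrictions, but it is purely formal once one observes that $\diamond$ and $\circ$ both behave functorially for morphisms in $X_w(\L)$.
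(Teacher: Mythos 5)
Your proposal is correct and follows essentially the same route as the paper: the paper's proof of (a) consists of exactly the same chain — identify $\hsULK(Y)$ with $\widehat{\O(Y)^\diamond \otimes_{\A} U(\L)}\otimes_\R K$ via Rinehart (Proposition \ref{RineTh}), identify $\F_Q(Y)$ with $\widehat{\O(Y)^\circ \otimes_{\A} U(\L)}\otimes_\R K$ by associativity, and bridge the two affine formal models $\O(Y)^\diamond \subseteq \O(Y)^\circ$ via Proposition \ref{Compare}(a) — and part (b) is then immediate from Lemma \ref{SheafFQ} together with the observation that every $\L$-admissible covering is admissible. The only difference is that you spell out the naturality verification at restrictions, which the paper leaves implicit; this is a harmless elaboration rather than a new idea.
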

\begin{proof} (a) Let $Y$ be an $\L$-admissible affinoid subdomain of $X$. Then 
\[ \begin{array}{rcl} \hsULK(Y) & \cong & \widehat{ \O(Y)^\diamond \otimes_{\A} U(\L) } \otimes_\R K, \qmb{whereas} \\

\F_{\O(X)^\circ  \otimes_{\A}U(\L)}(Y) &\cong& \widehat{\O(Y)^\circ \otimes_{\A} U(\L)} \otimes_\R K. \end{array} 
\]
Since $\O(Y)^\diamond \subseteq \O(Y)^\circ$, we may apply the Proposition.

(b) This follows from Lemma \ref{SheafFQ} and part (a).
\end{proof}

We will now prove a generalisation of Tate's Acyclicity Theorem for $\hsULK$.

\subsection{Theorem} \label{TateFQ}
Let $X$ be a reduced $K$-affinoid variety and let $Q$ be a flat $\O(X)^\circ$-module. Then every finite affinoid covering $\U$ of $X$ is $\F_Q$-acyclic.
\begin{proof} By \cite[Lemma 8.2.2/2]{BGR} there is a rational covering $\V$ of $X$ which refines $\U$. The restriction of $\F_Q$ to an affinoid subdomain $Y$ of $X$ is $\F_{Q'}$ where $Q' = \O(Y)^\circ \otimes_{\O(X)^\circ} Q$ is a flat $\O(Y)^\circ$-module. Therefore by \cite[Corollary 8.1.4/3]{BGR}, it is enough to prove that $\V$ is $\F_Q$-acyclic. We can therefore assume that $\U$ is already a rational covering, associated to a collection of functions $f_1,\ldots, f_n \in \O(X)$ without common zero:
\[\U = \{X_1,\ldots,X_n\}\qmb{where} X_i = X \left( \frac{f_1}{f_i}, \ldots, \frac{f_n}{f_i} \right).\]
By rescaling the $f_i$ we may assume that $f_i \in \A := \O(X)^\circ$ for all $i$. 

Since $X$ is reduced, $\A$ an admissible $\R$-algebra by Lemma \ref{AdmRalg}(b). The ideal $\mathcal{I}$ in $\A$ generated by the $f_i$ contains a power of $\pi$ since the $f_i$ have no common zero in $\O(X)$, so we may consider its admissible formal blowing-up $f : \mathcal{X}' \to \mathcal{X} := \Spf(\A)$. By \cite[Theorem 4.1]{BL1}, there is an open Zariski covering $\U' = \{\mathcal{X}_1', \ldots, \mathcal{X}_n'\}$ of $\mathcal{X}'$ such that $X_i = \mathcal{X}_{i,\rig}'$ for all $i$. Therefore 
\[\O(\mathcal{X}_{i_1}' \cap \cdots \cap \mathcal{X}_{i_p}')  \otimes_\R K \stackrel{\cong}{\longrightarrow} \O(X_{i_1} \cap \cdots \cap X_{i_p})  \qmb{for all} i_1 < \cdots < i_p\]
by \cite[Corollary 4.6]{BL1}. The sheaf $\O$ has vanishing higher cohomology on each affine Noetherian formal scheme $\mathcal{X}_{i_1}' \cap \mathcal{X}_{i_2}' \cap \cdots \cap \mathcal{X}_{i_p}'$  by \cite[Theorem II.9.7]{Hart} so $\check{H}^i(\U', \O) \cong H^i(\mathcal{X}',\O)$ by \cite[Exercise III.4.11]{Hart} for all $i$. As the map $f : \mathcal{X}' \to \mathcal{X}$ is proper by construction, it follows that all cohomology groups of the augmented \v{C}ech complex $C^\bullet := C^\bullet_{\aug} (\U', \mathcal{O})$ are finitely generated $\A$-modules by \cite[Corollary 3.4.4]{EGAII}. 

Since each of the finitely many $\R$-algebras appearing in $C^\bullet$ is reduced and admissible, it follows from Proposition \ref{Compare}(a) that
\[ C^\bullet_{\aug}(\U,\F_Q) \cong \widehat{C^\bullet\otimes_{\A}  Q} \otimes_\R K.\]
Now $C^\bullet \otimes_\R K = C^\bullet_{\aug}( \U, \mathcal{O})$ by construction and this last complex is exact by Tate's Theorem \cite[Theorem 8.2.1/1]{BGR}. Therefore $C^\bullet$ has \emph{bounded} $\pi$-torsion cohomology, and the same holds for $C^\bullet \otimes_{\A} Q$ because $Q$ is a flat $\A$-module by assumption.  Therefore $\widehat{C^\bullet\otimes_{\A}  Q} \otimes_\R K$ is exact by Lemma \ref{BddTorsionHat}.
\end{proof}

\begin{cor} Let $X$ be a reduced $K$-affinoid variety and let $\L$ be a smooth $(\R,\A)$-Lie algebra for some affine formal model $\A$ in $\O(X)$. Then every  $\L$-admissible covering of $X$ is $\hsULK$-acyclic.
\end{cor}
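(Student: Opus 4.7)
The plan is to reduce this corollary directly to Theorem \ref{TateFQ} via the identification established in part (a) of the preceding corollary. Since that corollary says that $\hsULK$ agrees with the restriction of $\F_{\O(X)^\circ \otimes_{\A} U(\L)}$ to the sub-$G$-topology $X_w(\L)$, the only thing to verify is that $Q := \O(X)^\circ \otimes_{\A} U(\L)$ is a flat $\O(X)^\circ$-module, so that Theorem \ref{TateFQ} actually applies with this choice of $Q$.

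For the flatness: because $\L$ is assumed smooth, Rinehart's theorem (as recalled in \S\ref{RineTh}) gives that $U(\L)$ is a projective, and hence flat, $\A$-module. Flatness is preserved by base change along $\A \to \O(X)^\circ$, so $Q$ is indeed flat over $\O(X)^\circ$. This is the only non-formal input; every other ingredient is already in place.

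To conclude: let $\U$ be an $\L$-admissible covering of $X$. By definition this is a finite covering of $X$ by affinoid subdomains, so Theorem \ref{TateFQ} applied to the flat $\O(X)^\circ$-module $Q$ produces an exact augmented \v{C}ech complex $C^\bullet_{\aug}(\U, \F_Q)$. Since every member of $\U$ (and every finite intersection of members, by Corollary \ref{FibreProds}) lies in $X_w(\L)$, part (a) of the preceding corollary gives a term-by-term identification
\[ C^\bullet_{\aug}(\U, \hsULK) \cong C^\bullet_{\aug}(\U, \F_Q),\]
and the latter is exact. Hence $\U$ is $\hsULK$-acyclic, as required.

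I do not anticipate a genuine obstacle here: the hard work has already been done in Theorem \ref{TateFQ}, where the passage from admissible formal blowing-ups and the comparison of affine formal models to the Fr\'echet-theoretic statement was carried out. The smoothness hypothesis is used in exactly one place, namely to invoke Rinehart's flatness of $U(\L)$ over $\A$, which is what makes Theorem \ref{TateFQ} applicable in the first place.
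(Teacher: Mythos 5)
Your proof is correct and takes essentially the same route as the paper: establish flatness of $Q=\O(X)^\circ\otimes_{\A}U(\L)$ via Rinehart's theorem, apply Theorem \ref{TateFQ} to $\F_Q$, and transport acyclicity across the identification $\hsULK \cong \F_Q|_{X_w(\L)}$ from Corollary \ref{Compare}(a). You are slightly more explicit than the paper in noting (via Corollary \ref{FibreProds}) that finite intersections of members of the covering remain $\L$-admissible, which is what lets the term-by-term comparison of \v{C}ech complexes go through.
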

\begin{proof} The $\O(X)^\circ$-module $Q = \O(X)^\circ \otimes_{\A} U(\L)$ is projective by \cite[Theorem 3.1]{Rinehart}, hence flat, so the covering is $\mathcal{F}_Q$-acyclic. Now apply Corollary \ref{Compare}(a).
\end{proof}

\section{Exactness of localisation} \label{Flatness}
\subsection{One-variable Tate extensions}\label{OneTate}
Let $A$ be a (not necessarily commutative) Banach $K$-algebra. Then the \emph{free Tate algebra in one variable $t$ over $A$} is 
\[ A\langle t\rangle := \left\{\sum_{i=0}^\infty t^i a_i \in A[[t]] : a_i \to 0 \qmb{as} i \to \infty\right\}.\]
Similarly we can define $M\langle t\rangle$ for a Banach $A$-module $M$, and it is readily checked  that $M\langle t\rangle$ is naturally a Banach $A\langle t\rangle$-module.

We will soon need to understand certain torsion submodules of $M\langle t\rangle $. 

\begin{lem} Let $f \in A$.
\be
\item $M\langle t\rangle$ is $(f t - 1)$-torsion-free.
\item If $(t - f) \cdot \left(\sum_{j=0}^\infty t^j m_j\right) = 0$ then $fm_0 = 0$ and $f m_j = m_{j-1}$ for all $j \geq 1$.
\item If $f$ is central in $A$ and $M$ is Noetherian, then $M\langle t\rangle$ is $(t - f)$-torsion-free.
\ee\end{lem}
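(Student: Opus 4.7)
\emph{Plan of proof.} The computations for parts (a) and (b) are direct: one expands $(ft-1)\cdot\sum_j t^j m_j$ or $(t-f)\cdot\sum_j t^j m_j$ using the fact that $t$ is central in $A\langle t\rangle$ (so $ft^j = t^j f$ even when $f$ is not central in $A$), and equates coefficients of each power of $t$ to zero. For (a) this yields $m_0=0$ and $m_k=fm_{k-1}$ for $k\geq 1$, whence $m_k=f^k m_0=0$ inductively, no hypothesis on $M$ being required. For (b) the same bookkeeping gives the two stated relations $fm_0=0$ and $fm_k=m_{k-1}$ for every $k\geq 1$ immediately.

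The real content is (c). Assume $f$ is central in $A$, that $M$ is a Noetherian $A$-module, and that $\sum_j t^j m_j \in M\langle t\rangle$ is annihilated by $t-f$. First I plan to iterate the relation $m_{k-1}=fm_k$ from (b) to obtain
\[ m_j = f^i m_{j+i} \qmb{for all} i,j \geq 0,\]
and combining this with $fm_0=0$ to deduce $f^{k+1}m_k = f\cdot f^k m_k = f m_0 = 0$ for every $k\geq 0$.

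Because $f$ is central, multiplication by $f^k$ is an $A$-module endomorphism of $M$, so the kernels $\ker(f^k)$ form an ascending chain of $A$-submodules of $M$. The crucial step is to invoke Noetherianness of $M$: there exists $N$ with $\ker(f^k)=\ker(f^N)$ for all $k\geq N$. Since $m_k\in\ker(f^{k+1})$ by the previous paragraph, taking $k\geq N-1$ gives $f^Nm_k=0$. Applying this at $k=j+N$ and using $m_j = f^N m_{j+N}$ from the iterated relation then yields $m_j=0$ for every $j\geq 0$, as required.

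The main obstacle, such as it is, is that one cannot hope to avoid either hypothesis in (c) by a purely formal manipulation: without centrality of $f$ the submodules $\ker(f^k)$ need not be $A$-stable and the chain argument collapses, while without the Noetherian hypothesis one is reduced to trying to infer vanishing from the norm estimate $\|m_0\|=\|f^k m_k\| \leq \|f\|^k\|m_k\|$, which fails unless $f$ is power-bounded. So the plan is precisely to use both hypotheses to produce a \emph{uniform} annihilating power $f^N$ from the a priori $k$-dependent annihilators $f^{k+1}$, and then feed this back into the tower $m_j = f^N m_{j+N}$.
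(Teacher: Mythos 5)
Your proposal is correct and follows essentially the same path as the paper's proof: parts (a) and (b) by direct coefficient comparison, and part (c) by using centrality of $f$ to make the kernels $\ker f^k$ an ascending chain of $A$-submodules, invoking Noetherianness to stabilize the chain, and feeding the resulting uniform power $f^N$ back into the relation $m_j = f^N m_{j+N}$. The only cosmetic difference is bookkeeping of indices; the idea is identical.
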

\begin{proof} (a) If $(ft - 1) \sum_{j=0}^\infty t^j m_j = 0$ then $m_0 = 0$ and $fm_j = m_{j+1}$ for all $j \geq 0$. Hence $m_j = 0$ for all $j \geq 0$ by induction.

(b) This is a direct calculation, similar to part (a).

(c) Since $f$ acts by $A$-linear endomorphisms of $M$ and $M$ is Noetherian, the ascending chain of $A$-submodules $0 \subseteq \ker f \subseteq \ker f^2 \subseteq \cdots$ in $M$ must terminate at $\ker f^r$, say. Suppose that $(t - f) \cdot \left(\sum_{j=0}^\infty t^j m_j\right) = 0$. Then $f^{j+1} m_j = f m_0 = 0$ for all $j \geq 0$ by part (b), so $m_j \in \ker f^{j+1} = \ker f^r$ for $j \geq r$. Hence $0 = f^r m_{i+r} = m_i$ for all $i \geq 0$ and $\sum_{i=0}^\infty t^im_i = 0$.\end{proof}

\subsection{Lifting derivations from \ts{\A} to \ts{\A\langle t\rangle}}\label{LiftingDersToB}
We begin with an elementary result.

\begin{lem}Let $\A$ be a $\pi$-adically complete $\R$-algebra, let $u$ be an $\R$-linear derivation of $\A$ and let $b \in \A\langle t\rangle$. Then $u$ extends uniquely to an $\R$-linear derivation $v$ of $\A\langle t\rangle$ such that $v(t) = b$.
\end{lem}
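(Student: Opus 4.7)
The plan is to write down the formula for $v$ forced by $\R$-linearity, the Leibniz rule, and continuity, and then verify that this formula defines a derivation of $\A\langle t\rangle$. First, I would record the useful observation that any $\R$-linear derivation $w$ of a $\pi$-adically complete $\R$-algebra is automatically continuous for the $\pi$-adic topology: since $\pi\in\R$ and $w$ is $\R$-linear, $w(\pi^n a) = \pi^n w(a)$, so $w(\pi^n \A) \subseteq \pi^n \A$. In particular $u$ is continuous, any candidate extension $v$ will be too, and the polynomial ring $\A[t]$ is dense in its $\pi$-adic completion $\A\langle t\rangle$.

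Applying the Leibniz rule inductively forces $v(t^i) = i t^{i-1} b$ and then $v(t^i a) = i t^{i-1} b a + t^i u(a)$ for $a\in\A$. Extending by $\R$-linearity and continuity, the only possible definition is
$$ v\Bigl( \sum_{i\geq 0} t^i a_i \Bigr) \;:=\; \sum_{i\geq 0} t^i\, u(a_i) \;+\; \sum_{i\geq 1} i\, t^{i-1}\, b\, a_i. $$
Both series converge in $\A\langle t\rangle$: since $a_i \to 0$ $\pi$-adically and $u$ is continuous, $u(a_i)\to 0$; and the coefficients of $b a_i \in \A\langle t\rangle$ all have $\pi$-adic valuation at least that of $a_i$, which tends to infinity. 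The map $v$ is visibly $\R$-linear and satisfies $v|_\A = u$ and $v(t) = b$.

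For the derivation property, continuity of $v$ and of multiplication on $\A\langle t\rangle$ reduce the verification of $v(xy) = v(x) y + x v(y)$ to the dense subring $\A[t]$, and then $\R$-bilinearity reduces it further to monomials $x = t^i a$, $y = t^j c$ with $a,c\in \A$. This comes down to the identity
$$ t^{i+j} u(ac) + (i+j)\, t^{i+j-1}\, b a c \;=\; t^{i+j}\bigl(u(a) c + a u(c)\bigr) + t^{i+j-1}\bigl(i\, bac + j\, abc\bigr), $$
which holds because $u$ is a derivation and $\A$ is commutative (so that $abc = bac$). Uniqueness is then immediate: any $\R$-linear derivation $v'$ with $v'|_\A = u$ and $v'(t) = b$ agrees with $v$ on $\A[t]$ by the Leibniz rule, and the two continuous extensions must coincide on the $\pi$-adic completion.

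No step is a real obstacle; the only mildly subtle point is noticing that $\R$-linear derivations are automatically $\pi$-adically continuous, which is what allows everything to be reduced to the dense polynomial subring.
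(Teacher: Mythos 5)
Your proof is correct and follows essentially the same route as the paper's: the paper also constructs the derivation on the dense subring $\A[t]$ (appealing to the universal property of polynomial rings rather than writing the formula), notes that $\R$-linear maps are automatically $\pi$-adically continuous, and extends by continuity to the complete ring $\A\langle t\rangle$. You have simply filled in the explicit formula and the convergence check that the paper leaves implicit.
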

\begin{proof} There is a unique $\R$-linear derivation $v_0 : \A[t] \to \A\langle t\rangle$ extending $u : \A \to \A$ such that $v_0(t) = b$. Since $\A$ is $\pi$-adically complete, so is $\A\langle t\rangle$, and $v_0 : \A[t] \to \A\langle t\rangle$ is $\pi$-adically continuous, being $\R$-linear. Since $\A[t]$ is dense in $\A\langle t\rangle$, $v_0$ extends uniquely to an $\R$-linear derivation $v$ of $\A\langle t\rangle$.
\end{proof}

\begin{prop} Let $\A$ be an affine formal model in a reduced $K$-affinoid algebra $A$ and let $\L$ be an $(\R, \A)$-Lie algebra with anchor map $\rho : \L \to \Der_\R(\A)$. Write $x\cdot a = \rho(x)(a)$ for $x \in \L$ and $a \in \A$. Let $f \in A$ be such that $\L \cdot f \subset \A$.  Then there are two lifts \[\sigma_1, \sigma_2 : \L \to \Der_\R(\A\langle t\rangle) \] 
of the action of $\L$ on $\A$ to $\A\langle t\rangle$, such that 
\[\sigma_1(x)(t) = x \cdot f \qmb{and} \sigma_2(x)(t) = -t^2 (x \cdot f) \qmb{for all} x \in \L.\]
\end{prop}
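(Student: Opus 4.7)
The plan is to build each $\sigma_i(x)$ by invoking the preceding Lemma to extend the derivation $\rho(x)$ of $\A$ to an $\R$-linear derivation of $\A\langle t\rangle$ with a prescribed value at $t$; the real work is then to check that the assignment $x \mapsto \sigma_i(x)$ is $\A$-linear and respects the Lie bracket.

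First I would observe that $\rho(x)$ extends uniquely to a $K$-linear derivation of $A = \A \otimes_\R K$ since $\A$ is $\R$-flat; this is what gives $x \cdot f \in A$ a meaning, and by hypothesis the result lies in $\A$. Consequently both $x \cdot f$ and $-t^2(x \cdot f)$ are elements of $\A\langle t\rangle$, and the preceding Lemma produces derivations $\sigma_i(x) \in \Der_\R(\A\langle t\rangle)$ uniquely determined by their values at $t$. By construction their restrictions to $\A$ equal $\rho(x)$, so both candidates lift the anchor.

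For $\A$-linearity and additivity in $x$, I would compare $\sigma_i(ax + by)$ with $a\sigma_i(x) + b\sigma_i(y)$: these are both $\R$-linear derivations of $\A\langle t\rangle$, both restrict to $\rho(ax + by)$ on $\A$, and both send $t$ to the same element of $\A\langle t\rangle$ by $\A$-linearity of $\rho$; hence they coincide by the uniqueness clause of the Lemma.

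The remaining point is that $\sigma_i$ preserves brackets, and this is where the only mild obstacle lies. Since both $\sigma_i([x,y])$ and $[\sigma_i(x), \sigma_i(y)]$ are $\R$-linear derivations of $\A\langle t\rangle$, uniqueness reduces the check to agreement on $\A$ and at $t$. Agreement on $\A$ is immediate from $\rho([x,y]) = [\rho(x), \rho(y)]$. Agreement at $t$ is a short direct calculation: for $\sigma_1$ the expansion collapses to $x \cdot (y \cdot f) - y \cdot (x \cdot f) = [x,y] \cdot f$, using that the extension of $\rho$ to $A$ is still a Lie homomorphism. For $\sigma_2$ the expansion produces additional $t^3$-terms proportional to $(x \cdot f)(y \cdot f) - (y \cdot f)(x \cdot f)$, which vanish by commutativity of $\A$, leaving $-t^2([x,y] \cdot f) = \sigma_2([x,y])(t)$. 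This last cancellation, though forced by commutativity, is the only step where the precise choice of formula for $\sigma_2(x)(t)$ really matters.
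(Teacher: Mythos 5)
Your proof is correct and takes essentially the same route as the paper's: apply the preceding Lemma to produce $\sigma_i(x)$ with the prescribed value at $t$, then reduce both $\A$-linearity and the Lie-bracket identity to checking that the two candidate derivations agree on $\A$ and at $t$. The only difference is cosmetic --- you invoke the uniqueness clause of the Lemma to conclude equality, while the paper observes directly that the difference of the two derivations vanishes on the dense subring $\A[t]$; these are the same argument, since the Lemma's uniqueness is proved precisely by that density.
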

\begin{proof} The $\R$-algebra $\A$ is admissible and is therefore $\pi$-adically complete. By the Lemma, for any $x \in \L$ there is a unique $\R$-linear derivation $\sigma_1(x)$ of $\A\langle t\rangle$ such that $\sigma_1(x)(t) = x\cdot f$. The map $\sigma_1 : \L \to \Der_\R(\A\langle t\rangle)$ obtained in this way is $\A$-linear because $\rho$ is $\A$-linear. Let $x,y \in \L$; then
\[ \sigma_1([x,y])(t) = [x,y]\cdot f = x\cdot (y \cdot f) - y\cdot (x\cdot f) = \left(\sigma_1(x)\sigma_1(y) - \sigma_1(y)\sigma_1(x)\right)(t)\]
so the derivation $\sigma_1([x,y]) - [\sigma_1(x),\sigma_1(y)]$ is identically zero on $\A[t]$. Since $\A\langle t\rangle$ is $\pi$-adically complete and $\A[t]$ is dense in $\A\langle t\rangle$, $\sigma_1$ is a Lie homomorphism.

Similarly we can construct an $\A$-linear map $\sigma_2 : \L \to \Der_\R(\A\langle t\rangle)$ extending $\rho$ such that $\sigma_2(x)(t) = -t^2 x\cdot f$ for all $x \in \L$. Let $x,y \in \L$ and write $x \cdot b = \sigma_2(x)(b)$ for $b \in \A\langle t\rangle$. Then $x \cdot (y \cdot t) = x \cdot(-t^2(y\cdot f)) = 2t^3(x \cdot f)(y \cdot f) - t^2 x \cdot (y \cdot f)$. Therefore $x \cdot (y \cdot t) - y \cdot (x \cdot t) = -t^2 [x,y] \cdot f = [x,y] \cdot t$ because $f \in \A$ and $\rho$ is a Lie homomorphism. Hence $\sigma_2$ is also a Lie homomorphism.\end{proof}

\subsection{\ts{\L}-stable affine formal models for Weierstrass and Laurent domains}\label{WeiLau1}
Let $A$ be a reduced $K$-affinoid algebra, let $B = A \langle t \rangle$ for $i=1,2$ and fix $f \in A$. Let $\A$ be an affine formal model for $A$, and choose $a \in \mathbb{N}$ such that $\pi^af \in \A$. Define
\[ u_1 = \pi^a t - \pi^a f \qmb{and} u_2 := \pi^a f t - \pi^a \in \B := \A\langle t\rangle.\]
Let $X := \Sp(A)$ and let $C_i = B/ u_i B$ be the $K$-affinoid algebras corresponding to the Weierstrass and Laurent subdomains 
\[X_1 := X(f) = \Sp(C_1) \qmb{and} X_2 := X(1/f) = \Sp(C_2)\]
 of $X$, respectively. 

Let $\L$ be an $(\R,\A)$-Lie algebra such that $\L \cdot f \subset \A$. Then by Proposition \ref{LiftingDersToB}, the action of $\L$ on $\A$ lifts to $\B$ in two different ways $\sigma_1$ and $\sigma_2$, and $\L_i := \B \otimes_{\A} \L$ becomes an $(\R, \B)$-Lie algebra by Lemma \ref{BaseExt}, with anchor map $1 \otimes \sigma_i$.

\begin{lem} Let $\L$ be an $(\R,\A)$-Lie algebra and let $f \in A$ be a non-zero element such that $\L \cdot f \subset \A$. Then the affinoid subdomains $X_i$ of $X$ are $\L$-admissible.
\end{lem}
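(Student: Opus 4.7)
The plan is to exhibit, for each $i \in \{1,2\}$, an explicit $\L$-stable affine formal model $\C_i$ in $C_i$, obtained as a torsion-free quotient of $\B$. The key observation is that the lifts $\sigma_i$ from Proposition \ref{LiftingDersToB} preserve the ideal $u_i\B$ in $\B$, and hence descend through any further quotient of $\B/u_i\B$.

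First I would verify this key claim. Using $\sigma_i(x)|_\A = \rho(x)$, the relation $\pi^a f \in \A$, and the hypothesis $\L \cdot f \subseteq \A$, one computes
\[\sigma_1(x)(u_1) = \pi^a \sigma_1(x)(t) - \sigma_1(x)(\pi^a f) = \pi^a(x\cdot f) - \pi^a(x\cdot f) = 0\]
and, similarly expanding and then using the identity $\pi^a(t - ft^2) = -t\,u_2$,
\[ \sigma_2(x)(u_2) = \sigma_2(x)(\pi^a f)\,t + (\pi^a f)\,\sigma_2(x)(t) = \pi^a(x\cdot f)(t - ft^2) = -(x\cdot f)\, t\, u_2, \]
where the final element lies in $u_2\B$ because $(x\cdot f)\,t \in \B$ (as $x\cdot f \in \A$). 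So in both cases $\sigma_i(x)$ preserves $u_i\B$ and descends to an $\R$-linear derivation of $\B/u_i\B$.

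Next I would define $\C_i$ as the image of $\B/u_i\B$ under its natural map into $(\B/u_i\B) \otimes_\R K = B/u_iB = C_i$, the second equality being valid because $\pi^a$ is a unit in $B$. So $\C_i$ is the quotient of $\B/u_i\B$ by its $\pi^\infty$-torsion submodule. I would then check that $\C_i$ is indeed an affine formal model in $C_i$: it is $\pi$-torsion-free by construction, has generic fibre $C_i$, and is topologically of finite type over $\R$ as a continuous quotient of $\B$ with closed kernel --- here I use that $\B = \A\langle t\rangle$ is Noetherian, being the $\pi$-adic completion of $\A[t]$ over the Noetherian admissible $\R$-algebra $\A$. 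The descended derivation on $\B/u_i\B$ passes through the final quotient too, since any $\R$-linear endomorphism stabilises the $\pi^\infty$-torsion submodule; this produces $\bar\sigma_i \colon \L \to \Der_\R(\C_i)$ lifting the anchor $\rho$, and thereby witnesses that $\C_i$ is $\L$-stable.

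The main technical subtlety is that $\B/u_i\B$ itself may have nontrivial $\pi$-torsion when $a \geq 2$, because $u_i$ is visibly divisible by $\pi^a$ in $B$ but this divisibility is not witnessed inside $\B$; thus one cannot take $\B/u_i\B$ directly as the affine formal model. Passing to the torsion-free quotient $\C_i$ is the natural fix, and the pleasing feature of the above computation of $\sigma_i(x)(u_i)$ is that the $\L$-action is carried through this additional quotient without requiring any further work.
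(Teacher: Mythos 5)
Your proof is correct and follows the same approach as the paper: compute $\sigma_1(x)(u_1)=0$ and $\sigma_2(x)(u_2)=-(x\cdot f)\,t\,u_2$, conclude that $u_i\B$ is a $\sigma_i(\L)$-stable ideal, and pass to the image of $\B/u_i\B$ in $C_i$ to obtain the $\L$-stable affine formal model. Your discussion of the $\pi$-torsion subtlety and why one must quotient out the torsion before declaring an affine formal model is a welcome fleshing-out of a step the paper asserts without comment.
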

\begin{proof}Let $\C_i := \B / u_i \B$. A direct calculation shows that
\[ \sigma_1(x)(u_1) = 0 \qmb{and} \sigma_2(x)(u_2) = -(x\cdot f) t u_2 \qmb{for all} x \in \L.\]
It follows that $u_i \B$ is a $\sigma_i(\L)$-stable ideal of $\B$, and therefore the image $\overline{\C_i}$ of $\C_i$ in $C_i$ is an $\L$-stable affine formal model in $C_i$. Hence $X_i$ is $\L$-admissible.
\end{proof}

\begin{prop} Let $\L$ be a smooth $(\R,\A)$-Lie algebra and let $f\in A$ be a non-zero element such that $\L \cdot f \subset \A$. 
\be 
\item $U(\L_i) / \pi U(\L_i)$ is isomorphic to $\left(U(\L) / \pi U(\L)\right)[t]$ as a $\B$-module.
\item $\hK{U(\L_i)}$ is a flat $\hK{U(\L)}$-module on both sides.
\item There is a short exact sequence 
\[ 0 \to \hK{U(\L_i)} \stackrel{u_i \cdot}{\longrightarrow} \hK{U(\L_i)} \to \hsULK(X_i) \to 0\]
of right $\hK{U(\L_i)}$-modules, and a short exact sequence
\[ 0 \to \hK{U(\L_i)} \stackrel{\cdot u_i}{\longrightarrow} \hK{U(\L_i)} \to \hsULK(X_i) \to 0\]
of left $\hK{U(\L_i)}$-modules.
\ee
\end{prop}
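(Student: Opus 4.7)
The plan is to prove (a), (b), (c) in order, exploiting the fact that by Proposition \ref{RineTh} we have filtered isomorphisms $U(\L_i) \cong \B \otimes_\A U(\L)$ as left $\B$-modules and $U(\L_i) \cong U(\L) \otimes_\A \B$ as right $\B$-modules, and that $U(\L)$ is flat (even projective) over $\A$ since $\L$ is smooth.

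For (a), I would tensor the left-$\B$-module isomorphism with $\R/\pi$: using flatness of $U(\L)$ over $\A$, $U(\L_i)/\pi \cong \B/\pi\B \otimes_{\A/\pi\A} U(\L)/\pi U(\L)$. Since $\B = \A\langle t\rangle$ reduces to $(\A/\pi)[t]$ modulo $\pi$, the right-hand side becomes $(U(\L)/\pi U(\L))[t]$ as $\B$-modules. For (b), the identical argument with $\pi^n$ in place of $\pi$ shows $U(\L_i)/\pi^n U(\L_i) \cong (U(\L)/\pi^n U(\L))[t]$ is a polynomial ring, hence flat over $U(\L)/\pi^n U(\L)$ on both sides. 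Since $\h{U(\L_i)}$ is $\pi$-torsion-free and its reduction mod $\pi$ is flat over $\h{U(\L)}/\pi$, a standard lifting result for complete Noetherian rings shows $\h{U(\L_i)}$ is flat over $\h{U(\L)}$ on both sides; inverting $\pi$ gives (b).

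For (c), the completion of the Rinehart isomorphism identifies $\hK{U(\L_i)}$, as a left $\hK{\B}$-module, with $\hK{U(\L)}\langle t\rangle$: every element has a unique expansion $\sum t^k m_k$ with $m_k \in \hK{U(\L)}$ and $\|m_k\|\to 0$. To prove left-regularity of $u_i$, since $\hK{U(\L_i)}$ is $\pi$-torsion-free it suffices to show $(t-f)$ (for $i=1$) or $(ft-1)$ (for $i=2$) acts injectively. Using that $f$ and $t$ commute in $\hK{\B}$, direct computation gives
\begin{align*}
(t-f) \sum t^k m_k &= \sum_\ell t^\ell(m_{\ell-1} - fm_\ell), \\
(ft-1)\sum t^k m_k &= -m_0 + \sum_{\ell\geq 1} t^\ell(fm_{\ell-1} - m_\ell),
\end{align*}
with $m_{-1}=0$. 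In case $i=2$, uniqueness forces $m_0 = 0$ and $m_\ell = f m_{\ell-1}$, so $m_\ell = f^\ell m_0 = 0$ by induction; no Noetherianness is required. For $i=1$, the relations $fm_0 = 0$ and $m_{\ell-1} = f m_\ell$ give $f^{\ell+1} m_\ell = 0$; the ascending chain of right ideals $\ker_R(f^n) := \{y : f^n y = 0\}$ in the Noetherian ring $\hK{U(\L)}$ stabilises at some $r$, so $m_\ell \in \ker_R(f^r)$ for $\ell \geq r-1$, and iterating the recursion gives $m_{\ell - r} = f^r m_\ell = 0$, forcing all $m_k = 0$. Right-regularity is handled symmetrically using the right-$\hK{\B}$-module expansion $\sum m_k t^k$ from the dual Rinehart isomorphism (for $i=1$ one can alternatively invoke centrality of $t-f$, which follows from the design of $\sigma_1$ in Proposition \ref{LiftingDersToB}, to deduce right-regularity from left-regularity directly).

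For the identification of the cokernel with $\hsULK(X_i)$, let $\overline{\C_i}$ be the $\L$-stable affine formal model in $C_i$ constructed in the Lemma of \S \ref{WeiLau1}, and let $J_i \subset \B$ be its defining kernel, which is the $\pi$-saturation of $u_i\B$. Proposition \ref{RineTh} gives $U(\overline{\C_i} \otimes_\A \L) \cong \overline{\C_i} \otimes_\B U(\L_i) = U(\L_i)/J_i U(\L_i)$, and Proposition \ref{Compare}(b) identifies its completion with $\hsULK(X_i)$. Since $J_i / u_i\B$ is $\pi$-power torsion, so is $J_i U(\L_i)/u_i U(\L_i)$, and passing to $\pi$-adic completion followed by inversion of $\pi$ yields $\hK{J_i U(\L_i)} = \hK{u_i U(\L_i)} = u_i \hK{U(\L_i)}$ (the last equality because the principal ideal is closed in the Noetherian Banach algebra $\hK{U(\L_i)}$ by the open mapping theorem once regularity of $u_i$ is in hand). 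Combined with regularity, this gives the two short exact sequences. The main obstacle is the regularity statement for $i=1$, which requires invoking Noetherianness of $\hK{U(\L)}$; the secondary subtlety is verifying that $\pi$-adic completion interacts well with the passage $u_i U(\L_i) \rightsquigarrow u_i \hK{U(\L_i)}$.
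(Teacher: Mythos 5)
Your parts (a) and (b) are essentially the paper's argument. For (a) the reduction mod $\pi$ is identical. For (b), the paper passes to the associated graded with respect to the $\pi$-adic filtration and cites \cite[Proposition 1.2]{ST}: $\gr\hK{U(\L_i)}\cong k[t]\otimes_k\gr\hK{U(\L)}$ is visibly flat over $\gr\hK{U(\L)}$. Your ``standard lifting result for complete Noetherian rings'' is morally the same graded argument, but you should be aware that the ordinary local flatness criterion does not directly apply here because $\h{U(\L_i)}$ is not a finitely generated $\h{U(\L)}$-module; the precise tool is exactly \cite[Proposition 1.2]{ST}.

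For (c) you take a genuinely different and substantially more laborious route. The paper proves the exactness \emph{before} completion: since $\B=\A\langle t\rangle$ is a domain, $u_i$ is regular in $\B$, and tensoring the exact sequence $0\to\B\xrightarrow{u_i}\B\to\C_i\to 0$ with the flat (even projective) $\B$-module $U(\L_i)$ gives $0\to U(\L_i)\xrightarrow{u_i}U(\L_i)\to U(\C_i\otimes_\A\L)\to 0$ at once, with no analysis of kernels needed. Then exactness of $\pi$-adic completion on finitely generated modules over the Noetherian ring $U(\L_i)$ (\cite[\S3.2.3(ii)]{Berth}) promotes this to the completion, and Lemma \ref{TorsUL} together with Proposition \ref{Compare}(b) identifies the cokernel with $\hsULK(X_i)$ cleanly. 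You instead prove regularity of $u_i$ directly \emph{inside the completion} via the $\langle t\rangle$-expansion and an ascending-chain argument on $\ker(f^n)$; that argument is correct, but it amounts to re-deriving in the Banach completion what the paper gets for free from flatness over the domain $\B$ at the uncompleted level. In fact your chain-condition computation is essentially Lemma \ref{OneTate} of the paper, which the authors use not here but in the proof of Theorem \ref{WeiLau2}. Your handling of the cokernel is likewise more involved: you introduce the saturation $J_i$ of $u_i\B$ and must argue that $J_iU(\L_i)/u_iU(\L_i)$ is bounded $\pi$-torsion and that completion collapses it, whereas the paper lets Lemma \ref{TorsUL} absorb exactly this issue. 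Your final appeal to the open mapping theorem for closedness of $u_i\hK{U(\L_i)}$ is also unnecessary: the identification $\h{u_iU(\L_i)}=u_i\h{U(\L_i)}$ already follows from \cite[\S3.2.3]{Berth} once $U(\L_i)$ is Noetherian. Nothing is wrong, but the paper's route is markedly more economical because it front-loads all the work into pre-completion flatness and then invokes exactness of completion once.
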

\begin{proof}
(a) By Proposition \ref{RineTh}, there is a $\B$-module isomorphism $U(\L_i) = U(\B \otimes_{\A} \L) \cong \B \otimes_{\A} U(\L)$. It induces $\B$-module isomorphisms
\[ \frac{U(\L_i)}{\pi U(\L_i)} \cong \frac{\B}{\pi \B} \otimes_{\frac{\A}{\pi \A}} \frac{U(\L)}{\pi U(\L)} \cong k[t] \otimes_k \frac{U(\L)}{\pi U(\L)}.\]

(b) The associated graded ring $\gr \hK{U(\L_i)}$ with respect to the $\pi$-adic filtration is $k[t] \otimes_k \gr \hK{U(\L)}$, which is flat over $\gr \hK{U(\L)}$. Now apply \cite[Proposition 1.2]{ST}.

(c) By symmetry, it is sufficient to prove the first statement. By definition, the sequence $0 \to \B \stackrel{u_i\cdot}{\longrightarrow} \B \to \C_i \to 0$ is exact. Tensor it on the right with the flat left $\B$-module $U(\L_i)$ and apply Proposition \ref{RineTh} to get  a short exact sequence of right $U(\L_i)$-modules $0 \to U(\L_i) \stackrel{u_i \cdot}{\longrightarrow} U(\L_i) \to U(\C_i \otimes_{\A} \L) \to 0$. Since $U(\L_i)$ is Noetherian, $\pi$-adic completion is exact on finitely generated $U(\L_i)$-modules by \cite[\S 3.2.3(ii)]{Berth}. Hence $0 \to \hK{U(\L_i)} \stackrel{u_i \cdot}{\longrightarrow} \hK{U(\L_i)} \to \hK{U(\C_i \otimes_{\A} \L)} \to 0$ is exact. Now there are natural isomorphisms
\[ \hK{U(\C_i \otimes_{\A} \L)} \stackrel{\cong}{\longrightarrow} \hK{U(\overline{\C_i} \otimes_{\A} \L)} \stackrel{\cong}{\longrightarrow}\hK{U(C_i^\diamond \otimes_{\A} \L)} = \hsULK(X_i)\]
by Lemma  \ref{TorsUL} and Proposition \ref{Compare}(b). 
\end{proof}

\begin{rmk} It follows from part (c) of the Proposition that the image of $\hsULK(X)$ in $\hsULK(X_1)$ is dense since it also contains the image of $t$.
\end{rmk}

\subsection{Flatness for Weierstrass and Laurent embeddings}\label{WeiLau2}

We keep the notation from the previous subsection, and recall that $B = A \langle t \rangle$.
\begin{lem} Let $M$ be a finitely generated $\hK{U(\L)}$-module. Then there is a natural isomorphism of Banach $B$-modules $\eta_M \colon M\langle t\rangle \stackrel{\cong}{\longrightarrow} \hK{U(\L_i)} \otimes_{\hK{U(\L)}} M$. 

Similarly, if $N$ is a finitely generated right $\hK{U(\L)}$-module there is a natural isomorphism of Banach $B$-modules $\eta_N\colon N\langle t\rangle \stackrel{\cong}{\longrightarrow} N\otimes_{\hK{U(\L)}}\hK{U(\L_i)}$. 
\end{lem}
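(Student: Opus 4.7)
The plan is to identify $\hK{U(\L_i)}$ itself with the one-variable Tate extension $\hK{U(\L)}\langle t\rangle$, and then reduce the statement for a finitely generated module $M$ to the finite-free case plus a cokernel identification.

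First I would apply Rinehart's Proposition (Section \ref{RineTh}) to the morphism $\A \to \B$ to obtain a natural isomorphism $U(\L_i) \cong \B \otimes_\A U(\L)$ of filtered left $\B$-modules, and analogously $U(\L_i) \cong U(\L) \otimes_\A \B$ of right $\B$-modules; this uses that $U(\L)$ is a projective, hence flat, $\A$-module by \cite[Theorem 3.1]{Rinehart} since $\L$ is smooth. Combining this with the fact that $\B = \A\langle t\rangle$ is the $\pi$-adic completion of $\A[t]$, $\pi$-adic completion followed by inverting $\pi$ identifies $\hK{U(\L_i)}$ with $\hK{U(\L)}\langle t\rangle$ as a Banach $(\hK{U(\L)}, \hK{U(\L)})$-bimodule: the element $t \in \B$ matches the Tate variable and Rinehart's isomorphism becomes a coefficient-wise identification of power series whose coefficients tend $\pi$-adically to zero.

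Next I define $\eta_M$ by the rule $\sum_{j\ge 0} t^j m_j \mapsto \sum_{j\ge 0} t^j \otimes m_j$. This is well-defined because $M$, being finitely generated over the Noetherian Banach algebra $\hK{U(\L)}$, carries a canonical Banach topology, so the partial sums are Cauchy in the Banach $\hK{U(\L_i)}$-module $\hK{U(\L_i)} \otimes_{\hK{U(\L)}} M$ (which is itself finitely generated, and hence canonically a Banach module, since $\hK{U(\L_i)}$ is Noetherian). After applying the identification from Step 1, the whole problem reduces to showing that $\eta_M \colon M\langle t\rangle \to \hK{U(\L)}\langle t\rangle \otimes_{\hK{U(\L)}} M$ is an isomorphism of Banach $B$-modules.

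To prove this, I would pick a finite presentation $\hK{U(\L)}^r \xrightarrow{\phi} \hK{U(\L)}^s \twoheadrightarrow M$, available by Noetherianity. The finite-free case of $\eta$ is immediate by Step 1, applied coefficient-wise. Right exactness of the algebraic tensor product identifies the target of $\eta_M$ with the cokernel of $\phi\langle t\rangle \colon \hK{U(\L)}\langle t\rangle^r \to \hK{U(\L)}\langle t\rangle^s$, and the task is to compare this cokernel with $M\langle t\rangle$. Surjectivity follows by coefficient-wise lifting: any null sequence in $M$ lifts, via the non-Archimedean open mapping theorem applied to the Banach surjection $\hK{U(\L)}^s \twoheadrightarrow M$, to a null sequence in $\hK{U(\L)}^s$. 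For injectivity, if each coefficient $v_j$ lies in $\phi(\hK{U(\L)}^r)$ with $v_j \to 0$, then since $\phi(\hK{U(\L)}^r)$ is closed in $\hK{U(\L)}^s$ (finitely generated submodules of finitely generated Banach modules over a Noetherian Banach algebra are automatically closed), a second application of the open mapping theorem produces a null sequence $w_j \in \hK{U(\L)}^r$ with $\phi(w_j) = v_j$, so $\sum t^j v_j$ lies in the image of $\phi\langle t\rangle$. The right-module statement is entirely symmetric via the right-module version of Proposition \ref{RineTh}. The main obstacle is the careful use of the open mapping theorem to lift null sequences to null sequences, together with the closedness of $\phi(\hK{U(\L)}^r)$, which is where the bulk of the technical bookkeeping lies.
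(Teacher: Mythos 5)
Your proof is correct and proceeds along essentially the same lines as the paper: reduce to the case $M = \hK{U(\L)}$ via a finite presentation and right exactness, then identify the free case using a $\B\otimes_\A U(\L)$ description of $U(\L_i)$. The paper cites Proposition \ref{WeiLau1}(a) to get $\eta_{\hK{U(\L)}}$ an isomorphism and then appeals to "right exactness" of $(-)\langle t\rangle$ and the Five Lemma, leaving the open-mapping content implicit; you spell that out explicitly (lifting null sequences, closedness of $\phi(\hK{U(\L)}^r)$), which is exactly where the real technical work sits, so your version is a bit more detailed on the step the paper suppresses.

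One small inaccuracy worth flagging: you assert that $\hK{U(\L_i)} \cong \hK{U(\L)}\langle t\rangle$ as a $(\hK{U(\L)},\hK{U(\L)})$-bimodule with everything coefficient-wise. The \emph{right} $\hK{U(\L)}$-action and the left $B=A\langle t\rangle$-action are indeed coefficient-wise under Rinehart's identification $U(\L_i)\cong\B\otimes_\A U(\L)$ (the first is right multiplication on the second tensor factor, the second is left multiplication on the first). But the \emph{left} $\hK{U(\L)}$-action is not coefficient-wise: $t$ does not commute with $\L$ in $U(\L_i)$, since $[1\otimes x, t] = \sigma_i(x)(t) \ne 0$ in general. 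Fortunately your argument never uses the left $\hK{U(\L)}$-structure---only the $(\,B,\hK{U(\L)}\,)$-bimodule structure enters the formation of $\hK{U(\L_i)}\otimes_{\hK{U(\L)}} M$ as a left $B$-module---so this slip is harmless, but the bimodule claim as stated should be corrected to $(B,\hK{U(\L)})$.
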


\begin{proof} Choose a finitely generated $\h{U(\L)}$-submodule $\M$ in $M$ which generates $M$ as a $K$-vector space. Then
\[\hK{U(\L_i)} \otimes_{\hK{U(\L)}} M \cong \left(\h{U(\L_i)} \otimes_{\h{U(\L)}} \M\right) \otimes_\R K.\]
The finitely generated $\h{U(\L_i)}$-module $\h{U(\L_i)} \otimes_{\h{U(\L)}} \M$ is $\pi$-adically complete by \cite[\S 3.2.3(v)]{Berth} because $\h{U(\L_i)}$ is Noetherian. Therefore, for any sequence of elements $m_j \in M$ tending to zero, the series $\sum_{j=0}^\infty t^j \otimes m_j$ converges to a unique element $\eta_M\left(\sum_{j=0}^\infty t^j m_j \right)$ in $\hK{U(\L_i)} \otimes_{\hK{U(\L)}} M$. Because $t$ commutes with $A$, it is straightforward to see that $\eta_M$ is $B$-linear. It follows from Proposition \ref{WeiLau1}(a) that $\eta_{\hK{U(\L)}}$ is an isomorphism. We may now view $\eta$ as a natural transformation between two right exact functors and use the Five Lemma to conclude that $\eta_{\M}$ is always an isomorphism. The proof of the right module version is similar. 
\end{proof}

\begin{thm} Let $X$ be a reduced $K$-affinoid variety and let $f \in \O(X)$ be non-zero. Let $\A$ be an affine formal model in $\O(X)$ and let $\L$ be a smooth $(\R,\A)$-Lie algebra such that $\L \cdot f \subset \A$. Let $X_1 = X(f)$ and $X_2 = X(1/f)$. Then $\hsULK(X_i)$ is a flat $\hsULK(X)$-module on both sides for $i=1$ and $i=2$.
\end{thm}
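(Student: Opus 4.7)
The plan is to apply Lemma \ref{FlatMethod} with $S := \hsULK(X)$, $T := \hK{U(\L_i)}$ and $u := u_i$. The hypothesis that $T$ is a flat right $S$-module is Proposition \ref{WeiLau1}(b), and the left regularity of $u_i$ in $T$ (together with the identification $\hsULK(X_i) \cong T/u_iT$) is supplied by the short exact sequence of Proposition \ref{WeiLau1}(c). It thus remains to verify condition (b) of Lemma \ref{FlatMethod}: $T \otimes_S M$ is $u_i$-torsion-free for every finitely generated left $S$-module $M$. Once this is established, Lemma \ref{FlatMethod} delivers right-flatness of $\hsULK(X_i)$, and left-flatness follows from the entirely symmetric argument using the right-module versions of Lemma \ref{WeiLau2} and of the exact sequence in Proposition \ref{WeiLau1}(c).

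To check the torsion-free condition I would use the Banach $B$-module isomorphism $\eta_M : M\langle t\rangle \stackrel{\cong}{\longrightarrow} T \otimes_S M$ from Lemma \ref{WeiLau2}. A direct coefficient-wise calculation, exploiting that $A \subseteq B \subseteq T$ commutes with $t$, computes the $u_i$-action on $M\langle t\rangle$; after absorbing the unit $\pi^a$ (invertible in the $K$-module $M\langle t\rangle$), $u_i$-torsion-freeness reduces to $(t-f)$-torsion-freeness for $i=1$ and $(ft-1)$-torsion-freeness for $i=2$. The Laurent case $i=2$ is then immediate from Lemma \ref{OneTate}(a), which requires no hypothesis on $M$.

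For the Weierstrass case $i=1$ I would appeal to Lemma \ref{OneTate}(c), applied to $M\langle t\rangle$ viewed as a Banach $A$-module with $A = \O(X)$, in which $f$ is central. A helpful preliminary observation is that $u_1 = \pi^a(t-f)$ is in fact central in $T$: $[a, t-f] = 0$ for $a \in \A$ since both $t$ and $f$ commute with $\A$, while for $y \in \L$ the defining property of $\sigma_1$ from Proposition \ref{LiftingDersToB} gives $[y, t-f] = \sigma_1(y)(t) - y \cdot f = (y\cdot f) - (y\cdot f) = 0$. The ascending chain $\ker f \subseteq \ker f^2 \subseteq \cdots$ in $M$ consists automatically of $A$-submodules; its stabilisation, which is the content of the proof of Lemma \ref{OneTate}(c), is the key step to extract. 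I expect this stabilisation to be the main obstacle: in the subcase $f \in \A$ one has $|f \cdot |_M \le 1$ and a direct topological argument suffices (from $m_0 = f^j m_j$ with $|m_j|_M \to 0$ one reads off $m_0 = 0$, and similarly $m_k = f^{j-k}m_j \to 0$ gives $m_k = 0$), but when $\pi^a f \in \A$ with $a > 0$ this shortcut fails and Noetherianity of $M$ over $S$ must be used genuinely. The observation that $\L$ preserves the $f$-torsion filtration of $M$ up to a shift (from $[f^n, y] = -n(y\cdot f)f^{n-1}$ together with the anchor condition $\L\cdot f \subseteq \A$) should be what allows the required stabilisation of $\ker f^r$ to be read off from the $S$-Noetherian property of $M$.
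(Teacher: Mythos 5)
Your proposal recovers the outer scaffolding of the paper's proof exactly: the reduction via Lemma \ref{FlatMethod}, the flatness of $T_i := \hK{U(\L_i)}$ over $S := \hsULK(X)$ and the identification $\hsULK(X_i) \cong T_i/u_iT_i$ from Proposition \ref{WeiLau1}, the passage to $M\langle t\rangle$ via the isomorphism $\eta_M$ of Lemma \ref{WeiLau2}, and the dispatch of the Laurent case $i=2$ by Lemma \ref{OneTate}(a), leaving the Weierstrass case reduced to $(t-f)$-torsion-freeness of $M\langle t\rangle$. Your observation that $t-f$ is central in $T_1$ is also correct, although the paper does not need it explicitly.

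The gap is in your proposed route for the Weierstrass step. You want to show that the chain $\ker f \subseteq \ker f^2 \subseteq \cdots$ in $M$ stabilises, by combining $S$-Noetherianity with the shift $\L\cdot\ker f^n\subseteq\ker f^{n+1}$. But this chain does not stabilise in general, even in what you call the easy subcase $f\in\A$: take $X = \Sp K\langle x\rangle$, $\A = \R\langle x\rangle$, $\L = \A\partial_x$, $f=x$ and $M = S/Sx$. Identifying $M$ with $K\langle\partial_x\rangle$, one has $x\cdot\partial_x^i = -i\,\partial_x^{i-1}$, so $\ker x^n$ is the $n$-dimensional $K$-span of $1,\partial_x,\ldots,\partial_x^{n-1}$ and the chain climbs forever. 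The shift property you identify is precisely what drives the chain upward, not what stops it; Lemma \ref{OneTate}(c) cannot be applied because its Noetherian hypothesis is over $\O(X)$, not over $S$, and a finitely generated $S$-module such as this one is not Noetherian over $\O(X)$. No argument that passes through stabilisation of $\ker f^r$ can succeed here.

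The paper's argument never invokes a chain condition; it is closer in spirit to your ``easy subcase'' computation, but arranged so as not to require $f\in\A$. Starting from $fm_j = m_{j-1}$, $m_{-1}=0$ and $m_j\to 0$, set $N := S\cdot\{m_j\}$ (finitely generated, say by $m_0,\ldots,m_d$, by $S$-Noetherianity of $M$) and form the $\cS := \h{U(\L)}$-lattice $\N := \sum_{i=0}^d \cS m_i$. The key is not that multiplication by $f$ is norm-non-increasing on all of $M$ --- that fails when $f\notin\A$ --- but that $f\N\subseteq\N$: indeed $f\sum s_im_i = \sum [f,s_i]m_i + \sum s_im_{i-1}\in\N$, using both the recursion and $[f,\cS]\subseteq\cS$, the latter coming from $\L\cdot f\subseteq\A$ and the fact that $\A+\L$ generates $U(\L)$. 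A lattice comparison between $\N$ and $\M\cap N$ (for any $\cS$-lattice $\M$ in $M$) turns $m_j\to 0$ into $m_j\in\pi^n\N$ for $j\geq j_n$, and then
\[
m_j = f^{j_n}m_{j+j_n} \in f^{j_n}\pi^n\N \subseteq \pi^n\N \qmb{for every} n,
\]
so $m_j\in\bigcap_n\pi^n\N = 0$. Here the Noetherian hypothesis enters through the finite generation of $N$ (so that a finite-rank $\cS$-lattice $\N$ exists), not through any ascending chain of torsion kernels.
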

\begin{proof} We know that $T_i := \hK{U(\L_i)}$ is a flat right $S := \hK{U(\L)}$-module, and that $\hsULK(X_i) \cong T_i / u_i T_i$ as a right $T_i$-module by Proposition \ref{WeiLau1}. Let $M$ be a finitely generated $S$-module. By Lemmas \ref{FlatMethod} and \ref{WeiLau2}, to prove that $T_i$ is a flat right $S$-module it will be enough to show that the $B$-module $M\langle t \rangle$ is $u_i$-torsion-free. The case $i = 2$ follows immediately from Lemma \ref{OneTate}(a) and $u_1 = \pi^a(t - f)$, so we just have to show that $M \langle t \rangle$ is $(t - f)$-torsion-free.

Suppose now that the element $\sum_{j=0}^\infty t^jm_j \in M \langle t \rangle$ is killed by $t-f$. Then setting $m_{-1} := 0$, we have the equations $fm_j=m_{j-1}$ for all $j \geq 0$ from Lemma \ref{OneTate}(b), and $\lim\limits_{j\to\infty}m_j=0$. We consider the $S$-submodule $N$ of $M$ generated by the $m_j$. Since $M$ is Noetherian, $N$ must be generated by $m_0, \ldots, m_d$ for some $d \geq 0$, say. 

Let $\M$ be a finitely generated $\cS := \h{U(\L)}$-submodule of $M$ which generates $M$ as a $K$-vector space, and let $\N := \sum_{i=0}^d \cS m_i$. Since $\cS$ is Noetherian, $\M \cap N$ is a finitely generated $\cS$-submodule of $N$ which generates $N$ as a $K$-vector space, so the $\cS$-modules $\M \cap N$ and $\N$ contain $\pi$-power multiples of each other. So for all $n \geq 0$ we can find $j_n \geq 0$ such that $m_j \in \pi^n \N$ for all $j \geq j_n$, because $\lim\limits_{j \to \infty} m_j = 0$. 

Since $U(\L)$ is generated by $\A + \L$ as an $\R$-algebra and $[f, \A + \L] \subseteq \L \cdot f \subset \A$  we see that $[f, U(\L)] \subseteq U(\L)$ and consequently $[f,\cS] \subseteq \cS$. Because 
\[ f \sum_{j=0}^d s_jm_j = \sum_{j=0}^d [f,s_j] m_j + s_jm_{j-1} \in \N \qmb{for all} s_0,\ldots, s_d \in \cS\]
we see that $f^i \N \subseteq \N$ for all $i \geq 0$. Therefore for any $j, n \geq 0$ we have
\[ m_j = f^{j_n} m_{j + j_n} \in f^{j_n}\pi^n \N \subseteq \pi^n\N.\]
Hence $m_j \in \bigcap_{n=0}^\infty \pi^n \N = 0$ for all $j \geq 0$ and $\sum_{j=0}^\infty t^jm_j = 0$, so $T_i$ is a flat right $S$-module as claimed. The same argument for finitely generated right $S$-modules $M$ also shows that $T_i$ is a flat left $S$-module. \end{proof}

\subsection{\ts{\L}-accessible rational subdomains}\label{Laccess} ${ }$ \\
Until the end of Section \ref{Flatness}, we will fix the following notation. 
\begin{itemize} 
\item $X$ is a reduced $K$-affinoid variety,
\item $\A$ is  an affine formal model in $\O(X)$,
\item $\L$ is a coherent $(\R,\A)$-Lie algebra,
\item $\cS := \hsULK$.
\end{itemize}

\noindent We would like to prove that every $\L$-admissible \'etale morphism of affinoids $Y \to X$ has the property that $\cS(Y)$ is a flat right and left $\cS(X)$-module. Unfortunately we cannot do this at the moment and we introduce a new notion, that of \emph{$\L$-accessibility}, as a consequence of this.

\begin{defn} 
\be
\item Let $Y \subset X$ be a rational subdomain. If it is the identity map, we say that it is \emph{$\L$-accessible in $0$ steps}. Inductively, if $n \geq 1$ then we say that it is \emph{$\L$-accessible in $n$ steps} if  there exists a chain $Y \subset Z \subset X$, such that
\begin{itemize}
\item $Z \to X$ is $\L$-accessible in $(n-1)$ steps,
\item $Y = Z(f)$ or $Z(1/f)$ for some non-zero $f \in \O(Z)$,
\item there is an $\L$-stable affine formal model $\C \subseteq \O(Z)$ such that $\L \cdot f \subseteq \C$.
\end{itemize}
\item A rational subdomain $Y \subset X$ is said to be \emph{$\L$-accessible} if it is $\L$-accessible in $n$ steps for some $n \in \mathbb{N}$.
\ee
\end{defn}

\begin{prop} Let $Y \subset X$ be an $\L$-accessible rational subdomain. Then it is $\L$-admissible, and $\cS(Y)$ is a flat $\cS(X)$-module on both sides whenever $\L$ is smooth.
\end{prop}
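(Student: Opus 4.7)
The plan is to proceed by induction on the number of steps $n$ in which $Y$ is $\L$-accessible. The base case $n=0$ is immediate: $Y = X$ is trivially $\L$-admissible, and $\cS(Y) = \cS(X)$ is flat over itself on both sides.

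For the inductive step, suppose $Y$ is $\L$-accessible in $n$ steps via a chain $Y \subset Z \subset X$, where $Z$ is $\L$-accessible in $n-1$ steps, $Y = Z(f)$ or $Y = Z(1/f)$ for some non-zero $f \in \O(Z)$, and there is an $\L$-stable affine formal model $\C \subset \O(Z)$ with $\L \cdot f \subseteq \C$. The inductive hypothesis gives that $Z$ is $\L$-admissible and, when $\L$ is smooth, that $\cS(Z)$ is flat on both sides over $\cS(X)$. Set $\L' := \C \otimes_\A \L$; by Lemma \ref{BaseExt} this is an $(\R,\C)$-Lie algebra whose action on $\O(Z)$ extends that of $\L$, and the hypothesis $\L \cdot f \subseteq \C$ implies $\L' \cdot f \subseteq \C$ by $\C$-linearity. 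Moreover, when $\L$ is projective as an $\A$-module, $\L'$ is projective as a $\C$-module, so $\L'$ is again smooth.

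Now Lemma \ref{WeiLau1} applied to $Z$, $\C$, $\L'$ and $f$ shows that $Y$ is $\L'$-admissible as a subdomain of $Z$; Lemma \ref{FibreProds}(b) then promotes this to the $\L$-admissibility of $Y$ as a subdomain of $X$. For the flatness, Proposition \ref{FibreProds} (taking the paper's $(Y,Z')$ to be our $(Z,Y)$ and our $(Z,Z)$) supplies natural isomorphisms $\cS(Y) \cong \hK{\sU(\L')}(Y)$ and $\cS(Z) \cong \hK{\sU(\L')}(Z)$ compatible with restriction. Applying Theorem \ref{WeiLau2} to $Z$, $\C$, $\L'$ and $f$ then yields that $\cS(Y)$ is flat on both sides over $\cS(Z)$; composing with the inductive flatness of $\cS(Z)$ over $\cS(X)$ gives the desired flatness of $\cS(Y)$ over $\cS(X)$.

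The proof is essentially mechanical once the one-variable flatness results of Section \ref{Flatness} and the base-change identification of Proposition \ref{FibreProds} are in place: the inductive definition of $\L$-accessibility has been rigged precisely so that each elementary step falls within the scope of Lemma \ref{WeiLau1} and Theorem \ref{WeiLau2}. The only mild obstacle is bookkeeping: at each stage one must transfer the smoothness hypothesis and the derivation condition $\L \cdot f \subseteq \C$ from $(\A,\L)$ to $(\C,\L')$, both of which are routine consequences of the base-change $\L' = \C \otimes_\A \L$.
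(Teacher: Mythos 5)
Your proof is correct and follows essentially the same route as the paper's own argument: induction on the number of steps in which $Y$ is $\L$-accessible, passing from $(\A,\L)$ to $(\C,\L'=\C\otimes_\A\L)$ at each stage, with Lemma \ref{WeiLau1} and Lemma \ref{FibreProds}(b) handling admissibility and Proposition \ref{FibreProds} plus Theorem \ref{WeiLau2} handling flatness. The only cosmetic difference is that you spell out the (trivially true) base case $n=0$ where the paper simply calls it vacuous.
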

\begin{proof} Assume that $Y \subset X$ is $\L$-accessible in $n$ steps, and proceed by induction on $n$. The statement is vacuous when $n = 0$, so assume $n \geq 1$. Choose a chain $Y \subset Z \subset X$ where $Z \to X$ is $\L$-accessible in $n-1$ steps, assume that $Y = Z(f)$ or $Z(1/f)$ for some non-zero $f \in \O(Z)$ and let $\C \subseteq \O(Z)$ be an $\L$-stable affine formal model such that $\L \cdot f \subset \C$. Then $\L' := \C \otimes_{\A} \L$ is an $(R,\C)$-Lie algebra and $\L' \cdot f \subset \C$, so $Y \subset Z$ is $\L'$-admissible by Lemma \ref{WeiLau1}. Since $Z \subset X$ is $\L$-admissible by induction, $Y \subset X$ is also $\L$-admissible by Lemma \ref{FibreProds}(b). 

Now suppose that $\L$ is smooth. Then $\L'$ is also smooth, so $\hK{\sU(\L')}(Y)$ is a flat $\hK{\sU(\L')}(Z)$-module on both sides by Theorem \ref{WeiLau2}, and Proposition \ref{FibreProds} tells us that $\hK{\sU(\L')}(Y) \cong \cS(Y)$ and $\hK{\sU(\L')}(Z) \cong \cS(Z)$. Since $\cS(Z)$ is a flat  $\cS(X)$-module on both sides by induction, $\cS(Y)$ is also a flat $\cS(X)$-module on both sides.
\end{proof}

\subsection{Proposition}\label{FibreProdLaccess} Let $Y$ be a rational subdomain of $X$ which is $\L$-accessible in $n$ steps.
\be
\item Let $U$ be an $\L$-admissible affinoid subdomain of $X$, and let $\B$ be an $\L$-stable affine formal model in $U$. Then $U \cap Y$ is a rational subdomain of $U$ which is $\L' := \B \otimes_{\A} \L$-accessible in $n$ steps.
\item Let $\B$ be an $\L$-stable affine formal model in $\O(Y)$, and let $Z$ be a rational subdomain of $Y$ which is $\L' := \B \otimes_{\A} \L$-accessible in $m$ steps. Then $Z$ is a rational subdomain of $X$ which is  $\L$-accessible in $(n+m)$ steps.
\ee
\begin{proof} (a) Proceed by induction on $n$, and suppose that $n \geq 1$ as the case when $n = 0$ is trivial. We have a commutative pullback diagram
\[ \xymatrix{ Y \ar[r] & Z \ar[r] & X \\ 
U \cap Y \ar[u]\ar[r] & U \cap Z \ar[u]\ar[r] & U \ar[u]}\]
where $Z \to X$ is $\L$-accessible in $(n-1)$ steps, $Y = Z(f)$ or $Z(1/f)$ for some $f \in \O(Z)$ and $\L \cdot f \subseteq \C$ for some $\L$-stable affine formal model $\C$ in $\O(Z)$. Let $g = 1 \otimes f$ be the image of $f$ in $\O(U \cap Z)$. Then $U \cap Z \to U$ is $\L'$-accessible in $(n-1)$-steps by induction, $U \cap Y$ is either $(U \cap Z)(g)$ or $(U \cap Z)(1/g)$, and $\L' \cdot g \subseteq \overline{ \B \widehat{\otimes}_{\A} \C }$ which is an $\L$-stable affine formal model in $\O(U \cap Z)$ by the proof of Lemma \ref{FibreProds}(d). Therefore $U \cap Y \to U$ is $\L$-accessible in $n$ steps.

(b) Proceed by induction on $m$, and assume that $m \geq 1$ as the case when $m = 0$ is trivial. Choose $Z \to W \to Y$ with $W \to Y$ being $\L'$-accessible in $(m-1)$ steps, and $Z = W(f)$ or $W(1/f)$ for some $f \in \O(W)$, and $\L' \cdot f \subseteq \C$ for some $\L$-stable affine formal model $\C$ in $\O(W)$. Then $W \to X$ is $\L$-accessible in $n + m-1$ steps by induction, and $\L \cdot f \subseteq \C$, so $Z \to X$ is $\L$-accessible in $n + m$ steps.\end{proof}

\begin{cor}  Let $Y \to X$ and $U \to X$ be two $\L$-accessible rational subdomains. Then $U \cap Y \to X$ is also an $\L$-accessible rational subdomain.
\end{cor}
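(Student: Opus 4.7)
The plan is to chain together the two parts of Proposition \ref{FibreProdLaccess}. Suppose $Y$ is $\L$-accessible in $n$ steps and $U$ is $\L$-accessible in $p$ steps as rational subdomains of $X$. Both $U$ and $Y$ are rational subdomains of $X$, so $U \cap Y$ is certainly a rational subdomain of $X$; the only content is accessibility.

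First, since $U$ is $\L$-accessible, Proposition \ref{Laccess} tells us that $U$ is $\L$-admissible, so we may choose an $\L$-stable affine formal model $\B \subseteq \O(U)$ and form $\L' := \B \otimes_{\A} \L$, an $(\R,\B)$-Lie algebra by Lemma \ref{BaseExt}. Applying Proposition \ref{FibreProdLaccess}(a) to $Y$ and this choice of $\B$, we conclude that $U \cap Y$ is a rational subdomain of $U$ which is $\L'$-accessible in $n$ steps.

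Now apply Proposition \ref{FibreProdLaccess}(b) with the roles relabelled: take the ambient $\L$-accessible rational subdomain of $X$ to be $U$ (accessible in $p$ steps), the $\L$-stable affine formal model in $\O(U)$ to be the same $\B$, and the $\L'$-accessible subdomain of $U$ to be $U \cap Y$ (accessible in $n$ steps by the previous paragraph). The conclusion is precisely that $U \cap Y$ is an $\L$-accessible rational subdomain of $X$, in fact in $p + n$ steps.

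There is no real obstacle here: both ingredients are already proved in Proposition \ref{FibreProdLaccess}, and the corollary is just their composition. The only point one must be careful about is checking the hypotheses of part (a)—namely that $U$ is $\L$-admissible and admits some $\L$-stable affine formal model—which is exactly what $\L$-accessibility of $U$ provides via Proposition \ref{Laccess}.
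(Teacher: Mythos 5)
Your proof is correct and takes essentially the same route as the paper: apply part (a) of Proposition \ref{FibreProdLaccess} to get $U \cap Y$ as an $\L'$-accessible subdomain of $U$, then apply part (b) to transfer this back to $\L$-accessibility in $X$. Your added reference to Proposition \ref{Laccess} for why $U$ admits an $\L$-stable affine formal model is a reasonable explicit justification of a step the paper leaves implicit.
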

\begin{proof} Choose an $\L$-stable affine formal model $\B$ in $\O(U)$. By part (a) of the Proposition, $U \cap Y \to U$ is a rational subdomain which is $\L' := \B \otimes_{\A} \L$-accessible. Since $U \to X$ is also $\L$-accessible, part (b) of the Proposition (applied to $U \cap Y \to U$) gives that $U \cap Y$ is an $\L$-accessible rational subdomain in $X$.
\end{proof}

\subsection{\ts{\L}-accessible affinoid subdomains}\label{LaccessAff}
Recall that by the Gerritzen-Grauert Theorem \cite[Theorem 4.10.4]{FvdPut}, every affinoid subdomain $Y$ of an affinoid $K$-variety $X$ is actually the union of finitely many rational subdomains in $X$. In view of this fact, we make the following

\begin{defn}\hfill
\be\item An affinoid subdomain $Y$ of $X$ is said to be \emph{$\L$-accessible} if it is $\L$-admissible and there exists a finite covering $Y = \bigcup_{j=1}^r X_j$ where each $X_j$ is an $\L$-accessible rational subdomain of $X$.
\item A finite affinoid covering $\{X_j\}$ of $X$ is said to be \emph{$\L$-accessible} if each $X_j$ is an $\L$-accessible affinoid subdomain of $X$.
\ee\end{defn}

It follows from Proposition \ref{Laccess} that every $\L$-accessible rational subdomain is $\L$-admissible, and is therefore also an $\L$-accessible affinoid subdomain.
\begin{lem}
\be\item The intersection of finitely many $\L$-accessible affinoid subdomains is again an $\L$-accessible affinoid subdomain.
\item If $Z \subset Y$ are $\L$-accessible affinoid subdomains of $X$ and $\B$ is an $\L$-stable affine formal model in $\O(Y)$, then $Z$ is an $\L':=\B\otimes_\A \L$-accessible affinoid subdomain of $Y$.
\ee
\end{lem}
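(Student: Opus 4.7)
Both statements should fall out of the machinery already established in Proposition \ref{FibreProdLaccess} (both parts and its corollary), combined with the admissibility translation in Lemma \ref{FibreProds}(b). The plan is to reduce each claim to the rational-subdomain case, which is where the real work was done, and then assemble finite covers.

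For part (a), I would induct on the number of subdomains, so that it suffices to handle the intersection $Y_1\cap Y_2$ of two $\L$-accessible affinoid subdomains of $X$. First, $Y_1\cap Y_2$ is $\L$-admissible, since $X_w(\L)$ is stable under finite intersections by the Corollary following Lemma \ref{FibreProds}. Choosing finite coverings $Y_i=\bigcup_{j}X_{ij}$ by $\L$-accessible rational subdomains of $X$, the intersection is covered by the finitely many sets $X_{1j_1}\cap X_{2j_2}$, and each of these is an $\L$-accessible rational subdomain of $X$ by Corollary \ref{FibreProdLaccess}. So $Y_1\cap Y_2$ is $\L$-accessible.

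For part (b), the covering piece is essentially a direct application of Proposition \ref{FibreProdLaccess}(a). Writing $Z=\bigcup_{j}Z_j$ with each $Z_j$ an $\L$-accessible rational subdomain of $X$, I apply that proposition taking ambient $U=Y$ (with $\L$-stable affine formal model $\B$) and taking the proposition's ``$Y$'' to be $Z_j$; since $Z_j\subseteq Z\subseteq Y$, we have $Y\cap Z_j=Z_j$, and I conclude that each $Z_j$ is an $\L'$-accessible rational subdomain of $Y$. This gives the desired cover of $Z$ by $\L'$-accessible rational subdomains of $Y$. For $\L'$-admissibility of $Z$ as a subdomain of $Y$, I invoke Lemma \ref{FibreProds}(b) applied to the chain $\O(X)\to\O(Y)\to\O(Z)$ with $\C=\B$: the composite is $\L$-admissible by hypothesis, so the second map is $\L'$-admissible, which is exactly what is needed.

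The only thing that might look like an obstacle is the compatibility bookkeeping: that a rational subdomain $Z_j\subseteq Y\subseteq X$ is genuinely a \emph{rational} subdomain of $Y$ (handled automatically by Proposition \ref{FibreProdLaccess}(a), which produces $Y\cap Z_j$ as a rational subdomain of $Y$), and that the maximal $\L$-stable formal model in $\O(Z)$ inside $X$ matches the maximal $\L'$-stable one inside $Y$ (which is precisely Lemma \ref{FibreProds}(c), invoked implicitly by passing through Lemma \ref{FibreProds}(b)). With these identifications noted, no new machinery is needed and the proof is essentially formal.
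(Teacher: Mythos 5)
Your proof is correct and follows essentially the same route as the paper's own (very terse) proof: part (a) combines Corollary \ref{FibreProdLaccess} with Corollary \ref{FibreProds}, and part (b) applies Proposition \ref{FibreProdLaccess}(a) to each rational piece of the cover. The only difference is that you spell out the two-subdomain induction in (a) and make explicit the $\L'$-admissibility of $Z \subset Y$ via Lemma \ref{FibreProds}(b), which the paper leaves implicit.
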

\begin{proof}(a) This follows from Corollary \ref{FibreProdLaccess} together with Corollary \ref{FibreProds}.

(b) Let $\{Z_1,\ldots,Z_n\}$ be a covering of $Z$ by $\L$-accessible rational subdomains of $X$. By Proposition \ref{FibreProdLaccess}(a),  each $Z_i$ is an $\L'$-accessible rational subdomain of $Y$.
\end{proof}
\subsection{Theorem} \label{AffLaccessCovers} Suppose that $\L$ is smooth.
\be
\item Let $Y \subset X$ be an $\L$-accessible affinoid subdomain. \\ Then $\cS(Y)$ is a flat $\cS(X)$-module on both sides.
\item Let $\X = \{X_1, \ldots, X_m\}$ be an $\L$-accessible covering of $X$. \\ Then $\bigoplus_{i=1}^m \cS(X_j)$
is a faithfully flat $\cS(X)$-module on both sides.
\ee\begin{proof} (a) By definition, there is a finite covering $\V = \{X_1,\ldots,X_m\}$ of $Y$ by $\L$-accessible rational subdomains $X_j$. Every finite intersection of these subdomains is $\L$-accessible by Lemma \ref{LaccessAff}(a), so every ring appearing in $C^\bullet(\V, \cS)$ is flat as a $\cS(X)$-module on both sides by Proposition \ref{Laccess}. 

The augmented \v{C}ech complex $C^\bullet_{\aug}(\V, \cS)$ is acyclic by Corollary \ref{TateFQ}. A long exact sequence of $\Tor$ groups shows that the kernel of a surjection between two flat modules is again flat. By an induction starting with the last term, the kernel of every differential in this complex is a flat $\cS(X)$-module on both sides. In particular, $\cS(Y)$ is flat as $\cS(X)$-module on both sides.

(b) By part (a), $\bigoplus_{i=1}^m \cS(X_j)$ is a flat right $\cS(X)$-module. By Lemma \ref{LaccessAff}(a) and part (a), each term in the complex $C^\bullet_{\aug}(\X, \cS)$ is a flat right $\cS(X)$-module. Since it is acyclic by Corollary \ref{TateFQ} we may view it as a flat resolution of the zero module. Let $N$ be a left $\cS(X)$-module. By \cite[Lemma 3.2.8]{Wei1995}, $C^\bullet_{\aug}(\X, \cS) \otimes_{\cS(X)} N$ computes $\Tor^{\cS(X)}_{\bullet}(0, N)$ and is therefore acyclic. So $N$ embeds into $\oplus_{j=1}^m \cS(X_j) \otimes_{\cS(X)} N$ and hence $\oplus_{j=1}^m\cS(X_j)$ is a faithfully flat right $\cS(X)$-module. The same proof shows that it is also a faithfully flat left $\cS(X)$-module.
\end{proof}

\section{Kiehl's Theorem for coherent \ts{{\hK{\sU(\L)}}}-modules} \label{CohMod}
In this section we will make the following standing assumptions:
\begin{itemize} 
\item $X$ is a reduced $K$-affinoid variety,
\item $\A$ is  an affine formal model in $\O(X)$,
\item $\L$ is a smooth $(\R,\A)$-Lie algebra,
\item $\cS := \hsULK$.
\end{itemize}
It follows from Lemma \ref{LaccessAff}(a) that the $\L$-accessible affinoid subdomains of $X$ together with the $\L$-accessible coverings form a $G$-topology on $X$. We will denote this $G$-topology by $X_{ac}(\L)$. Thus we have at our disposal four different $G$-topologies on $X$, represented on the level of objects as follows:
\[ X_{ac}(\L) \quad\subset \quad X_w(\L) \quad\subset\quad X_w \quad\subset\quad X_{\rig}.\]
  
\subsection{Localisation}\label{LaccLoc}
Suppose that $Y$ is an $\L$-admissible affinoid subdomain of $X$. For every finitely generated $\cS(X)$-module $M$,  we can define a presheaf of $\cS$-modules $\Loc(M)$ on $X_w(\L)$ by setting 
\[ \Loc(M)(Y):=\cS(Y)\otimes_{\cS(X)}M.\]
Similarly, for every finitely generated right $\cS(X)$-module $M$, we can define a presheaf of right $\cS$-modules $\Loc(M)$ on $X_w(\L)$ by setting
\[\Loc(M)(Y) := M \otimes_{\cS(X)} \cS(Y).\]
We will frequently use the fact that $\cS(Z)$ is a flat $\cS(Y)$-module on both sides whenever $Z \subset Y$ is an inclusion of $\L$-accessible affinoids of $X$ --- this follows from  Theorem \ref{AffLaccessCovers}(a).

\begin{prop} $\Loc$ is a full exact embedding of abelian categories from the category of finitely generated $\cS(X)$-modules (respectively, right $\cS(X)$-modules) to the category of sheaves of $\cS$-modules (respectively, right $\cS$-modules) on $X_{ac}(\L)$ with vanishing higher \v{C}ech cohomology groups.
\end{prop}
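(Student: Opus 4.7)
The plan is to verify four assertions: (i) for each finitely generated $\cS(X)$-module $M$, the presheaf $\Loc(M)$ is a sheaf on $X_{ac}(\L)$ with vanishing higher \v{C}ech cohomology; (ii) $\Loc$ is exact; (iii) $\Loc$ is faithful; (iv) $\Loc$ is full. Assertion (ii) is immediate from Theorem \ref{AffLaccessCovers}(a): $\cS(Y)$ is flat over $\cS(X)$ on both sides for every $\L$-accessible affinoid subdomain $Y$, so $\Loc$ preserves short exact sequences section by section. Assertion (iii) is also immediate since $\Loc(M)(X) = \cS(X) \otimes_{\cS(X)} M \cong M$, so any natural transformation $\Loc(f)$ is determined by its value $f$ on global sections.

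The heart of the argument is (i). Fix an $\L$-accessible affinoid subdomain $Y \subset X$ and an $\L$-accessible covering $\V = \{Y_1, \ldots, Y_n\}$ of $Y$. The first step is to recognise that $C^\bullet_{\aug}(\V, \cS)$ is exact: choosing an $\L$-stable affine formal model $\B \subseteq \O(Y)$ and setting $\L' := \B \otimes_{\A} \L$, Lemma \ref{LaccessAff}(b) guarantees that each $Y_i$ is $\L'$-accessible in $Y$, so $\V$ is an $\L'$-admissible covering of $Y$. Proposition \ref{FibreProds} identifies $\cS$ with $\hK{\sU(\L')}$ on the $\L'$-admissible subdomains of $Y$, and so Corollary \ref{TateFQ} applied to $(Y, \L')$ supplies the desired acyclicity.

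Next I would tensor this acyclic complex with $M$ over $\cS(X)$. Every finite intersection $Y_{i_0} \cap \cdots \cap Y_{i_p}$ is $\L$-accessible by Lemma \ref{LaccessAff}(a), so Theorem \ref{AffLaccessCovers}(a) ensures that every term of $C^\bullet_{\aug}(\V, \cS)$ is flat over $\cS(X)$ on both sides. A descending induction on degree, using that the kernel of a surjection between two flat modules is flat (the $\Tor$ argument appearing already in the proof of Theorem \ref{AffLaccessCovers}(a)), shows that the acyclic complex is a flat resolution of $0$. Tensoring with $M$ therefore preserves acyclicity, and the resulting exact complex is precisely $C^\bullet_{\aug}(\V, \Loc(M))$. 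This simultaneously yields the sheaf axiom for $\Loc(M)$ and the vanishing of its higher \v{C}ech cohomology on every $\L$-accessible covering.

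Finally, for fullness (iv), a morphism $\varphi \colon \Loc(M) \to \Loc(N)$ has global-sections component $\varphi_X \colon M \to N$ which is $\cS(X)$-linear; for any $\L$-accessible affinoid $Y$, the image of the restriction map $M \to \Loc(M)(Y)$ generates $\Loc(M)(Y)$ as a $\cS(Y)$-module, so $\cS(Y)$-linearity of $\varphi_Y$ together with naturality forces $\varphi_Y(s \otimes m) = s \otimes \varphi_X(m)$, giving $\varphi = \Loc(\varphi_X)$. The right-module version of each of (i)--(iv) is proved in an identical manner, swapping left flatness for right flatness in Theorem \ref{AffLaccessCovers}(a). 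The main technical hinge is that every intersection appearing in an $\L$-accessible covering is itself $\L$-accessible, which is exactly what Lemma \ref{LaccessAff}(a) delivers; this is what allows the flatness hypothesis needed to tensor the acyclic \v{C}ech complex with $M$.
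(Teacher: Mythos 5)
Your proof is correct and follows essentially the same route as the paper's: show acyclicity of the augmented \v{C}ech complex via Corollary \ref{TateFQ} and the passage through $\L'$-accessibility (Lemma \ref{LaccessAff}), conclude $\Loc(M)$-acyclicity by tensoring the flat acyclic complex, and then deduce exactness and fullness from flatness and the universal property of the tensor product. The only cosmetic difference is that you tensor $C^\bullet_{\aug}(\V,\cS)$ with $M$ directly over $\cS(X)$, whereas the paper identifies $C^\bullet_{\aug}(\U,\Loc(M))$ with $C^\bullet_{\aug}(\U,\cS)\otimes_{\cS(Y)}\Loc(M)(Y)$ and uses right flatness over $\cS(Y)$; since the terms and conclusions coincide and both invocations of Theorem \ref{AffLaccessCovers}(a) are available, the two are interchangeable.
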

\begin{proof} First we prove that if $M$ is any finitely generated $\cS(X)$-module then every $X_{ac}(\L)$-covering $\U=\{U_1,\ldots,U_n\}$ of every $\L$-accessible affinoid subdomain $Y$ of $X$ is $\Loc(M)$-acyclic. In particular this will demonstrate that $\Loc(M)$ is a sheaf on $X_{ac}(\L)$ with vanishing higher \v{C}ech cohomology groups. 

Let $\B$ be an $\L$-stable affine formal model in $Y$ and let $\L'=\B\otimes_\A L$. By Lemma \ref{LaccessAff}(b) we can view $\U$ as a covering of $Y$ in $Y_{ac}(\L')$. Then $\U$ is $\cS$-acyclic by Corollary \ref{TateFQ}. But every term in the \v{C}ech complex $C^\bullet_{\aug}(\U, \cS)$ is a flat right $\cS(Y)=\hK{U(\L')}$ module by Theorem \ref{AffLaccessCovers}(a). Therefore 
\[C^{\bullet}_{\aug}(\U,\Loc(M)) \cong C^\bullet_{\aug}(\U, \cS) \otimes_{\cS(Y)} \Loc(M)(Y)\]
is also acyclic as claimed. Now suppose that $f\colon M\to N$ is a morphism of finitely generated $\cS(X)$-modules. By the universal property of tensor product, for each $Y$ in $X_{ac}(\L)$ there is a unique morphism of $\cS(Y)$-modules $\Loc(M)(Y)\to \Loc(N)(Y)$ making the diagram  \[\begin{CD}
M @>>> N\\
@VVV @VVV\\
\cS(Y)\otimes_{\cS(X)} M @>>> \cS(Y)\otimes_{\cS(X)} N \end{CD}\] commute. It is now easy to see that $\Loc$ is a full functor as claimed.

Finally, suppose that $0\to M_1\to M_2\to M_3\to 0$ is an exact sequence of finitely generated $\cS(X)$-modules. Since $\cS(Y)$ is a flat right $\cS(X)$-module for each $Y \in X_{ac}(\L)$, each sequence \[0\to \Loc(M_1)(Y)\to\Loc(M_2)(Y)\to \Loc(M_3)(Y)\to 0 \] is exact. This suffices to see that $\Loc$ is exact. 

The case of right modules is almost identical.
\end{proof}

\subsection{\ts{\U}-coherent modules}\label{UcohMod}

Following \cite[\S 9.4.3]{BGR}, we say that an $\cS$-module $\M$ is \emph{coherent} if there is an $X_{ac}(\L)$-covering $\U=\{U_1,\ldots,U_n\}$ of $X$ such that, for each $1\le i\le n$, $\M|_{U_i}$ may be presented by an exact sequence of the form \[ \cS^r|_{U_i}\to \cS^s|_{U_i}\to \M_{|U_i}\to 0.\] Using Proposition \ref{LaccLoc}, we note that in this situation, if we choose $\L$-stable affine formal models $\B_i$ in $U_i$ and write $\L_i=\B_i\otimes_\A \L$, we may view the morphism $\cS^r|_{U_i}\to \cS^s|_{U_i}$ as $\Loc(f_i)$ for some $\cS(U_i)$-linear map $f_i\colon \cS(U_i)^r\to \cS(U_i)^s$. Writing $M_i$ for the cokernel of $f_i$ and applying Proposition \ref{LaccLoc} again we see that there is an isomorphism $\M_{|U_i}\cong \Loc(M_i)$ as $\cS|_{U_i}$-modules since both arise as the cokernel of $\Loc(f_i)$. 

Since each ring $\cS(U_i)$ is left Noetherian and so every finitely generated $\cS(U_i)$-module is finitely presented, it follows from the discussion above that an $\cS$-module $\M$ is coherent precisely if there is an $X_{ac}(\L)$-covering $\{U_1,\ldots,U_n\}$ of $X$ such that, for each $1\le i\le n$, $\M|_{U_i}$ is isomorphic to $\Loc(M_i)$ for some finitely generated $\cS(U_i)$-module $M_i$.

\begin{defn} Given an $X_{ac}(\L)$-covering $\U=\{U_1,\ldots,U_n\}$ of $X$, we say that an $\cS$-module  (respectively, right $\cS$-module) $\M$ is \emph{$\U$-coherent} if for each $1\le i\le n$ there is a finitely generated $\cS(U_i)$-module (respectively, right $\cS(U_i)$-module) $M_i$ such that $\M|_{U_i}$ is isomorphic to $\Loc(M_i)$ as a sheaf of $\cS|_{U_i}$-modules (respectively, right $\cS|_{U_i}$-modules).
\end{defn}

\begin{prop} Let $\U$ be an admissible covering of $X$, and suppose that $\alpha\colon \M\to \N$ is a morphism of $\U$-coherent left or right $\cS$-modules. Then $\ker \alpha, \coker \alpha$ and $\im \alpha$ are each $\U$-coherent. 
\end{prop}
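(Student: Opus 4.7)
The plan is to reduce everything to a statement about modules over each $\cS(U_i)$ using the fact, established in Proposition \ref{LaccLoc}, that $\Loc$ is a full exact embedding on $U_i$. Concretely, suppose $\U = \{U_1,\ldots,U_n\}$ is an $X_{ac}(\L)$-covering and choose, for each $i$, an $\L$-stable affine formal model $\B_i$ in $\O(U_i)$; set $\L_i := \B_i\otimes_{\A}\L$, a smooth $(\R,\B_i)$-Lie algebra. By $\U$-coherence we have finitely generated $\cS(U_i)$-modules $M_i,N_i$ with $\M|_{U_i}\cong \Loc(M_i)$ and $\N|_{U_i}\cong \Loc(N_i)$, where $\Loc$ is computed relative to the $(U_i)_{ac}(\L_i)$-topology in the sense of Proposition \ref{LaccLoc} applied to $U_i$ in place of $X$.

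The first key step is to use fullness of $\Loc$ on $U_i$ to lift $\alpha|_{U_i}\colon \Loc(M_i)\to \Loc(N_i)$ to a unique $\cS(U_i)$-linear map $f_i\colon M_i\to N_i$. Since $\L_i$ is smooth, $U(\L_i)$ is Noetherian by Rinehart's theorem (Section \ref{RineTh}), and hence so is its $\pi$-adic completion inverted at $\pi$, namely $\cS(U_i)=\hK{\sU(\L_i)}$. Therefore $\ker f_i$, $\im f_i$ and $\coker f_i$ are finitely generated $\cS(U_i)$-modules.

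The second step is to identify the sheaf-theoretic kernel, image and cokernel of $\alpha$, restricted to $U_i$, with the localisations of these module-theoretic operations. The kernel of a morphism of sheaves is automatically the presheaf kernel, and because $\Loc$ is exact on $U_i$, we have $\ker(\alpha|_{U_i})=\ker\Loc(f_i)=\Loc(\ker f_i)$; the same argument applied to the surjection $M_i\twoheadrightarrow \im f_i$ identifies $\im(\alpha|_{U_i})$ with $\Loc(\im f_i)$. For the cokernel, exactness of $\Loc$ shows that $\Loc(\coker f_i)$ is already a sheaf on $(U_i)_{ac}(\L_i)$ fitting into an exact sequence
\[
\Loc(M_i)\xrightarrow{\Loc(f_i)}\Loc(N_i)\longrightarrow \Loc(\coker f_i)\longrightarrow 0,
\]
so by the universal property this sheaf is the sheaf cokernel of $\alpha|_{U_i}$. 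Putting these identifications together shows that $\ker\alpha$, $\im\alpha$ and $\coker\alpha$ are all $\U$-coherent.

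The only genuine subtlety to watch out for is the cokernel: a priori the naive presheaf cokernel need not be a sheaf, so one must argue via the universal property rather than taking sections directly. This is handled cleanly by the fact that $\Loc$ lands in sheaves with vanishing higher \v{C}ech cohomology, as recorded in Proposition \ref{LaccLoc}. Everything else is a formal consequence of fullness, exactness and Noetherianity of $\cS(U_i)$, and the argument is symmetric between left and right modules.
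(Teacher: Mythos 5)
Your argument is correct and follows essentially the same route as the paper, which simply invokes Proposition \ref{LaccLoc} to compute $(\ker \alpha)|_{U_i}\cong \Loc(\ker \alpha(U_i))$, $(\coker \alpha)|_{U_i}\cong \Loc(\coker \alpha(U_i))$ and $(\im \alpha)|_{U_i}\cong \Loc(\im \alpha(U_i))$. You merely unpack the underlying mechanism (fullness to lift $\alpha|_{U_i}$ to $f_i$, Noetherianity for finite generation, and exactness plus the sheaf property for the cokernel) which the paper leaves implicit.
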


\begin{proof} We compute using Proposition \ref{LaccLoc} that $(\ker \alpha)|_{U_i}\cong \Loc(\ker \alpha(U_i))$, that $(\coker \alpha)|_{U_i}\cong \Loc(\coker \alpha(U_i))$ and that $\im \alpha|_{U_i}=\Loc(\im \alpha(U_i))$. 
\end{proof}

\subsection{Coverings of the form \ts{X=X(f)\cup X(1/f)}}\label{CartanLemma} We generalise some technical results from \cite[\S 4.5]{FvdPut} to our non-commutative setting. This involves making appropriate changes to the material presented in \cite[\S 4.5]{FvdPut}, but we repeat these proofs here nevertheless.  Note that it is incorrectly asserted in the proof of \cite[Lemma 4.5.4]{FvdPut} that $s_2$ has dense image; in fact it is the map $s_1$ that has dense image.

First, we suppose that $f\in \O(X)$ is such that $\L\cdot f\subset \A$.  Then 
\[X_1:=X(f), \quad X_2:=X(1/f) \qmb{and} X_3:=X(f)\cap X(1/f)\]
are all $\L$-accessible. We write $s_i\colon \cS(X_i)\to \cS(X_3)$ for the canonical restriction maps ($i=1,2$).
We define the \emph{norm} $||M||$ of a matrix $M$ with entries in a $K$-Banach algebra to be the supremum of the norms of the entries of $M$.

\begin{lem} There is a constant $c>0$ such that every matrix $M\in M_n(\cS(X_3))$ with $||M-I||<c$ can be written as a product $M=s_1(Q_1)^{-1}\cdot s_2(Q_2)^{-1}$ for some $Q_i \in \GL_n(\cS(X_i))$.
\end{lem}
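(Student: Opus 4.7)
The proof will mirror the classical Cartan lemma \cite[Lemma 4.5.4]{FvdPut} adapted to the non-commutative setting. The essential input is a bounded $K$-linear near-splitting of the restriction maps: a constant $C > 0$ and $K$-linear maps $\sigma_i \colon \cS(X_3) \to \cS(X_i)$ ($i=1,2$) such that $s_1(\sigma_1(b)) + s_2(\sigma_2(b)) = b$ and $\|\sigma_i(b)\| \leq C\|b\|$ for every $b \in \cS(X_3)$. This replaces the norm-$1$ Laurent decomposition used in the commutative case.

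To construct such a splitting, I would exploit the structure of $\cS(X_3)$ given by Proposition \ref{WeiLau1}. Since $X_3 = X_2(f) = X_1(1/f)$, this Proposition presents $\cS(X_3)$ as a quotient of a completed free Tate algebra in one variable, where the variable maps to $f$ or to $f^{-1}$. Combining the two descriptions gives a (non-unique) two-sided formal Laurent expression for each $b \in \cS(X_3)$ of the shape $\sum_{i \geq 0} f^i b_i^+ + \sum_{j \geq 1} f^{-j} b_j^-$, with $b_i^\pm \in \hK{U(\L)}$ tending to $0$. The non-negative tail lies in the image of $s_1$ and the strictly-negative tail in the image of $s_2$, and truncation at degree $0$ then yields $\sigma_1$ and $\sigma_2$ of the required form.

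Granting the splitting, the proof proceeds by quadratic iteration. Set $M_0 := M$ and $Q_1^{(0)} = Q_2^{(0)} := I$. Inductively, given that $M_k := s_2(Q_2^{(k)}) s_1(Q_1^{(k)}) M$ satisfies $\|I - M_k\| < 1/C'$ for an appropriate $C'$, apply $(\sigma_1, \sigma_2)$ entrywise to the matrix $I - M_k$ to obtain $\Delta_i^{(k)} \in M_n(\cS(X_i))$ with $s_1(\Delta_1^{(k)}) + s_2(\Delta_2^{(k)}) = I - M_k$ and $\|\Delta_i^{(k)}\| \leq C\|I - M_k\|$; then set $Q_1^{(k+1)} := (I + \Delta_1^{(k)})Q_1^{(k)}$ and $Q_2^{(k+1)} := Q_2^{(k)}(I + \Delta_2^{(k)})$. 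Expanding $M_{k+1}$ the linear terms in $\Delta_i^{(k)}$ telescope against $I - M_k$, leaving only higher-order products, so $\|I - M_{k+1}\| \leq C'\|I - M_k\|^2$ for a constant $C'$ depending only on $C$ and $n$. Choosing $c$ with $C'c < 1$ yields quadratic convergence $M_k \to I$, while the telescoping products $Q_i^{(k)}$ converge in the Banach algebra $M_n(\cS(X_i))$ to matrices $Q_i$ close to $I$, hence in $\GL_n(\cS(X_i))$. The identity $s_2(Q_2) s_1(Q_1) M = I$ is then equivalent to the desired factorization.

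The principal obstacle is the bounded splitting in the non-commutative setting. In the commutative case Tate's theorem gives a norm-$1$ Laurent decomposition of $\O(X_3)$ essentially for free, but here $f$ is not central in $\cS(X)$: the hypothesis $\L \cdot f \subset \A$ only provides $[x,f] \in \A$ for $x \in \L$. Consequently, reordering a formal Laurent expansion in $f$ past elements of $\L$ introduces commutator corrections that must be controlled uniformly. A careful expansion using the PBW-type identification of Proposition \ref{RineTh} (together with the fact that $\A$ is $\pi$-adically bounded in $\O(X)$) should suffice to bound the norms of $\sigma_1$ and $\sigma_2$; once this is done, the iteration above is formal.
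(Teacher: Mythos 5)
Your proposal has the right skeleton --- a bounded splitting of $s_1-s_2$ followed by a small-norm iteration --- but it diverges from the paper in two places, one of which is a genuine gap that you yourself flag without resolving.

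The serious problem is the splitting. You propose to build a \emph{linear} norm-controlled section $(\sigma_1,\sigma_2)$ of $s_1-s_2$ explicitly, via a two-sided Laurent decomposition in $f$, and you rightly observe that because $f$ is not central in $\cS(X)$ the reordering produces commutator corrections whose norms you do not control; you close with ``should suffice''. That is the missing step, and it is also unnecessary. Corollary~\ref{TateFQ} already shows that the augmented \v{C}ech complex for the covering $\{X_1,X_2\}$ is acyclic, so $s_1-s_2\colon\cS(X_1)\oplus\cS(X_2)\to\cS(X_3)$ is a \emph{surjective} bounded map of $K$-Banach spaces; Banach's Open Mapping Theorem then immediately yields a constant $d>0$ such that every $N\in M_n(\cS(X_3))$ can be written $N=s_1(N_1)-s_2(N_2)$ with $d\cdot\sup(\|N_1\|,\|N_2\|)\le\|N\|$. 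This is a non-linear, existence-only bound, but the iteration needs nothing more; no Laurent decomposition and no PBW bookkeeping is required. The paper takes precisely this route.

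Second, the iteration as you have written it does not close up in a non-commutative ring. You set $M_k:=s_2(Q_2^{(k)})s_1(Q_1^{(k)})M$ and update $Q_1^{(k+1)}=(I+\Delta_1^{(k)})Q_1^{(k)}$, $Q_2^{(k+1)}=Q_2^{(k)}(I+\Delta_2^{(k)})$. Unwinding one step gives
\[
M_{k+1}=s_2(Q_2^{(k)})\,\bigl(I+s_2(\Delta_2^{(k)})\bigr)\bigl(I+s_1(\Delta_1^{(k)})\bigr)\,s_2(Q_2^{(k)})^{-1}\,M_k,
\]
so the new correction sits conjugated by $s_2(Q_2^{(k)})$; to telescope the linear terms against $I-M_k$ you would need this conjugation to be trivial, which is precisely the commutative case of \cite[Lemma 4.5.4]{FvdPut}. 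The paper instead \emph{sandwiches} $M$, iterating
\[
A_{m+1}:=(I-s_1(B_{m1}))(I+A_m)(I-s_2(B_{m2}))-I,\qquad A_m=s_1(B_{m1})+s_2(B_{m2}),
\]
so that all $\cS(X_1)$-corrections accumulate on the left of $M$ and all $\cS(X_2)$-corrections on the right. The linear terms then cancel exactly without any commuting, $\|A_m\|$ decays geometrically, and the products $Q_i:=\lim_m(1-B_{mi})\cdots(1-B_{1i})$ converge in $\GL_n(\cS(X_i))$ with $s_1(Q_1)\,M\,s_2(Q_2)=I$, giving the factorization.
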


\begin{proof} By Corollary \ref{TateFQ}, the bounded $K$-linear map 
\[ s_1 - s_2 : \cS(X_1)\oplus \cS(X_2)\to \cS(X_3)\]
is surjective. So, by Banach's Open Mapping Theorem there is a constant $0 < d < 1$ such that if $N$ is any $n\times n$ matrix with entries in $\cS(X_3)$ we can find $N_1 \in M_n(\cS(X_1))$ and  $N_2 \in M_n(\cS(X_2))$ such that
\[N=s_1(N_1)-s_2(N_2) \qmb{and} d\cdot \sup(||N_1||,||N_2||)\leq ||N||.\]
We define $c := d^3$.  Suppose now that $M \in \GL_n(\cS(X_3))$ satisfies $||M - I|| < c$ and let $A_1 = M - I$. We can then find $B_{1i} \in M_n(\cS(X_i))$ of norm at most $d^2$ such that $A_1=s_1(B_{11})+s_2(B_{12})$. Then 
\begin{eqnarray*} 
 A_2  &:=& (I-s_1(B_{11}))(I+A_1)(I-s_2(B_{12}))-I \\
  &=& s_1(B_{11}) s_2(B_{12}) - s_1(B_{11}) A_1 - A_1 s_2(B_{12}) - s_1(B_{11}) \cdot A_1 \cdot s_2(B_{12})
\end{eqnarray*}
is a matrix with coefficients in $\O(X_3)$ and has norm at most $d^4$.

Inductively, we can find sequences $A_m,B_{m1},B_{m2}$ of matrices with coefficients in $\cS(X_3), \cS(X_1)$, $\cS(X_2)$ and norms bounded by $d^{m+1},d^m$ and $d^m$ respectively such that $A_m=s_1(B_{m1})+s_2(B_{m2})$ and \[ A_{m+1} :=(I-s_1(B_{m1}))(I+A_m)(I-s_2(B_{m2}))-I.\] Because $d^m \to 0$ as $m\to \infty$, the limit
\[Q_i := \lim_{m\to \infty} (1-B_{mi})\cdots(1-B_{1i})\]
exists in $M_n(\cS(X_i))$ and $Q_i \in \GL_n(\cS(X_i))$ for $i=1,2$. By construction, 
\[s_1(Q_1) \cdot M \cdot s_2(Q_2)=I\] 
so $M=s_1(Q_1)^{-1}\cdot s_2(Q_2)^{-1}$ as claimed.
\end{proof}

\subsection{Theorem}\label{SpecialLaurent} Suppose that $\N$ is an $\{X_1,X_2\}$-coherent sheaf of $\cS$-modules. Then the canonical $\cS(X_i)$-linear maps $\cS(X_i)\otimes_{\cS(X)}\N(X)\to \N(X_i)$ are surjective for $i=1$ and $i=2$.

Similarly, if $\N$ is an $\{X_1,X_2\}$-coherent sheaf of right $\cS$-modules, then the canonical $\cS(X_i)$-linear maps $\N(X) \otimes_{\cS(X)} \cS(X_i) \to \N(X_i)$ are surjective for $i=1$ and $i=2$.
\begin{proof} We first deal with the case of left $\cS$-modules. Let us identify $\N(X_3)$ with $\cS(X_3) \otimes_{\cS(X_1)} \N(X_1)$ and with $\cS(X_3) \otimes_{\cS(X_2)} \N(X_2)$. Suppose that $a_1,\ldots,a_n$ generate $\N(X_1)$ as a $\cS(X_1)$-module and $b_1,\ldots,b_n$ generate $\N(X_2)$ as a $\cS(X_2)$-module. Then the sets $\{1\otimes a_1,\ldots,1\otimes a_n\}$ and $\{1\otimes b_1,\ldots,1\otimes b_n\}$ each generate $\N(X_3)$ as a $\cS(X_3)$-module. 

Consider $\N(X_3)^n$ as a left module over the $n\times n$ matrix ring $M_n(\cS(X_3))$ and let $\mathbf{a}, \mathbf{b} \in \N(X_3)^n$ be the column vectors whose $j$th entries are $1 \otimes a_j$ and $1 \otimes b_j$, respectively. Then we may find non-zero $U, V \in M_n(\cS(X_3))$ such that
\[ \mathbf{a} = U \mathbf{b} \qmb{and} \mathbf{b} = V \mathbf{a}.\]
Let $c$ denote the constant from Lemma \ref{CartanLemma}. Since the image of $s_1\colon \cS(X_1)\to \cS(X_3)$ is dense by Remark \ref{WeiLau1}, we can find $V' \in M_n(\cS(X_1))$ such that 
\[ || s_1(V') - V || < c / || U || .\]
Therefore $||(s_1(V')-V)U||<c$, and by Lemma \ref{CartanLemma}, we can find $Q_i\in \GL_n(\cS(X_i))$ for $i=1,2$ such that
\[I+(s_1(V')-V)U=s_1(Q_1)^{-1}s_2(Q_2)^{-1}.\]
Applying this matrix identity to the vector $\mathbf{b} \in \N(X_3)^n$ we obtain
\[s_1(Q_1 V') \mathbf{a} = s_2(Q_2^{-1}) \mathbf{b}.\]
Writing $a_i'=\sum_{j=1}^n (Q_1V')_{ij}a_j \in \N(X_1)$ and $b_i'=\sum_{j=1}^n (Q_2^{-1})_{ij}b_j \in \N(X_2)$, we see that $1\otimes a_i'=1\otimes b_i'$ in $\N(X_3)$ for each $i = 1,\ldots, n$. Since $\N$ is a sheaf, we can find elements $d_1,\ldots,d_n\in \N(X)$ such that the image of $d_i$ in $\N(X_1)$ is $a_i'$ and the image of $d_i$ in $\N(X_2)$ is $b_i'$ for each $i=1,\ldots,n$. Since the matrix $Q_2^{-1}$ is invertible, the elements $b_1',\ldots, b_n'$ generate $\N(X_2)$ as an $\cS(X_2)$-module. Therefore the map $\cS(X_2)\otimes_{\cS(X)}\N(X) \to \N(X_2)$ is surjective.

Now consider an arbitrary element $v\in \N(X_1)$. Since $1\otimes b'_1,\ldots, 1\otimes b_n'$ generate $\N(X_3)$ as a $\cS(X_3)$-module we can write $1\otimes v=\sum_{i=1}^n z_i\otimes b_i'$ for some $z_i\in \cS(X_3)$. The surjectivity of $\cS(X_1)\oplus \cS(X_2)\to \cS(X_3)$ means that we can find $x_i\in \cS(X_1)$ and $y_i\in \cS(X_2)$ such that  $z_i=s_1(x_i)+s_2(y_i)$ for each $i=1,\ldots, n$. Therefore
\[1 \otimes (v - \sum_{i=1}^n x_ia_i') = 1\otimes v- \sum_{i=1}^n s_1(x_i)\otimes a_i'=\sum_{i=1}^n s_2(y_i)\otimes b_i' =  1 \otimes \sum_{i=1}^n y_i b_i'\]
inside $\N(X_3)$, because $1 \otimes a_i' = 1 \otimes b_i'$ for all $i$. Since $\N$ is a sheaf, there is an element $w\in \N(X)$ whose image in $\N(X_1)$ is $v - \sum_{i=1}^n x_ia_i'$ and whose image in $\N(X_2)$ is $\sum_{i=1}^n y_ib_i'$. In particular, $v$ is the image of $1 \otimes w + \sum_{i=1}^n x_i \otimes d_i$ under the map $\cS(X_1)\otimes_{\cS(X)}\N(X)\to \N(X_1)$. Therefore this map is also surjective.

In the case of right $\cS$-modules, again we can find a generating set $\{a_1,\ldots, a_n\}$ for $\N(X_1)$ as a right $\cS(X_1)$-module, and a generating set $\{b_1,\ldots, b_n\}$ for $\N(X_2)$ as a right $\cS(X_2)$-module. Then $\{a_1 \otimes 1, \ldots, a_n \otimes 1\}$ and $\{b_1 \otimes 1, \ldots, b_n \otimes 1\}$ each generate $\N(X_3)$ as a right $\cS(X_3)$-module. We consider $\N(X_3)^n$ as a right module over the $n \times n$ matrix ring $M_n(\cS(X_3))$ and let $\mathbf{a}, \mathbf{b} \in \N(X_3)^n$ be the \emph{row} vectors whose $j$th entries are $a_j \otimes 1$ and $b_j \otimes 1$, respectively. Then we may find non-zero $U, V \in M_n(\cS(X_3))$ such that $\mathbf{a} = \mathbf{b} U$ and $\mathbf{b} = \mathbf{a}V.$ Choose $V' \in M_n(\cS(X_1))$ as above satisfying $||U(s_1(V') - V)|| < c$, and let $T :=  U (s_1(V') - V)$. Then $|| (I + T)^{-1} - I || < c$ also, so by Lemma \ref{CartanLemma}, we can find $Q_i \in \GL_n(\cS(X_i))$ for $i=1,2$ such that $(I + T)^{-1} = s_1(Q_1)^{-1} s_2(Q_2)^{-1}.$ Hence $I + T = s_2(Q_2) s_1(Q_1)$, and applying this matrix identity to the vector $\mathbf{b} \in \N(X_3)^n$ we obtain $\mathbf{a} s_1(V'Q_1^{-1}) = \mathbf{b} s_2(Q_2).$ Therefore the elements $b_j' :=\sum_{i=1}^n b_i (Q_2)_{ij} \in \N(X_2)$ extend to global sections of $\N$ and generate $\N(X_2)$ as a right $\cS(X_2)$-module because the matrix $Q_2$ is invertible. Thus $\N(X) \otimes_{\cS(X)} \cS(X_2) \to \cS(X_1)$ is surjective, and the same argument as in the case of left modules now shows that $\N(X) \otimes_{\cS(X)} \cS(X_1) \to \N(X_1)$ is also surjective.
\end{proof}

\begin{cor} If $\N$ is an $\{X(f),X(1/f)\}$-coherent sheaf of $\cS$-modules then there is a finitely generated $\cS(X)$-module $N$ such that $\Loc(N)\cong \N$. A similar statement holds for an $\{X(f), X(1/f)\}$-coherent sheaf of right $\cS$-modules.
\end{cor}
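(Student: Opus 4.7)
The plan is to build a finitely generated $\cS(X)$-submodule $N \subseteq \N(X)$ and verify that the induced morphism $\Loc(N) \to \N$ is an isomorphism of sheaves. The main tool is Theorem \ref{SpecialLaurent}, which I will invoke twice: once to produce $N$, and once more, applied to a kernel sheaf, to establish injectivity. Specifically, I would first apply Theorem \ref{SpecialLaurent} to $\N$ itself to obtain surjections $\cS(X_i) \otimes_{\cS(X)} \N(X) \twoheadrightarrow \N(X_i)$ for $i = 1, 2$. Since each $\cS(X_i)$ is Noetherian and $\N(X_i)$ is finitely generated over it, finitely many elements $m_1, \ldots, m_r \in \N(X)$ can be chosen whose restrictions generate $\N(X_i)$ as a $\cS(X_i)$-module for both $i$ simultaneously. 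Setting $N := \sum_k \cS(X) \cdot m_k \subseteq \N(X)$, the inclusion $N \hookrightarrow \N(X)$ induces a morphism of sheaves $\alpha \colon \Loc(N) \to \N$, and by construction $\alpha(X_i)$ is surjective for $i = 1, 2$.

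Next, let $\mathcal{K} := \ker \alpha$ and $\mathcal{C} := \coker \alpha$. By the Proposition of \S\ref{UcohMod}, both are $\{X_1, X_2\}$-coherent. Since $\mathcal{C}(X_i) = 0$ for $i = 1, 2$, the $\{X_1, X_2\}$-coherence forces $\mathcal{C}|_{X_i} \cong \Loc(0) = 0$, hence $\mathcal{C} = 0$ and $\alpha$ is surjective. For injectivity, observe that $\mathcal{K}(X) = \ker(N \hookrightarrow \N(X)) = 0$. Applying Theorem \ref{SpecialLaurent} a second time, now to the $\{X_1, X_2\}$-coherent sheaf $\mathcal{K}$, yields surjections $\cS(X_i) \otimes_{\cS(X)} \mathcal{K}(X) \twoheadrightarrow \mathcal{K}(X_i)$, so $\mathcal{K}(X_i) = 0$, whence $\mathcal{K} = 0$ by $\{X_1, X_2\}$-coherence, giving $\Loc(N) \cong \N$. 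The right module case proceeds identically using the right module versions of Theorem \ref{SpecialLaurent} and the Proposition of \S\ref{UcohMod}.

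The conceptual crux, and really the only subtle point, is this second application of Theorem \ref{SpecialLaurent} to the kernel sheaf $\mathcal{K}$: it converts the tautological vanishing $\mathcal{K}(X) = 0$ into the desired vanishing $\mathcal{K}(X_i) = 0$ at the pieces of the covering. The only routine checks needed are that $\mathcal{K}$ and $\mathcal{C}$ genuinely inherit $\{X_1, X_2\}$-coherence from $\Loc(N)$ and $\N$, which is exactly the content of the Proposition of \S\ref{UcohMod}; no serious obstacle is foreseen.
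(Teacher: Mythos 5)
Your argument is correct, and the first half (producing a finitely generated submodule $N\subseteq\N(X)$ whose restrictions generate $\N(X_1)$ and $\N(X_2)$, and deducing surjectivity of $\alpha\colon\Loc(N)\to\N$ from surjectivity on the cover) matches the paper's proof. Where you diverge is in handling injectivity, and your route is cleaner. The paper does not show $\ker\alpha=0$ directly; instead, it uses $\{X_1,X_2\}$-coherence of $\ker\alpha$ to produce a finitely generated $M'\subseteq(\ker\alpha)(X)$ with $\Loc(M')\twoheadrightarrow\ker\alpha$, and then concludes $\N\cong\coker\bigl(\Loc(M')\to\Loc(N)\bigr)\cong\Loc\bigl(\coker(M'\to N)\bigr)$ using exactness and fullness of $\Loc$. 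You instead observe that because $N$ was chosen to be a \emph{submodule} of $\N(X)$, the global kernel $(\ker\alpha)(X)$ is already zero, and then run Theorem \ref{SpecialLaurent} a second time on the $\{X_1,X_2\}$-coherent sheaf $\ker\alpha$: the surjections $\cS(X_i)\otimes_{\cS(X)}(\ker\alpha)(X)\twoheadrightarrow(\ker\alpha)(X_i)$ immediately force $(\ker\alpha)(X_i)=0$, so $\ker\alpha|_{X_i}\cong\Loc(0)=0$ and $\ker\alpha=0$. This yields $\Loc(N)\cong\N$ outright, with no need for the cokernel gymnastics or the fullness of $\Loc$. In effect you have noticed that the paper's $M'$ is necessarily the zero module, which the paper does not exploit. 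The one thing worth stating explicitly (which you gesture at) is that for a kernel or cokernel of a morphism of $\{X_1,X_2\}$-coherent sheaves, the identifications $(\ker\alpha)(X_i)=\ker\alpha(X_i)$ and $(\coker\alpha)(X_i)=\coker\alpha(X_i)$ used in your vanishing arguments come from Proposition \ref{UcohMod}, which records that these sheaves restrict to $\Loc$ of the corresponding section-level kernel and cokernel.
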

 
\begin{proof} By symmetry, it will suffice to treat the case of left $\cS$-modules. As before write $X_1=X(f)$ and $X_2=X(1/f)$. By the Theorem, the natural maps $\cS(X_i)\otimes_{\cS(X)}\N(X) \to \N(X_i)$ are surjective for $i=1,2$. Since $\N(X_i)$ is a Noetherian $\cS(X_i)$-module, we can find a finitely generated $\cS(X)$-submodule $M$ of $\N(X)$ such that $\cS(X_i) \otimes_{\cS(X)} M \to \N(X_i)$ is surjective for $i=1,2$. Thus the natural map $\alpha\colon \Loc(M)\to \N$ is surjective since its restrictions to $X_1$ and $X_2$ are both surjective. Since $\Loc(M)$ and $\N$ are both $\{X_1, X_2\}$-coherent, $\ker \alpha$ is also $\{X_1, X_2\}$-coherent by Proposition \ref{UcohMod} so we may find a finitely generated $\cS(X)$-submodule $M'$ of $(\ker \alpha)(X)$ such that $\Loc(M')\to \ker \alpha$ is surjective. Thus $\N$ is isomorphic to the cokernel of $\Loc(M')\to \Loc(M)$. Since $\Loc$ is full, this cokernel is isomorphic to $\Loc(\coker (M'\to M))$ and we are done.
\end{proof}

Here is our non-commutative version of Kiehl's Theorem for sheaves of $\cS$-modules and $\L$-accessible Laurent coverings.

\subsection{Theorem} \label{LaccKiehl}Suppose that $f_1,\ldots,f_n\in \O(X)$ are such that $\L\cdot f_i\subset \A$ for each $i=1,\ldots,n$. Let $\U$ be the Laurent covering $\{X(f_1^{\alpha_1},\ldots,f_n^{\alpha_n})\st \alpha_i\in \{\pm 1\}\}$. Then $\U$ is $\L$-accessible and every $\U$-coherent sheaf $\M$ of left (respectively, right) $\cS$-modules on $X_{ac}(\L)$ is isomorphic to $\Loc(M)$ for some finitely generated left (respectively, right) $\cS(X)$-module $M$.
\begin{proof} The $\L$-accessibility of $\U$ follows from Corollary \ref{FibreProdLaccess}. By symmetry, it is sufficient to treat the case of left $\cS$-modules. We proceed by induction on $n$, the case $n=1$ being Corollary \ref{SpecialLaurent}. 

Suppose that $n> 1$, and that for every family ($X$, $\A$, $\L$) satisfying our standing hypotheses, the result is known for all smaller values of $n$. Suppose also that $f_1,\ldots,f_{n}\in \O(X)$ satisfy the hypotheses of the Proposition and that $\M$ is $\U$-coherent. 

Consider the cover $\V:=\{X(f_{n})(f_1^{\alpha_1},\ldots,f_{n-1}^{\alpha_n})\st \alpha_i\in \{\pm 1\}\}$ of $X(f_n)$. Let $\B$ be an $\L$-stable affine formal model for $X(f_n)$; then $\L'=\B\otimes_\A \L$ is a smooth $(\R, \B)$-Lie algebra. Now $\L'\cdot f_i\subset \B$ for all $i < n$, and since $\M|_{\V}$ is $\V$-coherent the induction hypothesis gives that $\M|_{X(f_n)}$ is isomorphic to $\Loc(M_1)$ for some finitely generated $\cS(X(f_{n}))$-module $M_1$. 

Using an identical argument for $X(1/f_{n})$, $\M|_{X(1/f_{n})}$ is isomorphic to $\Loc(M_2)$ for some finitely generated $\cS(X(1/f_{n}))$-module $M_2$.  Applying Corollary \ref{SpecialLaurent} again completes the proof.
\end{proof}

\section{Fr\'echet--Stein enveloping algebras}\label{FrechEnv}
We assume throughout Section \ref{FrechEnv} that $A$ is a reduced $K$-affinoid algebra and that $L$ is a coherent $(K,A)$-Lie algebra.

\subsection{Lie lattices}\label{LR}

\begin{defn} Let $\A$ be an affine formal model in $A$ and let $\L \subset L$ be an $\A$-submodule.
\be
\item $\L$ is an \emph{$\A$-lattice} if it is finitely generated as an $\A$-module, and $K \L=L$. 
\item $\L$ is a \emph{$\A$-Lie lattice} if in addition it is a sub $(\R,\A)$-Lie algebra of $L$. 
\ee
\end{defn} 

\begin{lem} Let $\L$ be an $\A$-lattice in $L$.
\be
\item If $\L$ is an $\A$-Lie lattice then $\pi^n\L$ is also an $\A$-Lie lattice for all $n\ge 0$.
\item If $\B$ is another affine formal model in $A$ then there is $n \geq 0$ such that 
\[\pi^m \L \cdot \B \subset \B \qmb{for all} m \geq n.\]
\item There is $n\ge 0$ such that $\pi^m \L$ is an $\A$-Lie lattice in $L$ for all $m \geq n$.
\ee
\end{lem}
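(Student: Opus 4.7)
My plan is to address the three parts in order, reusing (a) and especially (b) in the proof of (c).

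For part (a), since $\pi$ is a unit in $K$, both $\pi^n\L$ is still finitely generated over $\A$ and $K\pi^n\L = L$, so $\pi^n\L$ is an $\A$-lattice. For the Lie-Rinehart structure: $[\pi^n x, \pi^n y] = \pi^{2n}[x,y] \in \pi^n\L$, the anchor map restricts from $\L\to\Der_\R(\A)$, and the compatibility $[x, ay] = a[x,y] + \rho(x)(a)y$ is inherited from $\L$. This is routine.

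For part (b), I would pick $\A$-module generators $x_1,\ldots,x_r$ of $\L$ and a topological $\R$-algebra generating set $b_1,\ldots,b_s$ of $\B$ (these exist since $\B$ is topologically of finite type over $\R$). Since $x_i \cdot b_j \in A$ and $A = K\B$, I can choose $n_0 \geq 0$ so that $\pi^{n_0} x_i \cdot b_j \in \B$ for all $i,j$. The key intermediate step: each $\pi^{n_0} x_i$ is a $K$-linear derivation of $A$, hence bounded by the discussion in Section \ref{LiftEtale}; and the $\pi$-adic topology on $\B$ coincides with the topology induced from the Banach norm on $A$ (since $\B \subseteq A^\circ$ and $A$ is reduced). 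By the Leibniz rule, $\pi^{n_0} x_i$ sends the (dense) subring $\R[b_1,\ldots,b_s]$ into $\B$, and then by continuity and closedness of $\B$ in $A$, it sends all of $\B$ into $\B$. To pass from $\pi^{n_0} x_i \cdot \B \subseteq \B$ to $\pi^m \L \cdot \B \subseteq \B$, I write an arbitrary element of $\pi^m\L$ as $\sum a_i \pi^m x_i$ with $a_i \in \A$; the issue is that $\A \cdot \B$ need not be in $\B$. I resolve this by noting $\A\cdot\B$ is an affine formal model in $A$ containing $\B$ (Lemma \ref{AdmRalg}), hence finitely generated as a $\B$-module, so $\pi^N (\A\cdot\B)\subseteq \B$ for some $N$. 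Then $m \geq n_0 + N$ suffices. This absorption step is the main subtlety.

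For part (c), I first apply (b) with $\B = \A$: there is $n_0$ such that $\pi^m \L \cdot \A \subseteq \A$ for $m \geq n_0$, which supplies the anchor $\pi^m\L \to \Der_\R(\A)$. It remains to force closure under the bracket, i.e.\ $\pi^m[\L,\L] \subseteq \L$, since $[\pi^m\L,\pi^m\L] = \pi^{2m}[\L,\L]$. Expanding on generators,
\[
[ax_i, b x_j] = ab[x_i, x_j] + a(x_i\cdot b) x_j - b(x_j \cdot a) x_i,
\]
I bound each term separately: $[x_i, x_j] \in L$, so $\pi^{N_1}[x_i,x_j] \in \L$ for some uniform $N_1$; and for $m \geq n_0$, each $\pi^m(x_i\cdot b) = (\pi^m x_i)\cdot b$ lies in $\A$, making $a\pi^m(x_i\cdot b)x_j \in \A\cdot\L = \L$. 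Taking $n := \max(N_1, n_0)$ yields $\pi^m[\L,\L] \subseteq \L$ for $m \geq n$, which together with (a)'s verification of the Lie-Rinehart axiom finishes the argument.

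The main obstacle I expect is the absorption issue in (b): elements of $\pi^m\L$ are $\A$-combinations of $\pi^m x_i$, but $\A$ does not preserve $\B$, so one must genuinely use that $\A\cdot\B$ is a (bigger) formal model and absorb the discrepancy by a further power of $\pi$. Once (b) is in hand, (a) is immediate and (c) reduces to a direct Leibniz computation.
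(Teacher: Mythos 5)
Your proof is correct, and all three parts track the paper's own argument in substance. The one place you take a longer route is part (b): having invoked that $\rho(x_i)$ is a bounded derivation, the paper immediately concludes that $\rho(x_i)(\B)$ is a bounded subset of $A$, hence $\pi^{m_i}\rho(x_i)(\B) \subseteq \B$ for some $m_i$ (using $\B \subseteq A^\circ$ and $\pi^c A^\circ \subseteq \B$); your detour via topological $\R$-algebra generators, the Leibniz rule on $\R[b_1,\ldots,b_s]$, and continuity plus closedness of $\B$ reaches the same conclusion but re-derives something that boundedness already gives you for free. Your "absorption" step is the same as the paper's: you take $\pi^N(\A\cdot\B)\subseteq\B$ whereas the paper takes $\pi^t\A\subseteq\B$ directly from Lemma \ref{AdmRalg}, but these are interchangeable since $1\in\B$. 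On part (c), your argument is in fact a bit more careful than the paper's: you expand $[a\pi^m x_i, b\pi^m x_j]$ fully and note that the anchor terms $a\pi^m((\pi^m x_i)\cdot b)x_j$ need $m\geq n_0$ to land in $\A\cdot\pi^m\L$, whereas the paper asserts closure under the bracket from the generator computation alone before bringing in $s'$ — your ordering (anchor first, then bracket) makes the dependence explicit. No gaps; just note that in (b) boundedness alone suffices without the density argument.
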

\begin{proof}
(a) This is clear.

(b) Let $x_1,\ldots,x_d$ generate $\L$ as an $\A$-module, and let $\rho : L \to \Der_K(A)$ be the anchor map. The derivation $\rho(x_i)\colon A\to A$ is bounded for each $i=1,\ldots,d$ --- see the discussion in Section \ref{LiftEtale}. So there is $m_i\geq 0$ such that $\pi^{m_i}\rho(x_i)(\B)\subset \B$. By Lemma \ref{AdmRalg}, we can find $t\geq 0$ such that $\pi^t\A \subseteq \B$. Let $n = t + \max m_i$ and suppose that $m\geq n$. Then
\[ \pi^m \L \cdot \B \subset \sum_{i=1}^d \pi^t \A \hspace{0.1cm}\pi^{m_i}\rho(x_i)(\B) \subset \B.\]
(c) Since $L$ is a $(K,A)$-Lie algebra generated by $x_1,\ldots,x_d$ as an $A$-module, there are $a_{ij}^k\in A$ such that $[x_i,x_j]=\sum_{k=1}^d a_{ij}^k x_k$ for $1\le i,j\le d$. Since $A=K \cdot \A$, there is $s\ge 0$ such that $\pi^s a_{ij}^k\in \A$ for all $i,j$ and $k$. Then for $m\geq s$ we can compute 
\[[\pi^mx_i,\pi^mx_j]\in \sum_{k=1}^d \pi^{2m}a_{ij}^kx_k\in \pi^m \L\]
and hence $\pi^m\L$ is an $R$-Lie algebra for $m \geq s$. Using part (b), we can find $s' \geq 0$ such that $\pi^m \L \cdot \A \subset \A$ for all $m\geq s'$. Now take $n = \max\{s,s'\}$.
\end{proof}

\subsection{Fr\'echet completions of enveloping algebras}\label{DefnOfwUL}

Let $\A$ be an affine formal model in $A$, and let $\L$ be an $\A$-Lie lattice in $L$. We define
\[\w{U(L)}_{\A,\L} := \invlim \hspace{0.1cm} \hK{U(\pi^n \L)}.\]
Being a countable inverse limit of $K$-Banach algebras, $\w{U(L)}_{\A,\L}$ is a Fr\'echet algebra. 

\begin{lem} Let $\A$ be an affine formal model in $A$ and let $\L_1, \L_2$ be two $\A$-Lie lattices in $L$. Then there is a unique continuous $K$-algebra isomorphism
\[ \w{U(L)}_{\A,\L_1} \stackrel{\cong}{\longrightarrow} \w{U(L)}_{\A,\L_2}\]
which restricts to the identity map on $U(L)$.
\end{lem}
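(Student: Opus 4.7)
The strategy is to show that the two inverse systems defining $\w{U(L)}_{\A,\L_1}$ and $\w{U(L)}_{\A,\L_2}$ are cofinal in each other up to a uniform index shift, and then transport this through the inverse limit. Uniqueness will come from the density of $U(L)$.

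First I would observe that, since $\L_1$ and $\L_2$ are both finitely generated $\A$-submodules of $L$ with $K\L_i = L$, and since $L$ is coherent over the Noetherian ring $A$, there exists an integer $c \geq 0$ such that $\pi^c \L_1 \subseteq \L_2$ and $\pi^c \L_2 \subseteq \L_1$. Consequently, for every $n \geq 0$,
\[ \pi^{n+c} \L_1 \subseteq \pi^n \L_2 \subseteq \pi^{n-c} \L_1, \]
with all three being $\A$-Lie lattices for $n$ sufficiently large by Lemma \ref{LR}(c). Each such inclusion is a morphism of $(\R,\A)$-Lie algebras, and functoriality of the enveloping algebra (followed by $\pi$-adic completion and inversion of $\pi$) yields continuous $K$-algebra homomorphisms
\[ \hK{U(\pi^{n+c}\L_1)} \to \hK{U(\pi^n\L_2)} \qmb{and} \hK{U(\pi^{n+c}\L_2)} \to \hK{U(\pi^n\L_1)} \]
for $n \gg 0$, compatible with the restriction maps in either inverse system.

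Next I would pass to inverse limits. Since the two subsystems $(\hK{U(\pi^{n+c}\L_i)})_{n \gg 0}$ and $(\hK{U(\pi^n\L_i)})_{n \gg 0}$ are cofinal, so their inverse limits both equal $\w{U(L)}_{\A,\L_i}$, the maps above induce continuous $K$-algebra homomorphisms
\[ \Phi \colon \w{U(L)}_{\A,\L_1} \longrightarrow \w{U(L)}_{\A,\L_2} \qmb{and} \Psi \colon \w{U(L)}_{\A,\L_2} \longrightarrow \w{U(L)}_{\A,\L_1}. \]
At each finite level, the maps are induced by inclusions of sub-Lie-algebras of $L$, so the canonical maps $U(L) \to \w{U(L)}_{\A,\L_i}$ (arising from the universal property applied to the compatible family $L \hookrightarrow \hK{U(\pi^n\L_i)}$ obtained by scaling $\pi^n\L_i \hookrightarrow \hK{U(\pi^n\L_i)}$ over $K$) fit into commutative triangles; in particular, $\Phi$ and $\Psi$ restrict to the identity on $U(L)$.

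For uniqueness (and to conclude $\Psi \circ \Phi = \id$ and $\Phi \circ \Psi = \id$), I would verify that $U(L)$ has dense image in each $\w{U(L)}_{\A,\L_i}$ with respect to the Fr\'echet topology. The image of $U(L)$ in $\hK{U(\pi^n\L_i)}$ contains the $K$-span of the dense subring $U(\pi^n \L_i)$, hence is itself dense at each finite level; since the transition maps in the inverse system are continuous, a standard approximation argument in the Fr\'echet topology (choose an approximant that is close enough in the highest relevant seminorm, and bound the smaller seminorms via the continuity constants of the transition maps) shows that $U(L)$ is dense in $\w{U(L)}_{\A,\L_i}$. Continuous maps agreeing on a dense subset coincide, giving uniqueness and also forcing $\Psi \circ \Phi$ and $\Phi \circ \Psi$ to be the identities, since both compositions restrict to the identity on $U(L)$.

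The main obstacle I anticipate is the density claim: one must check carefully that density at each finite level actually implies density in the Fr\'echet topology on the inverse limit, which hinges on the continuity of the transition maps $\hK{U(\pi^{n+1}\L_i)} \to \hK{U(\pi^n\L_i)}$. Everything else reduces to functoriality of the construction $\L \rightsquigarrow \hK{U(\L)}$ and the cofinality of the shifted subsystems.
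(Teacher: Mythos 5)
Your proof is correct and follows essentially the same approach as the paper: you construct maps in both directions by exploiting mutual $\pi$-power inclusions $\pi^c\L_1 \subseteq \L_2$ and $\pi^c\L_2 \subseteq \L_1$, and then conclude they are mutually inverse via density of $U(L)$. The paper streamlines the setup slightly by first observing that $\L_1 \cap \L_2$ is again an $\A$-Lie lattice, which allows a reduction to the case $\L_1 \subseteq \L_2$; you work symmetrically and hence carry the two shifted systems in parallel, which is a bit longer but equivalent.

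One small remark: you cite Lemma \ref{LR}(c) to justify that $\pi^m\L_i$ is an $\A$-Lie lattice for $m$ large, but since $\L_1$ and $\L_2$ are already assumed to be $\A$-Lie lattices, Lemma \ref{LR}(a) already gives that $\pi^m\L_i$ is an $\A$-Lie lattice for \emph{all} $m \ge 0$, with no ``sufficiently large'' needed. Also, the density argument you flag as the main obstacle is in fact routine for countable inverse limits of Banach spaces with dense-image compatible families, and is precisely what the paper leaves implicit in ``It is easy to see that $\alpha$ and $\beta$ are mutually inverse''; your explicit treatment is harmless but not a genuine obstacle.
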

\begin{proof}
Since $\L_1 \cap \L_2$ is again an $\A$-Lie lattice in $L$, we may assume without loss of generality that $\L_1 \subset \L_2$. The universal property of $U(-)$ induces $K$-Banach algebra homomorphisms $\hK{U(\pi^n \L_1)} \to \hK{U(\pi^n \L_2)}$ for each $n\geq 0$ and hence a continuous $K$-algebra homomorphism
\[ \alpha : \w{U(L)}_{\A,\L_1} \to \w{U(L)}_{\A,\L_2}.\]
Because $\L_1$ and $\L_2$ are $\A$-lattices in $L$, we can find an integer $s$ such that $\pi^s \L_2 \subseteq \L_1$. This gives $K$-Banach algebra homomorphisms 
$\hK{U(\pi^{n+s}\L_2)} \to \hK{U(\pi^n \L_1)}$ for each $n\geq 0$ and hence a continuous $K$-algebra homomorphism
\[ \beta : \w{U(L)}_{\A,\L_2} \to \w{U(L)}_{\A,\L_1}.\]
It is easy to see that $\alpha$ and $\beta$ are mutually inverse.\end{proof}

Thus $\w{U(L)}_{\A,\L}$ is independent of the choice of $\L$ up to unique isomorphism, and we write $\w{U(L)}_{\A}$ to denote any of these Fr\'echet algebra completions of $U(L)$.

\begin{prop} Let $\A$ and $\B$ be two affine formal models in $A$. Then there is a unique continuous isomorphism
\[\w{U(L)}_{\A} \stackrel{\cong}{\longrightarrow}\w{U(L)}_{\B}\]
which restricts to the identity map on $U(L)$.
\end{prop}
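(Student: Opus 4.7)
The plan is to reduce to the case $\A \subseteq \B$ and then exhibit mutually inverse continuous maps by sandwiching an $\A$-Lie lattice between two scaled copies of a carefully chosen $\B$-Lie lattice. For the reduction, observe that by Lemma \ref{AdmRalg}(c) the $\R$-subalgebra $\A\cdot\B$ of $A$ generated by $\A$ and $\B$ is itself an affine formal model in $A$ (it lies between $\A$ and $A^\circ$ and is finitely generated over $\A$ since $A^\circ$ is). As $\A \cdot \B$ contains both $\A$ and $\B$, proving the isomorphism in the containment case and applying it twice will give the general statement.

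Assume therefore that $\A \subseteq \B$, and pick any $\A$-Lie lattice $\L$ in $L$ (which exists by Lemma \ref{LR}(c) starting from any $\A$-lattice). By Lemma \ref{LR}(b) applied to the anchor action of $\L$ on $\B$, together with Lemma \ref{LR}(c), we may choose $m \geq 0$ large enough that $\L_\A := \pi^m \L$ is both an $\A$-Lie lattice in $L$ and satisfies $\L_\A \cdot \B \subseteq \B$. Setting $\M := \B \cdot \L_\A$, a direct bracket computation of the form $[b x, b' y] = bb'[x,y] + b\rho(x)(b')y - b'\rho(y)(b)x$ shows that $\M$ is a $\B$-Lie lattice in $L$. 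Using Lemma \ref{AdmRalg}(a), choose $t\geq 0$ with $\pi^t \B \subseteq \A$. Then for each $j \geq 0$ we have the chain of $\R$-Lie subalgebras of $L$:
\[ \pi^{j+t}\M = (\pi^t\B)\cdot\pi^j\L_\A \subseteq \A\cdot\pi^j\L_\A = \pi^j\L_\A \subseteq \B\cdot\pi^j\L_\A = \pi^j\M. \]

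Applying the functor $\hK{U(-)}$ to these inclusions gives, for every $j$, compatible continuous $K$-algebra homomorphisms $\hK{U(\pi^{j+t}\M)} \to \hK{U(\pi^j\L_\A)} \to \hK{U(\pi^j\M)}$. Since $(\pi^j\L_\A)_{j\geq 0}$ is cofinal among scaled $\A$-Lie lattices coming from $\L$, and $(\pi^{j+t}\M)_{j\geq 0}$ is cofinal among scaled copies of $\M$, passing to the inverse limit yields continuous $K$-algebra maps $\alpha \colon \w{U(L)}_{\A} \to \w{U(L)}_{\B}$ and $\beta \colon \w{U(L)}_{\B} \to \w{U(L)}_{\A}$, each restricting to the identity on the common subalgebra $U(L)$. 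Finally, because $U(L) = U(\pi^j\L_\A)\otimes_\R K$ by Proposition \ref{RineTh}, the image of $U(L)$ is dense in every $\hK{U(\pi^j\L_\A)}$, and hence in the Fr\'echet algebra $\w{U(L)}_\A$; the same applies on the $\B$-side. Thus any continuous $K$-algebra map between these algebras is determined by its restriction to $U(L)$, which gives both the mutual inverseness of $\alpha$ and $\beta$ and the uniqueness of the resulting isomorphism. The main obstacle is the construction of $\M$: one needs the key observation (Lemma \ref{LR}(b)) that after enough rescaling by $\pi$, the anchor action of any $\A$-Lie lattice automatically preserves the a priori unrelated formal model $\B$, which is exactly what allows a lattice over one model to be promoted to a lattice over the other.
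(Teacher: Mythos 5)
Your route is genuinely different from the paper's: you first reduce to $\A\subseteq\B$ via the composite model $\A\cdot\B$, then promote an $\A$-Lie lattice $\L_\A$ to a $\B$-Lie lattice $\M=\B\cdot\L_\A$ and sandwich; the paper instead takes independently chosen $\A$- and $\B$-Lie lattices and builds the comparison map in one shot. Your reduction, the verification that $\M$ is a $\B$-Lie lattice, and the chain $\pi^{j+t}\M\subseteq\pi^j\L_\A\subseteq\pi^j\M$ are all correct.

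The gap is at the phrase ``applying the functor $\hK{U(-)}$ to these inclusions gives \ldots continuous $K$-algebra homomorphisms''. The inclusion $\pi^j\L_\A\hookrightarrow\pi^j\M$ does induce an $\R$-algebra map $U(\pi^j\L_\A)\to U(\pi^j\M)$, because the base-ring inclusion $\A\hookrightarrow\B$ goes the right way; that direction (giving $\alpha$) is fine. But $\pi^{j+t}\M\hookrightarrow\pi^j\L_\A$ is an inclusion of Lie--Rinehart algebras whose base rings satisfy $\B\not\subseteq\A$, and in that direction the enveloping algebra is \emph{not} functorial: there is no $\R$-algebra map $U(\pi^{j+t}\M)\to U(\pi^j\L_\A)$, since the left side contains $\B$ while the right side contains only $\A$. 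The universal property gives, after inverting $\pi$, a map $U(\pi^{j+t}\M)\to\hK{U(\pi^j\L_\A)}$; to promote this to a continuous map of Banach algebras $\hK{U(\pi^{j+t}\M)}\to\hK{U(\pi^j\L_\A)}$ (and hence construct $\beta$) you must check boundedness. The estimate is not hard --- since $\pi^t\B\subseteq\A$, the image of $\B$ lies in $\pi^{-t}\h{U(\pi^j\L_\A)}$, and the monomials in generators of $\pi^{j+t}\M$ already land in $\h{U(\pi^j\L_\A)}$, so the image of $U(\pi^{j+t}\M)$ lies in $\pi^{-t}\h{U(\pi^j\L_\A)}$ --- but it is exactly the step the paper's proof spells out (the ``image in $\pi^{-r}T$'' calculation), and your reduction to $\A\subseteq\B$ removes this issue only for $\alpha$, not for $\beta$. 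As written, $\beta$ is not constructed.
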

\begin{proof} Choose an $\A$-Lie lattice $\L$ and a $\B$-Lie lattice $\J$ in $L$. Since $K\cdot \A = K\cdot \B = A$, we can an integer $r$ such that $\pi^r \cdot \A \subset \B$. Similarly we can find an integer $s$ such that $\pi^s \cdot \L \subset \J$. 

Let $x_1,\ldots,x_d$ generate $\L$ as an $\A$-module, and let $T$ be the image of $\h{U(\J)}$ inside $\hK{U(\J)}$. The universal property of $U(-)$ induces an $\R$-algebra homomorphism $\theta_0 : U(\pi^s \L) \to \hK{U(\J)}$. Now $U(\pi^s \L)$ is generated as an $\A$-module by the set 
\[\{(\pi^s x_1)^{\alpha_1}\cdots(\pi^s x_d)^{\alpha_d} : \alpha\in\mathbb{N}^d\}.\]
Since $\theta_0$ sends all these elements to $T$ and since $\A \subset \pi^{-r} \B$, we see that the image of $\theta_0$ is contained in $\pi^{-r}T$. Hence $\theta_0$ extends to a $K$-algebra homomorphism
\[\theta_0 : \hK{U(\pi^s \L)} \to \hK{U(\J)}.\]
Applying the same argument to $\pi^{s+n} \cdot \L \subset \pi^n \J$ for each $n\geq 0$, we obtain a compatible sequence of $K$-algebra homomorphisms
\[\theta_n : \hK{U(\pi^{s+n}\L)} \to \hK{U(\pi^n \J)}\]
and hence a continuous $K$-algebra homomorphism
\[\theta_{\A,\B} := \invlim \theta_n: \w{U(L)}_{\A} \to \w{U(L)}_{\B}\]
which restricts to the identity map on $U(L)$. Since $\theta_{\B,\A}\circ\theta_{\A,\B}$ is the identity map on the dense image of $U(L)$ inside $\w{U(L)}_{\A}$, it must be equal to $\id_{\w{U(L)}_{\A}}$. Similarly $\theta_{\A,\B}\circ\theta_{\B,\A} = \id_{\w{U(L)}_{\B}}$.
\end{proof}

\begin{defn} Let $A$ be a reduced $K$-affinoid algebra and let $L$ be a $(K,A)$-Lie algebra which is finitely generated as an $A$-module. The \emph{Fr\'echet completion} of $U(L)$ is
\[ \w{U(L)} := \w{U(L)}_{\A} = \invlim \hspace{0.1cm} \hK{U(\pi^n \L)}\]
for any choice of affine formal model $\A$ in $A$ and $\A$-Lie lattice $\L$ in $L$.
\end{defn}
The above Lemma and Proposition ensure that this definition does not depend on the choice of $\A$ or $\L$, up to unique isomorphism.
\subsection{Functoriality} \label{FuncFS}

Whenever $\sigma : A\to B$ is an \'etale morphism of affinoid algebras, there is a Lie homomorphism $\psi: \Der_K(A) \to \Der_K(B)$ by Lemma \ref{LiftEtale}, and we may view $B\otimes_A L$ as a $(K,B)$-Lie algebra by Corollary \ref{LiftEtale}.

\begin{prop} Let $\sigma : A\to B$ be an \'etale morphism of reduced $K$-affinoid algebras, and let $\varphi : L \to L'$ be a morphism of $(K,A)$-Lie algebras. Then there are unique continuous $K$-algebra homomorphisms
\be\item $\w{U(L)}\to \w{U(B\otimes_A L)}$ extending the natural map $U(L)\to U(B\otimes_A L)$, and
\item $\w{U(L)}\to \w{U(L')}$ extending the natural map $U(L)\to U(L')$.  \ee
\end{prop}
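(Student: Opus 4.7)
The plan is to mimic the technique of the preceding Proposition in Section \ref{DefnOfwUL}: in each part, choose compatible $\A$-Lie lattices in source and target, construct a compatible family of continuous $K$-Banach algebra homomorphisms between their completions at each level $n$, and pass to the inverse limit. Uniqueness will hold in both cases because the image of $U(L)$ is dense in the Fr\'echet algebra $\w{U(L)}$: for each $n$ the canonical map $U(L) \to \hK{U(\pi^n \L)}$ has image containing $U(\pi^n\L)$, which is dense in $\hK{U(\pi^n\L)}$ by construction, and density with respect to each defining seminorm passes to the inverse limit. Since the targets $\w{U(B\otimes_AL)}$ and $\w{U(L')}$ are Hausdorff, any continuous extension is then uniquely determined by its restriction to $U(L)$.

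For part (a), fix an affine formal model $\A$ in $A$ with an $\A$-Lie lattice $\L \subset L$, and pick an affine formal model $\B$ in $B$ with $\sigma(\A) \subseteq \B$; for instance $\B = B^\circ$ works, using Lemma \ref{AdmRalg} and boundedness of $\sigma$. Corollary \ref{LiftEtale} endows $B\otimes_AL$ with the structure of a $(K,B)$-Lie algebra so that the natural map $L \to B\otimes_AL$ is a $K$-Lie algebra homomorphism. By Lemma \ref{LR}(b) there exists $m \geq 0$ such that $\pi^m\L \cdot \B \subseteq \B$, and then the construction of Section \ref{BaseExt} makes $\B\otimes_\A\pi^m\L$ into a genuine $(\R,\B)$-Lie algebra, which is visibly a $\B$-Lie lattice in $B\otimes_AL$. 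For each $n \geq 0$, the map $x \mapsto 1 \otimes x$ is a morphism of $(\R,\A)$-Lie algebras $\pi^{m+n}\L \to \pi^n(\B\otimes_\A \pi^m\L)$, so functoriality of $U$ and $\pi$-adic completion yields a compatible system of continuous $K$-algebra maps $\hK{U(\pi^{m+n}\L)}\to\hK{U(\pi^n(\B\otimes_\A \pi^m\L))}$; passing to inverse limits gives the required homomorphism $\w{U(L)}\to\w{U(B\otimes_AL)}$.

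Part (b) is directly analogous. Fix an affine formal model $\A$ in $A$, an $\A$-Lie lattice $\L \subset L$ and an $\A$-Lie lattice $\J \subset L'$. Because $\varphi(\L)$ is a finitely generated $\A$-submodule of $L'$ and $K\cdot\J = L'$, we have $\varphi(\L) \subseteq \pi^{-m}\J$ for some $m \geq 0$, so $\varphi(\pi^m\L) \subseteq \J$. Then $\varphi$ restricts to a morphism of $(\R,\A)$-Lie algebras $\pi^{m+n}\L \to \pi^n\J$ for each $n \geq 0$, and the resulting compatible family of continuous $K$-algebra maps $\hK{U(\pi^{m+n}\L)}\to\hK{U(\pi^n\J)}$ gives the required homomorphism in the inverse limit.

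The only technical subtlety in either case is the bookkeeping of powers of $\pi$ needed to arrange simultaneously that the relevant submodules carry genuine Lie algebra structures and that the source lattice is carried into the target lattice at each finite level; this is absorbed into the single integer $m$ and entirely controlled by Lemma \ref{LR}, so no serious obstacle arises.
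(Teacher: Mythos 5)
Your proof is correct and follows essentially the same strategy as the paper: in each part, choose a Lie lattice $\L$ in $L$ and a compatible Lie lattice $\J$ in the target, construct bounded maps $\hK{U(\pi^n\L)}\to\hK{U(\pi^n\J)}$ from the universal property of $U(-)$, pass to the inverse limit, and deduce uniqueness from density of $U(L)$. The only minor variation is in part (b), where the paper takes $\J$ first and finds a Lie lattice inside $\varphi^{-1}(\J)$ via Lemma \ref{LR}(c), whereas you fix both $\L$ and $\J$ and rescale $\L$; both are routine. One small imprecision: since $\L$ is only coherent (not necessarily flat), $\B\otimes_\A\pi^m\L$ need not embed in $B\otimes_AL$, so strictly one should take its image to speak of a ``$\B$-Lie lattice \emph{in} $B\otimes_AL$'' (the paper does say ``image''); this is harmless thanks to Lemma \ref{TorsUL} but deserves a word.
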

\begin{proof} We will construct an $A^\circ$-Lie lattice $\L$ in $L$ and a $B^\circ$-Lie lattice $\J$ in $B \otimes_A L$ (respectively, an $A^\circ$-Lie lattice $\J$ in $L'$) such that $(\sigma \otimes 1)(\L) \subseteq \J$ (respectively, $\varphi(\L) \subseteq \J$). Then the universal property of $U(-)$ induces continuous $K$-algebra homomorphisms
\[ \hK{U(\pi^m \L)} \to \hK{U(\pi^m \J)}\]
for all $m \geq 0$, and passing to the inverse limit gives the required map $\w{U(L)} \to \w{U(B\otimes_AL)}$ (respectively, $\w{U(L)} \to \w{U(L')}$). In each case uniqueness follows from the density of the image of $U(L)$ in $\w{U(L)}$. 

(a) Choose an $A^\circ$-Lie lattice $\L$ in $L$ and let $\J$ be the image of $B^\circ \otimes_{A^\circ} \L$ in $B \otimes_A L$. Then $\J$ is a $B^\circ$-lattice in $B\otimes_A L$ so by Lemma \ref{LR}(b), $\pi^n \J$ is a $B^\circ$-Lie lattice in $B\otimes_A L$ for some $n \geq 0$, and $(\sigma \otimes 1)(\pi^n \L) \subset \pi^n \J$.

(b) Let $\J$ be an $A^\circ$-Lie lattice in $L'$. Then $\varphi^{-1}(\J)$ generates $L$ as a $K$-vector space and hence contains an $A^\circ$-lattice in $L$. By Lemma \ref{LR}(c), $\varphi^{-1}(\J)$ contains an $A^\circ$-Lie lattice $\L$ in $L$ and $\varphi(\L) \subset \J$.
\end{proof}

\subsection{Fr\'echet-Stein algebras}\label{FsStein}
Following \cite[\S 3]{ST} we say that a $K$-algebra $U$ is \emph{Fr\'echet-Stein} if 
\begin{itemize}
\item there is a tower $U_0 \leftarrow U_1 \leftarrow U_2 \leftarrow \cdots $ of Noetherian $K$-Banach algebras,
\item $U_n$ is a flat right $U_{n+1}$-module for all $n\geq 0$, 
\item $U \cong \invlim U_n$.
\end{itemize}
This definition is designed with a view towards categories of \emph{left} modules. Because we will also need to work with \emph{right} modules in the future, we make this definition more precise by saying that $U$ is \emph{left Fr\'echet-Stein}. If there is a tower $U_0 \leftarrow U_1 \leftarrow U_2 \leftarrow \cdots $ of Noetherian $K$-Banach algebras such that $U \cong \invlim U_n$ and each $U_n$ is a flat \emph{left} $U_{n+1}$-module for all $n \geq 0$, then we say that $U$ is \emph{right Fr\'echet-Stein}. If both conditions are satisfied, then we say that $U$ is \emph{two-sided Fr\'echet-Stein}. 

\begin{thm} Let $A$ be a reduced $K$-affinoid algebra and let $L$ be a coherent $(K,A)$-Lie algebra. Suppose $L$ has a smooth $\A$-Lie lattice $\L$ for some affine formal model $\A$ in $\O(X)$. Then $\w{U(L)}$ is a two-sided Fr\'echet-Stein algebra.
\end{thm}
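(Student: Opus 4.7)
The plan is to verify the two-sided Fr\'echet--Stein axioms for the defining tower $\bigl(\hK{U(\pi^n\L)}\bigr)_{n\ge 0}$, whose inverse limit is $\w{U(L)}$ by the definition in Section \ref{DefnOfwUL}. Each $\pi^n\L$ is an $\A$-Lie lattice by Lemma \ref{LR} and is smooth because $\pi^n\cdot:\L\to\pi^n\L$ is an $\A$-module isomorphism. Rinehart's Theorem (Section \ref{RineTh}) then gives $\gr^{PBW}U(\pi^n\L)\cong\Sym(\pi^n\L)$, a Noetherian commutative $\A$-algebra --- the condition that will let the deformable-algebra machinery of Section \ref{FrechEnv} produce the Fr\'echet--Stein structure.

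For Noetherianness of each $\hK{U(\pi^n\L)}$, Rinehart yields that $U(\pi^n\L)$ is left and right Noetherian and, because $\L$ is $\A$-projective, $\pi$-torsion-free over $\R$. Hence the $\pi$-adic associated graded of $\h{U(\pi^n\L)}$ is the polynomial extension $(U(\pi^n\L)/\pi)[t]$, which is again Noetherian, and the Schneider--Teitelbaum criterion \cite[Proposition 1.2]{ST} transfers Noetherianness to $\h{U(\pi^n\L)}$. Inverting $\pi$ preserves this.

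For two-sided flatness of the transition map $\hK{U(\pi^{n+1}\L)}\to \hK{U(\pi^n\L)}$ my approach is to exploit the PBW basis. After a faithfully flat localisation on $\Spec\A$ on which $\L$ is free with basis $e_1,\ldots,e_d$, Rinehart identifies $\hK{U(\pi^n\L)}$ as a $K$-Banach module with the Tate-style completion $\A\langle \pi^n e_1,\ldots,\pi^n e_d\rangle\otimes_\R K$, and the transition map sends $\pi^{n+1}e_i=\pi\cdot(\pi^n e_i)$. On the commutative side this is the classical restriction between affinoid polydiscs of radii $|\pi|^{-n-1}$ and $|\pi|^{-n}$, which is flat. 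The Lie bracket contributes non-commutative corrections, but $[\pi^n e_i,\pi^n e_j]=\pi^{2n}[e_i,e_j]$ carries extra $\pi$-adic weight, so under a suitable combined PBW-plus-$\pi$-adic filtration the associated graded becomes commutative and the Tate flatness lifts via a filtered-graded argument in the style of \cite[Proposition 1.2]{ST}. Two-sidedness is automatic from the left/right symmetry of Rinehart's PBW.

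The main obstacle is this final flatness step: the naive $\pi$-adic associated graded of the transition map factors through $\A/\pi$, killing the Lie generators of $\pi^{n+1}\L$ and wrecking flatness at the graded level. The fix is a filtration sensitive both to PBW degree and to $\pi$-adic weight, calibrated so that the leading commutative Tate-algebra picture is visible while the non-commutative corrections are quarantined at strictly higher filtration levels. This calibration, together with the descent from the locally-free case to the general smooth case, is the technical heart of the deformability framework built in Section \ref{FrechEnv}.
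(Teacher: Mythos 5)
Your set-up is right, and you have correctly identified the tower $\bigl(\hK{U(\pi^n\L)}\bigr)_{n\ge 0}$, the Noetherianness of each term via Rinehart and the $\pi$-adic graded, and --- most importantly --- the precise obstruction: the naive $\pi$-adic associated graded of the transition $\hK{U(\pi^{n+1}\L)}\to\hK{U(\pi^n\L)}$ collapses modulo $\pi$ and sees none of the interesting structure, so a filtration argument applied naively gives nothing. That diagnosis is exactly the problem the paper confronts. But identifying the problem is not the same as solving it, and the paragraph beginning ``The fix is a filtration sensitive both to PBW degree and to $\pi$-adic weight\ldots'' describes the desired outcome without constructing the filtration or proving anything about it. That is the entire content of the theorem, and it is missing.

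The paper's actual mechanism is concrete and worth knowing. Setting $U:=U(\pi^n\L)$ (a deformable $\R$-algebra with $\gr U\cong\Sym(\pi^n\L)$ commutative Noetherian by Rinehart), one has $U_1:=\sum_{i\ge 0}\pi^{i}F_iU\cong U(\pi^{n+1}\L)$. The key technical step (Proposition \ref{Iadicpiadic}) is that the $I$-adic filtration on $U_1$, where $I:=U_1\cap\pi U$, is \emph{topologically equivalent} to the filtration $U_1$ inherits from the $\pi$-adic filtration on $U$; the proof uses Noetherianness of $\gr U$ to pick finitely many generators $x_j$ of degree $r_j$ and to compare $I^t$ with $U_1\cap\pi^{t}U$ via the estimate $U_1\cap\pi^{t\max r_j}U\subseteq I^t\subseteq U_1\cap\pi^t U$. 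Flatness is then proved in \emph{two} steps through the intermediate ring $V$, the $I$-adic completion of $U_1$: first $V$ is flat over $\h{U_1}$ (compare $\pi$-adic gradeds, where $\gr V\cong (V/\pi V)[t]$ and $V/\pi V$ is the $I/\pi U_1$-adic completion of $U_1/\pi U_1$, which is flat by Artin--Rees); second $\h{U_K}$ is flat over $V$, because $(\gr V)_t=(\gr U_1)_t=\overline{U}[t,t^{-1}]=\gr\h{U_K}$ after inverting $t=\gr\pi$. Both steps invoke \cite[Proposition 1.2]{ST}, which automatically gives flatness on both sides because the filtrations are two-sided --- that, not ``the left/right symmetry of Rinehart's PBW,'' is where two-sidedness comes from. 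Your proposal does not contain this intermediate ring $V$, the equivalence-of-filtrations lemma that makes $V$ usable, or the localisation-at-$t$ trick, and none of them are easy to guess from the description ``calibrated so that the non-commutative corrections are quarantined at strictly higher filtration levels.''

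Finally, the detour through ``a faithfully flat localisation on $\Spec\A$ on which $\L$ is free,'' followed by descent, is both unnecessary and problematic. Rinehart's theorem gives $\gr U(\pi^n\L)\cong\Sym(\pi^n\L)$ for any smooth (i.e.\ projective) $\L$, so the deformable-algebra machinery applies directly without reducing to the free case; and conversely, if one did localise, one would have to show that $\pi$-adic completion and the flatness statement descend along the cover, which requires nontrivial checks about completions commuting with faithfully flat base change that your sketch does not address.
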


We start preparing for the proof of this Theorem, which is given below in Section \ref{Onestepflat}. Recall \cite[\S3.5]{AW13}  that a positively filtered $\R$-algebra $U$ is said to be \emph{deformable} if $\gr U$ is flat over $\R$. Its \emph{$n$-th deformation} is by definition its subring
\[U_n:=\sum_{i\ge 0}\pi^{in}F_iU.\]
It follows from \cite[Lemma 3.5]{AW13} that $U_n$ is again a deformable $\R$-algebra, whose filtration is given by  
\[F_jU_n = U_n \cap F_jU = \sum_{i=0}^j \pi^{in} F_iU.\]
We begin by recording some useful general facts on deformable algebras.

\begin{lem} Let $U$ be a deformable $\R$-algebra. Then
\be
\item  $U_1 \cap \pi^tU = \sum_{i \geq t} \pi^i F_iU$ for any $t \geq 0$.
\item $(U_n)_m$ is equal to $U_{m+n}$ for any $n,m \geq 0$.
\ee
\end{lem}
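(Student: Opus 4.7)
The plan is to deduce both parts directly from the only nontrivial hypothesis available, namely that $\gr U$ is flat (equivalently, $\pi$-torsion-free) over $\R$.

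First I would record the following ``saturation'' consequence of flatness: for every $j,t \geq 0$,
\[ F_j U \cap \pi^t U \;=\; \pi^t F_j U. \]
One inclusion is trivial. For the reverse, suppose $x \in F_j U$ with $x = \pi y$ for some $y \in U$, and pick the least $k$ with $y \in F_k U$. If $k \le j$ we are done; otherwise $\pi y \in F_j U \subseteq F_{k-1}U$, and the $\pi$-torsion-freeness of $\gr U$ forces $y \in F_{k-1}U$, contradicting minimality. Iterating (or handling a single power of $\pi$ at a time) gives the statement for arbitrary $t$.

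For part (a), the inclusion $\supseteq$ is immediate from the definition of $U_1$. For $\subseteq$, take $u \in U_1 \cap \pi^t U$ and write $u = \sum_{i=0}^{j} \pi^i a_i$ with $a_i \in F_i U$. Split the sum at $t$: the tail $c := \sum_{i \geq t} \pi^i a_i$ already lies in $\sum_{i \geq t} \pi^i F_i U$, so it suffices to treat $b := u - c = \sum_{i=0}^{t-1} \pi^i a_i$. By construction $b \in F_{t-1}U$, and $b = u - c \in \pi^t U$ since both $u$ and $c$ are. The saturation property applied with $j = t-1$ now yields $b \in \pi^t F_{t-1}U \subseteq \pi^t F_t U$, which is the $i = t$ summand of $\sum_{i \geq t}\pi^i F_i U$.

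For part (b), I would simply use the given formula $F_i U_n = \sum_{k=0}^{i} \pi^{kn} F_k U$ to compute
\[ (U_n)_m \;=\; \sum_{i \geq 0} \pi^{im} F_i U_n \;=\; \sum_{k \geq 0} \Bigl(\sum_{i \geq k} \pi^{im + kn}\Bigr) F_k U \;=\; \sum_{k \geq 0} \pi^{k(m+n)} F_k U \;=\; U_{m+n}, \]
where the last equality in the middle holds because, for each fixed $k$, the $i = k$ term $\pi^{k(m+n)} F_k U$ already contains every higher one.

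The only real content is part (a); the main obstacle there is showing that the apparently ``too small'' subgroup $\sum_{i \geq t} \pi^i F_i U$ captures every $\pi^t$-divisible element of $U_1$, and the saturation identity above is what makes this work. Part (b) is a formal reindexing once the description of the filtration on $U_n$ is in hand.
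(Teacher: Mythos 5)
Your proof is correct and follows essentially the same approach as the paper: both hinge on the saturation property $F_jU \cap \pi^t U = \pi^t F_jU$, which you derive via a minimal-degree argument and the paper derives by noting that $U/F_tU$ is an iterated extension of flat modules $\gr_j U$, and both then handle the low-degree part of an element of $U_1$ using that saturation (you by subtracting off the tail $c$, the paper by the modular law); part (b) is the same index manipulation in both.
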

\begin{proof} (a) The $\R$-module $U / F_tU$ is a direct limit of iterated extensions of $\R$-modules of the form $\gr_jU$, each of which is flat by assumption. Hence $U / F_t U$ has no $R$-torsion and consequently $F_tU \cap \pi^t U = \pi^t F_tU$. Since $\sum_{i > t}\pi^i F_iU \subseteq \pi^t U$,
\[ U_1 \cap \pi^t U \subseteq \left(F_tU + \sum_{i > t}\pi^i F_iU\right) \cap \pi^t U \subseteq (F_tU  \cap \pi^t U) + \sum_{i > t}\pi^iF_iU = \sum_{i\geq t}\pi^iF_iU\]
by the modular law, and the reverse inclusion is clear.

(b) $(U_n)_m = \sum_{j\geq 0} \pi^{jm} \sum_{i=0}^j \pi^{in} F_iU = \sum_{i\geq 0} (\sum_{j \geq i} \pi^{jm + in}\R) F_iU = U_{n+m}$.
\end{proof} 

\subsection{The subspace filtration on \ts{U_1}}\label{Iadicpiadic} 
We will need to study the subspace filtration on $U_1$ induced from the $\pi$-adic filtration on $U$ in detail.

\begin{lem} Let $U$ be a deformable $\R$-algebra such that $\gr U$ is commutative. Suppose that $\gr U$ is generated by the symbols of the elements $x_1,\ldots, x_m \in U$ as an algebra over $\gr_0 U$. Let $r_j = \deg x_j$. Then \[ F_iU=F_0U\cdot\left\{x_1^{\alpha_1}\cdots x_m^{\alpha_m}\st \sum \alpha_j r_j\leq i\right\}\]
for each $i \geq 0$.
\end{lem}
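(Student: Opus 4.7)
The plan is to prove the claimed equality by induction on $i$, with the base case $i=0$ being essentially trivial (monomials of total degree $0$ with respect to the weights $r_j$ correspond either to the empty monomial $1$ or to products of those $x_j$ with $r_j = 0$, all of which lie in $F_0 U$, so both sides equal $F_0 U$).

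For the inductive step $i \geq 1$, the inclusion $\supseteq$ is immediate: if $\sum \alpha_j r_j \leq i$ then $x_1^{\alpha_1}\cdots x_m^{\alpha_m} \in F_{\sum \alpha_j r_j} U \subseteq F_i U$, and multiplying by an element of $F_0 U$ stays inside $F_i U$. For $\subseteq$, I would take $u \in F_i U$ and look at its principal symbol $\sigma_i(u) \in \gr_i U$. Since $\gr U$ is generated as a $\gr_0 U$-algebra by the symbols $\sigma_{r_j}(x_j)$ and is commutative, $\sigma_i(u)$ can be written as a $\gr_0 U$-linear combination of commutative monomials $\sigma_{r_1}(x_1)^{\alpha_1}\cdots \sigma_{r_m}(x_m)^{\alpha_m}$ with $\sum \alpha_j r_j = i$. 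Lifting each coefficient from $\gr_0 U = F_0 U$ to $F_0 U$ gives elements $c_\alpha \in F_0 U$ such that
\[
v \;:=\; \sum_{\sum \alpha_j r_j = i} c_\alpha\, x_1^{\alpha_1}\cdots x_m^{\alpha_m} \in F_i U
\]
has the same symbol in $\gr_i U$ as $u$. Hence $u - v \in F_{i-1}U$, and by the inductive hypothesis $u - v$ lies in $F_0 U \cdot \{x_1^{\alpha_1}\cdots x_m^{\alpha_m} : \sum \alpha_j r_j \leq i-1\}$. Adding $v$ back, $u$ lies in $F_0 U \cdot \{x_1^{\alpha_1}\cdots x_m^{\alpha_m} : \sum \alpha_j r_j \leq i\}$, completing the induction.

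There is no serious obstacle here; the only point requiring a moment's thought is why we may write $\sigma_i(u)$ as a sum of weighted monomials in the prescribed form rather than as an arbitrary noncommutative expression in the generators. This uses commutativity of $\gr U$ crucially: without it, one would only know $\sigma_i(u)$ is a sum of products $\sigma_{r_{j_1}}(x_{j_1})\cdots \sigma_{r_{j_k}}(x_{j_k})$ in some order, whereas commutativity lets us rearrange into the single normal form $\sigma_{r_1}(x_1)^{\alpha_1}\cdots\sigma_{r_m}(x_m)^{\alpha_m}$. The case of generators $x_j$ with $r_j = 0$ is handled automatically, since arbitrary powers of such $x_j$ are permitted by the degree constraint and can be absorbed into the coefficient in $F_0 U$.
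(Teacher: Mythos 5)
Your proof is correct and follows essentially the same route as the paper's: induction on $i$, passing to the principal symbol $\sigma_i(z) \in \gr_i U$, writing it as a $\gr_0 U$-linear combination of weighted monomials in the $\sigma_{r_j}(x_j)$ (using the generating hypothesis together with commutativity and the grading), lifting the coefficients through $\gr_0 U = F_0 U$, subtracting to drop into $F_{i-1}U$, and invoking the inductive hypothesis. The extra attention you give to the base case and to generators with $r_j = 0$ is harmless; in the paper's subsequent application (Proposition on the subspace filtration of $U_1$) the $r_j$ are taken positive anyway.
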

\begin{proof} It is sufficient to prove that $F_iU$ is contained in the right hand side, the reverse inclusion being clear. We proceed by induction on $i$, the case $i=0$ being trivial. For every $z\in F_iU$, the image of $z$ in $\gr_iU$ is a $\gr_0U$-linear combination of monomials in the symbols of the $x_j$'s by our assumption. Hence for each $\alpha \in \mathbb{N}^d$ such that $\sum \alpha_jr_j = i$ we can find $\lambda_\alpha \in \gr_0 U = F_0U$ such that 
\[z-\sum\lambda_{\alpha}x^\alpha\in F_{i-1}U.\]
The result follows immediately by applying the inductive hypothesis.
\end{proof}

\begin{prop}
Let $U$ be a deformable $\R$-algebra such that $\gr U$ is a commutative Noetherian graded ring, and let $I := U_1 \cap \pi U$. Then the subspace filtration on $U_1$ arising from the $\pi$-adic filtration on $U$ and the $I$-adic filtration on $U_1$ are topologically equivalent.
\end{prop}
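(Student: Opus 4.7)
The plan is to prove that the two filtrations are equivalent by establishing two containments relating powers of $I$ and intersections $U_1 \cap \pi^t U$. One inclusion is immediate: since $I \subseteq \pi U$ and $I \subseteq U_1$, one has $I^t \subseteq \pi^t U \cap U_1$ for every $t \geq 0$. The bulk of the work will be to prove that, for some constant $R$ and every $t$, $U_1 \cap \pi^t U \subseteq I^{\lceil t/R \rceil}$; topological equivalence follows at once.

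For the harder inclusion I would start by invoking Lemma \ref{Iadicpiadic}(a), which rewrites $U_1 \cap \pi^tU = \sum_{i \geq t}\pi^iF_iU$; so it suffices to bound each $\pi^iF_iU$ inside a suitable power of $I$. Since $\gr U$ is commutative and Noetherian, it is a finitely generated $\gr_0U$-algebra, say by the symbols of $x_1,\ldots,x_m\in U$ of positive degrees $r_1,\ldots,r_m$ (degree-zero elements being redundant). Set $R := \max_j r_j$ and let $J \subseteq U_1$ be the two-sided ideal generated by $\pi$ and the elements $\pi^{r_j}x_j$, each of which lies in $I$ because $r_j \geq 1$; thus $J \subseteq I$. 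By the Lemma preceding the Proposition, every $\pi^iF_iU$ is an $F_0U$-linear combination of elements $\pi^ix^\alpha$ with $x^\alpha = x_1^{\alpha_1}\cdots x_m^{\alpha_m}$ and $\sum_j\alpha_jr_j \leq i$. The key computation, valid because $\pi$ is central, is the identity
\[\pi^ix^\alpha \;=\; \pi^{c}\,(\pi^{r_1}x_1)^{\alpha_1}\cdots(\pi^{r_m}x_m)^{\alpha_m}, \qquad c := i - \textstyle\sum_j\alpha_jr_j \geq 0,\]
which exhibits $\pi^ix^\alpha$ as a product of $c + \sum_j\alpha_j$ generators of $J$. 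The simple estimate $\sum_j\alpha_j \geq (\sum_j\alpha_jr_j)/R = (i-c)/R$ gives $c + \sum_j\alpha_j \geq i/R$, so $\pi^ix^\alpha \in J^{\lceil i/R\rceil}$, and hence $\pi^iF_iU \subseteq J^{\lceil i/R\rceil} \subseteq I^{\lceil i/R\rceil}$. Summing over $i \geq t$ yields $U_1 \cap \pi^tU \subseteq I^{\lceil t/R\rceil}$, which is the required comparison.

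I do not anticipate any serious obstacle. The Noetherian hypothesis enters only to guarantee the finite generating set $x_1,\ldots,x_m$, and after that the argument reduces to the central identity above and the numerical estimate $c + \sum_j\alpha_j \geq i/R$, both of which are elementary.
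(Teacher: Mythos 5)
Your proof is correct and follows essentially the same route as the paper: both arguments pick generators $x_j$ of positive degree $r_j$ for $\gr U$ over $\gr_0 U$, use the preceding Lemma to express $F_iU$ as the $F_0U$-span of monomials $x^\alpha$ with $\sum \alpha_j r_j \le i$, and then use the numerical estimate $\sum\alpha_j \geq (\sum\alpha_j r_j)/\max_j r_j$ to show that $\pi^i x^\alpha$ factors as a product of sufficiently many elements of $I$. The only cosmetic differences are that the paper tracks the leftover power of $\pi$ via an index $\alpha_0$ with $r_0:=1$ rather than your auxiliary two-sided ideal $J$, and it phrases the conclusion as $U_1\cap\pi^{tR}U\subseteq I^t$ rather than your equivalent $U_1\cap\pi^tU\subseteq I^{\lceil t/R\rceil}$.
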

\begin{proof}
Because $\gr U$ is commutative and Noetherian, there are elements $x_1,\ldots, x_m$ in $U$ whose symbols generate $\gr U$ as an algebra over $\gr_0 U$ by \cite[Proposition 10.7]{AMac}. We may assume that each $r_j := \deg x_j$ is positive; then 
\[\pi \in I \quad \mbox{and}\quad \pi^{r_j}x_j \in I \quad\mbox{for all}\quad j \geq 1.\]
Let $r_0 := 1$; it follows from the Lemma that $\pi^i F_iU$ is generated as an $F_0U$-module by all possible elements of the form
\[ (\pi^{r_0})^{\alpha_0} (\pi^{r_1} x_1)^{\alpha_1} \cdots (\pi^{r_m} x_m)^{\alpha_m}\]
where $\alpha_j \in \mathbb{N}$ for all $j=0,\ldots, m$ and $\sum_{j=0}^m \alpha_j r_j = i$. If the integer $t$ is given and $i \geq t \max r_j$, then $\left(\sum_{j=0}^m \alpha_j\right) \max r_j \geq \sum_{j=0}^m \alpha_j r_j = i \geq t \max r_j$, so 
\[(\pi^{r_0})^{\alpha_0} (\pi^{r_1} x_1)^{\alpha_1} \cdots (\pi^{r_m} x_m)^{\alpha_m} \in I^t\]
because $\pi \in I$ and $\pi^{r_j}x_j \in I$ for all $j \geq 1$. Therefore by Lemma \ref{FsStein}(a) we have
\[ U_1 \cap \pi^{t \max r_j} U = \sum_{i \geq t \max r_j} \pi^i F_iU \subseteq I^t \subseteq U_1\cap \pi^t U  \quad\mbox{for all} \quad t \geq 0\]
because $I$ is an $F_0U$-submodule of $U$. \end{proof}

\subsection{\ts{\pi}-adic completions}\label{Onestepflat}
Recall that if $U$ is a deformable $\R$-algebra, then $\h{U_n}:=\invlim U_n/\pi^a U_n$ denotes the $\pi$-adic completion of $U_n$ and that 
\[ \h{U_{n,K}}:=K\otimes_\R \h{U_n}\]
may be equipped with the structure of a $K$-Banach algebra, with unit ball $\h{U_n}$. Since $U_0 = U$, we will abbreviate $\h{U_{0,K}}$ to $\h{U_K}$.

\begin{thm}
Let $U$ be a deformable $\R$-algebra such that $\gr U$ is a commutative Noetherian ring. Then $\h{U_K}$ is a flat  $\h{U_{1,K}}$ module on both sides.
\end{thm}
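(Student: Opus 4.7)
My strategy is to apply Lemma \ref{FlatMethod} iteratively, building $\h{U_K}$ as an iterated Weierstrass-type extension of $\h{U_{1,K}}$. Since $\gr U$ is commutative Noetherian, Proposition \ref{Iadicpiadic} yields elements $x_1, \ldots, x_m \in U$ whose symbols generate $\gr U$ as an algebra over $\gr_0 U$, with positive degrees $r_j := \deg x_j$. Setting $v_j := \pi^{r_j} x_j \in U_1$, the element $\xi_j := v_j / \pi^{r_j}$ makes sense in $\h{U_{1,K}}$ (since $\pi$ is invertible there) and maps to $x_j$ under the natural map $\h{U_{1,K}} \to \h{U_K}$.

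I then construct a chain $S_0 := \h{U_{1,K}} \to S_1 \to \cdots \to S_m = \h{U_K}$ by adjoining one $\xi_j$ at a time, taking $S_j := S_{j-1}\langle t\rangle/(\pi^{r_j} t - v_j)\, S_{j-1}\langle t\rangle$. Identifying $S_m$ with $\h{U_K}$ uses the presentation of $F_i U$ as $F_0 U$-spans of monomials in the $x_j$ given by Lemma \ref{Iadicpiadic} together with the universal property of $\pi$-adic completion. For each step apply Lemma \ref{FlatMethod} with $S := S_{j-1}$, $T := S_{j-1}\langle t\rangle$, $u := \pi^{r_j} t - v_j$ and $W := S_j$. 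Condition (a) of the lemma---flatness of $T$ over $S$---follows by the same graded argument as in the proof of Proposition \ref{WeiLau1}(b), since $\gr_\pi T$ is a polynomial-type extension of $\gr_\pi S$ and hence flat over it. For condition (b), the $u$-torsion-freeness of $T \otimes_S M \cong M\langle t\rangle$ for finitely generated $S$-modules $M$: since $\pi$ is a unit in $S$ we may rewrite $u = \pi^{r_j}(t - \xi_j)$, reducing to torsion-freeness with respect to $t - \xi_j$.

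The main obstacle is that $\xi_j$ is \emph{not} central in $S_{j-1}$. Because $\gr U$ is commutative, $[x_j, x_k]$ has strictly lower symbol, so $[v_j, v_k] \in \pi U_1$ and $[\xi_j, \xi_k] = [v_j, v_k]/\pi^{r_j + r_k}$ is nonzero in general. Thus Lemma \ref{OneTate}(c), whose proof relies on the ascending chain $\ker \xi_j^n$ consisting of $S$-submodules (as holds for central $\xi_j$), does not apply directly. The technical heart of the proof is therefore to adapt the ascending-chain argument to this non-central setting, controlling the iterated kernels via the Noetherianity of $M$ over $S_{j-1}$ and the $\pi$-adic completeness of the intermediate algebras, exploiting that $\xi_j$ commutes with the $v_i$ modulo $\pi$-multiples. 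A symmetric argument with right Tate extensions handles flatness on the right, giving the claim on both sides.
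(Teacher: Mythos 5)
Your proposal takes a genuinely different route from the paper's, but it has several unresolved gaps that prevent it from constituting a proof. The paper does \emph{not} iterate a Weierstrass-type extension; instead it introduces a single intermediate ring $V$, the $I$-adic completion of $U_1$ (which by Proposition \ref{Iadicpiadic} is the closure of the image of $U_1$ in $\h{U}$), and deduces flatness of $\h{U_1} \to V$ and of $V \to \h{U_K}$ by comparing associated graded rings and invoking the Schneider--Teitelbaum criterion \cite[Proposition 1.2]{ST} twice. This sidesteps entirely the noncommutativity issues that your construction runs into.

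The most serious gap in your argument is not the one you flag. Before you can even discuss torsion-freeness you need $S_j := S_{j-1}\langle t\rangle/(\pi^{r_j}t - v_j)S_{j-1}\langle t\rangle$ to be a ring, i.e.\ the left ideal $(\pi^{r_j}t - v_j)S_{j-1}\langle t\rangle$ must be two-sided. Since $t$ is central in $S_{j-1}\langle t\rangle$, for $a \in S_{j-1}$ we have $a(\pi^{r_j}t - v_j) - (\pi^{r_j}t - v_j)a = [v_j, a]$, which has $t$-degree $0$ and so cannot lie in the left ideal unless it vanishes. Thus $S_j$ is a ring only when $v_j$ is central in $S_{j-1}$, which fails in general precisely because $\gr U$ being commutative controls only the top symbol of $[v_j, a]$. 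If one instead passes to the two-sided ideal generated by $\pi^{r_j}t - v_j$, the quotient changes and your claimed identification $S_m \cong \h{U_K}$ --- which is already only sketched and ignores all commutation relations among the $x_j$ beyond those captured by $\gr U$ --- becomes even harder to justify. Finally, you do explicitly acknowledge that Lemma \ref{OneTate}(c) does not apply to the non-central $\xi_j$ and that closing this is ``the technical heart of the proof,'' but you give no actual argument: the suggestion that one should ``adapt the ascending-chain argument \ldots{} exploiting that $\xi_j$ commutes with the $v_i$ modulo $\pi$-multiples'' is a statement of intent, not a proof, and it is precisely where a key idea would be needed. In contrast, the paper's strategy requires no such manipulation because it only ever compares commutative associated graded rings.
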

\begin{proof}
In this proof, "flat module" will mean "flat module on both sides".  Since $\h{U_{1,K}} = \h{U_1} \otimes_\R K$, it will be enough to prove that $\h{U_K}$ is a flat $\h{U_1}$-module. By Proposition \ref{Iadicpiadic}, the $I$-adic completion $V$ of $U_1$ is isomorphic to the closure of the image of $U_1$ in $\h{U}$. Thus we have natural maps $\h{U_1}\to V \to \h{U_K}$. We observe that $V$ is $\pi$-adically complete by the proof of \cite[Theorem VIII.5.14]{ZSv2} noting that ideals in $V$ are $I$-adically closed by \cite[Theorem II.2.1.2, Proposition II.2.2.1]{LVO}

We begin by filtering both $\h{U_1}$  and $V$ $\pi$-adically. Notice that $V/\pi V$ is the $I/\pi U_1$-adic completion of $U_1/\pi U_1$ which is flat by \cite[Proposition 10.14]{AMac}. Since $U_1$ is $\pi$-torsion free, $\gr \h{U_1}\cong (U_1/\pi U_1)[t]$. Similarly, since $V$ is isomorphic to a subring of $\h{U}$, it has no $\pi$-torsion, and so $\gr V\cong (V/\pi V)[t]$. Hence $\gr V$ is flat as a $\gr \h{U_1}$-module. Since both $\h{U_1}$ and $V$ are $\pi$-adically complete, \cite[Proposition 1.2]{ST} implies that $V$ is a flat $\h{U_1}$-module.

Next, we again consider the subspace filtration on $U_1$ induced by the $\pi$-adic filtration on $U$. We have $\gr U \cong \overline{U}[t]$, where $t:=\gr \pi$ and $\overline{U} := U / \pi U$ has degree zero. It follows from Lemma \ref{FsStein}(a) that the image of $\gr U_1$ inside $\gr U$ is equal to $\oplus_{j \geq 0} t^j \cdot \overline{F_jU}$, where $\overline{F_jU}$ is the image of $F_jU$ in $\overline{U}$. Since the quotient filtration $\overline{F_jU}$ on $\overline{U}$ is exhaustive, the localisation of this image obtained by inverting $t$ is equal to $\overline{U}[t, t^{-1}]$. Now $V$ is the completion of $U_1$ so 
\[(\gr V)_t = (\gr U_1)_t = \overline{U}[t, t^{-1}] = \gr\h{U_K}\] 
and therefore $\gr \h{U_K}$ is a flat $\gr V$-module. Hence we can again invoke \cite[Proposition 1.2]{ST} to deduce that $\h{U_K}$ is a flat $V$-module.\end{proof}

Let $U$ be a deformable $\R$-algebra. By functoriality of $\pi$-adic completion, the descending chain 
\[ U = U_0 \supset U_1 \supset U_2 \supset \cdots\]
induces an inverse system of $K$-Banach algebras and bounded algebra maps 
\[ \h{U_K} = \h{U_{0,K}} \leftarrow \h{U_{1,K}} \leftarrow \h{U_{2,K}} \leftarrow  \cdots\]
whose inverse limit we denote by 
\[ \wK{U} := \invlim \hnK{U}.\]
The natural maps $\wK{U}\to \hnK{U}$ may be used to construct semi-norms $|\cdot|_n$ on $\wK{U}$ so that the completion of $\wK{U}$ with respect to $|\cdot|_n$ is $\h{U_{n,K}}$. In this way $\wK{U}$ becomes a Fr\'echet algebra. 
\begin{cor} Let $U$ be a deformable $\R$-algebra such that $\gr U$ is commutative and Noetherian. Then
$\wK{U}$ is a two-sided Fr\'echet--Stein algebra.
\end{cor}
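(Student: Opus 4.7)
The plan is to verify directly the three axioms defining a two-sided Fr\'echet--Stein algebra from Section \ref{FsStein}, using the natural tower $\hnK{U}$ indexed by $n \geq 0$. Condition $\wK{U} \cong \invlim \hnK{U}$ holds by the very definition of $\wK{U}$, so the content lies in showing that each $\hnK{U}$ is a Noetherian $K$-Banach algebra and that the restriction map $\h{U_{n+1,K}} \to \hnK{U}$ makes $\hnK{U}$ flat on both sides.

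For the flatness statement, I would reduce to the one-step case already covered by Theorem \ref{Onestepflat}, but applied to $U_n$ rather than to $U$ itself. To apply that theorem one needs $U_n$ to be a deformable $\R$-algebra whose associated graded (in the filtration $F_jU_n = \sum_{i\leq j} \pi^{in} F_iU$ inherited from Lemma \ref{FsStein}) is commutative and Noetherian. That $U_n$ is deformable is already recorded from \cite[Lemma~3.5]{AW13}. For $\gr U_n$, I would exhibit a graded $\R$-algebra isomorphism $\gr U \stackrel{\cong}{\longrightarrow} \gr U_n$ sending a representative $a \in F_jU$ to the class of $\pi^{jn} a$ in $F_jU_n/F_{j-1}U_n$; this is well-defined and injective since $U/F_{j-1}U$ is $\pi$-torsion-free (it is an iterated extension of the flat $\R$-modules $\gr_iU$), it is surjective by the formula $F_jU_n = \sum_{i\leq j}\pi^{in}F_iU$, and multiplicativity is immediate from $(\pi^{jn}a)(\pi^{in}b) = \pi^{(i+j)n}(ab)$. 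Hence $\gr U_n$ inherits commutativity and Noetherianness from $\gr U$. Then Theorem \ref{Onestepflat} applied to $U_n$ yields that $\widehat{(U_n)_K} = \hnK{U}$ is a flat module on both sides over $\widehat{(U_n)_{1,K}}$, which by Lemma \ref{FsStein}(b) equals $\h{U_{n+1,K}}$.

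For Noetherianness of $\hnK{U}$, I would equip $U_n/\pi U_n$ with the exhaustive quotient filtration $F_j(U_n/\pi U_n) := (F_jU_n + \pi U_n)/\pi U_n$. A short calculation using $\pi$-torsion-freeness of $U/F_jU$ shows that $\pi U_n \cap F_jU_n = \pi F_jU_n$, and so $\gr(U_n/\pi U_n) \cong \gr U_n / \pi \gr U_n$ as graded rings. This last ring is a quotient of the commutative Noetherian ring $\gr U_n \cong \gr U$, hence is itself commutative Noetherian. By the standard lifting-of-generators argument for exhaustively filtered rings, $U_n/\pi U_n$ is therefore Noetherian. Since $U_n$ is $\pi$-torsion-free, the $\pi$-adic associated graded of the $\pi$-adically complete ring $\hn{U}$ is the polynomial ring $(U_n/\pi U_n)[t]$, which is Noetherian; the standard fact that a complete filtered ring with Noetherian associated graded is Noetherian then gives that $\hn{U}$ is Noetherian, and so is $\hnK{U} = \hn{U}\otimes_\R K$.

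The main obstacle is really the verification that $\gr U_n$ is commutative Noetherian and the subsequent Noetherianness argument for $U_n/\pi U_n$; these require careful bookkeeping with the two competing filtrations (the deformation filtration $F_j$ and the $\pi$-adic one) and the compatibilities between them. Once the isomorphism $\gr U_n \cong \gr U$ is in hand, the flatness part of the proof is essentially a single invocation of Theorem \ref{Onestepflat} together with the identification $(U_n)_1 = U_{n+1}$ from Lemma \ref{FsStein}(b); putting all three verified conditions together then establishes that $\wK{U}$ is two-sided Fr\'echet--Stein.
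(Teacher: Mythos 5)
Your argument is correct and follows essentially the same route as the paper: reduce flatness of $\hnK{U}$ over $\h{U_{n+1,K}}$ to Theorem \ref{Onestepflat} applied to $U_n$ via the identities $(U_n)_1 = U_{n+1}$ and $\gr U_n \cong \gr U$, and deduce Noetherianness of $\hnK{U}$ from Noetherianness of $\gr U$. The only difference is that you work out explicitly the graded isomorphism $\gr U \cong \gr U_n$ and the Noetherianness of $\hn{U}$ where the paper cites \cite[Lemma~3.5]{AW13} and states the Noetherian claim without detail, respectively.
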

\begin{proof} 
Each $U_n$ is a deformable $\R$-algebra with $\gr U_n \cong \gr U$ by \cite[Lemma 3.5]{AW13}, and the first deformation $(U_n)_1$ of $U_n$ is equal to $U_{n+1}$ by Lemma \ref{FsStein}(b). Hence $\hnK{U}$ is a flat ${\widehat{U_{n+1,K}}}$-module on both sides by Theorem \ref{Onestepflat}. Also each $\hnK{U}$ is Noetherian because $\gr U$ is Noetherian.
\end{proof}

\begin{rmk} Essentially all ideas involved in this proof can already be found in \cite{ST}.
\end{rmk}

\begin{proof}[Proof of Theorem \ref{FsStein}]
Fix $n \geq 0$. Each $K$-Banach algebra $\hK{U(\pi^n \L)}$ is Noetherian. Because $\pi^n \L$ is a smooth $(\R,\A)$-Lie algebra, $U(\pi^n \L)$ is a deformable $\R$-algebra with associated graded ring $\Sym(\pi^n \L)$ by \cite[Theorem 3.1]{Rinehart}, and 
\[U(\pi^n\L)_1 \cong U(\pi^{n+1} \L).\]
Therefore $\hK{U(\pi^n \L)}$ is a flat $\hK{U(\pi^{n+1} \L)}$-module on both sides by the Theorem, and $\w{U(L)} = \invlim \hK{U(\pi^n \L)}$ is two-sided Fr\'echet-Stein.
\end{proof}

\section{The functor \ts{\w\otimes}}\label{Coad}
From now on we will work with categories of \emph{left} modules, however all our results will have analogues valid for categories of right modules. We omit giving the necessary repetitive details to save space.

\subsection{Co-admissible completion}\label{Coadcomplete}

Suppose that $U$ is a left Fr\'echet--Stein algebra. Recall, \cite[\S 3]{ST}, that if $U=\invlim U_n$ is a presentation of $U$ as a left Fr\'echet--Stein algebra then a \emph{coherent sheaf of $U_\bullet$-modules} is a family $(M_n)$ of finitely generated $U_n$-modules $M_n$ together with isomorphisms $U_n\otimes_{U_{n+1}}M_{n+1}\stackrel{\cong}{\longrightarrow} M_n$ for each $n$. The coherent sheaves of $U_\bullet$-modules form an abelian category $\Coh(U_\bullet)$ with respect to the obvious notion of morphism. Then a $U$-module $M$ is said to be \emph{co-admissible} if it is isomorphic as a $U$-module to $\invlim M_n$ for some coherent sheaf of $U_\bullet$-modules $(M_n)$ . By \cite[Lemma 3.8]{ST} the question of whether a $U$-module is co-admissible does not depend of the choice of $U_{\bullet}$ presenting $U$. The co-admissible $U$-modules form a full subcategory $\C_U$ of all $U$-modules. By \cite[Corollary 3.3]{ST} the natural functors 
\[\Gamma\colon \Coh(U_\bullet)\to \C_U \qmb{and} \Loc_{U_\bullet}\colon \C_U\to \Coh(U_\bullet)\]
that send a coherent sheaf $(M_n)$ of $U_\bullet$-modules to the co-admissible $U$-module $\invlim M_n$, and a co-admissible $U$-module $M$ to the coherent sheaf $(U_n\otimes_UM)$ of $U_{\bullet}$-modules, are mutually inverse equivalences of categories. 

\begin{defn} We say that a co-admissible $U$-module $\w{M}$ is a \emph{co-admissible completion} of a $U$-module $M$ if there is a $U$-linear map $\iota_M\colon M\to \w{M}$ such that for every co-admissible $U$-module $N$ and every $U$-linear map $f\colon M\to N$ there is a unique $U$-linear map $g\colon \w{M}\to N$ such that $g\circ \iota_M=f$. 
\end{defn}

By usual arguments with universal properties, if a $U$-module $M$ has a co-admissible completion it (together with the map $\iota$) is uniquely determined up to unique isomorphism.

\begin{prop} Suppose that $U=\invlim U_n$ is a presentation of $U$ as a left Fr\'echet--Stein algebra. If $M$ is a $U$-module such that each $U_n\otimes_U M$ is finitely generated as a $U_n$-module then $\invlim U_n\otimes_U M$ (together with the natural map $\iota_M\colon M\to \invlim U_n\otimes_U M$) is a co-admissible completion of $M$.
\end{prop}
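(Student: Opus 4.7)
The plan is to verify two things: first, that $\widetilde{M} := \invlim U_n \otimes_U M$ really is co-admissible, and second, that the pair $(\widetilde{M}, \iota_M)$ enjoys the universal property. The co-admissibility is quick. The family $(U_n \otimes_U M)_n$ forms a coherent sheaf of $U_\bullet$-modules: each $U_n \otimes_U M$ is finitely generated over $U_n$ by hypothesis, and the transition isomorphisms $U_n \otimes_{U_{n+1}} (U_{n+1} \otimes_U M) \stackrel{\cong}{\longrightarrow} U_n \otimes_U M$ are a formal consequence of associativity of tensor product. Hence $\widetilde{M} = \Gamma((U_n \otimes_U M)_n)$ is co-admissible, and moreover $\Loc_{U_\bullet}(\widetilde{M}) = (U_n \otimes_U M)_n$ via the canonical projections, by the equivalence of categories of Schneider-Teitelbaum recalled in Section \ref{Coadcomplete}.

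For the existence of $g$, given a $U$-linear map $f \colon M \to N$ with $N$ co-admissible, I would apply $U_n \otimes_U -$ to obtain a compatible family of $U_n$-linear maps $U_n \otimes_U f \colon U_n \otimes_U M \to U_n \otimes_U N$. By naturality these assemble into a morphism of coherent sheaves $(U_n \otimes_U M)_n \to (U_n \otimes_U N)_n = \Loc_{U_\bullet}(N)$, which the functor $\Gamma$ sends to a $U$-linear map $\invlim U_n \otimes_U M \to \invlim U_n \otimes_U N$. Composing with the canonical isomorphism $\invlim U_n \otimes_U N \cong N$, available because $N$ is co-admissible, produces the desired $g \colon \widetilde{M} \to N$, and a short diagram chase with a typical element $m \in M$ confirms $g \circ \iota_M = f$.

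The genuine content lies in uniqueness. Suppose $h \colon \widetilde{M} \to N$ is $U$-linear and vanishes on $\iota_M(M)$; it is enough to show $h = 0$. By the equivalence of categories this reduces to showing that $U_n \otimes_U h = 0$ for every $n$. Using the canonical identification $U_n \otimes_U \widetilde{M} \cong U_n \otimes_U M$ (which is the content of $\Loc_{U_\bullet} \circ \Gamma \cong \id$ applied to $\widetilde{M}$), the elements $1 \otimes \iota_M(m)$ with $m \in M$ correspond to $1 \otimes m$, and these visibly generate $U_n \otimes_U M$ as a $U_n$-module. Since $U_n \otimes_U h$ is $U_n$-linear and sends each such generator to $1 \otimes h(\iota_M(m)) = 0$, it vanishes identically.

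The main point requiring care is the bookkeeping showing that the canonical identification $U_n \otimes_U \widetilde{M} \cong U_n \otimes_U M$ intertwines the map $U_n \otimes_U \iota_M$ with the identity on $U_n \otimes_U M$; once this compatibility is in hand the argument reduces to a clean application of the Schneider-Teitelbaum equivalence between $\C_U$ and $\Coh(U_\bullet)$ recalled at the start of the section.
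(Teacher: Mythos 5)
Your proof is correct and follows essentially the same strategy as the paper's: construct $g$ by applying $U_n\otimes_U(-)$ levelwise and inverting $\iota_N$, then establish uniqueness by reducing to each level $n$ via the Schneider--Teitelbaum equivalence and observing that the elements $1\otimes m$ generate $U_n\otimes_U M$ over $U_n$. The only cosmetic difference is that you argue directly with the difference $h$ of two candidate maps, whereas the paper works with $q := \iota_N\circ h - \w{f}\colon \w{M}\to\w{N}$ and invokes \cite[Corollary 3.3]{ST} to write it as $\invlim q_n$ --- but the substance (the $U_n$-linear components vanish on the dense set of generators $1\otimes m$) is identical.
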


\begin{proof} Certainly $\invlim U_n\otimes_U M$ is a co-admissible $U$-module, so suppose that $N$ is also a co-admissible $U$-module and $f\colon M\to N$ is a $U$-linear map. By functoriality, there is a natural commutative diagram
\[\xymatrix{ M \ar[r]^(.3){\iota_M} \ar_f[d] & \invlim U_n \otimes_U M  \ar[d]^{\w{f}} \\
N \ar[r]_(.3){\iota_N} & \invlim U_n \otimes_U N. }\]
where $\w{f} = \invlim 1\otimes f$. Since $N$ is co-admissible, $\iota_N$ is an isomorphism so we may define $g := \iota_N^{-1} \circ \w{f}$. Then $g \circ \iota_M = f$. 

Suppose that $h : \invlim U_n \otimes_U M \to N$ is another $U$-linear map such that $h \circ \iota_M = f$. Then $\iota_N \circ h \circ \iota_M  = \iota_N \circ f =  \w{f} \circ \iota_M$, so the $U$-linear map 
\[q := \iota_N \circ h - \w{f} : \invlim U_n \otimes_U M \to \invlim U_n \otimes_U N\]
is zero on the image of $\iota_M$. By \cite[Corollary 3.3]{ST}, $q$ is the inverse limit of $U_n$-linear maps $q_n : U_n \otimes_U M \to U_n \otimes_U N$ where $q_n(x_n) = q(x)_n$ for any $x = (x_n) \in \invlim U_n \otimes_U M$. Now for any $m\in M$, $\iota_M(m) = (1 \otimes m) \in \invlim U_n \otimes_U M$, so $q_n(1 \otimes m) = q(\iota_M(m))_n = 0$. Since $q_n$ is $U_n$-linear, we see that $q_n = 0$ for all $n$ and hence $q = 0$. So $\iota_N \circ h = \w{f}$ and $h = \iota_N^{-1}\circ \w{f} = g$.
\end{proof}

\subsection{A Fr\'echet structure on \ts{\Hom} sets for co-admissible modules}\label{FrechetHom}

Suppose that $U$ is a left Fr\'echet--Stein algebra. Let $\Q_U$ be the partially ordered set of continuous seminorms $q$ on $U$ such that the corresponding Banach completion $U_q$  is a left Noetherian $K$-algebra.

\begin{lem} Let $U_\bullet$ be a Fr\'echet--Stein structure on $U$. \be
\item For each $q\in \Q_U$ and $M,N\in \C_U$, $\Hom_{U_q}(U_q\otimes_U M,U_q\otimes_U N)$ is naturally a $K$-Banach space.
\item There is a natural bifunctor from $\Q_U\times \C_U$ to the category of $K$-Banach spaces and continuous maps sending the pair $(q,M)$ to $U_q\otimes_U M$. 
\item For each $M$ and $N$ in $\C_U$, \begin{eqnarray*} \Hom_U(M,N) & \cong & \invlim_{q\in \Q_U} \Hom_{U_q}(U_q\otimes_U M,U_q\otimes_U N)\\ & \cong & \invlim_n \Hom_{U_n}(U_n\otimes_U M,U_n\otimes_U N).\end{eqnarray*} 
\ee
\end{lem}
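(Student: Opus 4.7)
The plan is to establish a canonical $K$-Banach structure on each module $U_q \otimes_U M$, deduce bifunctoriality by naturality, and then derive the identifications in (c) from Schneider--Teitelbaum's equivalence $\C_U \simeq \Coh(U_\bullet)$ recalled in Section \ref{Coadcomplete}, combined with a cofinality argument.

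For part (a), the first task is to show that $U_q \otimes_U M$ is finitely generated as a $U_q$-module. Since $q$ is a continuous seminorm on the Fr\'echet algebra $U$, there exist $n \geq 0$ and a constant $C > 0$ with $q \leq C |\cdot|_n$, so the completion map $U \to U_q$ factors continuously through $U_n$. Thus $U_q \otimes_U M \cong U_q \otimes_{U_n}(U_n \otimes_U M)$ is finitely generated over $U_q$ because $U_n \otimes_U M$ is finitely generated over $U_n$ by co-admissibility. As $U_q$ is a left Noetherian $K$-Banach algebra, any surjection $U_q^r \twoheadrightarrow U_q \otimes_U M$ endows the target with a canonical Banach $U_q$-module structure via the Open Mapping Theorem, independent of the presentation up to equivalent norm. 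The space of continuous $U_q$-linear maps between two such finitely generated Banach $U_q$-modules is then naturally a $K$-Banach space, realised as a closed subspace of the Banach space of bounded $K$-linear maps from $U_q^r$ into $U_q \otimes_U N$.

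For part (b), I would treat functoriality in each variable separately. For fixed $q$, a morphism $\phi : M \to N$ in $\C_U$ induces $1 \otimes \phi : U_q \otimes_U M \to U_q \otimes_U N$, which is automatically continuous because any $U_q$-linear map between finitely generated Banach $U_q$-modules is continuous by the uniqueness of the canonical Banach structure. For fixed $M$, a relation $q \leq q'$ in $\Q_U$ gives a continuous $K$-algebra homomorphism $U_{q'} \to U_q$ (the identity of $U$ is bounded for the two completion seminorms), and base change produces a continuous $U_q$-linear map $U_q \otimes_{U_{q'}}(U_{q'} \otimes_U M) \to U_q \otimes_U M$. The usual universal properties of tensor product and completion then yield composition and identity compatibilities.

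For part (c), the second isomorphism is a cofinality statement: by the Fr\'echet--Stein hypothesis each $|\cdot|_n$ lies in $\Q_U$, and any continuous seminorm $q$ on the Fr\'echet space $U$ satisfies $q \leq C|\cdot|_n$ for some $n$ and $C > 0$, so the system $\{|\cdot|_n : n \geq 0\}$ is cofinal in $\Q_U$ and the two inverse limits agree. For the first isomorphism, the natural base-change map $\Hom_U(M,N) \to \invlim_n \Hom_{U_n}(U_n \otimes_U M, U_n \otimes_U N)$ is an isomorphism because, via Schneider--Teitelbaum's equivalence $\C_U \simeq \Coh(U_\bullet)$, a $U$-linear morphism $M \to N$ between co-admissible modules is exactly a compatible family of $U_n$-linear morphisms $U_n \otimes_U M \to U_n \otimes_U N$. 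Composing with the cofinality isomorphism yields the full statement. The main obstacle will be keeping the bifunctoriality and continuity assertions in (b) consistent across all of $\Q_U$; once the uniqueness of the canonical Banach topology on finitely generated modules over a Noetherian $K$-Banach algebra is invoked, every continuity check reduces to a purely algebraic one, but organising the bookkeeping so that the base-change maps assemble into a genuine bifunctor whose Hom inverse limit over $\Q_U$ really matches the one over $\mathbb{N}$ requires some care.
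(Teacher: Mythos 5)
Your proposal is correct and takes essentially the same route as the paper: both reduce to the cofinal system $\{q_n\}$, invoke the canonical Banach topology on finitely generated modules over a Noetherian $K$-Banach algebra (the paper cites \cite[Proposition 2.1]{ST}, you unpack it via the Open Mapping Theorem and a free cover $U_q^r$), realise the Hom set as a closed subspace, assemble the transition maps by base change, and then identify $\Hom_U(M,N)$ with $\invlim_n\Hom_{U_n}(M_n,N_n)$ via the Schneider--Teitelbaum equivalence $\C_U\simeq\Coh(U_\bullet)$. The only difference is cosmetic: you order $\Q_U$ by seminorm dominance with maps $U_{q'}\to U_q$ for $q\leq q'$, while the paper uses the opposite convention, but the underlying inverse system and the cofinality argument are the same.
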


\begin{proof} Write $q_n$ for the semi-norm on $U$ such that $U_n=U_{q_n}$. For each $q\in \Q_U$, there is some $n$ such that $q_n\leq q$; i.e. the set $\{q_n\}$ is cofinal in $\Q_U$.

(a) Suppose $M, N$ are co-admissible $U$-modules and $q\in \Q_U$. We can find $n$ such that there is a continuous homomorphism of Noetherian $K$-Banach algebras $U_n\to U_q$. Since $M$ and $N$ are co-admissible $U_n\otimes_U M$ and $U_n\otimes_U N$ are finitely generated $U_n$-modules. Thus $U_q\otimes_U M\cong U_q\otimes_{U_n}U_n\otimes_U M$ and $U_q\otimes_U N$ are finitely generated $U_q$-modules. Thus by \cite[Proposition 2.1]{ST}, $U_q\otimes_U M$ and $U_q\otimes_U N$ have canonical Banach topologies and $\Hom_{U_q}(U_q\otimes_U M,U_q\otimes_U N)$ consists of continuous $K$-linear maps. In particular $\Hom_{U_q}(U_q\otimes_U M, U_q\otimes_U N)$ is a closed subspace of the Banach space consisting of all continuous $K$-linear maps from $U_q\otimes_U M$ to $U_q\otimes_U N$. 

(b) Suppose now that $q\leq q'\in \Q_U$ and $M\in \C_U$. Then we can define \[ \psi_{M,q,q'}\colon U_q\otimes_U M\to U_{q'}\otimes_U M\] by identifying $U_{q'}\otimes_U M$ with $U_{q'}\otimes_{U_q}U_q\otimes_U M$ and setting $\psi_{M,q,q'}(u_q\otimes m)=1\otimes u_q\otimes m$.  It is now easy to verify that if $q,q'\in \Q_U$, $M,N\in \C_U$ and $f\in \Hom_U(M,N)$ then  \[\xymatrix{ U_q\otimes_U M \ar[r]^{\id\otimes f} \ar[d]_{\psi_{M,q,q'}} &  U_q \otimes_U N  \ar[d]^{\psi_{N,q,q'}} \\
U_{q'}\otimes_U M \ar[r]_{\id\otimes f} & U_{q'} \otimes_U N }\] commutes. 

(c) Write $M_n=U_n\otimes_U M$ and $N_n=U_n\otimes_U N$. Since the set $q_n$ is cofinal in $\Q_U$, it suffices to show that $\Hom_U(M,N)\cong \invlim_n \Hom_{U_n}(M_n,N_n)$.  Now by the equivalence of categories between coherent $U_\bullet$-modules and coadmissible $U$-modules there is a $K$-linear isomorphism  $\Hom_U(M,N)\cong \Hom_{\Coh U_\bullet}(M_{\bullet},N_{\bullet})$. Thus it remains to observe that if $(f_n)\in \prod_{n\ge 0} \Hom_{U_n}(M_n,N_n)$ then $f_\bullet$ is a morphism of coherent $U_\bullet$-modules if and only if $\psi_{N,q_{n+1},q_n}\circ f_{n+1}=f_n\circ \psi_{M,q_{n+1},q_n}$ for each $n\geq 0$.  
\end{proof}

\begin{defn} Suppose that $M$ and $N$ are co-admissible $U$-modules. Using the lemma we can make \[ \Hom_U(M,N)\cong \invlim_{q\in \Q_U} \Hom_{U_q}(U_q\otimes_U M,U_q\otimes_U N)\] into a $K$-Fr\'echet space by giving it the inverse limit topology in the category of locally convex vector spaces.
\end{defn}

\subsection{The functor \ts{M \mapsto P \w\otimes_V M}}\label{Coadbase}

Let $U$ and $V$ be left Fr\'echet--Stein algebras. 

\begin{defn} We say that a Fr\'echet space $P$ is a \emph{$U$-co-admissible $(U, V)$-bimodule} if $P$ is a co-admissible left $U$-module equipped with a continuous homomorphism $V^{\op}\to \End_U(P)$ with respect to the topology on $\End_U(P)$ defined in \S\ref{FrechetHom}. 
\end{defn}

For any Fr\'echet-Stein structures $U_\bullet$ and $V_\bullet$ on $U$ and $V$ respectively, the definition of the Fr\'echet topology on $\End_U(P)$ implies that $V^{\op} \to \End_U(P)$ is continuous if and only if for every $n \geq 0$, there is some $m\geq 0$ and a continuous algebra homomorphism $V_m^{\op} \to \End_{U_n}(U_n \otimes_U P)$ such that the diagram
\[\xymatrix{ V^{\op}\ar[r]\ar[d] & \End_U(P) \ar[d] \\ V_m^{\op} \ar[r] & \End_{U_n}(U_n \otimes_U P)}\]
commutes. Thus for example $U$ is a $U$-co-admissible $(U, V)$-bimodule whenever $V\to U$ is a continuous homomorphism of left Fr\'echet--Stein algebras.

 \begin{lem} Suppose that $P$ is a $U$-co-admissible $(U,V)$-bimodule. Then for every co-admissible $V$-module $M$, there is a co-admissible $U$-module 
 \[P\w{\otimes}_V M\]
 and a $V$-balanced $U$-linear map 
 \[\iota\colon P\times M\to P\w{\otimes}_V M\]
satisfying the following universal property:  if $f\colon P\times M\to N$ is a $V$-balanced $U$-linear map with $N\in \C_U$ then there is a unique $U$-linear map $g\colon P\w\otimes_V M\to N$ such that $g\iota=f$. Moreover, $P\w\otimes_V M$ is determined by its universal property up to canonical isomorphism.
\end{lem}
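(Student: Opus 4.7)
The plan is to construct $P\w\otimes_V M$ explicitly as the inverse limit of a coherent sheaf of $U_\bullet$-modules, and then read the universal property off this description. Fix Fr\'echet--Stein presentations $U=\invlim U_n$ and $V=\invlim V_n$, and write $P_n:=U_n\otimes_U P$ and $M_n:=V_n\otimes_V M$. The continuity assumption on $V^{\op}\to\End_U(P)$ provides, for each $n$, an integer $m(n)$ and a continuous algebra map $V_{m(n)}^{\op}\to\End_{U_n}(P_n)$ compatible with $V^{\op}\to\End_U(P)$; after passing to a subsequence we may assume $m$ is increasing in $n$.

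The first step would be to set $T_n:=P_n\otimes_{V_{m(n)}}M_{m(n)}$ and verify it is a finitely generated left $U_n$-module, generated by the elementary tensors $p_i\otimes x_j$ where $p_1,\dots,p_\ell$ generate $P_n$ over $U_n$ and $x_1,\dots,x_k$ generate $M_{m(n)}$ over $V_{m(n)}$. Then, using that $U_n\otimes_{U_{n+1}}P_{n+1}\cong P_n$ as $(U_n,V_{m(n+1)})$-bimodules and that $V_{m(n)}\otimes_{V_{m(n+1)}}M_{m(n+1)}\cong M_{m(n)}$, a routine associativity calculation gives
\[
U_n\otimes_{U_{n+1}}T_{n+1}\;\cong\;P_n\otimes_{V_{m(n+1)}}M_{m(n+1)}\;\cong\;T_n.
\]
Hence $(T_n)$ is a coherent sheaf of $U_\bullet$-modules and I define $P\w\otimes_V M:=\invlim T_n$, which is a co-admissible $U$-module by \cite[\S 3]{ST}. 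The map $\iota\colon P\times M\to P\w\otimes_V M$ sends $(p,m)$ to the inverse system whose $n$-th component is $(1\otimes p)\otimes(1\otimes m)\in T_n$; it is $U$-linear in $p$ and $V$-balanced because $(1\otimes pv)=(1\otimes p)v$ in $P_n$ and $v(1\otimes m)=(1\otimes vm)$ in $M_{m(n)}$, together with $V_{m(n)}$-balance in $T_n$.

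For the universal property, the central observation is a natural isomorphism
\[
P_n\otimes_V M\;\xrightarrow{\;\cong\;}\;T_n,\qquad p\otimes m\;\longmapsto\;p\otimes(1\otimes m),
\]
where $V$ acts on $P_n$ through $V\to V_{m(n)}$; the inverse sends $p\otimes(v\otimes m)$ to $pv\otimes m$, and both maps are easily checked to be well defined. Given a $V$-balanced $U$-linear map $f\colon P\times M\to N$ with $N$ co-admissible, the induced $U$-linear map $\bar f\colon P\otimes_V M\to N$ yields, after applying $U_n\otimes_U-$, a $U_n$-linear map $P_n\otimes_V M\to N_n$; under the isomorphism above this becomes a map $g_n\colon T_n\to N_n$, and naturality in $n$ makes $(g_n)$ a morphism of coherent sheaves of $U_\bullet$-modules whose inverse limit is the required $g\colon P\w\otimes_V M\to N$ with $g\circ\iota=f$. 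For uniqueness, Lemma \ref{FrechetHom}(c) reduces the question to showing each $T_n$ is generated as a $U_n$-module by the image of $\iota_n$: any element $p\otimes(v\otimes m)$ of $T_n$ equals $(pv)\otimes(1\otimes m)$, and $pv\in P_n$ is a $U_n$-linear combination of elements of the form $1\otimes p'$, so the claim follows. The final assertion about canonical isomorphism is the standard Yoneda-style argument applied to the universal property.

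The main technical obstacle will be the bookkeeping of how the right $V_{m(n)}$-actions on the $P_n$ fit together as $n$ varies: one needs to verify that the $V_{m(n+1)}$-action on $P_{n+1}$ base-changes along $U_{n+1}\to U_n$ to the $V_{m(n+1)}$-action on $P_n$ obtained from the $V_{m(n)}$-action by pullback along $V_{m(n+1)}\to V_{m(n)}$. This amounts to the commutativity of the square in the definition of $U$-co-admissible $(U,V)$-bimodule applied to the pair of indices $n$ and $n+1$, and is the one point where the continuity of $V^{\op}\to\End_U(P)$ is really used.
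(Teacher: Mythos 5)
Your construction coincides with the paper's: both form $\invlim P_n\otimes_V M$ where $P_n=U_n\otimes_U P$, the key point in each case being that $P_n\otimes_V M\cong P_n\otimes_{V_m}(V_m\otimes_V M)$ is finitely generated over $U_n$. The difference is one of abstraction level. The paper observes that this finite generation means $P\otimes_V M$ has a co-admissible completion by Proposition \ref{Coadcomplete}, sets $P\w\otimes_V M:=\w{P\otimes_V M}$, and reads the universal property off at once from the universal properties of $\otimes_V$ and of co-admissible completion. You instead re-derive the content of that proposition inline --- verifying that $(T_n)$ is a coherent sheaf of $U_\bullet$-modules, constructing $g$ level by level, and proving uniqueness via Lemma \ref{FrechetHom}(c) together with a generation argument. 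This is correct but does more work than necessary; in particular the bookkeeping about compatibility of the $V_{m(n)}$-actions that you flag as the main obstacle (which indeed reduces to the fact that all the actions in question are continuous extensions of a single $V^{\op}$-action along dense embeddings) is exactly what invoking Proposition \ref{Coadcomplete} lets you avoid. A minor point: there is no need to pass to a subsequence to make $m$ increasing; simply replace $m(n)$ by $\max_{k\le n}m(k)$, since the factorization through $V_m^{\op}$ persists when $m$ is enlarged.
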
 

\begin{proof} Let $U=\invlim U_n$ and $V=\invlim V_n$ be presentations of $U$ and $V$ as left Fr\'echet--Stein algebras and let $n \geq 0$ be fixed. Then $P_n := U_n \otimes_U P$ is a ($U_n$, $V$)-bimodule that is finitely generated as a $U_n$-module. Because $V^{\op}\to \End_U(P)$ is continuous, the map $V^{\op}\to \End_{U_n}(P_n)$ factors through $V_m$ for some $m$. Thus
\[P_n\otimes_V M\cong P_n\otimes_{V_m}(V_m\otimes_V M)\]
is a finitely generated $U_n$-module because $M$ is co-admissible. Therefore $P \otimes_V M$ has a co-admissible completion by Proposition \ref{Coadcomplete}, and we define
\[ P \w{\otimes}_V M := \w{P \otimes_V M} = \invlim P_n \otimes_V M.\]
The universal properties of $\otimes_V$ and of co-admissible completion ensure that $P \w{\otimes}_V M$ satisfies the required universal property.
\end{proof}

We note that if $U$, $V$ and $P$ are as in the Lemma then for any choice of $U_\bullet$ presenting $U$ as a left Fr\'echet--Stein algebra, and any $M\in\C_U$, we have isomorphisms 
\[ U_n\otimes_U  (P\w\otimes_V M)\cong U_n\otimes_U P\otimes_V M.\] 
For any $f\in \Hom_{\C_V}(M,M')$, the universal property for $\w\otimes_V$ uniquely determines an element $1\w\otimes f\in \Hom_{\C_U}(P\w\otimes_V M,P\w\otimes_V M')$ since the composite $P\times M\stackrel{1\times f}\longrightarrow P\times M'\to  P\w\otimes_V M'$ is $V$-balanced and $U$-linear. Thus we have defined the \emph{co-admissible base change} functor 
\[P\w\otimes_V- : \C_V\longrightarrow \C_U.\]

\subsection{Associativity of \ts{\w{\otimes}}}\label{WotimesAssoc}

\begin{lem} Suppose that $U$, $V$ and $W$ are left Noetherian $K$-Banach algebras, $P$ is a $(U,V)$-bimodule, and $Q$ is a $(V,W)$-bimodule. Suppose further that $P$ and $Q$ are finitely generated over $U$ and $V$ respectively, and that $V^{\op}\to \End_U(P)$ and $W^{\op}\to \End_V(Q)$ are both continuous. Then $P\otimes_V Q$ is a finitely generated left $U$-module and the natural map $W^{\op}\to \End_U(P\otimes_V Q)$ is continuous.
\end{lem}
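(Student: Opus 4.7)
The plan is to verify finite generation first, then reduce the continuity statement to a boundedness estimate relating the canonical Banach norms on $P$, $Q$, and $P\otimes_V Q$, and carry out that estimate via two applications of the open mapping theorem.

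First I would show $P\otimes_V Q$ is finitely generated over $U$. I choose generators $q_1,\ldots,q_n$ of $Q$ as a left $V$-module, obtain a $V$-linear surjection $\pi\colon V^n\twoheadrightarrow Q$, and tensor on the left with $P$ to produce a $U$-linear surjection $\pi_P:=1_P\otimes\pi\colon P^n\twoheadrightarrow P\otimes_V Q$. Since $P$ is finitely generated over $U$, so is $P^n$, and hence so is the quotient $P\otimes_V Q$. By \cite[Proposition 2.1]{ST}, this module then carries a canonical Banach topology, and both $\pi_P$ and $\pi$ are open quotient maps by the open mapping theorem.

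Next I would observe that the right $W$-action on $P\otimes_V Q$ is given by $(p\otimes q)\cdot w = p\otimes(q\cdot w)$, so the map $W^{\op}\to\End_U(P\otimes_V Q)$ factors through $\End_V(Q)$ as $w\mapsto (\alpha_w\colon q\mapsto q\cdot w)\mapsto 1_P\otimes\alpha_w$. Since $W^{\op}\to\End_V(Q)$ is continuous by hypothesis, it suffices to check that $\End_V(Q)\to\End_U(P\otimes_V Q)$, $\alpha\mapsto 1_P\otimes\alpha$, is bounded. The key intermediate step will be a bilinear estimate $\|p\otimes q\|\leq C\|p\|\cdot\|q\|$ for some constant $C>0$. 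Given $q\in Q$, I would apply open mapping to $\pi$ to produce $v_1,\ldots,v_n\in V$ with $q=\sum v_i q_i$ and $\max_i\|v_i\|\leq C_1\|q\|$; then $p\otimes q=\pi_P(pv_1,\ldots,pv_n)$, and combining boundedness of $\pi_P$ with boundedness of the right $V$-action on $P$ (guaranteed by continuity of $V^{\op}\to\End_U(P)$) yields the required estimate.

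To finish, given $\alpha\in\End_V(Q)$ and $x\in P\otimes_V Q$, open mapping applied to $\pi_P$ provides a lift $(p_1,\ldots,p_n)\in P^n$ of $x$ with $\max_i\|p_i\|\leq C_2\|x\|$. Then $(1_P\otimes\alpha)(x)=\sum_i p_i\otimes\alpha(q_i)$, and the bilinear estimate together with $\|\alpha(q_i)\|\leq \|\alpha\|\cdot\|q_i\|$ will give
\[ \|(1_P\otimes\alpha)(x)\|\leq C\, C_2\,\max_i\|q_i\|\cdot\|\alpha\|\cdot\|x\|, \]
so $\alpha\mapsto 1_P\otimes\alpha$ has operator norm at most $C\,C_2\max_i\|q_i\|$. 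The main obstacle will be purely bookkeeping across several Banach norms; conceptually the proof hinges on \cite[Proposition 2.1]{ST}, which ensures that the canonical Banach topology on a finitely generated module over a Noetherian Banach $K$-algebra is well-behaved with respect to the various surjections involved.
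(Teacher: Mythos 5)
Your proof is correct, but it takes a genuinely different route from the paper's. The paper chooses sub-multiplicative norms on $U$, $V$, $W$ with unit balls $\U$, $\V$, $\W$, identifies $\U X$, $\V Y$, $\U(X\otimes Y)$ as unit balls for equivalent norms on $P$, $Q$, $P\otimes_V Q$ (a non-commutative version of \cite[\S 3.7]{BGR}), translates the two continuity hypotheses into lattice containments $\U X\V\subseteq\pi^{-a}\U X$ and $\V Y\W\subseteq\pi^{-b}\V Y$, and then simply multiplies these through to deduce $\U(X\otimes Y)\W\subseteq\pi^{-(a+b)}\U(X\otimes Y)$. Your argument instead factors the structure map as $W^{\op}\to\End_V(Q)\to\End_U(P\otimes_V Q)$, observes that the first arrow is bounded by hypothesis, and then bounds the second arrow by an operator-norm computation: two applications of the open mapping theorem (to $V^n\twoheadrightarrow Q$ and to $P^n\twoheadrightarrow P\otimes_V Q$) produce norm-controlled lifts, and a bilinear estimate $\|p\otimes q\|\leq C\|p\|\cdot\|q\|$ closes the loop. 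Both proofs pivot on \cite[Proposition 2.1]{ST} for the canonical Banach topology on finitely generated modules and for automatic boundedness of the module maps involved. The paper's lattice computation is more economical and avoids explicit constants by working with a single $\pi$-power scaling; your argument, while it accumulates several constants, makes the factorisation through $\End_V(Q)$ explicit and will read more naturally to someone used to operator-theoretic phrasing. One small remark on presentation: when you invoke the open mapping theorem to produce a lift $(v_1,\dots,v_n)$ of $q$ with $\max_i\|v_i\|\leq C_1\|q\|$, strictly speaking you obtain lifts with norm within any $\epsilon$ of the quotient-norm infimum, so the constant $C_1$ should absorb a small slack; this costs nothing but is worth noting for rigour.
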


\begin{proof} Suppose that $X:=\{x_1,\ldots,x_n\}$ generates $P$ as a left $U$-module and $Y:=\{y_1,\ldots,y_m\}$ generates $Q$ as a left $V$-module. Then if $p\otimes q\in P\otimes Q$, we can write \[ p\otimes q=\sum_{i=1}^m p\otimes v_iy_i=\sum_{i=1}^m pv_i\otimes y_i \] for some $v_1,\ldots,v_m\in V$. Now for each $i$, $pv_i=\sum_{j=1}^n u_{ij} x_j$ for some $u_{ij}\in U$. Thus $p\otimes q=\sum_{i,j} u_{ij}x_j\otimes y_i$. Since $P\otimes_V Q$ is generated by elementary tensors as an  abelian group, it follows that it is  generated as a $U$-module by the set $X\otimes Y:=\{x\otimes y\st x\in X, y\in Y\}$. 

Choose sub-multiplicative norms on $U$, $V$ and $W$ that define their Banach topologies and let $\U$, $\V$ and $\W$ be the corresponding unit balls. By a non-commutative version of \cite[\S 3.7]{BGR}, $\U X$, $\V Y$ and $\U(X\otimes Y)$ are unit balls with respect to norms on $P$, $Q$ and $P\otimes_V Q$ defining their respective Banach topologies. 

Since $V^{\op}\to \End_U(P)$ and $W^{\op}\to \End_V(Q)$ are continuous, there are natural numbers $a$ and $b$ such that $\U X\V\subseteq \pi^{-a}\U X$ and $\V Y\W\subseteq \pi^{-b}\V Y$. Thus 
\[ \U(X\otimes Y)\W \subseteq \U(X \otimes \pi^{-b} \V \Y) = \pi^{-b} \U X \V \otimes Y \subseteq \pi^{-(a+b)} \U(X \otimes Y)\]
and so $W^{\op}\to \End_U(P\otimes_V Q)$ is continuous as claimed.
\end{proof}
\begin{prop} Suppose that $U$, $V$ and $W$ are left Fr\'echet--Stein algebras, that $P$ is a $U$-co-admissible $(U,V)$-bimodule and that $Q$ is a $V$-co-admissible $(V,W)$-bimodule. Then $P \w\otimes_V Q$ is a $U$-co-admissible $(U,W)$-bimodule, and for every co-admissible $W$-module $M$ there is a canonical isomorphism \[ P\w\otimes_V (Q\w\otimes_W M)\stackrel{\cong}{\longrightarrow} (P\w\otimes_V Q)\w\otimes_W M  \] of co-admissible $U$-modules.
\end{prop}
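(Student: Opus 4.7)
The plan is to work level-wise using fixed Fr\'echet--Stein presentations $U = \invlim U_n$, $V = \invlim V_n$, $W = \invlim W_n$, reducing the Proposition to a family of natural isomorphisms of finitely generated modules over Noetherian Banach algebras, together with passage to inverse limits. First I would record the continuity data as two indexing functions: since $V^{\op} \to \End_U(P)$ is continuous, for each $n \geq 0$ there is some $m(n) \geq 0$ and a continuous algebra map $V_{m(n)}^{\op} \to \End_{U_n}(U_n \otimes_U P)$ factoring the action; similarly, for each $m \geq 0$ there is some $k(m) \geq 0$ and a continuous map $W_{k(m)}^{\op} \to \End_{V_m}(V_m \otimes_V Q)$. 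The $W^{\op}$-action on $P \w\otimes_V Q$ is then the one induced by functoriality of $\w\otimes_V$ (applied, for each $w \in W$, to the $V$-linear endomorphism $\cdot w \colon Q \to Q$).

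For the first claim, I would use the natural identification from Section \ref{Coadbase},
\[ U_n \otimes_U (P \w\otimes_V Q) \;\cong\; U_n \otimes_U P \otimes_V Q \;\cong\; (U_n \otimes_U P) \otimes_{V_{m(n)}} (V_{m(n)} \otimes_V Q). \]
Here $U_n \otimes_U P$ is a finitely generated $U_n$-module with continuous $V_{m(n)}^{\op}$-action, and $V_{m(n)} \otimes_V Q$ is a finitely generated $V_{m(n)}$-module with continuous $W_{k(m(n))}^{\op}$-action. Lemma \ref{WotimesAssoc} applied to this pair yields that $U_n \otimes_U (P \w\otimes_V Q)$ is finitely generated over $U_n$ and carries a continuous $W_{k(m(n))}^{\op}$-action, which by construction is compatible with the transition maps as $n$ varies. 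Taking the inverse limit shows that the map $W^{\op} \to \End_U(P \w\otimes_V Q)$ is continuous, so $P \w\otimes_V Q$ is a $U$-co-admissible $(U, W)$-bimodule.

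For the canonical isomorphism, I would again work at each level. Applying the identity $U_n \otimes_U (P \w\otimes_V M') \cong U_n \otimes_U P \otimes_V M'$ (valid for any co-admissible $V$-module $M'$) to $M' = Q \w\otimes_W M$, and then applying the analogous identity for $V_{m(n)} \otimes_V (Q \w\otimes_W M) \cong V_{m(n)} \otimes_V Q \otimes_W M$, we get
\[ U_n \otimes_U \bigl(P \w\otimes_V (Q \w\otimes_W M)\bigr) \;\cong\; (U_n \otimes_U P) \otimes_{V_{m(n)}} (V_{m(n)} \otimes_V Q) \otimes_W M. \]
Similarly, using the first part of the Proposition just established,
\[ U_n \otimes_U \bigl((P \w\otimes_V Q) \w\otimes_W M\bigr) \;\cong\; U_n \otimes_U (P \w\otimes_V Q) \otimes_W M \;\cong\; U_n \otimes_U P \otimes_V Q \otimes_W M. \]
The associativity of the ordinary tensor product identifies the two right-hand sides canonically. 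Passing to the inverse limit over $n$ produces the asserted isomorphism of co-admissible $U$-modules.

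The main obstacle I expect is not the isomorphism at any single level $n$ (which is just associativity of $\otimes$), but the bookkeeping required to ensure that all these identifications commute with the transition maps of the coherent $U_\bullet$-module structures on both sides. That is, for $n' \geq n$ one must check that the associativity isomorphism intertwines the maps coming from $U_n \otimes_{U_{n'}} (-)$ applied to the two towers, and that the chosen indexing functions $m, k$ can be taken to be compatible (one can always increase $m(n)$ and $k(m)$ to achieve this). Once this compatibility is verified, the isomorphism descends to $\invlim$ and is manifestly canonical because at each level it is the associativity isomorphism for ordinary tensor products.
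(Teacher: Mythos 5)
Your proposal follows essentially the same route as the paper's proof: fix presentations $U_\bullet, V_\bullet, W_\bullet$, reduce the bimodule and associativity claims to level-$n$ statements via $U_n\otimes_U(P\w\otimes_V Q)\cong (U_n\otimes_U P)\otimes_{V_m}(V_m\otimes_V Q)$ for $m$ large enough depending on $n$, invoke the preceding Lemma for continuity of the $W^{\op}$-action on the level-$n$ tensor product, and pass to the inverse limit. The paper handles the compatibility concern you raise at the end by noting that the composite isomorphism is independent of the choice of sufficiently large $m$, so the level-wise isomorphisms automatically assemble into a morphism of coherent $U_\bullet$-modules.
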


\begin{proof} Let $U_\bullet$, $V_\bullet$ and $W_\bullet$ be Fr\'echet--Stein structures on $U$, $V$ and $W$ respectively. $P\w\otimes_V Q$ is a coadmissible $U$-module by Lemma \ref{Coadbase}, and to see that $W^{\op}\to \End_U(P\w\otimes_V Q)$ is continuous, it suffices to show that for each $n\geq 0$, $W^{\op}\to \End_{U_n}\left(U_n\otimes_U (P\w\otimes_V Q)\right)$ factors continuously through some $W_l^{\op}$.

Fix $n\geq 0$ and write $P_n:=U_n\otimes_U P$. Because $V^{\op}\to \End_U(P)$ is continuous, there is some $m\geq 0$ such that $V^{\op} \to \End_{U_n}(P_n)$ factors through a continuous map $V_m^{\op}\to \End_{U_n}(P_n)$. Let $Q_m:=V_m\otimes_V Q$ so that there is a canonical isomorphism of $(U_n,W)$-bimodules $P_n\otimes_{V_m} Q_m\cong P_n\otimes_V Q$. Because $W^{\op} \to \End_V(Q)$ is continuous, there is some $l\ge 0$ such that $W^{\op}\to \End_{V_m}(Q_m)$ factors through a continuous map $W_l^{\op} \to \End_{V_m}(Q_m)$. Hence $W_l^{\op}\to \End_{U_n}(P_n\otimes_{V_m}Q_m)$ is continuous by the Lemma. 

Now, for the choice of $m$ above, there are canonical isomorphisms
\begin{eqnarray*}
U_n\otimes \left(P\w\otimes_V (Q\w\otimes_W M)\right) & \cong & P_n\otimes_V (Q\w\otimes_W M) \\
								    & \cong &  P_n\otimes_{V_m}(V_m\otimes_V \left(Q\w\otimes_W M)\right)\\
								  & \cong &  P_n\otimes_{V_m} Q_m\otimes_W M \\
								& \cong & P_n\otimes_V Q\otimes_W M\\
								& \cong  & U_n\otimes_U (P\w\otimes_V Q)\otimes_W M\\
								& \cong & U_n\otimes_U \left( ( P\w\otimes_V Q)\w\otimes_W M\right).
\end{eqnarray*}
We note that the composite isomorphism does not depend on $m$ provided that it is sufficiently large with respect to $n$. Since $n$ is arbitrary and both modules in the statement are co-admissible the result follows.
\end{proof}

\begin{cor} Let $W \to V\to U$ be a sequence of continuous morphisms of left Fr\'echet--Stein algebras. Then there is a canonical isomorphism 
\[U\w\otimes_V (V\w\otimes_W M)\stackrel{\cong}{\longrightarrow} U\w\otimes_W M\]
of $U$-modules, for every co-admissible $W$-module $M$.\end{cor}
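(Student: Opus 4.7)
The plan is to apply the Proposition immediately preceding the Corollary (Section \ref{WotimesAssoc}) with $P := U$, viewed as a $U$-co-admissible $(U,V)$-bimodule via the continuous homomorphism $V \to U$, and $Q := V$, viewed as a $V$-co-admissible $(V,W)$-bimodule via $W \to V$. The paragraph following Definition \ref{Coadbase} records that these bimodule structures arise automatically from the given continuous algebra maps. This instantly yields a canonical isomorphism
\[U \w\otimes_V (V \w\otimes_W M) \;\cong\; (U \w\otimes_V V) \w\otimes_W M\]
of co-admissible $U$-modules, compatible with the $W$-actions on both sides.

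The remaining task is to produce a natural isomorphism $U \w\otimes_V V \cong U$ of $U$-co-admissible $(U,W)$-bimodules. To construct a map $\mu : U \w\otimes_V V \to U$, I would invoke the universal property of $\w\otimes_V$ from the Lemma in Section \ref{Coadbase}: the multiplication map $U \times V \to U$, $(u,v) \mapsto u \cdot f(v)$ (where $f : V \to U$ is the given continuous morphism), is $V$-balanced and $U$-linear, and $U$ is co-admissible over itself, so it factors uniquely through $\mu$.

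To verify $\mu$ is an isomorphism, I would fix a Fr\'echet--Stein presentation $U \cong \invlim U_n$ and apply the functor $U_n \otimes_U (-)$. Continuity of $V^{\op} \to \End_U(U)$ supplies, for each $n$, an index $m = m(n)$ such that the right $V$-action on $U_n$ factors through $V_m^{\op}$; then the routine identification
\[U_n \otimes_V V \;\cong\; U_n \otimes_{V_m} V_m \;\cong\; U_n\]
shows that $\mu$ induces an isomorphism at each layer of the Fr\'echet--Stein presentation, hence is itself an isomorphism. Composing this with the isomorphism supplied by the Proposition delivers the claim.

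The main obstacle is largely bookkeeping: one must track which side of each bimodule is carrying the co-admissible structure and confirm continuity of the relevant $\End$-actions to stay inside the hypotheses of the Lemma in Section \ref{Coadbase} and of the Proposition. Once the setup is correctly arranged, the argument reduces to the associativity statement already proved together with the elementary identification $U \w\otimes_V V \cong U$; no further analytic input is required.
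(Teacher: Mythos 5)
Your proof is correct and is precisely the argument the paper intends: the Corollary is stated without proof as an immediate consequence of the preceding Proposition, obtained by setting $P = U$ and $Q = V$ (with the bimodule structures coming from the continuous maps $W \to V \to U$ as noted after Definition \ref{Coadbase}) together with the canonical identification $U \w\otimes_V V \cong U$. Your verification of that identification via the universal property and the layer-by-layer check $U_n \otimes_V V \cong U_n$ is sound, though the intermediate factorization through $V_m$ is unnecessary since $U_n \otimes_V V \cong U_n$ holds for any ring map $V \to U_n$.
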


\subsection{Co-admissible flatness}\label{CoadFlat}
Let $U$ and $V$ be left Fr\'echet--Stein algebras, and let $P$ be a $U$-co-admissible $(U, V)$-bimodule.
 
\begin{defn} \be
\item $P$ is a \emph{c-flat} right $V$-module if $P\w\otimes_V-$ is exact. 
\item $P$ is a \emph{faithfully c-flat} right $V$-module if in addition $P\w\otimes_V M=0$ only if $M=0$.
\ee
\end{defn}

\begin{prop} 
\be \item The functor $P\w\otimes_V-$ is right exact. 
\item If $U=\invlim U_n$ is a presentation of $U$ as a left Fr\'echet--Stein algebra such that $U_n\otimes_U P$ is a flat right $V$-module for all $n$, then $P$ is c-flat over $V$. 
\item If additionally, for all non-zero $M\in \C_V$ there exists $n$ such that $U_n\otimes_U P\otimes_V M$ is non-zero, then $P$ is a faithfully c-flat right $V$-module. 
\ee
\end{prop}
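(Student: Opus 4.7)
The common thread in all three parts is that tensoring with $P$ at a fixed level $n$ is tensoring with $P_n := U_n \otimes_U P$, and that $U_n \otimes_U (P \w\otimes_V -) \cong P_n \otimes_V (-)$ by the definition of $\w\otimes$ together with the already noted identification $U_n \otimes_U (P \w\otimes_V M) \cong P_n \otimes_V M$. This reduces all three statements to combining ordinary tensor product properties at each level with the behaviour of the inverse limit along the Fr\'echet--Stein structure on $U$, governed by \cite[Theorem B/Corollary 3.3]{ST}: $\invlim$ is an exact functor on short exact sequences of coherent sheaves of $U_\bullet$-modules, and preserves surjections.

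For part (a), I would start with a right exact sequence $M' \to M \to M'' \to 0$ in $\C_V$, let $N := \im(M' \to M)$ (which lives in $\C_V$ since $\C_V$ is abelian), and apply the ordinary right exact functor $P_n \otimes_V -$ to obtain compatible right exact sequences
\[
P_n \otimes_V M' \twoheadrightarrow I_n \hookrightarrow P_n \otimes_V M \to P_n \otimes_V M'' \to 0,
\]
where $I_n$ is the image of $P_n \otimes_V N \to P_n \otimes_V M$. A short verification using the exactness of $U_n \otimes_{U_{n+1}} -$ on $U_{n+1}$-coherent modules shows that $(I_n)_n$ is a coherent sheaf of $U_\bullet$-modules. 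Applying $\invlim$ to $0 \to I_n \to P_n \otimes_V M \to P_n \otimes_V M'' \to 0$ yields the desired exactness at $P \w\otimes_V M$ and surjectivity onto $P \w\otimes_V M''$, while applying $\invlim$ to the level-wise surjection $P_n \otimes_V M' \twoheadrightarrow I_n$ gives surjectivity of $P \w\otimes_V M' \to \invlim I_n$.

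For part (b), the additional hypothesis that each $P_n$ is a flat right $V$-module upgrades the level-$n$ sequence to a genuine short exact sequence $0 \to P_n \otimes_V M' \to P_n \otimes_V M \to P_n \otimes_V M'' \to 0$ for any short exact sequence $0 \to M' \to M \to M'' \to 0$ in $\C_V$, and \cite[Theorem B]{ST} again preserves exactness upon taking $\invlim$, yielding the c-flatness of $P$.

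For part (c), suppose $M \in \C_V$ is non-zero; by hypothesis there exists $n$ with $P_n \otimes_V M \neq 0$. Since $U_n \otimes_U (P \w\otimes_V M) \cong P_n \otimes_V M$, this immediately forces $P \w\otimes_V M \neq 0$. The only subtlety throughout is the verification that the systems $(I_n)$ and $(P_n \otimes_V M)$ really do form coherent sheaves of $U_\bullet$-modules, which is a compatibility check using that $V_m \otimes_V -$ is exact on $\C_V$ and that the action $V^{\op} \to \End_{U_n}(P_n)$ factors through some $V_m^{\op}$; everything else is the standard machinery of \cite{ST}, so I expect no real obstacle.
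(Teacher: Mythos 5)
Your argument is correct, and its core idea — reduce everything to the level-$n$ functors $P_n\otimes_V -$ via the identification $U_n\otimes_U\bigl(P\w\otimes_V M\bigr)\cong P_n\otimes_V M$ — is exactly the one the paper uses. The difference is purely in execution. The paper observes that $\Loc_{U_\bullet}\circ\bigl(P\w\otimes_V -\bigr)$ equals the functor $M\mapsto (P_n\otimes_V M)_n$ from $\C_V$ to $\Coh(U_\bullet)$, and then exploits the fact that $\Loc_{U_\bullet}\colon\C_U\to\Coh(U_\bullet)$ is an equivalence of abelian categories, so it both preserves and reflects exact sequences; right exactness, exactness, and non-vanishing of $P\w\otimes_V -$ are therefore equivalent to the same properties for $(P_n\otimes_V -)_n$, which in $\Coh(U_\bullet)$ are checked level by level. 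No $\invlim$ ever appears. You instead apply $\invlim$ to the level-wise sequences and invoke \cite[Theorem~B]{ST} to keep exactness under the limit. The two routes are interchangeable — Theorem~B is one of the ingredients of the equivalence $\Loc_{U_\bullet}\leftrightarrow\Gamma$ — but the paper's formulation sidesteps the extra bookkeeping your route requires, namely verifying that the image system $(I_n)_n$ is itself a coherent sheaf of $U_\bullet$-modules (needing flatness of $U_n$ over $U_{n+1}$ so that images are preserved) and that the surjections $P_n\otimes_V M'\twoheadrightarrow I_n$ pass well to the inverse limit, all of which is already built into the abelian-category equivalence. Both proofs are complete and correct; yours is a more hands-on unpacking of the same machinery.
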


\begin{proof} Let $P_n=U_n\otimes_U P$ and consider the functor $\Loc_{U_\bullet}\circ(P\w\otimes_V -)\colon \C_V\to \Coh(U_\bullet)$. This is equivalent to the functor $(P_n\otimes_V -)$. Since $\Loc_{U_\bullet}$ is an equivalence of categories it suffices to show that $P_n\otimes_V -$ is always right exact, that it is exact if each $P_n$ is flat over $V$, and that if for all non-zero $M\in \C_V$ there exists $n$ such that $P_n\otimes_V M$ is non-zero then $(P_n\otimes_V M)$ is non-zero. All these statements are well-known or clear.
\end{proof}

\subsection{Rescaling the Lie lattice}\label{RescLie}

Suppose that $Y$ is an affinoid subdomain of the reduced $K$-affinoid variety $X$, $\A$ is an affine formal model in $\O(X)$ and $\L$ is a coherent $(\R,\A)$-Lie algebra.

\begin{lem} 
\be \item For each $g\in \O(X)$, there is an $n\ge 0$ such that $\pi^n \L\cdot g\subset \A$.
\item There exists $l\ge 0$ such that $Y$ is $\pi^n\L$-admissible for all $n\ge l$.\ee
\end{lem}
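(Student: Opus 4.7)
The plan is to handle (a) by a direct boundedness argument on a finite generating set of $\L$, and then to leverage this together with Corollary \ref{LiftEtale} to construct a $\pi^n \L$-stable affine formal model in $\O(Y)$ for (b). Neither part is difficult, and both ultimately rest on the fact that $K$-linear derivations on affinoid algebras are automatically bounded.

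For (a), I would first pick a finite generating set $x_1, \ldots, x_d$ of $\L$ as an $\A$-module, which exists because $\L$ is coherent over the Noetherian ring $\A$. Each $\rho(x_i)$ extends $K$-linearly to a derivation of $\O(X) = \A \otimes_\R K$, so $\rho(x_i)(g) \in \O(X)$, and hence $\pi^{m_i}\rho(x_i)(g) \in \A$ for some $m_i \geq 0$ since $\A$ is an affine formal model. Setting $n := \max_i m_i$, the $\A$-linearity of $\rho$ then yields $\pi^n \L \cdot g \subset \A$.

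For (b), I would use that the affinoid subdomain inclusion $Y \hookrightarrow X$ gives an \'etale morphism $\sigma \colon \O(X) \to \O(Y)$, so Corollary \ref{LiftEtale} provides a unique $\A$-linear lift $\psi \colon \L \to \Der_K(\O(Y))$ of the anchor map of $\L$. Next I would choose any affine formal model in $\O(Y)$ and enlarge it to contain $\sigma(\A)$, obtaining by Lemma \ref{AdmRalg}(c) an affine formal model $\B \subset \O(Y)$ with $\sigma(\A) \subseteq \B$. Each $\psi(x_i)$ is a $K$-linear derivation of the affinoid algebra $\O(Y)$ and is therefore bounded (cf.\ Section \ref{LiftEtale}); since $\B$ is bounded in $\O(Y)$, we obtain $\psi(x_i)(\B) \subset \pi^{-m_i}\B$ for some $m_i \geq 0$. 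Taking $l := \max_i m_i$, for every $n \geq l$ we have $\pi^n \psi(x_i)(\B) \subset \B$, whence $(\pi^n \L) \cdot \B \subseteq \sigma(\A) \cdot \B = \B$ by $\A$-linearity. Thus $\B$ becomes a $\pi^n \L$-stable affine formal model in $\O(Y)$, and so $Y$ is $\pi^n \L$-admissible.

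The only genuinely non-formal step will be the appeal to Corollary \ref{LiftEtale}, which requires the \'etaleness of $\sigma$; once the derivations $\rho(x_i)$ have been transported to $\O(Y)$, everything else reduces to the standard observations that $K$-linear derivations of affinoid algebras are bounded and that bounded subsets of an affinoid algebra are absorbed by any fixed affine formal model after rescaling by a suitable power of $\pi$.
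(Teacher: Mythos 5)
Your proof is correct and takes essentially the same approach as the paper: part (a) is the same boundedness argument on a finite generating set, and part (b) constructs $\B$ by enlarging an affine formal model to contain $\sigma(\A)$ (the paper cites Lemma \ref{AdmRalg} and \cite[Proposition 6.2.2.1]{BGR}) and then rescales the lifted derivations of the generators by $\pi^{m_i}$ using their boundedness. Your explicit appeal to Corollary \ref{LiftEtale} simply surfaces a step the paper leaves implicit, since the lift of the $\L$-action to $\O(Y)$ is already built into the definition of $\L$-admissibility; the one notational caveat is that ``$(\pi^n\L)\cdot\B \subseteq \sigma(\A)\cdot\B$'' should be read as meaning that the derivation $\rho(\pi^n a x_i)$ sends $\B$ into $\sigma(\A)\B$, not as a product of subsets.
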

\begin{proof} Suppose that  $x_1,\ldots,x_d$ is a generating set for $\L$ as an $\A$-module. 

(a) Since $x_i\cdot g\in \O(X)$ for each $1\le i\le d$ and $\O(X)=K\cdot \A$, there are $n_i\ge 0$ such that for each such $i$, $\pi^{n_i}x_i\cdot g\in \A$. Taking $n=\sup \{n_i\}$ we see that $\pi^n\L\cdot f\subset \A$ as required.

(b) By Lemma \ref{AdmRalg} and \cite[Proposition 6.2.2.1]{BGR}, there is an affine formal model $\B$ in $\O(Y)$ containing the image of $\A$. Since $\B$ is topologically finitely generated and the action of each $x_i$ on $\B$ is bounded we can find $m_i\ge 0$ such that for each $1\le i\le d$, $\pi^{m_i}x_i\cdot \B\subset \B$. Taking $l=\sup m_i$ we see that $\B$ is $\pi^n\L$-stable for all $n\ge l$. 
\end{proof}

\begin{prop} There is an $m\ge 0$ such that $Y$ is $\pi^n\L$-accessible for all $n\ge m$.
\end{prop}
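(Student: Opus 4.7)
The plan is to invoke the Gerritzen--Grauert theorem to reduce to covering $Y$ by rational subdomains of $X$, and then to realise each such rational subdomain as a single Laurent step followed by an iterated Weierstrass chain, the whole thing controlled by rescaling $\L$ through a sufficiently large power of $\pi$.

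By Gerritzen--Grauert, I would first write $Y = \bigcup_{i=1}^{r} Y_i$ with each $Y_i = X(f_{i,1}/f_{i,0}, \ldots, f_{i,s_i}/f_{i,0})$ a rational subdomain of $X$ in which $f_{i,0},\ldots,f_{i,s_i}$ have no common zero. After rescaling I may assume $f_{i,j} \in \A$ for all $i,j$. Because $\max_j |f_{i,j}|$ is bounded below on $X$ by compactness, one can choose $a \geq 0$ with $|\pi|^a \leq \inf_{x \in X} \max_j |f_{i,j}(x)|$ for every $i$. Setting $g_i := f_{i,0}/\pi^a \in \O(X)$, the single-function Laurent subdomain $U_i := X(1/g_i) = \{|f_{i,0}| \geq |\pi|^a\}$ then contains $Y_i$, and inside $U_i$ the element $\pi^a/f_{i,0}$ is integral, so $Y_i = U_i(f_{i,1}/f_{i,0}, \ldots, f_{i,s_i}/f_{i,0})$ is an iterated Weierstrass subdomain of $U_i$.

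Next, using Lemma \ref{RescLie}(a) I would fix $N_0 \geq 0$ with $\pi^{N_0}\L \cdot f_{i,j} \subseteq \A$ for all $i,j$, and combine with part (b) of that lemma to choose $m \geq \max(l, N_0 + 2a)$. For any $n \geq m$, $Y$ is $\pi^n\L$-admissible. The Laurent step $X \to U_i$ is $\pi^n\L$-accessible in one step because $\pi^n\L \cdot g_i = \pi^{n-a}\L \cdot f_{i,0} \subseteq \A$, and $\A$ is an $\L$-stable formal model of $\O(X)$. Fixing $\B_i := \O(U_i)^\diamond$, which is $\pi^n\L$-stable and contains $\pi^a/f_{i,0}$, and setting $\L_i := \B_i \otimes_\A \pi^n\L$, the Leibniz rule gives for any $\partial \in \pi^n\L$ that
\[\partial(f_{i,j}/f_{i,0}) = \frac{\partial f_{i,j}}{f_{i,0}} - \frac{f_{i,j}\,\partial f_{i,0}}{f_{i,0}^2}.\]
Since $\partial f_{i,j}, \partial f_{i,0} \in \pi^{n-N_0}\A \subseteq \pi^{2a}\B_i$ and $1/f_{i,0} \in \pi^{-a}\B_i$, both summands lie in $\B_i$; hence $\L_i \cdot (f_{i,j}/f_{i,0}) \subseteq \B_i$. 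Applying this at each Weierstrass step in the chain $U_i \to U_i(f_{i,1}/f_{i,0}) \to \cdots \to Y_i$ (taking the image of $\B_i$ in each successive affinoid as the $\L_i$-stable model) shows that $U_i \to Y_i$ is $\L_i$-accessible in $s_i$ steps. Proposition \ref{FibreProdLaccess}(b) then gives that each $Y_i$ is $\pi^n\L$-accessible as a rational subdomain of $X$, and since $Y$ is $\pi^n\L$-admissible and covered by these, the definition in Section \ref{LaccessAff} gives that $Y$ is $\pi^n\L$-accessible for every $n \geq m$.

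The principal obstacle is the second-order denominator $1/f_{i,0}^2$ appearing in the Leibniz expansion: for arbitrary $\L$ there is no reason $\partial(f_{i,j}/f_{i,0})$ should be integral on $U_i$, but rescaling $\L$ by a power of $\pi$ of size at least $N_0 + 2a$ simultaneously absorbs this pole and the loss $\pi^{-N_0}$ coming from the bound on $\L \cdot f_{i,j}$. Uniformity over the finitely many $Y_i$ causes no difficulty once $N_0$ and $a$ are chosen uniformly.
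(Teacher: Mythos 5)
Your argument is correct in substance and takes a genuinely different route from the paper's. Both proofs begin identically: reduce via Gerritzen--Grauert to the case of a rational subdomain, and then reduce a rational subdomain to a chain of Weierstrass/Laurent steps. The divergence is in how that chain is produced. The paper invokes \cite[Proposition 7.2.4/1]{BGR} as a black box to obtain a chain $Y = Z_r \subset \cdots \subset Z_1 = X$ of one-function Weierstrass/Laurent extensions, and then \emph{inductively} rescales the Lie lattice at each stage using Lemma~\ref{RescLie}(a), accumulating exponents $m_1 \le \cdots \le m_r$ that depend on the intermediate affinoids. You instead unpack the proof of that BGR lemma: one explicit Laurent step $X \to U_i = X(1/g_i)$ (controlled by compactness of $X$) followed by an iterated Weierstrass chain inside $U_i$ controlled by a single Leibniz estimate. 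The payoff of your approach is a \emph{uniform, explicit} rescaling $m = N_0 + 2a$ read off from one Leibniz computation, whereas the paper's exponent is produced by an opaque induction. The cost is having to track denominators $1/f_{i,0}^2$ by hand, which the paper sidesteps entirely because each link in its chain introduces only a first-order pole.

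There is one imprecision worth flagging. In the parenthetical you write that at each Weierstrass step one should take ``the image of $\B_i$ in each successive affinoid as the $\L_i$-stable model.'' The image of $\B_i$ under $\O(U_i) \to \O(Z_k)$ is not in general an affine formal model of $\O(Z_k)$, because the map $\O(U_i) \to \O(Z_k)$ is not surjective and so $K \cdot (\text{image of }\B_i) \subsetneq \O(Z_k)$. What the definition of $\L_i$-accessibility actually asks for is \emph{some} $\L_i$-stable affine formal model $\C_k \subseteq \O(Z_k)$ with $\L_i \cdot h_{k+1} \subseteq \C_k$, and your Leibniz bound places $\L_i \cdot h_{k+1}$ inside the image of $\B_i$. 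The fix is to take $\C_k := \O(Z_k)^\diamond$ (admissibility of $Z_k$ being established inductively via Lemma~\ref{WeiLau1}); then the image of $\B_i = \O(U_i)^\diamond$ lands inside $\C_k$ by Proposition~\ref{AdmRalg}(c), and the accessibility condition at that step is satisfied. With this substitution your argument is complete.
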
 
\begin{proof} 
First suppose that $Y$ is a rational subdomain of $X$. By \cite[Proposition 7.2.4/1]{BGR}, there is a chain \[Y= Z_{r}\subset Z_{r-1}\subset \cdots\subset Z_1 = X\] such that $Z_{k+1}=Z_k(g_k)$ or $Z_{k+1}=Z_k(1/g_k)$ for some $g_k\in \O(Z_k)$. By part (a) of the Lemma, we may then inductively find $m_k\ge m_{k-1}$ (with $m_0=0$) and $\pi^{m_k}\L$-stable affine formal models $\B_k$ in $\O(Z_k)$ such that $\pi^{m_k}\L\cdot g_k\subset \B_k$. Then $Y\subset X$ is $\pi^n\L$-accessible for all $n\ge m_r$.

Returning to the general case, let $l$ be given by part (b) of the Lemma. By \cite[Theorem 4.10.4]{FvdPut}, we can find rational subdomains $X_1,\ldots,X_r$ of $X$ such that $Y =\bigcup_{j=1}^r X_j$. By part (a), there are $m_1,\ldots,m_r\ge l$ such that $X_j\to X$ is $\pi^n\L$-accessible for $n\ge m_j$ and so we may take $m=\sup m_j$.\end{proof}

\subsection{Theorem}\label{cflat}
Let $Y$ be an affinoid subdomain of the reduced $K$-affinoid variety $X$, let $A = \O(X)$, $B = \O(Y)$ and let $L$ be a coherent $(K,A)$-Lie algebra. Suppose that $L$ has a smooth $\A$-Lie lattice $\L$ for some affine formal model $\A$ in $A$. Then $\w{U(B \otimes_A L)}$ is a co-admissibly flat $\w{U(L)}$-module on both sides.
\begin{proof}
Replacing $\L$ by a $\pi$-power multiple if necessary, by Proposition \ref{RescLie} we may assume that $Y$ is a $\pi^n\L$-accessible affinoid subdomain of $X$ for all $n\ge 0$. Choose an $\L$-stable affine formal model $\B$ in $B$; then $\L' := \B\otimes_{\A}\L$ is a smooth $\B$-Lie lattice in $B \otimes_A L$ so using Definition \ref{DefnOfwUL}, we may write
\[\w{U(L)}=\invlim \hK{U(\pi^{n}\L)}\qmb{and}\w{U(B\otimes_A L)}=\invlim \hK{U(\pi^{n}\L')}.\] 
Now $\w{U(L)}$ is a two-sided Fr\'echet-Stein algebra by Theorem \ref{FsStein}, so $\hK{U(\pi^n \L)}$ is a flat $\w{U(L)}$-module on both sides by the two-sided version of \cite[Remark 3.2]{ST}. Also $\hK{U(\pi^n \L')}$ is a flat $\hK{U(\pi^n \L)}$-module on both sides by Theorem \ref{AffLaccessCovers}(a). Therefore $\hK{U(\pi^n \L')}$ is a flat $\w{U(L)}$-module on both sides, and hence $\w{U(B\otimes_AL)}$ is a co-admissibly flat $\w{U(L)}$-module on both sides by the two-sided version of Proposition \ref{CoadFlat}(b).  
\end{proof} 

\section{Co-admissible \ts{\w{\sU(L)}}-modules on affinoids} \label{CoadsUL}
In this section we suppose that $X$ is a reduced $K$-affinoid variety, $\A$ is an affine formal model in $\O(X)$, $\L$ is a smooth $(\R,\A)$-Lie algebra, and $L = \L \otimes_{\R} K$.

\subsection{Sheaves of Fr\'echet--Stein enveloping algebras}\label{Envaff}
\begin{defn} For each affinoid subdomain $Y$ of $X$, write
\[\w{\sU(L)}(Y) := \w{U(\O(Y) \otimes_{\O(X)} L)}\]
for the Fr\'echet completion of the enveloping algebra $U(\O(Y) \otimes_{\O(X)} L)$. 
\end{defn}

\begin{thm} $\w{\sU(L)}$ is a sheaf of two-sided Fr\'echet--Stein algebras on $X_w$.
\end{thm}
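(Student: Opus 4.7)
The plan is to reduce the theorem to finite-level assertions at each fixed $n$ about the Noetherian Banach sheaves $\hK{\sU(\pi^n\L)}$ on the refined $G$-topology $X_w(\pi^n\L)$, and then pass to the inverse limit as $n \to \infty$. Two things must be established: that each section $\w{\sU(L)}(Y)$ is a two-sided Fr\'echet--Stein algebra, and that $\w{\sU(L)}$ satisfies the sheaf axiom on every finite affinoid cover.

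For the Fr\'echet--Stein structure, fix an affinoid subdomain $Y$ of $X$. First observe that $\pi^n \L$ is again a smooth $(\R,\A)$-Lie algebra for every $n \geq 0$: multiplication by $\pi^n$ gives an $\A$-module isomorphism $\L \stackrel{\cong}{\longrightarrow} \pi^n\L$, since $\L$ is projective over the admissible (hence $\R$-flat) $\R$-algebra $\A$ and so is $\pi$-torsion-free. By Proposition \ref{RescLie}(b) there is some $n_Y$ such that $Y$ is $\pi^n\L$-admissible for every $n \geq n_Y$, so we may pick a $\pi^{n_Y}\L$-stable affine formal model $\B \subset \O(Y)$. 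Then $\B \otimes_{\A} \pi^{n_Y}\L$ is a smooth $\B$-Lie lattice inside $\O(Y) \otimes_{\O(X)} L$, and Theorem \ref{FsStein} tells us that $\w{\sU(L)}(Y)$ is two-sided Fr\'echet--Stein. Combining Definition \ref{DefnOfwUL} with Propositions \ref{Compare}(b) and \ref{FibreProds}, we further obtain canonical identifications $\w{\sU(L)}(Y) \cong \invlim_{n \geq n_Y} \hK{\sU(\pi^n\L)}(Y)$ compatible with restriction along affinoid subdomain inclusions.

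For the sheaf property, let $\U = \{Y_1,\dots,Y_r\}$ be a finite affinoid cover of an affinoid subdomain $Y$ of $X$. Applying Proposition \ref{RescLie}(b) to each element of the finite family $\{Y, Y_1,\dots,Y_r\}$, we find some $n_0$ such that $Y$ and each $Y_i$ lies in $X_w(\pi^n\L)$ for every $n \geq n_0$; by stability of $X_w(\pi^n\L)$ under finite intersection (the Corollary following Lemma \ref{FibreProds}), the intersections $Y_i \cap Y_j$ also lie in $X_w(\pi^n\L)$. For each such $n$, $\U$ is thus a $\pi^n\L$-admissible covering of $Y$; Corollary \ref{Compare}(b) says that $\hK{\sU(\pi^n\L)}$ is a sheaf on $X_w(\pi^n\L)$, and Corollary \ref{TateFQ} (applied locally to $Y$ via Proposition \ref{FibreProds}) shows that $\U$ is $\hK{\sU(\pi^n\L)}$-acyclic. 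Consequently the augmented \v{C}ech complex
\[ 0 \to \hK{\sU(\pi^n\L)}(Y) \to \prod_i \hK{\sU(\pi^n\L)}(Y_i) \to \prod_{i<j} \hK{\sU(\pi^n\L)}(Y_i \cap Y_j) \]
is exact for every $n \geq n_0$. Taking $\invlim_{n \geq n_0}$ preserves left exactness and commutes with the finite products involved, and using the identifications from the previous paragraph yields exactness of the corresponding augmented \v{C}ech complex for $\w{\sU(L)}$. This is precisely the sheaf axiom on $X_w$.

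The main obstacle is the bookkeeping in the inverse limit step: one must check that the restriction morphisms in $\w{\sU(L)}$ really do arise as the inverse limits of the restriction morphisms in the $\hK{\sU(\pi^n\L)}$, compatibly with the canonical identifications. This follows from the canonicity of the isomorphisms in Proposition \ref{Compare}(b) combined with the independence of $\w{U(L)}$ from the choice of Lie lattice and affine formal model established in Section \ref{DefnOfwUL}, which together guarantee that the natural maps at every level assemble coherently.
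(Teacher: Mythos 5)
Your proof follows essentially the same strategy as the paper's: reduce the Fréchet--Stein claim at each affinoid $Y$ to Theorem \ref{FsStein} after rescaling $\L$ so that $Y$ acquires an $\L$-stable affine formal model, and establish the sheaf axiom by rescaling so the given finite cover is $\pi^n\L$-admissible for all large $n$, invoking the acyclicity result (Corollary \ref{TateFQ}) at each finite level, and then passing to the inverse limit. The only minor quibbles are cosmetic: \ref{RescLie}(b) is a Lemma, not a Proposition, and the paper takes the presheaf structure of $\w{\sU(L)}$ on $X_w$ directly from Proposition \ref{FuncFS}(a) rather than reassembling it via the level-$n$ identifications; your explicit remark about the bookkeeping of these identifications is a point the paper leaves implicit, but it is the same argument.
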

\begin{proof} Let $Y$ be an affinoid subdomain of $X$. By replacing $\L$ by a $\pi$-power multiple if necessary and applying Lemma \ref{RescLie}(b), we may assume that  $Y$ is $\L$-admissible. Let $\B$ be an $\L$-stable affine formal model in $Y$. Then  $\B \otimes_{\A} \L$ is a smooth $\B$-Lie lattice in $\O(Y) \otimes_{\O(X)} L$, so $\w{\sU(L)}(Y)$ is a two-sided Fr\'echet-Stein algebra by Theorem \ref{FsStein}.

By Proposition \ref{FuncFS}(a), $\w{\sU(L)}$ is a presheaf on $X_w$. Let $\U$ be an $X_w$-covering of $X$. By replacing $\L$ by a $\pi$-power multiple again if necessary and applying Lemma \ref{RescLie}(b), we may assume that $\U$ is $\pi^n \L$-admissible for all $n \geq 0$. Now
\[ \w{\sU(L)}(Y) \cong \invlim_{n\geq 0} \hK{\sU(\pi^n \L)} (Y)\]
whenever $Y$ is an intersection of members of $\U$, and the complex $C^\bullet_{\aug}(\U, \hsULnK)$ is exact for each $n\geq 0$ by Corollary \ref{TateFQ}. Therefore
\[C^\bullet_{\aug}(\U, \w{\sU(L)}) \cong \invlim C^\bullet_{\aug}(\U, \hsULnK) \]
is also exact, and hence $\w{\sU(L)}$ is a sheaf. 
\end{proof}

\subsection{Localisation}\label{Loc}
For every co-admissible $\w{U(L)}$-module $M$, we can define a presheaf $\Loc(M)$ of $\w{\sU(L)}$-modules on $X_w$ by setting
\[ \Loc(M)(Y):=\w{\sU(L)}(Y) \underset{\w{U(L)}}{ \w{\otimes}  } M\]
for each affinoid subdomain $Y$ of $X$. The restriction maps in $\Loc(M)$ are obtained from the associativity isomorphism 
\[ \w{\sU(L)}(Z) \underset{\w{\sU(L)}(Y)}{\w\otimes}  \left(\w{\sU(L)}(Y) \underset{\w{U(L)}}{ \w{\otimes}  } M \right) \quad \cong \quad \w{\sU(L)}(Z) \underset{\w{U(L)}}{ \w{\otimes}  } M\]
given by Corollary \ref{WotimesAssoc}.

\begin{thm} $\Loc$ defines a full exact embedding of abelian categories from the category of co-admissible $\w{U(L)}$-modules to the category of sheaves of $\w{\sU(L)}$-modules with vanishing higher \v{C}ech cohomology groups.
\end{thm}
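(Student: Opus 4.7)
The plan is to lift Proposition \ref{LaccLoc} from the finite-level setting (there proved for $\hK{\sU(\L)}$ on $X_{ac}(\L)$) to the Fréchet--Stein setting of $\w{\sU(L)}$ on $X_w$, by expressing $\Loc(M)$ as an inverse limit of finite-level localisations. First, given a finite affinoid covering $\U = \{Y_1,\ldots,Y_r\}$ of an affinoid subdomain $Y \subseteq X$, I will apply Proposition \ref{RescLie} to rescale $\L$ (which does not alter $\w{U(L)}$) so that every finite intersection $Y_I := Y_{i_0}\cap \cdots \cap Y_{i_p}$ belongs to $X_{ac}(\pi^n \L)$ for all $n \geq 0$ simultaneously.

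Write $\cS_n := \hK{\sU(\pi^n\L)}$ and $M_n := \cS_n(X) \otimes_{\w{U(L)}} M$ for the coherent pieces of $M$. The associativity of $\w\otimes$ from Corollary \ref{WotimesAssoc}, together with the definition of $\w{\sU(L)}$, identifies
\[\Loc(M)(Y_I) \;\cong\; \invlim_n \bigl(\cS_n(Y_I) \otimes_{\cS_n(X)} M_n\bigr),\]
so the augmented \v{C}ech complex $C^\bullet_{\aug}(\U, \Loc(M))$ is the inverse limit of level-$n$ complexes $C_n^\bullet$ with terms $\cS_n(Y_I) \otimes_{\cS_n(X)} M_n$. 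Each $C_n^\bullet$ is exact by Proposition \ref{LaccLoc}. To pass to the limit I will verify that for each $p$ the system $(C_n^p)_n$ is a coherent sheaf of modules for the Fréchet--Stein presentation $\cS_\bullet(Y_I)$ of $\w{\sU(L)}(Y_I)$; this uses the flatness of Theorem \ref{AffLaccessCovers}(a) to produce the required isomorphisms $\cS_n(Y_I) \otimes_{\cS_{n+1}(Y_I)} C_{n+1}^p \cong C_n^p$. The same flatness shows that the cycle subsystems $(\ker d_n^p)_n$ are coherent as well. Since coherent sheaves of modules over a Fréchet--Stein algebra satisfy Mittag--Leffler (compare the arguments in \cite{ST}), the short exact sequences $0 \to \ker d_n^p \to C_n^p \to \ker d_n^{p+1} \to 0$ survive the inverse limit, yielding exactness of $C^\bullet_{\aug}(\U, \Loc(M))$. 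This simultaneously establishes the sheaf property and the vanishing of higher \v{C}ech cohomology.

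Exactness of $\Loc$ is then immediate from Theorem \ref{cflat}: each $\w{\sU(L)}(Y)$ is coadmissibly flat over $\w{U(L)}$, so $M \mapsto \Loc(M)(Y)$ is exact on $\C_{\w{U(L)}}$. For the full embedding property, I will use the identification $\Loc(M)(X) = \w{U(L)} \w\otimes_{\w{U(L)}} M \cong M$: given a morphism of sheaves $\varphi : \Loc(M) \to \Loc(N)$, its evaluation $\varphi_X : M \to N$ is a well-defined $\w{U(L)}$-linear map, and for each $Y$ the universal property of $\w\otimes$ from Lemma \ref{Coadbase} combined with the sheaf compatibility of $\varphi$ forces $\varphi_Y = \Loc(\varphi_X)_Y$, whence $\varphi = \Loc(\varphi_X)$; faithfulness falls out of the same identification.

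The principal technical obstacle is the acyclicity step: one must check that the cycle subsystems of the level-$n$ \v{C}ech complexes really form coherent sheaves over $\cS_\bullet(Y_I)$, and then invoke Mittag--Leffler for such systems in order to interchange the inverse limit with the taking of cohomology. Once this is in place, exactness and fullness are essentially formal consequences of Theorem \ref{cflat} and the universal property of $\w\otimes$.
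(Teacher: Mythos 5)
Your overall plan (rescale via Proposition \ref{RescLie}, use the finite-level Proposition \ref{LaccLoc}, pass to the inverse limit, and then invoke Theorem \ref{cflat} for exactness and the universal property of $\w\otimes$ for fullness) is the same as the paper's, and the exactness and fullness parts are fine. The gap is in the acyclicity argument.

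Your plan for the cycle subsystems does not go through as stated. The terms $C_n^p$ of the augmented \v{C}ech complex are finite products of modules $\cS_n(Y_I) \otimes_{\cS_n(X)} M_n$, and each such factor is indeed a coherent sheaf over the Fr\'echet--Stein tower $\cS_\bullet(Y_I)$, so Theorem B of \cite{ST} gives $\invlim^{(j)} C_n^p = 0$ for $j>0$. But the differential $d_n^p$ is only $\cS_n(Y)$-linear, because it mixes the indices $I$ via the restriction maps $\cS_n(Y_I) \to \cS_n(Y_{I'})$. Thus $\ker d_n^p$ is not a finitely generated module over any single $\cS_n(Y_I)$, and it is not finitely generated over $\cS_n(Y)$ either (since $\cS_n(Y_I)$ is not a finite $\cS_n(Y)$-module). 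The flatness isomorphisms $\cS_n(Y_I) \otimes_{\cS_{n+1}(Y_I)} (-) \cong (-)$ hold factor by factor, but there is no single ring over which you can tensor $\ker d_{n+1}^p$ to recover $\ker d_n^p$. So the claim that $(\ker d_n^p)_n$ is a coherent sheaf over a Fr\'echet--Stein algebra is false, and you cannot invoke \cite[Theorem B]{ST} for the cycles directly.

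The paper circumvents this cleanly: it uses only that each $C_n^\bullet$ is exact (Proposition \ref{LaccLoc}) and that $\invlim^{(j)}$ kills each \emph{term}. From exactness one has short exact sequences of towers $0 \to Z_n^p \to C_n^p \to Z_n^{p+1} \to 0$, and the long exact sequence for $\invlim$ then propagates the vanishing of $\invlim^{(j)}$ for $j>0$ from $Z^p$ to $Z^{p+1}$, starting from the left-most (augmentation) term where the kernel is zero. This requires no coherence of the kernel towers. Your underlying intuition that the cycles satisfy a Mittag--Leffler-type condition can in fact be made precise topologically (the transition maps $C_{n+1}^p \to C_n^p$ have dense image, and a diagram chase shows the same for the image/cycle towers), but that is a genuinely different argument from the coherent-sheaf one you wrote, and you would still need to justify the passage from dense-image to $\invlim^{(1)}=0$ in the Fr\'echet setting. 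Either repair the cycle step along these lines, or simply replace it by the long-exact-sequence induction.
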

\begin{proof} First we prove that if $M$ is a co-admissible $\w{U(L)}$-module then any $X_w$-covering $\U$ of an affinoid subdomain $Y$ of $X$ is $\Loc(M)$-acyclic. In particular this will demonstrate that $\Loc(M)$ is a sheaf on $X_w$ with vanishing higher \v{C}ech cohomology groups. 

Using Proposition \ref{RescLie}, we may assume that $\U$ is a $\pi^n \L$-accessible covering of $Y$ for each $n\ge 0$. Write $M=\invlim M_n$ where $M_n:=\hK{U(\pi^n\L)}\otimes_{\w{U(L)}}M$, and consider the sheaves $\M_n:=\Loc(M_n)$ of $\hK{\sU(\pi^n\L)}$-modules on $X_{ac}(\pi^n\L)$. By Proposition \ref{LaccLoc}, the augmented \v{C}ech complexes $C_{\aug}^\bullet(\U,\M_n)$ are exact for each $n\ge 0$. 

Now $\Loc(M)(Y)=\invlim \M_n(Y)$ and $\Loc(M)(U)=\invlim \M_n(U)$ for each $U \in \U$. Moreover, by \cite[Theorem B]{ST}, $\invlim{}^{(j)}\M_n(Y)=0 $ and $\invlim{}^{(j)}\M_n(U)=0$ for each $j>0$ and each $U \in \U$. Consider the exact complex of towers of $\w{\sU(L)}(Y)$-modules
\[C^\bullet_{\aug}( \U, (\M_n) ).\]
An induction starting with the left-most term shows that $\invlim{}^{(j)}$ is zero on the kernel of every differential in this complex, for all $j>0$. Therefore $\invlim C_{\aug}^\bullet(\U,\M_n)$ is exact. But this complex is isomorphic to $C_{\aug}^\bullet(\U,\Loc(M))$.

Now suppose that $f\colon M\to N$ is a morphism of co-admissible $\w{U(L)}$-modules. By the universal property of $\w{\otimes}$, for each $Y$ in $X_w$ there is a unique morphism of $\w{\sU(L)}(Y)$-modules $\Loc(M)(Y)\to \Loc(N)(Y)$ making the diagram  \[\begin{CD}
M @>>> N\\
@VVV @VVV\\
\Loc(M)(Y) @>>> \Loc(M)(X) \end{CD}\] commute. It is now easy to see that $\Loc$ is a full functor as claimed.

Finally, suppose that $0\to M_1\to M_2\to M_3\to 0$ is an exact sequence of co-admissible $\w{U(L)}$-modules. Since $\w{\sU(L)}(Y)$ is a c-flat $\w{\sU(L)}(X)$-module on both sides for each $Y \in X_w$ by Theorem \ref{cflat}, each sequence \[0\to \Loc(M_1)(Y)\to\Loc(M_2)(Y)\to \Loc(M_3)(Y)\to 0 \] is exact. This suffices to see that $\Loc$ is exact.\end{proof}

\subsection{Co-admissible \ts{\w{\sU(L)}}-modules}\label{Ucoad}

\begin{defn} Let $\sM$ be a $\w{\sU(L)}$-module. Given an $X_w$-covering $\U=\{U_1,\ldots,U_n\}$ of $X$, we say that $\sM$ is \emph{$\U$-co-admissible} if for each $1\le i\le n$ there is a co-admissible $\w{\sU(L)}(U_i)$-module $M_i$ such that $\sM|_{U_i}$ is isomorphic to $\Loc(M_i)$ as sheaves of $\w{\sU(L)}|_{U_i}$-modules. We say that $\sM$ is \emph{co-admissible} if there is some $X_w$-covering $\U$ of $X$ such that $\sM$ is $\U$-co-admissible.  
\end{defn}

\begin{prop} Suppose that $\alpha\colon \sM\to \sN$ is a morphism of $\U$-co-admissible $\w{\sU(L)}$-modules for some admissible covering $\U$. Then $\ker \alpha, \coker \alpha$ and $\im \alpha$ are each $\U$-co-admissible.
\end{prop}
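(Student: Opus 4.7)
The plan is to mimic the proof of Proposition \ref{UcohMod}, using Theorem \ref{Loc} in place of Proposition \ref{LaccLoc}. Write $\U = \{U_1,\ldots,U_n\}$ and, for each $i$, choose co-admissible $\w{\sU(L)}(U_i)$-modules $M_i$ and $N_i$ together with isomorphisms $\sM|_{U_i} \cong \Loc(M_i)$ and $\sN|_{U_i} \cong \Loc(N_i)$. Under these identifications, the restriction $\alpha_i := \alpha|_{U_i}$ becomes a morphism $\Loc(M_i) \to \Loc(N_i)$ of sheaves of $\w{\sU(L)}|_{U_i}$-modules.

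Next, I would invoke the fullness half of Theorem \ref{Loc}, applied with $X$ replaced by $U_i$, to produce a morphism $f_i \colon M_i \to N_i$ of co-admissible $\w{\sU(L)}(U_i)$-modules satisfying $\Loc(f_i) = \alpha_i$. Since the category of co-admissible modules over a Fr\'echet-Stein algebra is abelian (see \S\ref{Coadcomplete} and \cite[\S 3]{ST}), the modules $\ker f_i$, $\coker f_i$ and $\im f_i$ are all co-admissible over $\w{\sU(L)}(U_i)$.

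Now apply the exactness half of Theorem \ref{Loc} to conclude that
\[\Loc(\ker f_i) \cong \ker \Loc(f_i) = \ker \alpha_i,\]
and similarly $\Loc(\coker f_i) \cong \coker \alpha_i$ and $\Loc(\im f_i) \cong \im \alpha_i$. Combining this with the general fact that the formation of kernels, images, and cokernels of morphisms of sheaves of modules commutes with restriction to an open in the $G$-topology — so that $(\ker\alpha)|_{U_i} = \ker(\alpha|_{U_i})$, and similarly for $\coker$ and $\im$ — gives $\w{\sU(L)}|_{U_i}$-linear isomorphisms $(\ker\alpha)|_{U_i} \cong \Loc(\ker f_i)$, $(\coker\alpha)|_{U_i} \cong \Loc(\coker f_i)$ and $(\im\alpha)|_{U_i} \cong \Loc(\im f_i)$, which is precisely the statement that $\ker\alpha$, $\coker\alpha$ and $\im\alpha$ are each $\U$-co-admissible.

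There is no real obstacle: this is a formal consequence of Theorem \ref{Loc} together with the abelianness of $\C_{\w{\sU(L)}(U_i)}$. The only point worth checking carefully is that restriction of sheaves along $U_i \hookrightarrow X$ commutes with the three constructions — immediate for kernels, and true for images and cokernels because sheafification commutes with such restriction — and the verification is standard.
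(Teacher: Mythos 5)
Your proposal is correct and follows the same route the paper takes: the paper's one-line proof ``we can compute using Theorem \ref{Loc} that $(\ker\alpha)|_{U_i} \cong \Loc(\ker\alpha(U_i))$, $\ldots$'' is exactly your argument, with $\alpha(U_i)$ playing the role of your $f_i$ (they agree under the identifications $\sM(U_i)\cong M_i$, $\sN(U_i)\cong N_i$). You have simply unpacked the fullness, abelianness, and exactness steps that the paper leaves implicit.
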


\begin{proof} We can compute using Theorem \ref{Loc} that $(\ker \alpha)|_{U_i}\cong \Loc(\ker \alpha(U_i))$, that $(\coker \alpha)|_{U_i}\cong \Loc(\coker \alpha(U_i))$ and that $\im \alpha|_{U_i}=\Loc(\im \alpha(U_i))$.
\end{proof}

\begin{lem} Suppose that $\sM$ is a sheaf of $\w{\sU(L)}$-modules isomorphic to $\Loc(M)$ for some co-admissible $\w{U(L)}$-module $M$. Then the sheaf $\Loc\left(\hK{U(\L)}\otimes_{\w{U(L)}}M\right)$ on $X_{ac}(\L)$ has sections given by $Z\mapsto \hsULK(Z)\otimes_{\w{\sU(L)}(Z)}\sM(Z)$.\end{lem}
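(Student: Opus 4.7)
The plan is to unpack both expressions and show they are naturally isomorphic by reducing to the basic compatibility between $\w\otimes$ and the truncation functors for a Fréchet--Stein algebra. By the definition of $\Loc$ on $X_{ac}(\L)$ from Section \ref{LaccLoc} and ordinary associativity of tensor products, for any $Z\in X_{ac}(\L)$ we have
\[\Loc\bigl(\hK{U(\L)}\otimes_{\w{U(L)}}M\bigr)(Z) = \hsULK(Z)\otimes_{\hK{U(\L)}}\bigl(\hK{U(\L)}\otimes_{\w{U(L)}}M\bigr) \cong \hsULK(Z)\otimes_{\w{U(L)}}M,\]
after identifying $\hsULK(X)$ with $\hK{U(\L)}$. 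It therefore suffices to exhibit a natural isomorphism
\[\hsULK(Z)\otimes_{\w{\sU(L)}(Z)}\sM(Z)\;\stackrel{\cong}{\longrightarrow}\;\hsULK(Z)\otimes_{\w{U(L)}}M\]
and to check that it is compatible with restriction as $Z$ varies in $X_{ac}(\L)$.

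To produce this isomorphism I would fix a Fréchet--Stein presentation of $\w{\sU(L)}(Z)$ tailored to $\L$. Since $Z$ is $\L$-admissible, $\L':=\O(Z)^\diamond\otimes_\A\L$ is a smooth $\O(Z)^\diamond$-Lie lattice in $\O(Z)\otimes_{\O(X)}L$, so Theorem \ref{FsStein} together with Proposition \ref{FibreProds} gives
\[\w{\sU(L)}(Z)\;\cong\;\invlim_n \hK{\sU(\pi^n\L')}(Z),\qquad V_0:=\hK{\sU(\L')}(Z)\cong\hsULK(Z).\]
Because $\sM\cong\Loc(M)$, we have $\sM(Z) = \w{\sU(L)}(Z)\w\otimes_{\w{U(L)}}M$, which by the construction in Section \ref{Coadbase} is the coadmissible $\w{\sU(L)}(Z)$-module $\invlim_n\bigl(V_n\otimes_{\w{U(L)}}M\bigr)$. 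The equivalence between coadmissible modules and coherent sheaves over a Fréchet--Stein algebra \cite[Corollary 3.3]{ST} then yields
\[V_n\otimes_{\w{\sU(L)}(Z)}\sM(Z)\;\cong\;V_n\otimes_{\w{U(L)}}M\qquad\text{for every }n\geq 0;\]
specialising to $n=0$ gives the required isomorphism, and naturality in $Z$ is automatic since everything is induced by the canonical algebra homomorphism $\w{U(L)}\to\w{\sU(L)}(Z)\to V_0=\hsULK(Z)$ and these are functorial in $Z$.

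The main technical obstacle is bookkeeping: one must check that the chosen Fréchet--Stein presentation really does have $\hsULK(Z)$ as its $n=0$ term, rather than appearing only as some later truncation or up to a noncanonical choice. This rests on the independence statements of Section \ref{DefnOfwUL}, which allow one freely to adjust the affine formal model and Lie lattice up to canonical isomorphism, together with the identification $\hK{\sU(\L')}(Z)\cong\hsULK(Z)$ supplied by Proposition \ref{FibreProds}. Once this identification is pinned down, the remainder of the argument is a formal consequence of coadmissibility, so I expect no further substantive difficulty.
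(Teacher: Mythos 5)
Your proof is correct and takes essentially the same approach as the paper's: both reduce the claim to the compatibility between a coadmissible module and the terms of a Fréchet--Stein presentation, with the key input being that $\hsULK(Z)$ appears as the $n=0$ term of such a presentation of $\w{\sU(L)}(Z)$ and hence $\hsULK(Z)\otimes_{\w{\sU(L)}(Z)}N \cong \hsULK(Z)\otimes_{\w{U(L)}}M$ when $N=\w{\sU(L)}(Z)\w\otimes_{\w{U(L)}}M$. The paper packages this in a single commutative diagram and claims the isomorphism directly, whereas you make the appeal to \cite[Corollary 3.3]{ST} and the choice of $\L'$ explicit; the argument is the same.
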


\begin{proof} The commutative diagram
\[
\xymatrix{ \w{U(L)} \ar@{=}[r]\ar[d] & \w{\sU(L)}(X)  \ar[r]\ar[d] & \w{\sU(L)}(Z)\ar[d]  \\ \hK{U(\L)} \ar@{=}[r] & \hsULK(X) \ar[r]& \hsULK(Z)  }
\]
induces an isomorphism \[ \hsULK(Z)\underset{\hK{U(\L)}}{\otimes} \hK{U(\L)}\underset{\w{U(L)}}{\otimes}M \quad \cong \quad \hsULK(Z) \underset{\w{\sU(L)}(Z)}{\otimes} \w{\sU(L)}(Z)\underset{\w{U(L)}}{\w{\otimes}}M\]
and the result follows.\end{proof}

\subsection{Kiehl's Theorem}\label{Kiehl}

\begin{thm}  Let $\sM$ be a sheaf of $\w{\sU(L)}$-modules on $X_w$. Then the following are equivalent.
\be
\item  $\sM$ is co-admissible.
\item  $\sM$ is $\U$-co-admissible for all $X_w$-coverings $\U$ of $X$.
\item  $\sM$ is isomorphic to $\Loc(M)$ for some co-admissible $\w{U(L)}$-module $M$.
\ee
\end{thm}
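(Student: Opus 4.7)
The plan is to prove the cycle $(c) \Rightarrow (b) \Rightarrow (a) \Rightarrow (c)$, with the first two implications being essentially formal and $(a) \Rightarrow (c)$ being the main work.

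For $(c) \Rightarrow (b)$, if $\sM \cong \Loc(M)$ for a co-admissible $\w{U(L)}$-module $M$ and $\{U_i\}$ is any $X_w$-covering, set $M_i := \w{\sU(L)}(U_i) \w\otimes_{\w{U(L)}} M$, which is co-admissible over $\w{\sU(L)}(U_i)$ by Lemma \ref{Coadbase}. The associativity of $\w\otimes$ from Corollary \ref{WotimesAssoc} identifies $\sM(V)$ with $\w{\sU(L)}(V) \w\otimes_{\w{\sU(L)}(U_i)} M_i$ for every affinoid subdomain $V \subset U_i$, yielding $\sM|_{U_i} \cong \Loc(M_i)$. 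The implication $(b) \Rightarrow (a)$ is immediate.

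For $(a) \Rightarrow (c)$, first refine the given covering using Gerritzen--Grauert \cite[Theorem 4.10.4]{FvdPut} and \cite[Proposition 7.2.4/1]{BGR} so that without loss of generality $\sM$ is $\U$-co-admissible for a Laurent covering $\U = \{X(f_1^{\alpha_1},\ldots,f_n^{\alpha_n})\}$ attached to functions $f_1,\ldots,f_n \in \O(X)$. After replacing $\L$ by $\pi^s\L$ for sufficiently large $s$ via Lemma \ref{RescLie}, we may arrange $\pi^m\L \cdot f_j \subset \A$ for all $j$ and all $m \geq 0$, so that $\U$ is a $\pi^m\L$-accessible Laurent covering for every $m$. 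For each $i$, fix an $\L$-stable affine formal model $\B_i$ in $\O(U_i)$ and set $\L_i' := \B_i \otimes_\A \L$. Since $\sM|_{U_i} \cong \Loc(M_i)$ with $M_i$ co-admissible over $\w{\sU(L)}(U_i)$, the Lemma at the end of Section \ref{Ucoad} identifies the sheaf $\Loc(M_{i,m})$ on $(U_i)_{ac}(\pi^m\L_i')$ with the presheaf $Z \mapsto \hK{\sU(\pi^m\L)}(Z) \otimes_{\w{\sU(L)}(Z)} \sM(Z)$, where $M_{i,m} := \hK{U(\pi^m\L_i')} \otimes_{\w{U(L_i')}} M_i$ is finitely generated. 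These local sheaves glue to a $\U$-coherent sheaf $\M_m$ of $\hK{\sU(\pi^m\L)}$-modules on $X_{ac}(\pi^m\L)$, and Theorem \ref{LaccKiehl} then provides a finitely generated $\hK{U(\pi^m\L)}$-module $N_m$ with $\M_m \cong \Loc(N_m)$.

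The transition maps $\M_{m+1} \to \M_m$ together with the full faithfulness of $\Loc$ from Proposition \ref{LaccLoc} yield isomorphisms $\hK{U(\pi^m\L)} \otimes_{\hK{U(\pi^{m+1}\L)}} N_{m+1} \cong N_m$, so $(N_m)$ is a coherent module for the Fr\'echet--Stein presentation $\w{U(L)} = \invlim \hK{U(\pi^m\L)}$; set $M := \invlim N_m$, a co-admissible $\w{U(L)}$-module. Comparing $\M_m|_{U_i} \cong \Loc(M_{i,m})$ with $\Loc(N_m)|_{U_i} \cong \Loc(\hK{\sU(\pi^m\L)}(U_i) \otimes_{\hK{U(\pi^m\L)}} N_m)$ gives $M_{i,m} \cong \hK{\sU(\pi^m\L)}(U_i) \otimes_{\hK{U(\pi^m\L)}} N_m$; passing to $\invlim_m$ yields $M_i \cong \w{\sU(L)}(U_i) \w\otimes_{\w{U(L)}} M$ by the very definition of $\w\otimes$. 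Hence $\sM|_{U_i} \cong \Loc(M)|_{U_i}$ for every $i$, and the sheaf property of both $\sM$ and $\Loc(M)$ forces $\sM \cong \Loc(M)$ globally. The main obstacle is the gluing step that constructs $\M_m$: one must verify that the local sheaves $\Loc(M_{i,m})$ on the various $(U_i)_{ac}(\pi^m\L_i')$ are compatible on overlaps $U_i \cap U_j$ via the naturality of the Lemma in Section \ref{Ucoad}, so that they assemble into a single $\U$-coherent sheaf on the global $G$-topology $X_{ac}(\pi^m\L)$.
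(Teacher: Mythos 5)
Your proposal follows the same overall strategy as the paper's proof -- reduce to a Laurent covering accessible for all $\pi^m\L$, apply the Kiehl-type Theorem \ref{LaccKiehl} at each level $m$ to get finitely generated $\cS_m(X)$-modules, assemble them into a co-admissible $\w{U(L)}$-module via $\invlim$, and then check $\Loc(M) \cong \sM$. The two trivial implications and the refinement step are handled correctly. However there are two places where the argument is incomplete.

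First, you flag the gluing step that constructs $\M_m$ from local sheaves $\Loc(M_{i,m})$ on each $(U_i)_{ac}(\pi^m\L_i')$ as ``the main obstacle'' but do not carry it out. The paper sidesteps this issue entirely: rather than gluing, it defines $\M_n$ directly as the sheafification of the \emph{global} presheaf $Z \mapsto \cS_n(Z) \otimes_{\cS_\infty(Z)} \sM(Z)$ on $X_{ac}(\pi^n\L)$, and then uses Lemma \ref{Ucoad} to identify $\M_n|_{Y_w}$ for each $Y \in \U$. With this formulation, there is nothing to glue; compatibility on overlaps comes for free. You should adopt this device rather than attempting a gluing argument.

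Second, and more seriously, the closing sentence ``the sheaf property of both $\sM$ and $\Loc(M)$ forces $\sM \cong \Loc(M)$ globally'' is a genuine gap. Having isomorphisms $\sM|_{U_i} \cong \Loc(M)|_{U_i}$ for each $i$ does \emph{not} by itself yield a global isomorphism -- one must verify that the chosen isomorphisms agree on the overlaps $U_i \cap U_j$. This is exactly what the paper spends the final third of its proof on: it introduces the maps $\theta_n : M_\infty \to \M_n(X)$ so that the local isomorphisms $\alpha_Y(Z)$ are given by the explicit formula $s \w\otimes m \mapsto s \cdot \lim(\theta_n(m)|_Y)|_Z$, and then checks $\alpha_Y(Y \cap Y') = \alpha_{Y'}(Y \cap Y')$ because $\lim(\theta_n|_Y)|_Z = \lim \theta_n|_Z$. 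Your isomorphisms $M_i \cong \w{\sU(L)}(U_i) \w\otimes_{\w{U(L)}} M$ are produced through a chain of identifications, and without an explicit formula it is not clear they are compatible on overlaps. You need to either track these identifications carefully (as the paper does) or argue that they are all canonical in a sufficiently strong sense to force compatibility.
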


\begin{proof} Note that (c)$\implies$(b) and (b)$\implies$(a) are trivial. We will prove (a)$\implies$(c). 

Suppose that $\U$ is a covering of $X$ by affinoid subdomains such that $\sM$ is $\U$-co-admissible. By \cite[Lemmas 8.2.2/2-4]{BGR}, $\U$ may be refined to a Laurent covering $\mathcal{V}=\{ X(f_1^{\alpha_1},\ldots, f_m^{\alpha_m})\st \alpha_i\in\{\pm 1\}\}$ for some $f_1,\ldots,f_m\in\O(X)$. Certainly $\sM$ is $\mathcal{V}$-co-admissible so we may, without loss of generality, assume that $\U=\V$.  Using Proposition \ref{RescLie}, we may also assume that $\U$ is $\pi^n \L$-accessible for all $n\geq 0$.

In an attempt to improve readability, we write $\cS_n$ for the sheaf $\hK{\sU(\pi^n \L)}$ on $X_{ac}(\pi^n \L)$ and $\cS_\infty$ for the sheaf $\w{\sU(L)}$ on $X_w$, so that $\cS_\infty(X) \cong \invlim \cS_n(X)$ and 
\[\cS_\infty(Y) \cong \invlim \cS_n(Y) \qmb{for all} Y \in \U.\]
Fix $n\geq 0$. Consider the sheafification $\M_n$ of the presheaf $Z\mapsto \cS_n(Z)\otimes_{\cS_\infty(Z)}\sM(Z)$ on $X_{ac}(\pi^n\L)$. Let $Y \in \U$, so that $\sM(Y)$ is a co-admissible $\cS_\infty(Y)$-module, and $\sM_{|{Y_w}}$ is isomorphic to $\Loc(\sM(Y))$ by assumption. By Lemma \ref{Ucoad} applied to $\sM_{|Y_w}$ there are isomorphisms
\[\M_n|_{Y_w}\cong \Loc\left(\cS_n(Y)\otimes_{\cS_\infty(Y)}\sM(Y)\right).\]
Thus applying Theorem \ref{LaccKiehl} there is a finitely generated $\cS_n(X)$-module $M_n$, and an isomorphism $\Loc(M_n) \stackrel{\cong}{\longrightarrow} \M_n$. 

Now $\Loc(M_n)\cong \Loc(\cS_n(X)\otimes_{\cS_{n+1}(X)}M_{n+1})$ since they have the same local sections on $\U$. Thus $M_\infty :=\invlim M_n$ is a co-admissible $\cS_\infty(X)$-module. We will show that $\Loc(M_\infty)$ is isomorphic to our sheaf $\sM$.

Let $\theta_n$ denote the $\cS_\infty(X)$-linear map $M_\infty\to \M_n(X)$ defined by the composite of the natural map $M_\infty\to M_n$ and the global sections of the isomorphism $\Loc(M_n)\to \M_n$. Let $Y \in \U$. Combining the isomorphism
\[ \Loc(M_n)(Y) = \cS_n(Y) \otimes_{\cS_n(X)} M_n \stackrel{\cong}{\longrightarrow} \M_n(Y)\]
together with the canonical isomorphism $M_n \cong \cS_n(X) \otimes_{\cS_\infty(X)} M_\infty$ given by \cite[Corollary 3.1]{ST} produces a compatible family of isomorphisms
\[ \alpha_n(Y) : \cS_n(Y) \otimes_{\cS_\infty(X)} M_\infty \stackrel{\cong}{\longrightarrow}\M_n(Y) \] given by the $\cS_\infty(X)$-balanced map $(s, m)\mapsto s\cdot\theta_n(m)|_Y$.
 
Passing to the limit as $n \to \infty$ gives an isomorphism of $\cS_\infty(Y)$-modules
\[ \alpha(Y)\colon \Loc(M_\infty)(Y) = \cS_\infty(Y) \w{\otimes}_{\cS_\infty(X)} M_\infty \stackrel{\cong}{\longrightarrow} \sM(Y)\] given by the $\cS_\infty(X)$-balanced map $(s,m)\mapsto s\cdot \lim(\theta_n(m)|_Y)$.  Since $\sM_{|Y} \cong \Loc(\sM(Y))$ by assumption, Theorem \ref{Loc} gives an isomorphism 
\[\alpha_Y : \Loc(M_\infty)_{|Y} \stackrel{\cong}{\longrightarrow} \sM_{|Y}\]
of sheaves of $\cS_{\infty}|_Y$-modules whose local sections
\[\alpha_Y(Z) \colon \cS_\infty(Z) \stackrel[\cS_\infty(X)]{}{\w{\otimes}} M_\infty  \to \sM(Z)\]
are given by $ \alpha_Y(Z)(s\w{\otimes} m) = s\cdot \lim(\theta_n(m)|_Y)|_Z$, whenever $Z \subset Y$ is an affinoid subdomain of $X$ contained in $Y$. Because $\lim(\theta_n|_Y)|_Z=\lim \theta_n|_{Z}$, it follows that 
\[\alpha_Y(Y\cap Y')=\alpha_{Y'}(Y\cap Y') \qmb{for every} Y,Y'\in \U.\]
Hence the $\alpha_Y$ patch together to an isomorphism of sheaves $\alpha\colon \Loc(M_\infty)\to \sM$.  
 \end{proof}
 
\section{Sheaves on rigid analytic spaces}\label{Sheaves}

In this section $X$ is a rigid $K$-analytic space.  
\subsection{Lie algebroids}\label{LieAlgebroids}
Let $X_w$ denote the subset of $X_{\rig}$ consisting of the affinoid subdomains of $X$. Since we do not assume that $X$ is separated, $X_w$ is not closed under intersections in $X_{\rig}$ and thus is not a $G$-topology on $X$ in general. However, every admissible open subset in $X_{\rig}$ has an admissible cover by affinoid subdomains of $X$. 

\begin{defn}{\cite[\S 9.2.1]{BGR}} A subset $\B$ of objects of $X_{\rig}$ is a \emph{basis} for the topology if every admissible open has an admissible cover by objects in $\B$. \end{defn}

In particular, $X_w$ is a basis of $X$.

\begin{defn}  If $\B$ is a basis of $X$, a presheaf $F$ on $\B$ is a \emph{sheaf} if for every admissible cover $\{U_i\}$ of $U$ by objects in $\B$ and any choice of admissible covers $\{W_{ijk}\}$ of $U_i\cap U_j$, \[ F(U)\to \prod F(U_i)\rightrightarrows \prod F(W_{ijk}) \] is exact. \end{defn}

\begin{thm} Suppose that $\B\subset X_{\rig}$ is a basis for the topology $X$. The restriction functor induces an equivalence of categories between sheaves on $X_{\rig}$ and sheaves on $\B$.\end{thm}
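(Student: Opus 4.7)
The plan is to construct a quasi-inverse to the restriction functor $\rho \colon \mathrm{Sh}(X_{\mathrm{rig}}) \to \mathrm{Sh}(\B)$ by defining an extension functor $\varepsilon \colon \mathrm{Sh}(\B) \to \mathrm{Sh}(X_{\mathrm{rig}})$, and then checking that the two compositions $\rho \varepsilon$ and $\varepsilon \rho$ are naturally isomorphic to the identity functors.

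First I would define $\varepsilon$. Given a sheaf $F$ on $\B$ and an admissible open $U \in X_{\mathrm{rig}}$, choose an admissible covering $\{U_i\}_{i \in I}$ of $U$ by objects of $\B$ (which exists by hypothesis), and for each pair $i,j \in I$ choose an admissible covering $\{W_{ijk}\}_{k \in K_{ij}}$ of $U_i \cap U_j$ by objects of $\B$. Define $\varepsilon(F)(U)$ to be the equalizer of the two natural maps $\prod_i F(U_i) \rightrightarrows \prod_{i,j,k} F(W_{ijk})$. The first technical step, which I expect to be the main source of bookkeeping, is to show that this equalizer is independent of the choice of covers $\{U_i\}$ and $\{W_{ijk}\}$ up to canonical isomorphism. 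The standard way is first to show that passing to an admissible refinement of $\{W_{ijk}\}$ yields a canonically isomorphic equalizer (using that $F$ is a sheaf on $\B$ applied to intersections inside each $U_i \cap U_j$); then to show the analogous statement for refinements of $\{U_i\}$; and finally to compare any two covers by passing to a common refinement, which exists because $X_{\mathrm{rig}}$ is a $G$-topology.

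Next I would verify that $\varepsilon(F)$ is indeed a sheaf on $X_{\mathrm{rig}}$. For an admissible covering $\{V_\alpha\}$ of $U$ by admissible opens, refine each $V_\alpha$ by an admissible $\B$-cover $\{V_{\alpha,i}\}$; then $\{V_{\alpha,i}\}_{\alpha,i}$ is an admissible $\B$-cover of $U$ which computes $\varepsilon(F)(U)$ by the independence shown above. Computing $\varepsilon(F)(V_\alpha)$ via the restricted covers $\{V_{\alpha,i}\}$ and identifying intersections yields the sheaf axiom for $\varepsilon(F)$ on $X_{\mathrm{rig}}$ by a direct diagram chase. Functoriality in $F$ is immediate from the universal property of the equalizer.

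To see $\rho \varepsilon \cong \mathrm{id}$, take $U \in \B$ and use the trivial cover $\{U\}$ of $U$ to evaluate $\varepsilon(F)(U)$: the resulting equalizer collapses to $F(U)$. For $\varepsilon \rho \cong \mathrm{id}$, given a sheaf $G$ on $X_{\mathrm{rig}}$ and $U \in X_{\mathrm{rig}}$, pick an admissible $\B$-cover $\{U_i\}$ of $U$; since $G$ is already a sheaf on $X_{\mathrm{rig}}$, the sheaf axiom for $G$ with respect to $\{U_i\}$ identifies $G(U)$ with the same equalizer defining $\varepsilon(\rho G)(U)$. These isomorphisms are visibly natural and compatible with restriction, yielding the required equivalence of categories. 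The main obstacle throughout is the cover-independence at the level of $\varepsilon(F)(U)$; everything else is a formal consequence.
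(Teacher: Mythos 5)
Your proposal is correct and follows essentially the same route as the paper's Appendix A: both define the extension on an admissible open $U$ as the equalizer of $\prod F(U_i) \rightrightarrows \prod F(W_{ijk})$ for $\B$-covers $\{U_i\}$ of $U$ and $\{W_{ijk}\}$ of $U_i \cap U_j$, and both use the sheaf axiom on $\B$ (via monomorphicity of $F(U_i\cap U_j)\to \prod_k F(W_{ijk})$-type maps under refinement) to establish independence of the choices. The paper merely packages the conclusion differently, proving full faithfulness (Lemma \ref{ResFF}) and essential surjectivity separately and using a direct limit over all $\B$-covers rather than a fixed choice, but the technical content is the same as your quasi-inverse construction.
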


This is a consequence of the Comparison Lemma \cite[Theorem C.2.2.3]{ElephantII}, but we give a proof in Appendix A for the convenience of the reader.

\begin{prop} There is a coherent sheaf $\T_X$ of $K$-Lie algebras on $X_{\rig}$ with \[ \T_X(U):=\Der_K \O(U)\] for every affinoid subdomain $U$ of $X$. Moreover, for all admissible open subsets $Y$ of $X$, $\T_X(Y)$ acts by derivations on $\O_X(Y)$.
\end{prop}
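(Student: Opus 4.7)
The plan is to build $\T_X$ first on the basis $X_w$ of affinoid subdomains, and then extend uniquely to a sheaf on $X_{\rig}$ using the equivalence of categories stated in the theorem just above. For each affinoid subdomain $U$ of $X$, set $\T_X(U) := \Der_K \O(U)$, which carries a canonical structure of $K$-Lie algebra under the commutator bracket. Whenever $V \subseteq U$ is an inclusion of affinoid subdomains, the morphism $\O(U) \to \O(V)$ is \'etale, so Lemma \ref{LiftEtale} supplies a unique $\O(U)$-linear $K$-Lie algebra homomorphism $\psi_{U,V} \colon \Der_K\O(U) \to \Der_K\O(V)$ compatible with $\O(U) \to \O(V)$. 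The uniqueness clause of that lemma forces $\psi_{V,W} \circ \psi_{U,V} = \psi_{U,W}$ whenever $W \subseteq V \subseteq U$, so this defines a presheaf of $K$-Lie algebras on $X_w$.

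Next I would verify the sheaf axioms on the basis $X_w$. Given an admissible cover $\{U_i\}$ of an affinoid subdomain $U$ and admissible covers $\{W_{ijk}\}$ of each $U_i\cap U_j$ by affinoid subdomains, the separation axiom is immediate from the sheaf property of $\O_X$: a derivation $d$ whose restriction to every $U_i$ is zero sends each $f \in \O(U)$ to a section of $\O_X$ vanishing on every piece of the cover, hence $d(f)=0$. For gluing, suppose we are given compatible $d_i \in \Der_K\O(U_i)$ in the sense that $\psi_{U_i, W_{ijk}}(d_i) = \psi_{U_j,W_{ijk}}(d_j)$ for all $i,j,k$. For each $f \in \O(U)$ the local sections $d_i(f|_{U_i}) \in \O(U_i)$ agree under further restriction to each $W_{ijk}$ (by compatibility of the $d_i$ together with the defining property of $\psi$), so the sheaf property of $\O_X$ glues them to a unique $d(f) \in \O(U)$. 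The resulting map $d \colon \O(U) \to \O(U)$ is $K$-linear and satisfies the Leibniz rule because these are local conditions that can be verified on the cover. Applying the equivalence of categories between sheaves on $X_{\rig}$ and sheaves on the basis $X_w$ then extends $\T_X$ uniquely to a sheaf of $K$-Lie algebras on $X_{\rig}$.

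Coherence as an $\O_X$-module reduces to two statements on affinoids: (i) for each affinoid subdomain $U$, $\T_X(U) \cong \Hom_{\O(U)}(\Omega_{\O(U)/K}, \O(U))$ is a finitely generated $\O(U)$-module since $\Omega_{\O(U)/K}$ is finitely generated (Section \ref{LiftEtale}); (ii) for an affinoid subdomain inclusion $V \subseteq U$, the canonical isomorphism $\Omega_{\O(V)/K} \cong \O(V) \otimes_{\O(U)} \Omega_{\O(U)/K}$ coming from \'etaleness (invoked in the proof of Lemma \ref{LiftEtale}) dualises, using flatness of $\O(U) \to \O(V)$, to $\T_X(V) \cong \O(V) \otimes_{\O(U)} \T_X(U)$. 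Since $\O(U)$ is Noetherian, coherence follows.

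Finally, for the derivation action on an admissible open $Y \subseteq X$, choose any admissible cover $\{U_i\}$ of $Y$ by affinoid subdomains. A section $\delta \in \T_X(Y)$ is a compatible family $\delta_i \in \Der_K\O(U_i)$, so for $f \in \O_X(Y)$ the elements $\delta_i(f|_{U_i}) \in \O(U_i)$ agree on overlaps by the same compatibility argument as above, and glue to a unique $\delta(f) \in \O_X(Y)$. The $K$-linearity and Leibniz rule for $\delta$ are inherited from their local counterparts. The main subtlety is the verification of the sheaf axiom on $X_w$, but this is really only a careful combination of the uniqueness of the \'etale lift from Lemma \ref{LiftEtale} with the sheaf property of $\O_X$ on $X_{\rig}$; no further hard input is required.
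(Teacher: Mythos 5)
Your proof takes the same route as the paper: define $\T_X$ on the basis $X_w$, use the uniqueness clause of Lemma \ref{LiftEtale} to produce restriction maps and a presheaf, verify the sheaf axiom there (the paper calls this step ``routine''), and extend to $X_{\rig}$ by the comparison theorem. You additionally spell out the coherence claim via $\T_X(U)\cong\Hom_{\O(U)}(\Omega_{\O(U)/K},\O(U))$ and flat base change; the paper's proof leaves this implicit, and your verification is correct.
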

\begin{proof}

We define the restriction maps $\T_X(U)\to \T_X(V)$ for $V\subset U$ affinoid subdomains in $X$ using Lemma \ref{LiftEtale}. By the uniqueness part of that Lemma this defines a presheaf of $K$-Lie algebras on $X_w$. Let $\{U_i\}$ be an admissible affinoid cover of an affinoid subdomain $U$ of $X$. Then it is routine to check that the sequence \[ 0\to \T_X(U)\to \prod \T_X(U_i)\to \prod \T_X(U_i \cap U_j)\] is exact, so $\T_X$ defines a sheaf of $K$-Lie algebras on $X_w$. By the Theorem, this extends to a sheaf of $K$-Lie algebras on $X_{\rig}$. A similarly routine verification shows that $\T_X(Y)$ acts by derivations on $\O_X(Y)$ whenever $Y$ is an admissible open subset of $X$.
\end{proof}

We call the sheaf $\T_X$ constructed in the Proposition the \emph{tangent sheaf} of $X$.

\begin{defn} A \emph{Lie algebroid} on $X$ is a pair $(\rho, \sL)$ such that
\begin{itemize}
\item $\sL$ is a locally free sheaf of $\O$-modules of finite rank on $X_{\rig}$, 
\item $\sL$ has the structure of a sheaf of $K$-Lie algebras, and
\item $\rho\colon \sL\to \T$ is an $\O$-linear map of sheaves of Lie algebras such that \[ [x,ay]=a[x,y]+\rho(x)(a)y\] whenever $U$ is an admissible open subset of $X$, $x,y\in \sL(U)$ and $a\in \O(U)$.
\end{itemize}
\end{defn}

For example, if $X$ is smooth, then the tangent sheaf $\T_X$ is locally free of finite rank by definition, and thus $(\id_{\T_X},\T_X)$ is a Lie algebroid on $X$ by the Proposition.
\subsection{Lie-Rinehart algebras and Lie algebroids}

If $(\rho,\sL)$ is a Lie algebroid on $X$,  then $(\rho(U),\sL(U))$ is a $(K,\O(U))$-Lie algebra for every admissible open subset $U$ of $X$. Moreover every affinoid subdomain $U$ of $X$, $\sL(U)$ is smooth by \cite[Proposition 4.7.2]{FvdPut}.

\begin{defn} A \emph{morphism $(\rho,\sL)\to (\rho',\sL')$ of Lie algebroids} on $X$ is a morphism of sheaves $\theta\colon \sL\to \sL'$ such that $\theta(U)$ is a morphism of $(K,\O(U))$-Lie algebras for every $U\subset X$ in $X_{\rig}$. \end{defn}

\begin{lem} Let $Y=\Sp(A)$ be a $K$-affinoid variety. The global sections functor $\Gamma(Y,-)$ defines an equivalence of categories between the category of Lie algebroids on $Y$ and the category of smooth $(K,A)$-Lie algebras. \end{lem}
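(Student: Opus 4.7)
The plan is to construct an explicit quasi-inverse $L \mapsto \widetilde{L}$ to $\Gamma(Y,-)$. Given a smooth $(K,A)$-Lie algebra $L$, I would define $\widetilde{L}$ on the weak $G$-topology $Y_w$ by
\[ \widetilde{L}(U) := \O(U) \otimes_A L \qquad \text{for each affinoid subdomain } U = \Sp(B) \subseteq Y.\]
Since every affinoid subdomain embedding $A \to B$ is an open immersion and so is étale, Corollary \ref{LiftEtale} canonically endows $\widetilde{L}(U)$ with the structure of a $(K,B)$-Lie algebra whose anchor map lands in $\Der_K(B) = \T_Y(U)$; its functoriality makes the restriction maps into morphisms of $(K,\O(V))$-Lie algebras for $V \subseteq U$ affinoid.

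The next step is to verify the sheaf axioms for $\widetilde{L}$ on $Y_w$. The underlying presheaf of $\O$-modules is a coherent $\O$-module on $Y_w$ by the affinoid case of Kiehl's Theorem applied to the finitely generated $A$-module $L$, hence is a sheaf; the sheaf conditions on the $K$-Lie bracket and anchor structures are automatic from the uniqueness and naturality assertions of Corollary \ref{LiftEtale}. I would then extend $\widetilde{L}$ from $Y_w$ to $Y_{\rig}$ using the Comparison Theorem proved earlier in this section. Local freeness of $\widetilde{L}$ of finite rank follows from projectivity and finite generation of $L$ over $A$, which guarantee that $L$ becomes free after passing to a suitable affinoid covering of $Y$.

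For the equivalence itself, $\Gamma(Y, \widetilde{L}) = A \otimes_A L = L$ is immediate, and morphisms of smooth $(K,A)$-Lie algebras yield morphisms of Lie algebroids by base change. Conversely, given a Lie algebroid $\sL$ on $Y$ with $L := \sL(Y)$, there is a canonical morphism $\alpha \colon \widetilde{L} \to \sL$ whose value on each $U = \Sp(B) \in Y_w$ is the $B$-linear extension of the restriction map $L \to \sL(U)$. Applying Kiehl's theorem to the coherent $\O_Y$-module underlying $\sL$ shows that $\sL(U) \cong B \otimes_A L$ canonically, so $\alpha$ is an $\O$-module isomorphism and, by naturality of the bracket and anchor, an isomorphism of Lie algebroids. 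Fullness and faithfulness of $\Gamma(Y,-)$ then follow, since morphisms of sheaves are determined by their restrictions to $Y_w$ and any $(K,A)$-linear map $L \to \sL(Y)$ extends uniquely by $B$-linearity.

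The main obstacle, although ultimately routine, is verifying that the canonical $\O$-module isomorphism $\alpha \colon \widetilde{L} \to \sL$ in the previous paragraph does respect the Lie bracket and anchor structures. This reduces to a check on elementary tensors $b \otimes x$ with $b \in B$ and $x \in L$, where the compatibility follows directly from the defining formulas for the base-changed Lie--Rinehart structure established in Corollary \ref{LiftEtale}, together with the $B$-linearity of the anchor map and the fact that $L \to \sL(U)$ is $K$-linear.
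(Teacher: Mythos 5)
Your proof is correct and follows essentially the same approach as the paper's: construct the quasi-inverse $\Loc(L)(U) := \O(U)\otimes_A L$ on $Y_w$, equip it with the Lie algebroid structure via the uniqueness statement of Corollary~\ref{LiftEtale}, and extend to $Y_{\rig}$ via the Comparison Theorem. The paper's proof is actually more terse than yours---it constructs $\Loc$ as a functor and then simply asserts that it is inverse to $\Gamma(Y,-)$, leaving implicit the verification that for a Lie algebroid $\sL$, the canonical $\O$-module isomorphism $\Loc(\sL(Y)) \to \sL$ (from Kiehl's theorem) respects the bracket and anchor. You spell out this converse direction explicitly, reducing it to the uniqueness clause of Corollary~\ref{LiftEtale}, which is a worthwhile clarification; the key point (which you correctly identify, though somewhat compressed) is that the restriction map $\sL(Y)\to\sL(U)$ is a $K$-Lie algebra morphism with commuting anchors, so pulling back the bracket on $\sL(U)$ across the $B$-module isomorphism recovers precisely the unique structure guaranteed by that Corollary.
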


\begin{proof} First, suppose that $(L,\rho)$ is a smooth $(K,A)$-Lie algebra and define $\Loc(L)$ to be the locally free sheaf on $Y_w$ given by $\Loc(L)(U)=\O(U)\otimes_AL$ for $U\subset Y$ affinoid and natural restriction maps. By Corollary \ref{LiftEtale} there is a unique structure of a $(K,\O(U))$-Lie algebra on $\Loc(L)(U)$ with anchor map $\rho(U)$ so that \[ \xymatrix{L\ar[r]^\rho \ar[d] & \Der_K(A) \ar[d]\\ \Loc(L)(U) \ar[r]^{\rho(U)} & \T_Y(U) }\] commutes.   Suppose that $V\subset U$ are affinoid subdomains of $Y$, and consider the diagram \[ \xymatrix{ L \ar[r] \ar[d]^\rho & \Loc(L)(U) \ar[r] \ar[d]^{\rho(U)} & \Loc(L)(V) \ar[d]^{\rho'(V)} \\ \Der_K(A) \ar[r] & \T_Y(U) \ar[r] & \T_Y(V)  } \] where $\rho'(V)$ is the anchor map for the unique $(K,\O(V))$-Lie algebra structure on $\Loc(L)(V)$ making the right-hand square commute. Since the left-hand square also commutes, the outer square must commute and $\rho'(V)=\rho(V)$ by the uniqueness of $\rho(V)$. Thus $\rho\colon \Loc(L)\to \T_Y|_{Y_w}$ is a morphism of sheaves of Lie algebras on $Y_w$. By \cite[Proposition 9.2.3/1]{BGR}, $\Loc(L)$ extends to a Lie algebroid $\Loc(L)$ on $Y$.

Now, suppose that $f\colon L\to L'$ is a morphism of $(K,A)$-Lie algebras. By Corollary \ref{LiftEtale} there is a unique morphism of sheaves on $Y_w$ \[ \Loc(f)\colon \Loc(L)|_{Y_{w}}\to \Loc(L')|_{Y_{w}}\] such that $\Loc(f)(Y)=f$, given by $\Loc(f)(U)=\O(U)\otimes_A f$ for affinoid subdomains $U\subset Y$. By \cite[Proposition 9.2.3/1]{BGR} again, $\Loc(f)$ extends to a morphism of Lie algebroids. Thus $\Loc$ defines a functor inverse to $\Gamma(Y,-)$.
\end{proof}
 
\begin{cor} If $(\rho,\sL)$ is a Lie algebroid on a rigid $K$-analytic space $X$, then for every affinoid subdomain $U$ of $X$, $\sL|_{U} \cong \Loc( \sL(U) )$.
\end{cor}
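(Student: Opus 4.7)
The plan is to deduce this corollary directly from the preceding Lemma, which establishes that $\Gamma(U,-)$ and $\Loc$ are mutually inverse equivalences between the category of Lie algebroids on an affinoid $U$ and the category of smooth $(K,\O(U))$-Lie algebras. The only real content is to check that the restriction $\sL|_U$ makes sense as a Lie algebroid on $U$, after which the isomorphism is essentially automatic.

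First I would verify that if $(\rho,\sL)$ is a Lie algebroid on $X$, then $(\rho|_U,\sL|_U)$ is a Lie algebroid on the affinoid subdomain $U$. Locally freeness of finite rank as an $\O_U$-module is inherited from the analogous property on $X$ since every admissible open of $U$ is also admissible open in $X$; the sheaf-of-$K$-Lie-algebras structure restricts; and the anchor map $\rho|_U : \sL|_U \to \T_X|_U$ lands in $\T_U = \T_X|_U$ because $\T$ is a sheaf on $X_{\rig}$ and its sections over affinoid subdomains of $U$ agree whether computed in $X$ or in $U$ (this uses Proposition \ref{LieAlgebroids} together with the uniqueness in Lemma \ref{LiftEtale}). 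The compatibility identity $[x,ay]=a[x,y]+\rho(x)(a)y$ is also inherited sectionwise.

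Once $\sL|_U$ is known to be a Lie algebroid on $U$, we have an identification $\Gamma(U,\sL|_U) = \sL(U)$ as $(K,\O(U))$-Lie algebras, with smoothness of this $\O(U)$-module guaranteed by \cite[Proposition 4.7.2]{FvdPut} as noted at the start of Section \ref{LieAlgebroids}. Applying the Lemma to the affinoid $Y=U$ and the Lie algebroid $\sL|_U$ on it gives a canonical isomorphism $\sL|_U \cong \Loc(\Gamma(U,\sL|_U)) = \Loc(\sL(U))$ of Lie algebroids on $U$, which is exactly the assertion.

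The only mildly subtle point, and really the step that needs care rather than difficulty, is checking that the two ways of constructing the Lie algebroid structure on the sheaf $V\mapsto \O(V)\otimes_{\O(U)}\sL(U)$ for $V\subset U$ affinoid coincide: one coming from the restriction of the global Lie algebroid $\sL$, and the other from the $\Loc$ construction of the Lemma. Both are pinned down by the universal property in Corollary \ref{LiftEtale}, namely that the $(K,\O(V))$-Lie algebra structure on $\O(V)\otimes_{\O(U)}\sL(U)$ is the unique one whose anchor lifts the anchor on $\sL(U)$ along $\O(U)\to\O(V)$; so the two structures agree on $U_w$ and therefore, by the Comparison Lemma (Theorem \ref{LieAlgebroids}) together with \cite[Proposition 9.2.3/1]{BGR}, on all of $U_{\rig}$.
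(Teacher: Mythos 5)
Your proof is correct and takes the approach the paper intends (the corollary is stated without an explicit proof, being an immediate consequence of the preceding Lemma). You observe that the restriction $(\rho|_U,\sL|_U)$ is a Lie algebroid on the affinoid $U$, then apply the Lemma's equivalence $\Loc\circ\Gamma\cong\id$ to get $\sL|_U\cong\Loc(\sL(U))$; your final paragraph, invoking the uniqueness in Corollary \ref{LiftEtale} to identify the two Lie algebroid structures, is precisely the content of the Lemma's assertion that $\Loc$ is quasi-inverse to $\Gamma(Y,-)$, so it is consistent with, and fills in a detail of, the paper's argument.
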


\subsection{The Fr\'echet completion of \ts{\sU(\sL)}}\label{DefnOfDhat}

We will need to work with a slightly coarser basis for $X_{\rig}$ than $X_w$.

\begin{defn} Let $\sL$ be a Lie algebroid on the reduced rigid $K$-analytic space $X$. We say that $\sL(X)$ \emph{admits a smooth Lie lattice} if there is an affine formal model $\A$ in $\O(Y)$ and a smooth $\A$-Lie lattice $\L$ in $\sL(Y)$. We let $X_w(\sL)$ denote the set of affinoid subdomains $Y$ of $X$ such that $\sL(Y)$ admits a smooth Lie lattice.
\end{defn}

\begin{lem} $X_w(\sL)$ is a basis for $X$.
\end{lem}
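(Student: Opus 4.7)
The plan is to reduce to the case where the Lie algebroid is free on the given affinoid, and then construct a smooth Lie lattice by rescaling a basis by a suitable power of $\pi$.

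Since every admissible open subset of $X$ has an admissible cover by affinoid subdomains (i.e. $X_w$ is a basis), and since a refinement of a basis is a basis, it is enough to show that every $Y \in X_w$ admits an admissible cover by elements of $X_w(\sL)$. So fix $Y \in X_w$. Because $\sL$ is a locally free sheaf of $\O$-modules of finite rank by the definition of a Lie algebroid, and $X_w$ is a basis, we may choose an admissible cover $\{Y_i\}$ of $Y$ by affinoid subdomains of $X$ on which $\sL$ becomes free: $\sL(Y_i) \cong \O(Y_i)^{n_i}$ as $\O(Y_i)$-modules.

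It then suffices to prove that each such $Y_i$ lies in $X_w(\sL)$. Fix one and drop the index. Since $X$ is reduced, $Y_i$ is a reduced $K$-affinoid variety, so by Lemma \ref{AdmRalg}(b) we may take $\A := \O(Y_i)^\circ$ as an affine formal model in $\O(Y_i)$. Choose an $\O(Y_i)$-basis $v_1,\ldots,v_n$ of $\sL(Y_i)$, and set
\[
\L_0 := \A v_1 \oplus \cdots \oplus \A v_n \;\subseteq\; \sL(Y_i).
\]
Then $\L_0$ is a free $\A$-lattice in the $(K,\O(Y_i))$-Lie algebra $\sL(Y_i)$. By Lemma \ref{LR}(c), we can find an integer $N \geq 0$ such that $\pi^N \L_0$ is an $\A$-Lie lattice in $\sL(Y_i)$; explicitly, the bound on $N$ comes both from clearing denominators in the structure constants of the bracket and from ensuring the anchor $\rho(\pi^N v_j)$ preserves $\A$ via the boundedness statement in Lemma \ref{LR}(b). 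Since $\pi^N \L_0$ is still free as an $\A$-module, it is in particular projective, hence a \emph{smooth} $\A$-Lie lattice. This shows $Y_i \in X_w(\sL)$, completing the proof.

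There is essentially no obstacle: all the hard work has been done in Lemma \ref{LR}, which guarantees that an arbitrary $\A$-lattice can be rescaled into an $\A$-Lie lattice, and in Lemma \ref{AdmRalg}, which supplies a canonical affine formal model on any reduced affinoid. The only conceptual point is the use of local freeness of $\sL$ to reduce to the case where taking an $\A$-span of a global basis gives a projective (indeed free) $\A$-module.
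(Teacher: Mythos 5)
Your proof is correct and follows essentially the same route as the paper: choose an admissible cover on which $\sL$ is free, take the $\O(Y)^\circ$-span of a basis as a free lattice, and invoke Lemma \ref{LR}(c) to rescale it into a (free, hence smooth) Lie lattice. The only cosmetic difference is that you unwind the content of Lemma \ref{LR}(c) slightly, whereas the paper just cites it.
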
 \begin{proof}Suppose that $Y$ is an affinoid subdomain of $X$ such that $\sL(Y)$ is a free $\O(Y)$-module. Then $\sL(Y)$ has a free $\O(Y)^\circ$-lattice spanned by a generating set for $\sL(Y)$ as an $\O(Y)$-module, and some $\pi$-power multiple of this lattice will be a free $\O(Y)^\circ$-Lie lattice by Lemma \ref{LR}(c). Thus $\sL(Y)$ has a smooth $\O(Y)^\circ$-Lie lattice whenever $\sL(Y)$ is a free $\O(Y)$-module, so $X_w(\sL)$ is a basis for $X$ since $\sL$ is a locally free $\O$-module.
\end{proof}

\begin{thm} Let $X$ be a reduced rigid $K$-analytic space. There is a natural functor $\w{\sU(-)}$ from Lie algebroids on $X$ to sheaves of $K$-algebras on $X_{\rig}$ such that there is a canonical isomorphism \[\w{\sU(\sL)}|_{Y_w}\cong\w{ \sU(\sL(Y)) }\]
for every $Y \in X_w(\sL)$. \end{thm}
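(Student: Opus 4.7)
The plan is to construct $\w{\sU(\sL)}$ first as a sheaf on the basis $X_w(\sL)$, and then to invoke the Comparison Lemma (Theorem \ref{LieAlgebroids}) to extend it uniquely to $X_{\rig}$. On objects, for each $Y \in X_w(\sL)$ I set $\w{\sU(\sL)}(Y) := \w{U(\sL(Y))}$ as defined in \S\ref{DefnOfwUL}; this is well-defined because $\sL(Y)$ admits a smooth Lie lattice by hypothesis, and it is independent of the choice of lattice and of affine formal model by the lemma and proposition of that subsection. For an inclusion $Z\subset Y$ of elements of $X_w(\sL)$, the map $\O(Y)\to\O(Z)$ is \'etale, so Proposition \ref{FuncFS}(a) provides a continuous $K$-algebra map
\[\w{U(\sL(Y))}\longrightarrow\w{U(\O(Z)\otimes_{\O(Y)}\sL(Y))},\]
and composing with the canonical $(K,\O(Z))$-Lie algebra isomorphism $\O(Z)\otimes_{\O(Y)}\sL(Y)\cong\sL(Z)$ from the corollary at the end of \S\ref{LieAlgebroids} gives the required restriction map to $\w{U(\sL(Z))}$. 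Functoriality of $\w{U(-)}$ in $(K,A)$-Lie algebras, together with the uniqueness clause of Proposition \ref{FuncFS}, ensures that these restrictions compose correctly, so $\w{\sU(\sL)}$ is a presheaf on $X_w(\sL)$.

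Next I would verify the sheaf axiom on the basis. Fix $Y\in X_w(\sL)$, a finite admissible cover $\{Y_i\}$ of $Y$ by members of $X_w(\sL)$, and admissible refinements $\{W_{ijk}\}$ of $Y_i\cap Y_j$ by members of $X_w(\sL)$. Choosing an affine formal model and smooth Lie lattice for $\sL(Y)$, Theorem \ref{Envaff} tells us that the presheaf $Z\mapsto\w{U(\O(Z)\otimes_{\O(Y)}\sL(Y))}$ on $Y_w$ is already a sheaf. Using $\sL|_Y\cong\Loc(\sL(Y))$ together with the lattice-independence of \S\ref{DefnOfwUL}, its values on $Y_i$ and $W_{ijk}$ are canonically isomorphic to $\w{U(\sL(Y_i))}$ and $\w{U(\sL(W_{ijk}))}$ respectively, so the sheaf axiom transfers to $\w{\sU(\sL)}$.

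Applying Theorem \ref{LieAlgebroids} then extends $\w{\sU(\sL)}$ uniquely to a sheaf of $K$-algebras on $X_{\rig}$, and the required isomorphism $\w{\sU(\sL)}|_{Y_w}\cong\w{\sU(\sL(Y))}$ for $Y\in X_w(\sL)$ holds by construction, since both sides are sheaves on $Y_w$ which agree on the sub-basis $X_w(\sL)\cap Y_w$ of $Y_w$. For functoriality in $\sL$, a morphism $\theta\colon\sL\to\sL'$ of Lie algebroids restricts to morphisms of $(K,\O(Y))$-Lie algebras $\sL(Y)\to\sL'(Y)$, and on the common basis $X_w(\sL)\cap X_w(\sL')$ Proposition \ref{FuncFS}(b) supplies compatible continuous $K$-algebra maps $\w{U(\sL(Y))}\to\w{U(\sL'(Y))}$; the Comparison Lemma assembles these into a morphism of sheaves $\w{\sU(\theta)}$ on $X_{\rig}$.

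The main technical point is Step 2: one must reconcile the two a priori distinct descriptions of $\w{\sU(\sL)}(Z)$ for $Z\in X_w(\sL)\cap Y_w$, namely the one obtained by base-changing a smooth Lie lattice of $\sL(Y)$ to an appropriate affine formal model of $\O(Z)$ (as used inside Theorem \ref{Envaff}), and the intrinsic one built from a smooth Lie lattice of $\sL(Z)$. That these produce canonically isomorphic Fr\'echet $K$-algebras is exactly the content of the lemma and proposition of \S\ref{DefnOfwUL}, combined with the fact that any two smooth Lie lattices in $\sL(Z)$ share a common $\pi^n$-multiple.
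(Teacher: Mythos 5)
Your proof is correct and takes essentially the same route as the paper: define the presheaf on the basis $X_w(\sL)$ via \S\ref{DefnOfwUL} with restriction maps from Proposition \ref{FuncFS}(a), verify the sheaf axiom using Theorem \ref{Envaff}, extend to $X_{\rig}$ via the Comparison Lemma (Theorem \ref{LieAlgebroids}), and obtain functoriality from Proposition \ref{FuncFS}(b) together with Lemma \ref{ResFF}. The one point you spell out more explicitly than the paper is the identification $\w{U(\O(Z)\otimes_{\O(Y)}\sL(Y))}\cong\w{U(\sL(Z))}$ needed to transfer the sheaf axiom from $\w{\sU(\sL(Y))}$ on $Y_w$ to the intrinsic presheaf on $X_w(\sL)$, which is correct and follows, as you say, from the corollary of \S\ref{LieAlgebroids} together with the lattice- and formal-model-independence established in \S\ref{DefnOfwUL}.
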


\begin{proof} Given a Lie algebroid $\sL$ on $X$, let $\w{\sU(\sL)}$ be the presheaf of $K$-algebras on $X_{w}$ given by 
\[\w{\sU(\sL)}(Y) := \w{U(\sL(Y))}\]
on affinoid subdomains $Y$ of $X$, with restriction maps given by Proposition \ref{FuncFS}(a). 

By Theorem \ref{Envaff}, $\w{\sU(\sL)}$ is a sheaf of $K$-algebras on $X_w(\sL)$. Because $X_w(\sL)$ is a basis for $X$ by the Lemma, $\w{\sU(\sL)}$ extends uniquely to a sheaf of $K$-algebras on $X_{\rig}$ by Theorem \ref{LieAlgebroids}.

Moreover if $\sL\to \sL'$ is a morphism of Lie algebroids on $X$ then Proposition \ref{FuncFS}(b), together with Lemma \ref{ResFF}, gives a morphism of sheaves of $K$-algebras $\w{\sU(\sL)}\to \w{\sU(\sL')}$ on $X_{\rig}$ in a functorial way.  
\end{proof}

\begin{defn} We call the sheaf $\w{\sU(\sL)}$ constructed in the Theorem the \emph{Fr\'echet completion of $\sU(\sL)$}. If $X$ is smooth, $\sL=\T$ and $\rho=1_\T$,  we call\[\w{\D}:=\w{\sU(\T)} \] the \emph{Fr\'echet completion of $\D$}.
\end{defn}

From now on we assume that our rigid $K$-analytic space $X$ is reduced.

\subsection{Co-admissible sheaves of modules} \label{CoadmSheaves}

Let $\sL$ be a Lie algebroid on $X$. By analogy with the definition of coherent sheaves given in \cite[\S II.5]{Hart}, we make the following

\begin{defn} A sheaf of $\w{\sU(\sL)}$-modules $\sM$ on $X_{\rig}$ is \emph{co-admissible} if there is an admissible covering $\{U_i\}$ of $X$ by affinoids in $X_w(\sL)$ such that $\sM|_{U_{i,w}}$ is a co-admissible $\w{\sU(\sL)}|_{U_{i,w}}$-module for all $i$ in the sense of Definition \ref{Ucoad}.
\end{defn}

We record three equivalent ways of thinking about co-admissible modules.

\begin{thm} The following are equivalent for a sheaf $\sM$ of $\w{\sU(\sL)}$-modules on $X_{\rig}$:
\be
\item $\sM$ is co-admissible,
\item $\sM|_{U_w}$ is a co-admissible $\w{\sU(\sL)}|_{U_w}$-module for every $U \in X_w(\sL)$,
\item $\sM(U)$ is a co-admissible $\w{\sU(\sL)}(U)$-module, and the natural map
\[\w{\sU(\sL)}(V) \underset{\w{\sU(\sL)}(U)}{\w\otimes}  \sM(U)  \longrightarrow \sM(V)\]
is an isomorphism whenever $V, U \in X_w(\sL)$ and $V \subset U$.
\ee \end{thm}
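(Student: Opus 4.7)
The plan is to establish the cycle $(c) \Rightarrow (b) \Rightarrow (a) \Rightarrow (c)$. The first two implications are essentially formal. For $(c) \Rightarrow (b)$, fix $U \in X_w(\sL)$; the base change isomorphism in $(c)$ together with co-admissibility of $\sM(U)$ says precisely that $\sM|_{U_w} \cong \Loc(\sM(U))$ as sheaves of $\w{\sU(\sL)}|_{U_w}$-modules, whence $\sM|_{U_w}$ is co-admissible by the equivalence in Theorem \ref{Kiehl}. For $(b) \Rightarrow (a)$, I would invoke the fact recorded in \S\ref{DefnOfDhat} that $X_w(\sL)$ is a basis for $X_{\rig}$; any admissible cover of $X$ by elements of $X_w(\sL)$ then witnesses co-admissibility in the sense of \S\ref{CoadmSheaves}.

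The genuine content is $(a) \Rightarrow (c)$. First I would record the auxiliary fact that $X_w(\sL)$ is stable under passing to affinoid subdomains: given $V \subseteq U$ with $U \in X_w(\sL)$ and a smooth $\A$-Lie lattice $\L$ in $\sL(U)$, choose any affine formal model $\B$ in $\O(V)$ containing the image of $\A$; by boundedness of derivations on affinoid algebras (as in the proof of Lemma \ref{RescLie}(b)) some $\pi^n \L$ preserves $\B$, whereupon $\B \otimes_\A \pi^n \L$ is a smooth $\B$-Lie lattice in $\sL(V) = \O(V) \otimes_{\O(U)} \sL(U)$.

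Now fix $U \in X_w(\sL)$ and let $\{U_i\}$ be the admissible affinoid covering of $X$ provided by $(a)$. Restricting to $U$ and refining through the basis $X_w(\sL)$, I would obtain an admissible affinoid covering of $U$ by subdomains $W_\alpha \in X_w(\sL)$ of $X$ with each $W_\alpha \subseteq U_{i(\alpha)}$; quasi-compactness of $U$ then extracts a finite subcover $\{V_1, \ldots, V_n\}$. For each $j$, Theorem \ref{Kiehl} applied to the affinoid $U_{i(j)}$ yields $\sM|_{U_{i(j),w}} \cong \Loc(\sM(U_{i(j)}))$; restricting to $V_j \subseteq U_{i(j)}$ and using the definition of $\Loc$ in \S\ref{Loc} together with associativity of $\w\otimes$ (Corollary \ref{WotimesAssoc}) gives $\sM|_{V_{j,w}} \cong \Loc(\sM(V_j))$ with $\sM(V_j)$ co-admissible over $\w{\sU(\sL)}(V_j)$. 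Thus $\sM|_{U_w}$ is $\{V_1, \ldots, V_n\}$-co-admissible, and a second application of Theorem \ref{Kiehl}, this time to the affinoid $U$, produces a co-admissible $\w{\sU(\sL)}(U)$-module $M$ with $\sM|_{U_w} \cong \Loc(M)$; taking sections at $U$ identifies $M$ with $\sM(U)$, and evaluating this sheaf isomorphism at any $V \subseteq U$ in $X_w(\sL)$ gives the claimed base change isomorphism in $(c)$.

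The main obstacle is really bookkeeping in $(a) \Rightarrow (c)$: one must know that $X_w(\sL)$ behaves well under restriction to affinoid subdomains in order to refine the cover, and then apply Kiehl's theorem twice---first at the $U_i$ to get local $\Loc$-presentations of $\sM$, then at $U$ to patch them. All of the substantive technical content (existence of co-admissible completions, c-flatness of the relevant base changes, fullness and exactness of $\Loc$) has already been established in the preceding sections, so no new machinery is required.
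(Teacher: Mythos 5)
Your proposal is correct and takes essentially the same approach as the paper: it rests on the same ingredients (Theorem \ref{Kiehl}, the basis property from Lemma \ref{DefnOfDhat}, and the rescaling/refinement argument via Lemma \ref{RescLie}), and the auxiliary fact you record—that $X_w(\sL)$ is closed under passage to affinoid subdomains—is used implicitly by the paper as well. The only difference is cosmetic: you traverse the cycle of implications in the opposite direction, folding the paper's (a)$\Rightarrow$(b) and (b)$\Rightarrow$(c) into a single step (a)$\Rightarrow$(c).
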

\begin{proof} (a) $\Rightarrow$ (b). Let $\{U_i\}$ be an admissible affinoid covering of $X$ such that $\sM|_{U_{i,w}}$ is a co-admissible $\w{\sU(\sL)}|_{U_{i,w}}$-module and $U_i \in X_w(\sL)$ for all $i$. Let $U$ be another object of $X_w(\sL)$; then $\{U \cap U_i\}$ is an admissible cover of $U$. Choose an admissible affinoid covering $\{V_{ij}\}_j$ of $U \cap U_i$ for each $i$; then $\{V_{ij}\}_{i,j}$ is an admissible affinoid covering of $U$ and therefore admits a finite subcovering $\W$, say. Now $\sM|_{U_w}$ is $\W$-co-admissible in the sense of Definition \ref{Ucoad} since each $W \in \W$ is an affinoid subdomain of some $U_i$.

(b) $\Rightarrow$ (c). Let $U \in X_w(\sL)$. By Theorem \ref{Kiehl}, $\sM|_{U_w}$ is isomorphic to $\Loc(M_U)$ for some co-admissible $\w{\sU(\sL)}(U)$-module $M_U$. Applying $\Gamma(U,-)$ shows that $M_U = \Loc(M_U)(U) \cong \sM(U)$, so $\sM(U)$ is a co-admissible $\w{\sU(\sL)}(U)$-module. Hence 
\[\sM(V) \cong \Loc\left(\sM(U)\right)(V) = \w{\sU(\sL)}(V)\underset{\w{\sU(\sL)}(U)}{\w\otimes} \sM(U)\]
for every affinoid subdomain $V$ of $U$.

(c) $\Rightarrow$ (a). Using Lemma \ref{DefnOfDhat}, choose an admissible covering $\{U_i\}$ of $X$ by affinoids in $X_w(\sL)$, and let $M_i := \sM(U_i)$ for each $i$. Then $M_i$ is a co-admissible $\w{\sU(\sL)}(U_i)$-module, and there is a natural isomorphism of sheaves of $\w{\sU(\sL)}|_{U_{i,w}}$-modules 
\[\Loc(M_i) \stackrel{\cong}{\longrightarrow} \sM|_{U_{i,w}}\]
 for each $i$, by assumption. Hence $\sM$ is co-admissible.
\end{proof}

It follows readily from Proposition \ref{Ucoad} that the full subcategory of sheaves of $\w{\sU(\sL)}$-modules on $X_{\rig}$ whose objects are the co-admissible $\w{\sU(\sL)}$-modules is abelian.

\subsection{Two corollaries}\label{TwoCors}
We begin with a more general version of Corollary \ref{MainResults}, which follows immediately from Theorems \ref{Loc} and \ref{Kiehl}.

\begin{thm} Suppose that $\sL$ is a Lie algebroid on a reduced $K$-affinoid variety $X$ such that $\sL(X)$ admits a smooth Lie lattice. Then $\Loc$ is an equivalence of abelian categories
\[
\left\{ 
				\begin{array}{c} 
					co\hspace{-0.1cm}-\hspace{-0.1cm}admissible \\ 
					\w{\sU(\sL)}(X)-\hspace{-0.1cm}modules
				\end{array}
\right\} \cong \left\{
				\begin{array}{c}
				 co\hspace{-0.1cm}-\hspace{-0.1cm}admissible \hspace{0.1cm} sheaves \hspace{0.1cm} of\\ 
				  \w{\sU(\sL)} \hspace{-0.1cm}-\hspace{-0.1cm}modules \hspace{0.1cm} on \hspace{0.1cm} X
				\end{array}
\right\}.
\]
\end{thm}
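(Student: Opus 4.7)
The plan is to reduce the statement to the affinoid-level equivalence already supplied by Theorems \ref{Loc} and \ref{Kiehl}, by passing from $X_{\rig}$ to the basis $X_w$. The hypothesis that $\sL(X)$ admits a smooth Lie lattice says precisely that $X$ itself belongs to $X_w(\sL)$, so by Theorem \ref{DefnOfDhat} the restriction $\w{\sU(\sL)}|_{X_w}$ is canonically isomorphic to the sheaf $\w{\sU(L)}$ of Section \ref{Envaff} for $L := \sL(X)$, and in particular $\w{\sU(\sL)}(X) = \w{U(L)}$. Theorem \ref{CoadmSheaves} applied with $U = X$ then tells us that a sheaf $\sM$ of $\w{\sU(\sL)}$-modules on $X_{\rig}$ is co-admissible (Definition \ref{CoadmSheaves}) if and only if its restriction $\sM|_{X_w}$ is co-admissible in the sense of Definition \ref{Ucoad}. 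Combined with the basis equivalence of Theorem \ref{LieAlgebroids}, this identifies the right-hand side of the desired equivalence with the category of co-admissible sheaves of $\w{\sU(L)}$-modules on $X_w$.

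It therefore suffices to prove that $\Loc$ is an equivalence of abelian categories between co-admissible $\w{U(L)}$-modules and co-admissible sheaves of $\w{\sU(L)}$-modules on $X_w$. Exactness and full faithfulness are the explicit content of Theorem \ref{Loc}. For essential surjectivity, given a co-admissible sheaf $\sN$ on $X_w$, Theorem \ref{Kiehl} supplies an isomorphism $\sN \cong \Loc(M)$ for some co-admissible $\w{U(L)}$-module $M$; taking global sections pins down $M \cong \sN(X)$ canonically. Transporting back to $X_{\rig}$ along the basis equivalence gives the desired equivalence of abelian categories and preserves exactness, since kernels, images and cokernels of morphisms of sheaves are computed locally on a basis.

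There is no genuine obstacle here, as the substance has been absorbed into Theorems \ref{Loc} and \ref{Kiehl}, which themselves rest on the non-commutative Tate acyclicity theorem \ref{TateAcy}, the co-admissible flatness of restriction maps (Theorem \ref{cflat}), and the Cartan-style patching in Theorem \ref{SpecialLaurent}. The only point requiring mild attention is the reconciliation of the two \emph{a priori} distinct notions of co-admissibility for sheaves, one formulated on $X_w$ (Definition \ref{Ucoad}) and one on $X_{\rig}$ (Definition \ref{CoadmSheaves}); this is handled by the equivalence (a) $\Leftrightarrow$ (b) of Theorem \ref{CoadmSheaves}, which is available precisely because $X \in X_w(\sL)$ under the standing hypothesis.
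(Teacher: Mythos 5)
Your proof is correct and matches the paper's approach: the paper simply asserts that the statement "follows immediately from Theorems \ref{Loc} and \ref{Kiehl}," and your argument spells out exactly what that "immediately" involves — passage to the basis $X_w$, reconciliation of the two notions of co-admissibility via Theorem \ref{CoadmSheaves}, full faithfulness and exactness from Theorem \ref{Loc}, and essential surjectivity from the equivalence (a)$\Leftrightarrow$(c) of Theorem \ref{Kiehl}. One minor imprecision: the direction ``$\sM|_{X_w}$ co-admissible $\Rightarrow$ $\sM$ co-admissible on $X_{\rig}$'' follows directly from Definition \ref{CoadmSheaves} (taking the singleton cover $\{X\}$, legitimate since $X \in X_w(\sL)$), not from the implication (b)$\Rightarrow$(a) of Theorem \ref{CoadmSheaves}, whose hypothesis (b) is stronger than what you have at that point.
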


Given an abelian sheaf $\F$ on $X$, write $H^\bullet(X,\F)$ to denote the sheaf cohomology of $\F$, and let $\breve{H}^{\bullet}(\U,\F)$ denote the \v{C}ech cohomology of $\F$ with respect to the covering $\U$. 

\begin{prop} Suppose that $X$ is separated, $\sM$ is a co-admissible $\w{\sU(\sL)}$-module and $\U$ is any cover of $X$ by affinoids in $X_w(\sL)$. Then \[ H^i(X,\sM)= \breve{H}^i(\U,\sM)\] for all $i\ge 0$. In particular, $H^i(X,\sM)=0$ for $i\geq |\U|$. 
\end{prop}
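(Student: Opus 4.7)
The plan is to show that every $V \in X_w(\sL)$ is $\sM$-acyclic in derived-functor sheaf cohomology, and then to conclude by a standard Leray argument applied to $\U$. First I would verify that $X_w(\sL)$ is stable under passage to affinoid subdomains, so that in particular every finite intersection of members of $\U$ belongs to $X_w(\sL)$: if $V$ is an affinoid subdomain of $U \in X_w(\sL)$ and $\L$ is a smooth $\A$-Lie lattice in $\sL(U)$, then Proposition \ref{RescLie} supplies $n\geq 0$ with $V$ being $\pi^n\L$-admissible, and taking a $\pi^n\L$-stable affine formal model $\B\subset\O(V)$, the base change $\B\otimes_{\A}\pi^n\L$ is a smooth $(\R,\B)$-Lie lattice in $\sL(V)$. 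Separatedness of $X$ ensures that finite intersections of affinoid subdomains remain affinoid subdomains, establishing the claim for finite intersections.

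Next I would establish that $H^q(V,\sM|_V)=0$ for every $q\geq 1$ and $V \in X_w(\sL)$. By Theorem \ref{CoadmSheaves}(c) combined with Theorem \ref{Kiehl}, $\sM|_{V_w}\cong\Loc(\sM(V))$, and Theorem \ref{Loc} tells us that every admissible affinoid covering of $V$ is $\sM|_V$-acyclic in the \v{C}ech sense. Since the $\pi^n\L$-accessible affinoid subdomains of $V$ form a basis closed under finite intersections by the previous paragraph, an application of the \v{C}ech-to-derived-functor spectral sequence
\[\breve{H}^p(\V,\underline{H}^q(\sM|_V))\;\Longrightarrow\;H^{p+q}(V,\sM|_V),\]
combined with an induction on $q$ (verifying that the presheaf $U \mapsto H^q(U,\sM|_V)$ sheafifies to zero on this basis), converts the \v{C}ech acyclicity into vanishing of derived-functor sheaf cohomology.

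Given this acyclicity, Leray's theorem applied to $\U$ yields the desired isomorphism $\breve{H}^i(\U,\sM)\cong H^i(X,\sM)$ for all $i\geq 0$, since every finite intersection of members of $\U$ lies in $X_w(\sL)$ and is therefore $\sM$-acyclic. The final assertion is then immediate: if $|\U|=n$, the ordered \v{C}ech complex $C^\bullet(\U,\sM)$ vanishes in degrees $\geq n$. The main obstacle is the middle step, namely correctly converting the \v{C}ech-theoretic statement of Theorem \ref{Loc} into a derived-functor statement on the rigid $G$-topology, which requires careful handling of the \v{C}ech-to-derived spectral sequence and of the sheafification of the presheaves $\underline{H}^q(\sM|_V)$ on the basis $X_w(\sL)$.
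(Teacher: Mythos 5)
Your proof is correct and takes essentially the same approach as the paper: show that every finite intersection of members of $\U$ lies in $X_w(\sL)$ and hence is $\sM$-acyclic via co-admissibility, then pass from \v{C}ech acyclicity to derived-functor acyclicity and apply a Leray-type comparison. The only difference is presentational: the paper compresses your middle and final paragraphs into a single citation of a Stacks Project lemma (Cartan's criterion, tag 03F7), whereas you spell out the \v{C}ech-to-derived spectral sequence and the induction on $q$ that the cited lemma encapsulates.
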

\begin{proof} Since $X$ is separated, every finite intersection $V$ of elements of $\U$ is affinoid. Thus by Theorem \ref{CoadmSheaves}, $\sM|_V$ is a co-admissible $\w{\sU(\sL)}|_V$-module and so has vanishing \v{C}ech cohomology groups by Theorem \ref{Kiehl} and Theorem \ref{Loc}. Thus the result follows from \cite[\href{http://stacks.math.columbia.edu/tag/03F7}{Lemma 03F7}]{stacks-project}.
\end{proof}
It follows from the Proposition that in the setting of the Theorem, the global sections functor $\Gamma(X,-)$ is an exact quasi-inverse to the localisation functor $\Loc$.
\appendix

\section{}\label{ExtendSheaf}

When $X$ is affinoid, \cite[Proposition 9.2.3/1]{BGR} gives that the restriction functor from sheaves on $X_{\rig}$ to sheaves on $X_w$ is an equivalence of categories. In this appendix we extend this result to bases for general rigid $K$-analytic spaces $X$.
 
\subsection{Lemma}\label{ResFF}Suppose that $\B\subset X_{\rig}$ is a basis. The restriction functor $r$ from sheaves on $X_{\rig}$ to the category of sheaves on $\B$ is full and faithful. 
\begin{proof} Suppose that $\F$ and $\G$ are sheaves on $X_{\rig}$ and $\theta$ is a natural transformation from $r (\F) $ to $r (\G) $. We must show that $\theta$ extends uniquely to a morphism of sheaves $t\colon \F\to \G$. 

Suppose that $U$ is an admissible open subset of $X$ and $\U=\{U_i\}$ is a admissible cover of $U$ by $U_i$ in $\B$. Since $\G$ is a sheaf on $X_{\rig}$, \[ \G(U) \to \prod \G(U_i)\rightrightarrows \prod_{i,j} \G(U_i\cap U_j)\] is exact. For each pair $i,j$, choose an admissible cover $\{W_{ijk}\}$ of $U_i\cap U_j$ by objects in $\B$. Since $\G$ is a sheaf, $\G(U_i\cap U_j) \to \prod_k \G(W_{ijk})$ is a monomorphism for each pair $i,j$ and so $\G(U)$ is also the equaliser of $\prod \G(U_i)\rightrightarrows \prod_{ijk} \G(W_{ijk})$. Thus $\G|_{\B}$ is a sheaf on $\B$.

Since $\theta$ is a natural transformation, the two composites \[\F(U)\to \prod \F(U_i)\to \prod \G(U_i)\rightrightarrows \prod \G(W_{ijk})\] agree. Thus there is a unique $t(\U)\in \Hom(\F(U),\G(U))$ such that \[ \xymatrix{ \F(U) \ar[r] \ar[d]^{t(\U)} & \prod\F(U_i) \ar[d]^{\prod \theta(U_i)}  \\ \G(U) \ar[r] & \prod\G(U_i)} \] commutes.  Next, suppose that $\V=\{V_j\}$ is a refinement of $\U$ with each $V_j$ in $\B$. Then   \[ \xymatrix{ \F(U) \ar[r] \ar[d]^{t(\U)} & \prod\F(U_i) \ar[d]^{\prod \theta(U_i)} \ar[r] & \prod \F(V_j) \ar[d]^{\prod \theta(V_j)}  \\ \G(U) \ar[r] & \prod\G(U_i) \ar[r] & \prod \G(V_j) }\] also commutes, so $t(\U)=t(\V)$. Since any two such covers of $U$ have a common refinement, we see that $t(U):=t(\U)$ does not depend on the choice of cover of $U$. In particular if $U$ is in $\B$, $t(U)=\theta(U)$. 

Now suppose that $V\subset U$ are admissible opens in $X_{\rig}$ with $U$ in $\B$. Let $\{V_i\}$ be an admissible cover of $V$ by objects in $\B$. Consider the diagram \[ \xymatrix{ \F(U)\ar[r] \ar[d]^{\theta(U)} & \F(V)\ar[r] \ar[d]^{t(V)} & \prod \F(V_i)\ar[d]^{\theta(V)} \\ \G(U) \ar[r] & \G(V) \ar[r] & \prod \G(V_i). }\] The outer square commutes because $\theta$ is a natural transformation. The right-hand square commutes by the construction of $t(V)$. Since $\G$ is a sheaf, the bottom rightmost horizontal morphism is a monomorphism so it follows that the left-hand square commutes. 

Finally, consider $V\subset U$ for general admissible opens in $X_{\rig}$. Let $\{U_i\}$ be an admissible cover of $U$ by objects in $\B$ and define $V_i:= V\cap U_i$ so that $\{V_i\}$ is an admissible cover of $V$. Then consider the diagram\[ \xymatrix{ \F(U)\ar[r] \ar[d]^{t(U)} & \prod \F(U_i) \ar[r] \ar[d]^{\prod \theta(U_i)} & \prod \F(V_i) \ar[d]^{\prod t(V_i)} \\ \G(U) \ar[r]&  \prod \G(U_i) \ar[r] & \prod \G(V_i). }\] The left-hand square commutes by construction of $t(U)$. The right-hand square commutes by the previous paragraph since each $U_i$ is in $\B$. Thus the outer square commutes. By repeating the argument used in the case $U$ is in $\B$ we see that $t$ is the unique morphism of sheaves extending $\theta$ as required.
\end{proof}

\subsection{Proposition} Suppose $\B\subset X_{\rig}$ is a basis of $X$. The essential image of the restriction functor from sheaves on $X_{\rig}$ to presheaves on $\B$ consists of the sheaves on $\B$. 
\begin{proof} Suppose that $F$ is a sheaf on $\B$. We will construct a sheaf $\F$ on $X_{\rig}$ whose restriction is naturally isomorphic to $F$. Suppose $U$ is an admissible open subvariety of $X$ and $\U=\{U_i\st i\in I\}$ is an admissible cover of $U$ by objects in $\B$. For each $i,j\in I$ let $\V_{ij}=\{V_{ijk}\}$ denote an admissible cover of $U_i\cap U_j$ by objects in $\B$. Then define $H^0(\U, F)$ to be the equaliser of $\prod_{i\in I} F(U_i)\rightrightarrows \prod F(V_{ijk})$. Since $F$ is a sheaf on $\B$, if $\W_{ij}=\{W_{ijl}\}$ is a refinement of $\V_{ij}$ then $\prod F(V_{ijk})\to \prod F(W_{ijl})$ is a monomorphism. Thus $H^0(\U,F)$ only depends on the choice of cover $\U$ not on the choice of $\V_{ij}$. Note also that, by the definition of a sheaf on $\B$, $H^0(\U,F)=F(U)$ whenever $\U$ is a admissible cover of $U\in \B$.

Now, we can define for any admissible open subset $U$ of $X$
\[\F(U):=\dirlim H^0(\U,F)\] 
where the direct limit is over all covers of $U$ by objects in $\B$. In particular $\F(U)\cong F(U)$ for $U\in \B$. Suppose that $V\subset U$ are admissible open subsets of $X$. If $\U=\{U_i\st i\in I\}$ is an admissible cover of $U$ by objects in $\B$ then $\V=\{U_i\cap V\st i\in I\}$ is an admissible cover of $V$. For each $i$ we can find an admissible cover $\V_i$ of $U_i\cap V$ by objects in $\B$. Then $\bigcup \V_i$ is an admissible cover of $V$ by objects in $\B$. Moreover, the universal property of equalisers defines a map $H^0(\U,F)\to H^0(\bigcup \V_i,F)\to \F(V)$. These patch together using the universal property of direct products to give a morphism $\F(V)\to \F(U)$. It is routine to check that in this way $\F$ defines a presheaf on $X_{\rig}$ whose restriction to $\B$ is naturally isomorphic to $F$. The proof of \cite[Lemma 9.2.2/3]{BGR} shows that $\F$ is in fact a sheaf. 
\end{proof}

\bibliography{references}
\bibliographystyle{plain}
\end{document}